\documentclass[11pt]{amsart}

\usepackage[colorlinks=true,allcolors=blue!70!black,bookmarksopen=true]{hyperref}

\usepackage{url}
\usepackage{amsmath}
\usepackage{amssymb}
\usepackage{amscd}
\usepackage{manfnt}
\usepackage{enumitem} 
\usepackage[all]{xy}
\usepackage{graphicx}
\usepackage{MnSymbol}
\usepackage{ulem} 
\normalem

\usepackage[paper=a4paper, text={155mm,218mm},centering]{geometry}
\setlength\marginparwidth{2cm}
\setlength\marginparsep{0.4cm}

\usepackage{tikz}
\usetikzlibrary{decorations.markings}
\usetikzlibrary{decorations.pathreplacing}
\usetikzlibrary{intersections,patterns}
\usepackage{pgfplots}
\usepackage{standalone}

\usepackage{comment}
\usepackage[colorinlistoftodos]{todonotes}


\numberwithin{section}{part}
\newtheorem{theorem}{Theorem}[section]
\newtheorem*{theorem*}{Theorem}
\newtheorem{proposition}[theorem]{Proposition}
\newtheorem{lemma}[theorem]{Lemma}
\newtheorem{corollary}[theorem]{Corollary}

\newtheorem{addendum}[theorem]{Addendum}

\theoremstyle{definition}
\newtheorem{definition}[theorem]{Definition}
\newtheorem{example}[theorem]{Example}

\newtheorem{convention}[theorem]{Convention}
\newtheorem{condition}[theorem]{Condition}


\newtheorem{cond}[theorem]{Condition}

\theoremstyle{remark}
\newtheorem{remark}[theorem]{Remark}
\newtheorem*{remark*}{Remark}
\numberwithin{equation}{section}

\hyphenation{re-para-metrised}
\hyphenation{re-para-metrisation}
\hyphenation{stra-ti-fi-ca-tion}

\newcommand{\ol}[1]{\overline{#1}}

\newcommand{\dx}[1]{\frac{\partial\phantom{x}}{\partial x_{#1}}}
\newcommand{\dy}[1]{\frac{\partial\phantom{y}}{\partial y_{#1}}}

\newcommand{\eg}{\varepsilon_\gamma}
\newcommand{\toiso}{\xrightarrow{\simeq}}

\newcommand{\Diff}{\operatorname{Diff}}
\newcommand{\Emb}{\operatorname{Emb}}
\newcommand{\Image}{\operatorname{Image}}

\newcommand{\x}{\pmb{x}}
\newcommand{\y}{\pmb{y}}
\newcommand{\hra}{\hookrightarrow}

\newcommand{\imra}{\looparrowright}
\newcommand{\ra}{\longrightarrow}

\newcommand{\smfrac}[2]{\mbox{\footnotesize$\displaystyle\frac{#1}{#2}$}} 
\newcommand{\tmfrac}[2]{\mbox{\large$\frac{#1}{#2}$}} 

\def\Str{\mathcal{S}}
\def\Z{\mathbb Z}
\def\R{\mathbb R}
\def\Q{\mathbb Q}
\def\N{\mathbb N}
\def\O{\Omega}
\def\lbirth{\mathfrak{r}}
\def\birth{\mathfrak{u}}
\def\ftheta{\mathfrak{s}}
\def\fa{\mathfrak{a}}
\def\fb{\mathfrak{b}}
\def\wt#1{\widetilde{#1}}
\def\wh#1{\widehat{#1}}

\def\H{\mathbb{M}}
\def\Ha{\H_{\mathrm{a}}}
\def\Hd{\H_{\mathrm{d}}}

\def\cF{\mathcal{F}}

\def\cA{\mathcal{A}}
\def\cR{\mathcal{R}}
\def\cD{\mathcal{D}}
\def\cI{\mathcal{I}}
\def\cK{\mathcal{K}}
\def\cL{\mathcal{L}}
\def\cP{\mathcal{P}}
\def\cN{\mathcal{N}}

\def\mj{\mathfrak{j}}
\def\xmo{\xi_{2:n}}
\def\xtan{\xi_{\ttan}}
\def\wxtan{\wt{\xi}_{\ttan}}

\def\Ttan{E_{\ttan}}
\def\UUU{\Upsilon}

\def\sm{\setminus}

\def\Mtwo{M_{p_+}} 

\def\Mone{M_{\textrm{\upshape orig}}} 
\def\Mfng{M_{\textrm{\upshape fng}}} 
\def\Mnew{M_{\textrm{\upshape new}}} 
\def\sh{t_{\textrm{\upshape hit}}}
\def\efng{\varepsilon_{\textrm{\upshape fng}}}

\def\esafe{\varepsilon_{\textrm{\upshape safe}}}
\def\evs{\varepsilon_{\textrm{\upshape vs}}}
\def\feta{\phi_{\lambda}}
\def\Csafe{C_{\textrm{\upshape safe}}}
\def\Cvs{C_{\textrm{\upshape vs}}}
\def\rhofng{\rho_{\textrm{\upshape fng}}}
\def\eeta{\varepsilon_R}
\def\erho{\varepsilon_\Sigma}
\def\fng{\textrm{\upshape fng}}
\def\oout{\textrm{\upshape out}}
\def\iinn{\textrm{\upshape inn}}
\def\iin{\textrm{\upshape in}}
\def\mmid{\textrm{\upshape mid}}
\def\bbot{\textrm{\upshape bot}}
\def\ttop{\textrm{\upshape top}}
\def\ttan{\textrm{\upshape tan}}
\def\bbad{\textrm{\upshape bad}}
\def\eend{\textrm{\upshape end}}
\def\ccoor{\textrm{\upshape coor}}
\def\ttang{\textrm{\upshape tang}}
\def\piin{\partial_{\iin}}
\def\pout{\partial_{\oout}}
\def\ptan{\partial_{\tan}}
\def\xit{\overline{\xi}}
\def\xih{\widehat{\xi}}
\def\xitt#1{\wt{\xi}_{#1}}
\def\etat{\overline{\eta}}

\def\guiding{\varpi}
\def\auxfield{\upsilon}

\DeclareMathOperator\codim{codim}

\DeclareMathOperator\ind{ind}

\DeclareMathOperator\sspan{span}
\DeclareMathOperator\Int{Int}
\DeclareMathOperator\Id{Id}
\DeclareMathOperator\Iim{Im}

\DeclareMathOperator\bd{bd}
\DeclareMathOperator\pr{pr}

\definecolor{linkcolor0}{rgb}{0.45, 0.15, 0.15}
\definecolor{linkcolor1}{rgb}{0.15, 0.15, 0.45}
\definecolor{c2626d8}{RGB}{38,38,116}
\definecolor{c00ffff}{RGB}{0,255,255}
\definecolor{c0000db}{RGB}{0,0,219}
\definecolor{ce7a0ff}{RGB}{231,160,255}
\definecolor{c650000}{RGB}{101,0,0}
\definecolor{c009100}{RGB}{0,145,0}
\definecolor{cd82626}{RGB}{116,38,38}
\definecolor{ce6f815}{RGB}{30,248,110}

\newcounter{nparcount}

\def\npar#1#2{\stepcounter{nparcount}%
  \colorbox{green!10}{$?^{\thenparcount}$}\marginpar{\colorbox{green!10}{\begin{minipage}{2cm}\fontsize{5}{6}\selectfont\color{green!20!black}${}^{\arabic{nparcount}}$#1\end{minipage}}\\ \colorbox{blue!10}{\begin{minipage}{2cm}\fontsize{5}{6}\selectfont\color{green!20!black}
${}^{\arabic{nparcount}}$#2\end{minipage}}}}

\title{Link concordance implies link homotopy}

\author{Maciej Borodzik}
\address{Institute of Mathematics, University of Warsaw, ul. Banacha 2,
02-097 Warsaw, Poland}
\email{mcboro@mimuw.edu.pl}

\author{Mark Powell}
\address{School of Mathematics and Statistics, University of Glasgow, University Place, Glasgow, G12 8QQ, United Kingdom}
\email{mark.powell@glasgow.ac.uk}

\author{Peter Teichner}
\address{Max Planck Institut f\"ur Mathematik, Vivatsgasse 7, 53111 Bonn, Germany}
\email{teichner@mpim-bonn.mpg.de}

\makeatletter
\@namedef{subjclassname@2010}{\textup{2010} Mathematics Subject Classification}
\makeatother
\subjclass[2020]{Primary:
57K45,   	
57R42,   	
57R70.   	
} 
\keywords{Immersed Morse theory, immersed Cerf theory, stratified Morse theory, concordance, isotopy, link homotopy}

\begin{document}

\begin{abstract}
We show that link concordance implies link homotopy for immersions of codimension at least two. As a consequence, we prove that every link $\sqcup^r S^n \hra S^{n+2}$ is link homotopically trivial for $n\geq 2$. This means that there is a {\em link} homotopy that (for each time parameter) maps distinct component $n$-spheres disjointly into $S^{n+2}$. In other words, beyond the classical dimension (of embedded circles in $S^3$) there are no `linking modulo knotting' phenomena in codimension two. To date, this was only known for $n=2$.

In our proofs we follow, expand, and complete unpublished notes of the third author developing stratified Morse theory for generic immersions, where the $d$-th stratum is  given by points that have $d$ preimages under the generic immersion. We  include a discussion of gradient like vector fields, their strata preserving flows, and Cerf theory. This generalizes the case of embeddings, having only two strata, as studied by Perron, Sharpe, and Rourke,  and expanded by the first two authors.

Two vital operations in Cerf theory are the rearrangement and cancellation of critical points. In the setting of stratified theory, there are additional rearrangement and cancellation obstructions arising from intersections of ascending and descending membranes for critical points of the Morse function restricted to various strata.

 We show that both additional obstructions vanish in codimension at least three, implying a smooth proof of Hudson's result that embedded concordance implies isotopy.
In codimension at least two, we show that only the rearrangement obstruction vanishes and we introduce finger moves that eliminate the cancellation obstruction. This is done carefully and only at the expense of introducing new self-intersection points into the components of the immersion. Therefore, our moves keep distinct components disjoint and hence preserve the link homotopy class.
\end{abstract}

\maketitle

\newpage

\setcounter{tocdepth}{1}
\tableofcontents

\part{Main results and outlines of proofs.}\label{part:intro}

\section{Link maps, link homotopy, and link concordance} \label{sec:link maps}

A continuous map $c \colon X \to Y$ is called a {\em link map} if it keeps distinct connected components of $X$ disjoint in $Y$. In other words, if and only if the induced map $\pi_0(X) \to \pi_0(\Image(c))$ is a bijection. A {\em link homotopy} is a homotopy through link maps.
A {\em link concordance} is a link map $C \colon X \times [0,1] \to Y \times [0,1]$ such that $C^{-1}(Y \times  \{i\}) = X \times \{i\}$ for $i=0,1$.

This notion of link concordance is an analogue for link maps of the usual notion of embedded link concordance of embedded links. However note that when restricted to embedded links, our notion of link concordance does not give the notion of embedded link concordance one finds in the literature, since $C$ need not be an embedding, even if $C|_{X \times \{0,1\}}$ is an embedding.

We focus initially on the case that $Y$ is a sphere $S^N$ and $X$ is a disjoint union of spheres $S^{n_i}$ with $n_i<N$. 

Milnor~\cite{Milnor:1954-1,Milnor:1957-1} introduced link homotopies for classical links $L \colon \sqcup^r S^1 \hra S^3$, to distinguish the phenomenon of `knotting' from that of `linking'. The Hopf link (formed by two unknots) is not link homotopic to an unlink, as detected by its linking number.  Milnor's invariant $\ol{\mu}_{123}$ shows that the Borromean rings are also link homotopically essential. 

The fact that Milnor's invariants are also concordance invariants led to the question of whether link concordance implies link homotopy.
This was proven in 1979 in the classical dimension by Giffen~\cite{Giffen-1979-1} and Goldsmith~\cite{Goldsmith-1979-1} (there is also a later account by Habegger~\cite{Habegger-1992-1}).  We extend this result to all dimensions, provided the codimension is at least two.

\begin{theorem}\label{thm:link concordance}
For any dimensions $n_i \leq N-2$, two link maps $L, L' \colon \sqcup_{i=1}^r S^{n_i} \to S^N$ are link homotopic if and only if they are  link concordant.
\end{theorem}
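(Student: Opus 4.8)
We first dispose of the easy implication. A link homotopy $H\colon \sqcup_i S^{n_i}\times[0,1]\to S^N$ from $L$ to $L'$ determines the map $C(x,t)=(H(x,t),t)$ into $S^N\times[0,1]$; since $H_t$ keeps distinct components disjoint for every $t$, so does $C$, and $C^{-1}(S^N\times\{i\})=\sqcup_i S^{n_i}\times\{i\}$, so $C$ is a link concordance from $L$ to $L'$. Hence the content of the theorem is the converse. Given a link concordance $C\colon \sqcup_i S^{n_i}\times[0,1]\to S^N\times[0,1]$ with $C|_{t=0}=L$ and $C|_{t=1}=L'$, the plan is to bring $C$ into the \emph{vertical normal form} $C(x,t)=(c(x,t),t)$, for then $c$ is a homotopy from $L$ to $L'$ whose time-$t$ image is $\Image(C)\cap(S^N\times\{t\})$, inside which distinct components remain disjoint; that is, $c$ is the required link homotopy.

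First I would make $C$ generic. Since $L$ and $L'$ are embeddings, $C$ is an embedded collar near each end, so $C$ may be perturbed with support away from those collars; as distinct components have disjoint compact images, a sufficiently small perturbation stays a link map, and then every multiple point of the immersion is a self-intersection within a single component $C(S^{n_i}\times[0,1])$. After such a perturbation $C$ is a self-transverse immersion whose image $\Sigma$ carries the multiplicity stratification $\Sigma_{(1)}\supset\Sigma_{(2)}\supset\cdots$, where $\Sigma_{(d)}$ consists of the points with at least $d$ preimages. A further perturbation of the same type makes $g:=\pr_{[0,1]}\circ C$ a \emph{stratified Morse function} in the sense developed in this paper: $g$ is Morse on each stratum, subject to the requisite tangency conditions between strata, its critical points (on all strata) lie in pairwise distinct levels, and none of them lie in the end collars. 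Verticality of $C$ would force every stratum, and $g$ restricted to it, to be a product, so it can be attained only once all critical points of $g$ — on every stratum — have been cancelled; the goal becomes to do precisely this, by modifications that keep $C$ a link concordance from $L$ to $L'$.

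The cancellation is carried out by the stratified Cerf theory of this paper, following the classical two-stage scheme. \textbf{Rearrangement.} Using strata-preserving gradient-like flows one reorders the critical points so that the index increases weakly with the value of $g$. In the stratified setting this requires sliding the ascending and descending \emph{membranes} of the critical points — the replacements for ascending and descending spheres, which here may meet the lower strata — off one another, and the obstruction to doing so vanishes once $n_i\le N-2$, our hypothesis. \textbf{Cancellation.} One then cancels pairs of critical points of adjacent index, which requires an analogue of the Whitney trick matching a descending membrane of an index-$(k+1)$ critical point with an ascending membrane of an index-$k$ one. In codimension at least three this second obstruction also vanishes, and one obtains a smooth proof of Hudson's theorem that embedded concordance implies isotopy; in codimension two it need not vanish, and this is exactly where finger moves enter: a finger move replaces the geometric intersection that obstructs a cancellation by a pair of new self-intersection points of the immersion. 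Since every multiple point of $C$ lies within one component, these finger moves are performed inside that single component, so distinct components stay disjoint and the link homotopy class of the ends is preserved. Iterating rearrangement and finger-assisted cancellation removes all critical points of $g$.

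Once $g$ has no critical points on any stratum, standard arguments — integrating a strata-preserving gradient-like vector field and realizing the resulting flow ambiently, standard in the end collars — deform $C$ rel the ends to the vertical normal form $C(x,t)=(c(x,t),t)$ with $c|_{t=0}=L$ and $c|_{t=1}=L'$, and $c$ is then the link homotopy sought, as observed at the outset. I expect the heart of the matter to be the cancellation stage in codimension two: one must construct the finger moves so that they genuinely kill the cancellation obstruction, respect the multiplicity stratification and the stratified Morse structure of $g$, introduce no new rearrangement or cancellation obstructions elsewhere among the critical points, and stay confined to a single component — in short, one must make stratified Cerf theory work at codimension two, where self-intersections of the moving submanifold cannot be avoided and are precisely what forces the conclusion to be link homotopy rather than isotopy.
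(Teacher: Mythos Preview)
Your outline of the stratified Morse-theoretic argument---rearrangement in codimension $\geq 2$, finger-move-assisted cancellation in codimension $2$, with finger moves confined to a single component---is precisely the paper's strategy, executed via Theorem~\ref{thm:concordance immersion} and the Path Lifting Theorem~\ref{thm:path_lifting}. The gap is at the very start: $L$, $L'$ and $C$ are only continuous link maps, not embeddings or immersions as you assume. A small perturbation of a continuous (or even smooth) map need not be an immersion; when $2(n_i+1) > N+1$ the singular set of a generic smooth map $S^{n_i}\times[0,1]\to S^N\times[0,1]$ has nonnegative dimension. Replacing $C$ by an immersed link concordance is a genuine step, handled in the paper by Proposition~\ref{prop:immersed}, which uses the Hirsch--Smale $h$-principle: a link map in codimension $\geq 1$ is link \emph{homotopic} (not merely perturbable) to an immersion, with a relative version for the concordance once the boundary is already immersed. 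The homotopy can be taken $C^0$-small so it remains a link map, but it is not a perturbation that comes for free from transversality.

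A second, smaller issue: your aim of cancelling critical points ``on every stratum'' is both unnecessary and not what the finger move achieves. Each finger move creates a sphere of double points carrying up to four new critical points on the depth-$2$ stratum (item~\ref{item:FM_two_more} of Theorem~\ref{thm:new_finger_move}), and these are never cancelled---only rearranged out of the way. The correct target is to eliminate critical points on the zeroth and first strata; once that is done, Theorem~\ref{thm:deeperstrata} and Proposition~\ref{prop:crossingforhomotopy} produce the regular link homotopy between $g_0$ and $g_1$ without any control on deeper strata.
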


In his PhD thesis \cite{Bartels_thesis}, Bartels showed that in codimension two and for $n \geq 2$, every embedded link $L \colon \sqcup^r S^{n} \hra S^{n+2}$ is link concordant to the unlink. Prior to this, Massey and Rolfsen~\cite{Massey-Rolfsen} had introduced some high dimensional link homotopy invariants and asked whether link concordance implies link homotopy, and also whether every embedding $S^2\sqcup S^2 \hra S^4$ is link homotopically trivial. Our paper shows that both questions have an affirmative answer. Indeed the latter holds much more generally: by combining Theorem~\ref{thm:link concordance} with~\cite{Bartels_thesis} we obtain the following.

\begin{corollary}\label{cor:embedded links}
Every smooth embedding $\sqcup^r S^{n} \hra S^{n+2}, n\geq 2,$ is link homotopically trivial.
\end{corollary}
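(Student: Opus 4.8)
The plan is to deduce the corollary formally from Theorem~\ref{thm:link concordance} together with Bartels's concordance result. Fix a smooth embedding $L \colon \sqcup^r S^{n} \hra S^{n+2}$ with $n \geq 2$, and let $U \colon \sqcup^r S^{n} \hra S^{n+2}$ be the standard unlink, i.e.\ the boundary of $r$ disjoint smoothly embedded $(n+1)$-balls in $S^{n+2}$. Since $U$ visibly bounds disjoint balls, being link homotopic to $U$ is exactly what it means to be link homotopically trivial; so it suffices to show that $L$ is link homotopic to $U$.

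By Bartels's thesis~\cite{Bartels_thesis}, in codimension two and for $n \geq 2$ every embedded link $\sqcup^r S^{n} \hra S^{n+2}$ is embedded link concordant to the unlink. Thus there is a smooth embedding
\[
C \colon (\sqcup^r S^{n}) \times [0,1] \hra S^{n+2} \times [0,1]
\]
with $C^{-1}(S^{n+2} \times \{i\}) = (\sqcup^r S^{n}) \times \{i\}$ for $i=0,1$, restricting to $L$ at one end and to $U$ at the other. The first step is to observe that this $C$ is a link concordance in the sense of Section~\ref{sec:link maps}: being an embedding it is in particular a link map, so distinct component spheres of $(\sqcup^r S^{n}) \times [0,1]$ have disjoint images, and the boundary condition $C^{-1}(S^{n+2} \times \{i\}) = (\sqcup^r S^{n}) \times \{i\}$ is precisely the one demanded there. (As the excerpt notes, the converse inclusion of notions fails, but only this easy direction is needed.)

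Now apply Theorem~\ref{thm:link concordance} with $N = n+2$ and $n_i = n \leq N-2$ for all $i$. Since $L$ and $U$ are link concordant, they are link homotopic, and hence $L$ is link homotopically trivial, proving the corollary.

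There is essentially no obstacle here: the statement is a direct combination of the two cited inputs, and the only point to verify — that an embedded link concordance counts as a link concordance in our sense — is immediate from the definitions. The substantive content lies entirely in Theorem~\ref{thm:link concordance}, whose proof occupies the remainder of the paper, and in Bartels's construction of the concordance to the unlink; what is genuinely new is that Theorem~\ref{thm:link concordance} upgrades that concordance (which need not be an isotopy) to a link homotopy.
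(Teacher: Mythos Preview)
Your proof is correct and follows exactly the approach indicated in the paper, which simply states that the corollary follows by combining Theorem~\ref{thm:link concordance} with Bartels's result. Your explicit verification that an embedded link concordance is a link concordance in the sense of Section~\ref{sec:link maps} fills in the only detail the paper leaves implicit.
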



Corollary~\ref{cor:embedded links} was proven in \cite{Bartels-Teichner-1999-1} for $n=2$, where some of the techniques of this paper were introduced. Later \cite{ST-group-2spheres} showed by different methods that a link map $S^2\sqcup S^2 \to S^4$ with one (topologically) embedded component is link homotopically trivial.  Fenn and Rolfsen constructed the first nontrivial link map, exhibited in \cite[Figure~4]{Fenn-Rolfsen}. It has one self-intersection of each component.
The main result of \cite{ST-group-2spheres} showed that the group (under connected sum) of link maps $S^2\sqcup S^2 \to S^4$
is a free module over the commutative ring $\Z[x_1,x_2]/(x_1\cdot x_2)$, freely generated by the Fenn-Rolfsen link map.

We end our link homotopy discussion for embedded spherical links by looking at the codimension one case.
The Schoenflies theorem holds for $n\neq 3$; see~\cite[Proposition~D,~p.~112]{Mil65} for $n \geq 4$, implying that a smooth codimension one submanifold $K \subseteq \R^{n+1}$ that is diffeomorphic to $S^n$ is ambiently isotopic to the round $n$-sphere $S^n\subseteq \R^{n+1}$.  We shall refer to $K$ as a (unparametrized) {\em knot} and define a codimension one link $L\subseteq\R^{n+1}$ to be an {\em ordered} sequence $(K_1, \dots, K_r)$ of pairwise disjoint codimension one knots.
From the Schoenflies theorem it follows that such links are classified up to isotopy by their dual tree.

\begin{definition}\label{def:dual}
The {\em dual tree} $t(L)$ of a codimension one link $(K_1, \dots, K_r)=L\subseteq\R^{n+1}$ has $\pi_0(\R^{n+1}\smallsetminus L)$ as set of vertices, with a specified {\em root vertex} for the unbounded component.  For each component $K_i$ of $L$ there is a unique edge $e_i$ in $t(L)$ whose boundary consists of the two connected components of $\R^{n+1}\smallsetminus L$ whose closures meet $K_i$.
 \end{definition}

\begin{proposition}\label{prop:cod1}
For $n\neq 3$, two smooth codimension one links $L, L' \subseteq\R^{n+1}$ are ambiently isotopic if and only if they are link concordant.
Moreover, a generically immersed link concordance induces an isomorphism on dual trees, and if there is an isomorphism $t(L)\cong t(L')$ of rooted, edge-ordered trees, then it is unique and is induced by an ambient isotopy from $L$ to $L'$.
\end{proposition}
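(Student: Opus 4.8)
The plan is to prove the three assertions separately; the only external input is the quoted Schoenflies theorem, which for $n\neq 3$ guarantees that each component $K_i$ of $L$ bounds a smooth ball $B_i\subseteq\R^{n+1}$ (its interior) and is ambiently isotopic to a round sphere. It will turn out that $n\neq 3$ is used only there.

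First, an ambient isotopy $\{\phi_t\}$ with $\phi_0=\id$ and $\phi_1(L)=L'$ yields the embedded --- hence link --- concordance $(x,t)\mapsto(\phi_t(x),t)$. For the combinatorial bookkeeping, note that for a codimension one link a component $v$ of $\R^{n+1}\smallsetminus L$ is determined by the set $S(v):=\{i:v\subseteq B_i\}$, and, because two balls with disjoint boundaries are nested as soon as they intersect, $S(v)$ is an up-set of the nesting poset ($K_i\preceq K_j$ iff $B_i\subseteq B_j$) with a unique minimal element --- except for the unbounded component, where $S(v)=\emptyset$. Hence the vertices of $t(L)$, its root, and its labelled edges $e_i$ are all recovered from the labelled poset $\preceq$, so an isomorphism $t(L)\cong t(L')$ of rooted, edge-ordered trees exists exactly when the labelled nesting posets of $L$ and $L'$ agree; and when it exists it fixes the root and every labelled edge, hence --- each non-root vertex being the far endpoint of a unique labelled edge --- it is the identity, and in particular unique. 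Conversely, assuming the labelled posets agree, I would build the ambient isotopy by induction on $r$, peeling from outside: the $\preceq$-maximal spheres of $L$ and of $L'$ bound disjoint balls, each unknotted by Schoenflies, and since $\R^{n+1}$ is connected one can ambiently isotope $L$ so that its outermost spheres coincide with those of $L'$; inside each such ball the induced sublinks of $L$ and $L'$ have equal labelled posets and fewer components, so the inductive hypothesis in a compactly supported (rel-boundary) form applies, and the resulting isotopies of the disjoint balls glue. Together with the previous sentence this gives the ``if and only if'' once the next point is established, and it gives the final clause of the proposition.

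It remains to show that a generically immersed link concordance $C\colon\bigsqcup_i S^n\times[0,1]\to W:=\R^{n+1}\times[0,1]$ --- in fact any smooth link map --- forces the labelled nesting posets of $L$ and $L'$ to agree, i.e.\ $K_i\subseteq B_j\Leftrightarrow K'_i\subseteq B'_j$ for $i\neq j$. After an isotopy $C$ is a product near each end, so $\Image C_j$ meets $\partial W$ only in $K_j\times\{0\}$ and $K'_j\times\{1\}$, and near each end its small-level slices are parallel copies of $K_j$, resp.\ $K'_j$. Capping $\Image C_j$ off with the balls $B_j\times\{0\}$ and $B'_j\times\{1\}$ produces the image $N_j$ of a map $\tilde g_j\colon S^{n+1}\to W$; since $\pi_{n+1}(W)=\pi_{n+1}(\R^{n+1})=0$ this extends to a map $G_j\colon D^{n+2}\to W$, which near $\partial D^{n+2}=S^{n+1}$ we take to be a product collar along the parts where $\tilde g_j$ embeds onto those end slices. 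Then the mod $2$ degree $\lambda_j:=\deg_2 G_j\colon W\smallsetminus N_j\to\Z/2$ is locally constant, vanishes on the unbounded component, and jumps by exactly $1$ across each end copy of $K_j$, so on $\R^{n+1}\times\{\epsilon\}$ for small $\epsilon>0$ it is the indicator of the interior of $K_j$. As $C$ is a \emph{link} map, $\Image C_i$ is disjoint from $\Image C_j$, and its interior $C_i(S^n\times(0,1))$ --- which lies in $\Int W$ away from the capping balls --- is a connected subset of $W\smallsetminus N_j$; hence $\lambda_j$ is constant there, and evaluating this constant as the time parameter tends to $0$ and to $1$ along the product collar gives $[K_i\subseteq B_j]=[K'_i\subseteq B'_j]$. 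Combined with the classification, and after first smoothing an arbitrary link concordance, this finishes the proof.

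The step I expect to require the most care is the last one: arranging the product collars, and verifying that the mod $2$ degree $\lambda_j$ really is the interior-indicator of $K_j$ near each end and behaves as stated across the possibly self-intersecting membrane $\Image C_j$ (in particular that near the ends the slices of $\Image C_j$ are honestly parallel copies of $K_j$, which uses that the height function on $\Image C_j$ is bounded away from $0$ and $1$ off the end collars). The rest is the formal combinatorics of rooted trees and the cited Schoenflies theorem.
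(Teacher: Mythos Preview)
Your proof is correct; the architecture (Schoenflies plus induction for the isotopy classification, then showing a concordance forces the labelled nesting posets to agree) matches the paper's, but the key step is done differently. The paper proves the sharper statement that for a generically immersed $G$ the inclusions $j,j'$ from $\pi_0(\R^{n+1}\smallsetminus L)$ and $\pi_0(\R^{n+1}\smallsetminus L')$ into $\pi_0(W\smallsetminus\Image G)$ are injective with \emph{equal images}, so $j'^{-1}\circ j$ is the induced tree isomorphism. Equality of images is obtained by removing innermost components inductively and then explicitly building a path in $W\smallsetminus\Image G_i$ from the inside of $K'_i$ to the inside of $K_i$: one pushes off along a nonvanishing normal section of $G_i$ and makes a prescribed turn at each double point to stay disjoint from $G_i$. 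This uses the generic-immersion local model in an essential way; the paper's hourglass example shows $\Image j=\Image j'$ can fail without it. Your capping-and-filling argument bypasses the path construction entirely: the mod~$2$ degree $\lambda_j$ of a filling of $N_j$ is locally constant on $W\smallsetminus N_j$, equals the interior indicator of $K_j$ on near-boundary slices (only the embedded product collar is needed there), and is constant on the connected set $C_i(S^n\times(0,1))\subseteq W\smallsetminus N_j$. This is shorter and, as you note, works for any smooth link concordance---the hourglass is no counterexample, since for a single component your poset condition is vacuous. What you forgo is the paper's structural statement about $\pi_0(W\smallsetminus\Image G)$, but for the proposition as stated this is immaterial, since the rooted edge-ordered tree isomorphism is unique anyway.
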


The notion of a link concordance between unparametrized links is discussed in Definition~\ref{def:unparametrized}. Remark~\ref{rem:oriented edges} contains our result on parametrised links modulo link concordance, classified by $t(L)$ with oriented internal edges. We could not find results on link concordance in codimension one in the literature, so we prove them in Section~\ref{sec:cod1}, where we also discuss {\em parametrised} knots $S^n\hra \R^{n+1}$ and links, and their relation to exotic smooth structures on~$S^{n+1}$.

The classification of {\em unordered} codimension one links  follows from Proposition~\ref{prop:cod1}, by allowing non-order preserving isomorphisms of rooted tree.  Proposition~\ref{prop:cod1} is stronger, namely it implies that there is an ambient self-isotopy of $L$ that permutes its components if and only if the rooted tree $t(L)$ admits a self-isomorphisms with the given permutation of edges. To see this, apply Proposition~\ref{prop:cod1} with $L'$ the same link as $L$ but with a different ordering, so that the self-isomorphism of the rooted tree becomes an edge-order preserving isomorphism $t(L)\cong t(L')$.

\begin{figure}[ht]
  \begin{tikzpicture}
    \draw(0,0) node [yscale=-1] {\includegraphics[width=9cm]{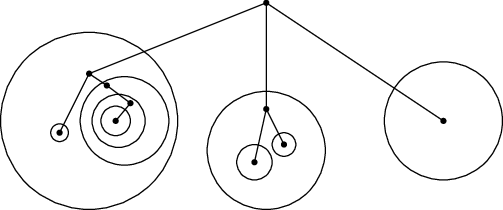}};
    \draw(0.2,-2.1) node  {root};
\end{tikzpicture}
\caption{Any rooted tree arises as dual tree in dimensions $n\geq 1$.}
\label{fig:dual tree}
\end{figure}

Note the subtle distinction in Proposition~\ref{prop:cod1} between the first statement `link concordance implies isotopy', the most surprising part, and the second statement ``a {\em generically immersed} link concordance {\em induces} an isomorphism''. Figure~\ref{fig:need immersion} shows an example where a (non-generically immersed) link concordance does not induce a tree isomorphism (even though the trees are isomorphic).

It follows that approximating immersions are a very useful tool in the study of link homotopy, even in codimension one. Their existence is guaranteed by the following well-known result, which holds in all codimensions at least one, and that we shall apply in the rest of the paper in codimension at least two.

\begin{proposition} \label{prop:immersed}
Assuming $n_i \leq N-1$ for all $i$, a link map $L \colon \sqcup_{i=1}^r S^{n_i} \to S^N$ is link homotopic to an immersion. Similarly, a link concordance $(\sqcup_{i=1}^r S^{n_i}) \times [0,1] \to S^N \times [0,1]$ that restricts to an immersion on the boundary is link homotopic $($rel.\ boundary$)$ to an immersion.
\end{proposition}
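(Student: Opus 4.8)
The plan is to obtain the immersion by a standard general position / transversality argument, applied with care so that link-component disjointness (i.e.\ the link-map condition) is never violated during the homotopy. First I would treat the closed case. Since $n_i \leq N-1$, each restricted map $L_i := L|_{S^{n_i}} \colon S^{n_i} \to S^N$ can be $C^0$-approximated, and in fact homotoped through an arbitrarily small homotopy, to a smooth immersion: this is the immersion case of the Thom jet-transversality package (an immersion of a manifold of dimension $n$ into a manifold of dimension $N$ is the condition that the $1$-jet avoid the codimension-$(N-n)\cdot 1$ subset of non-injective $1$-jets, which is automatic when $N \geq n+1$ since immersions of $S^n$ into $\R^{n+1}$ locally exist and the immersion relation is open and dense in $C^\infty(S^n, S^N)$ for $N \geq n+1$; alternatively invoke Hirsch--Smale immersion theory, whose hypothesis $N > n$ is met). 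Choosing each approximation small enough and using that $L$ was already a link map — so the images $L(S^{n_i})$ are pairwise disjoint compact sets, hence at positive Hausdorff distance — the perturbed images remain pairwise disjoint. Concatenating the straight-line (or geodesic, in $S^N$) homotopies from $L_i$ to its perturbation over all $i$ gives a link homotopy from $L$ to an immersion.

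Next I would do the relative case. Let $C \colon (\sqcup_i S^{n_i}) \times [0,1] \to S^N \times [0,1]$ be a link concordance that is already an immersion on a neighbourhood of the boundary $(\sqcup_i S^{n_i}) \times \{0,1\}$ — this is the hypothesis, after possibly shrinking; here $\dim$ of the domain components is $n_i+1 \leq N$, so again immersions into the $(N+1)$-manifold $S^N \times [0,1]$ are generic. I would apply the relative form of the transversality/immersion approximation theorem: approximate $C$, rel.\ a smaller boundary collar on which it is already immersed, by a map that is an immersion everywhere, via an arbitrarily small homotopy supported away from the boundary. As before, smallness of the approximation together with compactness keeps the images $C_i((\sqcup_i S^{n_i}) \times [0,1])$ pairwise disjoint throughout, so the approximating homotopy is a link homotopy, and it can be taken rel.\ boundary. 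One must also check that the approximation preserves the product-structure condition $C^{-1}(S^N \times \{i\}) = (\sqcup_i S^{n_i}) \times \{i\}$; this is immediate because the homotopy is supported in the interior, away from a collar of the boundary.

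The only genuine subtlety — and the step I expect to require the most care — is ensuring that no \emph{intercomponent} collision is introduced, neither in the closed case nor, more delicately, in the relative case where one is perturbing a single-parameter family. The key point is that the link-map hypothesis gives $\dist\big(L(S^{n_i}), L(S^{n_j})\big) = \delta_{ij} > 0$ for $i \neq j$ (resp.\ for the concordance), and by compactness $\delta := \min_{i \neq j}\delta_{ij} > 0$; choosing every approximation within $\delta/3$ in the $C^0$-norm and the homotopy to move points by at most $\delta/3$ at all times guarantees the images stay $\delta/3$-apart, hence disjoint, at every moment. In particular one should perturb the components \emph{one at a time} (or simultaneously but all within $\delta/3$), which is harmless since self-transversality of each component is what the immersion condition demands and intercomponent transversality is not needed — indeed not wanted — for a link map. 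This packaging of "small transversality homotopy" plus "positive distance between components" is exactly what makes the conclusion land on link homotopy rather than mere homotopy; everything else is the standard genericity of immersions in codimension $\geq 1$.
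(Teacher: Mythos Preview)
Your overall strategy matches the paper's: exploit that link maps form an open set (compact disjoint images sit at positive distance), so a sufficiently $C^0$-small homotopy to an immersion remains a link homotopy. The gap is in the approximation step. Your jet-transversality claim that immersions are dense in $C^\infty(S^n,S^N)$ for all $N\geq n+1$ is incorrect in the range $n+1\leq N<2n$: the locus of non-injective $1$-jets has codimension $N-n+1$, so the jet extension of a generic map from an $n$-dimensional source meets it in a set of dimension $2n-N-1\geq 0$. These are \emph{stable} singularities (e.g.\ Whitney cross-caps for $n=2$, $N=3$) that cannot be perturbed away. Thus immersions are not $C^\infty$-dense in that range, and transversality alone does not furnish the approximating immersion.

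The fix is precisely what the paper supplies. You mention Hirsch--Smale as an alternative, but that theorem does not say every codimension-$\geq 1$ map is close to an immersion; it requires the map to be covered by a bundle monomorphism of tangent bundles, and you do not verify this. The paper checks it explicitly: after perturbing so that $L_i$ misses a point of $S^N$ and hence lands in $\R^N$, the composition
\[
TS^{n_i}\hookrightarrow TS^{n_i}\oplus\varepsilon\;\cong\;S^{n_i}\times\R^{n_i+1}\xrightarrow{(L_i,\iota_i)}\R^N\times\R^N\;\cong\;T\R^N,
\]
with $\iota_i\colon\R^{n_i+1}\hookrightarrow\R^N$ a fixed linear inclusion, is fibrewise injective and covers $L_i$. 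Hirsch's theorem then produces an immersion $C^0$-close to $L_i$, after which your $\delta/3$ argument correctly upgrades closeness to a link homotopy. The relative statement follows from the relative version of Hirsch's theorem in the same way. So your argument becomes correct once the faulty transversality step is replaced by this bundle-monomorphism verification; without it, the proof does not go through for $n_i+1\leq N<2n_i$.
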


\begin{proof}
Note that for compact domains, link maps are open in the space of all continuous maps. Hence a smooth approximation (in the compact-open topology) will stay a link map (and not map onto $S^N$, i.e.\ lie in the parallelisable manifold $\R^N$). Then one shows that $L$ is covered by a monomorphism of tangent bundles, and so by the $h$-principle is thus homotopic to an immersion.
To see that $L$ is covered by a bundle map, fix an inclusion map $\iota_i \colon \R^{n_i+1} \to \R^N$, and let $L_i \colon S^{n_i} \to \R^N$ be the $i$th component of $L$. Then the composition
\[TS^{n_i} \to TS^{n_i} \oplus \varepsilon \cong S^{n_i} \times \R^{n_i +1} \xrightarrow{(L_i, \iota_i)} \R^N \times \R^N \cong  T\R^N\]
covers $L_i$ and is fibrewise injective, as desired. 
By \cite[Thm.5.10]{Hirsch-immersions}, this codimension $\geq 1$ immersion can be chosen arbitrarily close to $L$ and is thus link homotopic to $L$. All  arguments have relative versions.
\end{proof}


\section{Link concordance implies link homotopy for immersions}

The proof of Theorem~\ref{thm:link concordance} consists of a number of steps. In the first step, we use Proposition~\ref{prop:immersed} to improve our link concordance to an immersion.
In the second step, we apply the following main result of this paper that holds for all immersions. It was announced in \cite{Pete_hab} where the strategy of our current proof was outlined but not completed.

\begin{theorem}\label{thm:concordance immersion}
Fix a closed $(n-1)$-manifold $A$ and a compact $(n-1+k)$-manifold $Y$, both smooth.
 Consider a link concordance $G\colon A\times[0,1]\to Y\times[0,1]$ that is also an immersion, such that $G|_{A\times\{0\}}=g_0\times\{0\}$, $G|_{A\times\{1\}}=g_1\times\{1\}$ with $($link$)$ immersions $g_i \colon A\to Y$.

If the codimension $k \geq 2$, then there is a regular link homotopy $G_\tau$ $($rel.\ boundary$)$ with $G_0=G$ and such that $G_1$ is a level-preserving link immersion.   In particular $g_0$ and $g_1$ are regularly link homotopic.

Moreover, if the codimension $k \geq 3$ and $G$ is an embedding, then $G_\tau$ may be chosen to be an embedding for all $\tau \in [0,1]$. In particular $g_0$ and $g_1$ are ambiently isotopic.
\end{theorem}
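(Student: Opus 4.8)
The plan is to run a stratified Morse/Cerf theory argument on the height function $f = \pr_{[0,1]} \circ G \colon A\times[0,1]\to[0,1]$, where the strata of $Y\times[0,1]$ (pulled back via $G$) are indexed by multiplicity of preimages under the generic immersion $G$. First I would perturb $G$ (rel.\ boundary, through regular link homotopy) so that $f$ is Morse on each stratum and all the usual genericity conditions of stratified Morse theory hold; because $G$ is already an immersion and a link map, small perturbations stay link maps, and $k\geq 2$ means the self-intersection strata have codimension $\geq 2$, so we never create intersections between distinct components. The goal is to cancel all critical points of $f$ (on every stratum) against each other, after first rearranging them by index; a height function on $A\times[0,1]$ with no critical points at all and agreeing with the projection near the boundary is, up to isotopy of the source, the projection, which makes $G$ level-preserving.

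The two genuinely new ingredients, relative to the embedded (two-strata) case of Perron--Sharpe--Rourke and Borodzik--Powell, are the rearrangement and cancellation obstructions coming from intersections of ascending and descending \emph{membranes} of critical points of $f$ restricted to the various strata. So the key steps are: (i) set up gradient-like vector fields that are simultaneously adapted to all strata, and establish that their strata-preserving flows behave well (this is the ``Cerf theory for generic immersions'' referred to in the abstract); (ii) show the rearrangement obstruction vanishes in codimension $k\geq 2$, so that critical points can be reordered by index on all strata simultaneously; (iii) in codimension $k\geq 3$, show the cancellation obstruction also vanishes, because ascending and descending membranes have low enough dimension to be pushed off each other generically, and then Whitney-trick / Cerf-cancel all critical pairs; since in the $k\geq 3$ embedded case no self-intersections are present or created, the resulting homotopy $G_\tau$ stays an embedding, recovering Hudson's theorem smoothly; (iv) in codimension $k=2$ the cancellation obstruction need not vanish, so instead I would introduce \emph{finger moves} that trade the cancellation obstruction for finitely many new self-intersection points \emph{within a single component}, after which the critical points can be cancelled; since a finger move on one sheet along an arc in that same component never touches another component, the link homotopy class is preserved, giving a regular \emph{link} homotopy to a level-preserving link immersion.

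I expect the main obstacle to be step (iii)/(iv): controlling the cancellation obstruction. In ordinary Cerf theory, cancelling a pair of critical points of adjacent index requires the ascending manifold of the lower one to meet the descending manifold of the upper one in a single point, with the Whitney trick available to fix the intersection pattern; here one must do this \emph{simultaneously on all strata}, where the relevant objects are the ascending/descending membranes of $f|_{\text{stratum}}$, and these membranes can intersect in ways that obstruct a strata-preserving cancellation. Showing these obstruction classes lie in groups that vanish for dimension reasons when $k\geq 3$, and then, when $k=2$, designing finger moves that are localised to a single component and that provably kill the obstruction without reintroducing it — keeping careful track of the bookkeeping of which intersections are created and on which component — is the technical heart of the argument. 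A secondary obstacle is the genericity/transversality package in step (i): ensuring that a single vector field can be made gradient-like for the Morse function on every stratum at once, and that its flow respects the stratification, so that the whole Cerf-theoretic machine can even be started.
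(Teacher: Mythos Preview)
Your proposal is essentially correct and matches the paper's approach: set up stratified Morse theory for the generic immersion, prove rearrangement works for $k\ge 2$ by dimension counting on membranes, prove cancellation works for $k\ge 3$ by the same mechanism, and for $k=2$ remove the extra ambient trajectories obstructing cancellation by finger moves that only introduce self-intersections within a single component. The paper packages steps (ii)--(iv) as a \emph{Path Lifting Theorem}: given the Cerf path $f_\tau$ on $A\times[0,1]$ from $f_0=\pr_2\circ G$ to the projection, one lifts it to a regular double path $(F_\tau,G_\tau)$ on $Y\times[0,1]$, where the lifts of births, rearrangements, and deaths are exactly the moves you describe.

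One clarification is important and is slightly blurred in your write-up. You do \emph{not} need to cancel critical points ``on every stratum''. The function $f$ lives on the manifold $N=A\times[0,1]$, and its critical points correspond exactly to the depth-$1$ critical points of the ambient height $F$; the immersed Morse condition ensures depth-$\ge 2$ critical points of $F$ are \emph{not} critical for $f$. The key lemma (the paper's ``Crossing Deeper Strata'' result) is that once $F$ has no critical points on the zeroth and first strata, the immersion is already the trace of a regular homotopy, regardless of what happens on deeper strata. This matters because each finger move creates an $(n-2)$-sphere of new double points, hence new depth-$2$ critical points of $F$; if you had to cancel those too you would be in an infinite regress. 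So the correct target is: cancel all critical points of $f$ on $N$, accept that $F$ may acquire depth-$\ge 2$ critical points along the way, and then invoke the crossing lemma. Your final sentence ``which makes $G$ level-preserving'' also hides a small step: during the process the ambient function $F_\tau$ drifts away from $\pr_2$, and one needs a short stability argument (the paper's Proposition~\ref{prop:crossingforhomotopy}) to straighten $F_1$ back to the projection and conclude.
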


\begin{remark}\label{rem:h-principle} Let $V_n(n+k)$ be the Stiefel manifold of orthonormal $n$-frames in $\R^{n+k}$. It follows from the $h$-principle and the fibre bundle (with projection given by the last vector)
\[
V_{n-1}(n-1+k) \ra V_n(n+k) \ra S^{n-1+k}
\]
that for $k\geq 2$, a concordance immersion is regularly homotopic (rel.\ boundary) to a level preserving immersion, i.e.\ to the track of a regular homotopy. However, we see no argument along these lines that would imply that the outcome is a link homotopy, i.e.\ that disjoint components would be kept disjoint during the regular homotopy that comes from the $h$-principle. To obtain the essential output in Theorem~\ref{thm:concordance immersion}, namely a link homotopy, we have to add more geometric techniques.
\end{remark}

\begin{figure}[h]
  \begin{tikzpicture}
    \draw(-6,0) node {\includegraphics[width=2.5cm]{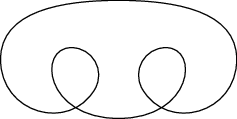}};
    \draw(-2.5,0) node {\includegraphics[width=2.5cm]{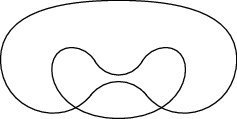}};
    \draw(1,0) node {\includegraphics[width=2.5cm]{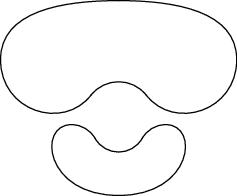}};
    \draw(4.5,0) ellipse (1.3cm and 0.7cm);
   \end{tikzpicture}
\caption{Immersed concordance does not imply regular homotopy in codimension one. Going from left to right: the singular link $S^1 \looparrowright \R^2$ to the left is transformed
by a saddle move to obtain a singular $2$-component link. Then, a Whitney move transforms the singular link to a $2$-component unlink. Finally,
a minimum cancels one of the components. The trace of these moves provides a singular concordance between the singular link on the left and
the unknot on the right.  The winding numbers of the immersions are different so they are not regularly homotopic. }
\label{fig:cod1}
\end{figure}

Figures~\ref{fig:cod1} and~\ref{fig:trefoil} imply that our result  is the best possible in various ways.
The example in Figure~\ref{fig:cod1} shows that Theorem~\ref{thm:concordance immersion} is not true in codimension one, even with only one component. We use that the winding number in $\Z$ is a regular homotopy invariant of immersions $S^n \imra \R^{n+1}$, which for $n=1$ is the Whitney-Graustein Theorem~\cite{Whitney-closed-curves}. The example shows that the winding number can change along an immersed concordance (even though it is invariant modulo~2), and hence immmersed concordance does not imply regular homotopy in general.
We do not know whether link concordance implies (non-regular) link homotopy for codimension one (non-spherical) link maps.

Figure~\ref{fig:trefoil} shows that in codimension two there exist concordances that are not isotopic to the trace of any isotopy. Even though the minimum and maximum of the trefoil knot cancel in the domain 1-manifold, they cannot be cancelled ambiently, otherwise the trefoil knot would be trivial (as obstructed for example by the Alexander polynomial or Arf invariant).
In this case the points at the top and bottom are still isotopic, via a different, simpler isotopy.
However one dimension higher there are codimension two examples showing that concordance does not imply isotopy.
In Figure~\ref{fig:stevedore} below we note that the stevedore knot is concordant to the unknot, but is not isotopic to it. Again, the last claim can be seen using the Alexander polynomial.

\begin{figure}[h]
 \includegraphics[height=4cm]{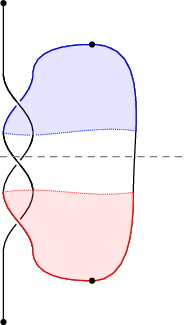}
 \caption{A nontrivial cancellation obstruction in codimension two. The intersection of membranes with the planar level presented as a horizontal dashed line is shown in Figure~\ref{fig:trefoil in the plane}.}
\label{fig:trefoil}
\end{figure}

Theorem~\ref{thm:concordance immersion} nevertheless has the following consequence in dimension three.
\begin{proposition}\label{prop:braids}
Every classical link $\sqcup^r S^1\hra S^3$ is link
homotopic to the closure of a braid with $r$ strands.
\end{proposition}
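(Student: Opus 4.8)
The plan is to combine Alexander's braiding theorem with Theorem~\ref{thm:concordance immersion}, using the latter in its codimension-two form ($A=\sqcup^r S^1$, $Y=S^3$, $k=2$). Concretely, it suffices to produce an immersed link concordance from $L$ to the closure of a pure braid on $r$ strands: once we have such a concordance, Theorem~\ref{thm:concordance immersion} upgrades it to a regular link homotopy rel boundary, which gives the conclusion. So the whole problem is reduced to a construction of immersed link concordances between closed braids.

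First I would apply Alexander's theorem to isotope $L$ to the closure $\widehat{\beta}$ of a braid $\beta\in B_n$ with $n\ge r$; since $\widehat{\beta}$ has $r$ components, the underlying permutation $\pi(\beta)$ has exactly $r$ cycles, one strand-block per component. I would then induct on the number $n-r$ of surplus strands. If $n=r$ the braid is pure and we are done. If $n>r$, some component $c$ uses $\ell\ge 2$ strands; after conjugating $\beta$ (an ambient isotopy of $\widehat{\beta}$) I may assume strands $n-1$ and $n$ both belong to $c$ and are adjacent along $c$, with strand $n$ the outermost, so that strand $n$ is the one we aim to remove by a Markov destabilization. The inductive step is to build an immersed link concordance from $\widehat{\beta}$ to $\widehat{\beta'}$ for some $\beta'\in B_{n-1}$ whose permutation still has $r$ cycles; stacking these concordances and smoothing produces the required immersed link concordance from $\widehat{\beta}$ to the closure of a pure braid $B\in B_r$.

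To build the concordance in the inductive step I would argue geometrically with the closed braid, noting which moves are legal: a self-crossing change of the single component $c$ is realized by a concordance with one transverse double point lying on $c$, hence by an immersed link concordance (distinct components stay disjoint), while braid isotopies, conjugations, and Markov destabilizations are realized by ambient isotopies. Using these I want to drive all of strand $n$'s crossings with the \emph{other} components onto strand $n-1$ — concentrating the linking (and higher-order) interaction of $c$ with each other component onto the arcs $1,\dots,n-1$ of $c$ and away from strand $n$ — after which strand $n$ meets only strand $n-1$; since those are two strands of the same component $c$, a further sequence of self-crossing changes reduces that intersection to a single $\sigma_{n-1}^{\pm 1}$. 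The braid is then conjugate to $\alpha\sigma_{n-1}^{\pm1}$ with $\alpha\in B_{n-1}$, and a destabilization removes strand $n$ and completes the inductive step.

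The main obstacle is precisely this transfer-and-destabilize step: one must verify that the interaction of component $c$ with the remaining components, which is spread across all $\ell$ strands of $c$, can genuinely be pushed off the single outermost strand using only moves legal for a \emph{link} concordance (self-crossing changes of $c$ and ambient isotopies), and that the intermediate sliding/Whitney moves needed to cancel the leftover $\pm$ pairs of $c$-versus-other-component crossings can be performed disjointly from the rest of the link. Stated combinatorially, the crux is: every $\beta\in B_n$ whose permutation has more than $r$ cycles is, up to link homotopy of its closure, Markov-destabilizable to a braid in $B_{n-1}$ with the same number of cycles, so that iteration lands in the pure braid group $B_r$. Everything else is routine; as a sanity check, the same statement also follows from the known fact that link-homotopy classes of $r$-component string links form a group generated by (the images of) pure braids, but I would prefer the self-contained argument above since it uses only Alexander's theorem and Theorem~\ref{thm:concordance immersion}.
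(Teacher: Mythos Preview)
Your approach differs substantially from the paper's, and the gap you flag is genuine.

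A minor point first: invoking Theorem~\ref{thm:concordance immersion} is unnecessary. A self-crossing change \emph{is} already a link homotopy---at the singular instant the map has a double point on one component but distinct components remain disjoint, so the homotopy is through link maps throughout. Once you reach a closed pure $r$-braid by isotopies and self-crossing changes you are finished; no concordance-to-homotopy upgrade is needed.

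The substantive issue is the transfer-and-destabilize step. You must show that all crossings of strand~$n$ with strands of \emph{other} components can be driven onto strand~$n-1$ using only isotopies and self-crossings of~$c$, leaving strand~$n$ in a position where Markov destabilization applies. You do not prove this, and it is not routine: iterated, it amounts to the statement that the link-homotopy braid index equals~$r$, which is essentially the proposition itself once Alexander's theorem is granted. Your sanity check via the Habegger--Lin structure of string links modulo link homotopy is exactly the result to which the paper is offering a geometric alternative, so citing it would be circular in spirit.

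The paper's argument avoids braid combinatorics and Theorem~\ref{thm:concordance immersion} entirely. It opens the link along a disc meeting each component once, obtaining a string link $\sqcup^r I\hookrightarrow D^2\times I$ on the \emph{correct} number of strands from the outset; the strands are merely non-monotone. After making the height projection Morse and rearranging so that all minima lie below all maxima, it cancels an adjacent min/max pair sharing an endpoint in the intermediate level. The obstruction to embedded cancellation is a finite set of interior intersections between the two membrane arcs, and the paper simply pushes through them: since both critical points lie on one component, the resulting crossing changes are self-crossings, hence a link homotopy. This is the one-dimensional finger move, and illustrating it is the purpose of including the proposition.
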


This result was first proven in \cite{Habegger-Lin:1990-1} by an inductive procedure
using Milnor's classification of almost trivial links in terms of a certain
quotient of the fundamental group \cite{Milnor:1954-1}. Our proof is an alternative direct
geometric argument.
We prove Proposition~\ref{prop:braids} in Section~\ref{sec:braid}.
It may help the reader of the proof of Theorem~\ref{thm:concordance} to first understand the low dimensional case, which can be more easily visualised.

For the proof of Theorem~\ref{thm:concordance immersion} we develop  Morse theory on stratifications that come from generic immersions, including gradient-like vector fields, their flows and membranes. This  technical core of our paper is described in the next section.

\section{Morse immersions -- an outline of the proof of Theorem~\ref{thm:concordance immersion}}\label{sub:immersed_morse_intro}

Let $N$ and $\O$ be smooth manifolds, $F \colon \O \to \R$ a Morse function  and $G\colon N\imra\O$ a generic immersion.
A \emph{Morse immersion} is a pair $(F,G)$ that satisfies suitable compatibility conditions between $F$ and $G$, given in detail in Definition~\ref{def:immersed_morse_function}. Roughly speaking, we require that the restriction of $F$ to each $G$-multiple point stratum
\[
\O[d] := \{x \in \O \, \colon \,  |G^{-1}(x)| = d\}, \quad d\geq 0
 \]
is again a Morse function. In case the generic immersion $G$ is fixed throughout some argument, we also refer to $F$ as an {\em immersed Morse function} (for $G$).

\begin{figure}[h]
 \includegraphics[width=5cm]{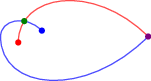}
\caption{Intersecting membranes in a planar level for the minimum and maximum in Figure~\ref{fig:trefoil}.}
\label{fig:trefoil in the plane}
\end{figure}

As with classical Morse theory, the key is to understand the behaviour near critical points, how critical points confer topological information, and how Morse immersions can be manipulated in order to cancel or rearrange critical points. This sometimes means changing $F$ alone, sometimes $G$ alone and in some cases, both maps have to be changed.

The main tool for studying Morse immersions is a suitable generalisation of a gradient-like vector field, a so-called gradient-like immersed vector field, in short
a \emph{grim} vector field. Roughly, a vector field $\xi$ on $\O$ is {\em grim} for a  Morse immersion $(F,G)$ if $\partial_\xi F\ge 0$, with equality
only at critical points of a restriction $F|_{\O[d]}$, and $\xi$ is tangent to all the strata $\Omega[d]$. A more precise definition involves local coordinates for the pair $(F,\xi)$
near critical points of the restrictions $F|_{\O[d]}$; see Definition~\ref{def:grim}. This characterisation was introduced in \cite{Pete_hab} and is a direct generalisation of
the definition of an embedded gradient like vector field due to Sharpe \cite{Sha}; see also \cite{BP}.

Grim vector fields are not linear near critical points of $F|_{\O[d]}$ for $d>0$. Therefore the notions of stable (descending)
and unstable (ascending) manifolds of a critical point require revision. This leads to the construction of descending and ascending \emph{membranes} in Section~\ref{sec:hypermembranes}, which are manifolds with corners, described via a product formalism in \cite{Pete_hab} but not here.
These membranes were used in \cite{Bartels-Teichner-1999-1} if $\dim(Q)=5$ and generalise those studied by Rourke \cite{Ro}, Perron \cite{Pe}, Sharpe \cite{Sha}, and the first two authors \cite{BP} in the context of embedded manifolds.

One of the key properties of  membranes is as follows. Suppose $p\in \O[d]$ is a critical point of $F|_{\O[d]}$. The stable and unstable manifolds of $\xi$ for $p$ can be defined on $\O[d]$.
The ascending and descending membranes have strictly higher dimension and correspond to trajectories limiting to $p$ as $t\to \mp \infty$. The membranes associated with a point on $\O[d]$ do not lie entirely within $\O[d]$ unless $d=0$. The intersections of membranes of different critical points away from $\O[d]$ give rise to an obstruction
to rearrangement or cancellation that is not seen in classical Morse theory (see \cite{BP} for a detailed discussion of the embedded case).
From this it follows that it might not be possible to rearrange (respectively cancel) two critical points $p,q$ of $F|_{\O[d]}$, that could be rearranged (respectively cancelled) in the classical setting. This obstruction gives a Morse--theoretical explanation of the existence of non-ambiently isotopic embeddings and immersions. For example, Figure~\ref{fig:trefoil in the plane} explains from this point of view why the trefoil knot is nontrivial.

To be more precise, a careful dimension counting argument combined with a generalised Morse--Smale condition shows that these extra obstructions for rearrangements of critical points are void if the codimension is at least two. In codimension at least three, there are also no obstructions for cancelling a pair of immersed critical points that would be cancellable in the \ non-immersed case. In the present paper, our primary focus is on codimension two, where rearrangement is possible, but cancellation
can be obstructed. 

The main result of this paper implements the idea that these cancellation obstructions can be removed at the expense of adding self-intersections into the components of the domain of $G \colon N\imra \O$.
Suppose the codimension is $2$ and $p,q$ are critical points of $F|_{\O[1]}$. Assume they can be cancelled in this stratum, i.e.\ there is precisely one trajectory on $\O[1]$
of a Morse--Smale gradient-like vector field $\xi$ that connects $p$ and $q$. As mentioned above, there might be other trajectories of $\xi$  in $\O$ connecting $p$ and $q$. A dimension counting argument and compactness shows that there can be only finitely many such trajectories.
Our main technical tool is the introduction of (a sequence of) \emph{finger moves} that decrease the number of trajectories from $p$ to $q$ that stay outside $\O[1]$. The
price to pay is the introduction of an $(n-2)$--dimensional sphere of self-intersections in a component of $N$ for each finger move. The Morse function $F$ will have some critical
points on this sphere, so we introduced  new critical points on $\O[2]$.  After a finite number of such moves, the original critical points $p$ and $q$ can be cancelled.  An example of this is illustrated in Figure~\ref{fig:stevedore}.

\begin{figure}[h]
  \input{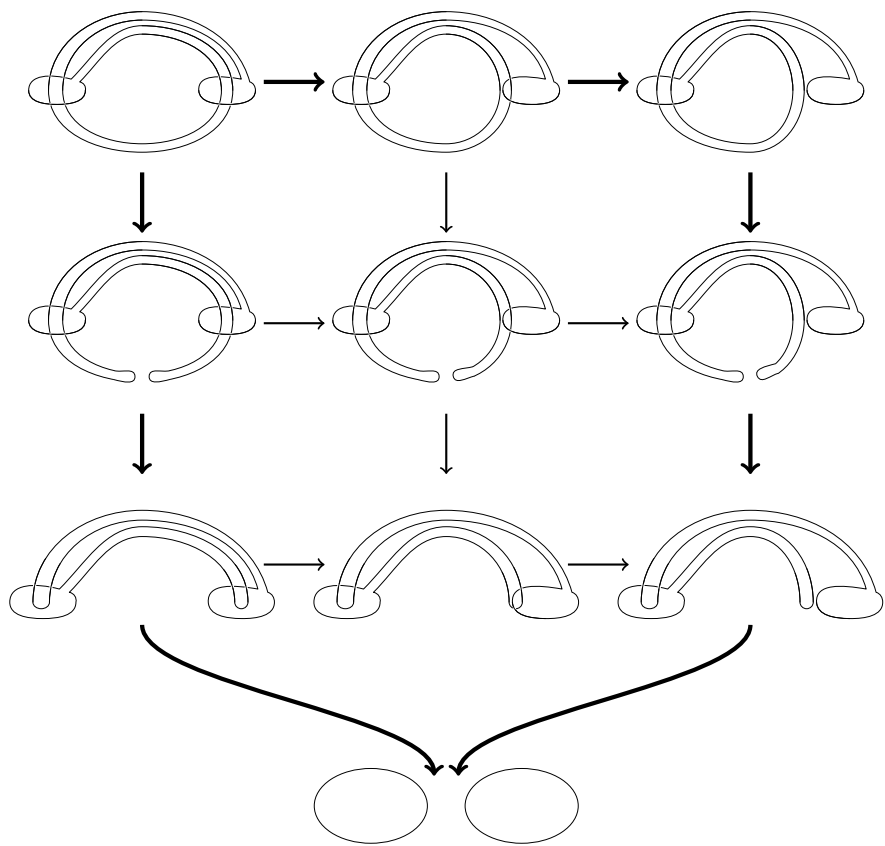}
\caption{A an embedded concordance and a link homotopy (rel.\ boundary). The arrows from the top-left corner to the bottom represent an embedded concordance
from the stevedore knot to the unknot (the obvious deletion of a circle is not presented). The arrows from top-left to the top-right, and then down, represent an immersed concordance with one minimum and one saddle point.
The middle route downwards represents the midpoint of a regular homotopy between these two concordances.
Going only along the top row from the left to the right gives an immersed concordance from the stevedore knot to the unknot (the knot in the top-right corner is already an unknot).
In the  right-then-down route, the saddle point and the minimum can be cancelled.
A detailed explanation is given in Section~\ref{sec:61}.}\label{fig:stevedore}
\end{figure}



Now let us describe an outline of the entire proof.
After a perturbing $G \colon A \times [0,1] \imra Y \times [0,1] $ we may assume that the composition
\[
f_0 \colon A \times [0,1] \xrightarrow{G} Y \times [0,1] \xrightarrow{\pr_2} [0,1]
\]
is a Morse function.  It is connected to the projection $f_1 := \pr_2 \colon A \times [0,1] \to [0,1]$ by a path $f_\tau \colon A \times [0,1] \to [0,1]$ of \emph{generalised Morse functions}, namely functions that are either Morse or have isolated birth or death points.  Moreover, this path
can be chosen to be independent of $\tau$ in a neighbourhood of $A\times\{0,1\}$.

The question is whether we can {\em lift} such a path $f_\tau$ to a regular homotopy $G_\tau$ of $G$.
More precisely, we seek a regular homotopy $G_\tau$ and a homotopy
$\pr_2 \circ G_\tau \simeq f_\tau$ through generic paths of generalised Morse functions that agree with $f_0$ and $f_1$ at $\tau=0,1$ respectively.  Here generic means that there are only finitely many births and deaths, and that they occur at different $\tau$ values.   If such a regular homotopy $G_\tau$ exists, then at the end of the regular homotopy there are no critical points, and so $G_1$ is level preserving, and is therefore the trace of a regular homotopy.

More generally, suppose $(F_0, G_0)$ is a Morse immersion with $G_0 \colon N\looparrowright\O$ a generic immersion and $F_0\colon\O\to \R$ a Morse function. Consider $f_0:=F_0 \circ G_0$ and assume that
 $f_\tau$ is a smooth path of functions on~$N$.
 Assume that $f_\tau$ is Morse for all but finitely many
parameters of $\tau$, where a single birth or death occurs.  We aim to {\em lift} the path of functions $f_\tau$ to a path $(F_\tau, G_\tau)$ of Morse immersions in the sense that $F_\tau \circ G_\tau$  and $f_\tau$  are homotopic (relative their endpoints) through generic paths of generalised Morse functions on~$N$.
The obstructions to lifting only arise if:
\begin{itemize}
  \item there is a $\tau$ with a death in $f_\tau$, that is a pair of critical points $p,q$ on $N$ is cancelled, but there is an obstruction (from intersecting membranes) for cancellation of the pair $p,q$ in $\O$, or
    \item there are two critical points $p,q$ on $N$ that get rearranged, see Figure~\ref{fig:Hopf}. In general, such rearrangement need not be possible in $\O$ but we show that in codimension at least two it is possible, provided the critical point of higher index of $f_\tau$ goes below the critical point of lower index.
  \end{itemize}

We solve the first problem in codimension at least two by performing a regular homotopy that introduces self-intersections in the components of $N$, but that removes the cancellation obstruction.
To see that a regular homotopy is necessary in general to obtain a level-preserving map $G_1$, and that this cannot be done via embeddings, recall that there are many pairs of knots $S^n \subseteq S^{n+2}$ that are concordant but not isotopic.

As an example for the reordering obstruction, let $G \colon N=S^1 \sqcup S^1 \hra \O =\R^3$ be the Hopf link.
Let $f_\tau$ be a path of functions with $f_0 = \pr_3 \circ g$ that pulls the circles apart as in Figure~\ref{fig:Hopf}, i.e.\  there is a $c \in \R$ such that $f_1$ maps one circle above $c$ and the other circle below $c$.  Note that $f_\tau$ moves a critical point of index $1$ below a critical point of index $0$ and that this can be realized by a regular homotopy $G_\tau$ in the sense that $f_\tau = \pr_3\circ G_\tau$.

However, the obvious $G_\tau$ is not a link homotopy because for some $\tau$, the image $G_\tau(N)$ is connected. In fact, there is no way we can lift the path~$f_\tau$ by a link homotopy $G_\tau$ because the Hopf link has nontrivial linking number.

\begin{figure}[h]\label{fig:Hopf}
  \input{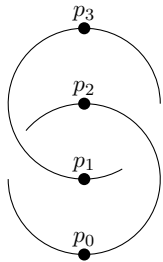}
\caption{A nontrivial reordering obstruction. The critical point $p_2$ can be abstractly moved below $p_1$, but it cannot be moved in the embedded case, without creating self-intersections or adding extra critical points.}
\end{figure}

One of the main technical results in this paper is the following Path Lifting Theorem, which gives a criterion for such lifts of $f_\tau$ to exist.
We use the notation $\bd N$ for the boundary of a manifold $N$.
We fix compact manifolds $N$ and $\O$ of dimensions $n$ and $n+k$ respectively.
\begin{theorem}[Path Lifting]\label{thm:path_lifting-intro}
 Let $(F_0\colon\O\to\R, G_0\colon N\imra\O)$ be a Morse immersion such that $G_0^{-1} \bd(\O) = \bd N$ and $F_0$ is constant on connected components of $\bd \O$.
 Starting with $f_0:=F_0\circ G_0$, let $f_\tau \colon N\to \R$ be a generic path of functions that contains no rearrangements where  a critical point of higher index goes below a critical point of a lower index.

If the codimension $k\ge 2$ then $f_\tau$ lifts, i.e.\ there exist:
  \begin{itemize}
    \item a regular homotopy $G_\tau\colon N\to\O$ $($rel.\ boundary$)$ such that $G_\tau$ is a generic immersion for all but finitely many $\tau$.
    Moreover, if $G_0$ is a link map then so is $G_\tau$ for all $\tau$.
    \item a generic path of Morse functions $F_\tau\colon\O\to\R$ with $(F_1,G_1)$ a Morse immersion and such that $F_\tau\circ G_\tau$ and $f_\tau$ are homotopic in the space of generic paths of functions.
      \end{itemize}

  Furthermore, if $G_0$ is an embedding, then $G_\tau$ can be chosen to be an ambient isotopy if $k\ge 3$, or $k=2$ and the path $f_\tau$ has no deaths.
\end{theorem}
\begin{remark*}
  In the statement of the Path Lifting Theorem~\ref{thm:path_lifting}, we will make  the assumptions on \emph{genericity} of the path $f_\tau$ precise.
\end{remark*}

From the Path Lifting Theorem, there is a direct way to deduce the Link Concordance Implies Link Homotopy Theorem~\ref{thm:link concordance}.
Apply the Path Lifting Theorem to $N= A \times [0,1], \Omega = Y \times [0,1]$ and let $f_\tau$ be a generic path connecting the given $\pr_2 \circ G_0$ to $\pr_2$. Then we get a path of Morse functions $F_\tau$ with $F_0=\pr_2$ and hence no $F_\tau$ has a critical point on the top dimensional stratum $\O[0]$.

We construct the regular homotopy by identifying the level sets of the composition $F_1\circ G_1$. Each such level set is shown to be diffeomorphic to $A$; the restriction of $G_1$ to these level sets yields a regular homotopy. We refer to Proposition~\ref{prop:crossingforhomotopy}
for more details. 

\section{A guide through the paper}

The paper is split into six parts. 
After the introduction (Part~\ref{part:intro}), in Part~\ref{part:immersed_morse_thy} we develop Immersed Morse Theory. We give the definition
of an immersed Morse function and a gradient-like immersed vector field (called a \emph{grim vector field} for short). We prove many technical results about the existence
of grim vector fields. If $G\colon N\looparrowright\O$ is a generic immersion, we show how to pass from a grim vector field on $\O$ to a gradient-like vector field on $N$ and vice versa. These technical constructions are important while proving results on path lifting.
A key term from Part~\ref{part:immersed_morse_thy} is that of \emph{membranes} (Section~\ref{sec:hypermembranes}), which are analogues of stable and unstable manifolds from classical Morse theory.

Part~\ref{part:just_paths} deals with families of Morse functions. Theorems on rearrangement and cancellation of critical points are proved for
immersed manifolds and interpreted as creating a one parameter family
of functions such that rearrangement, respectively cancellation, occurs along the family.  We also recall the rudiments of classical Cerf theory. Our focus is on one-parameter phenomena, that is: birth, rearrangement, and death. We recast Cerf's results on uniqueness (up to homotopy) of paths of rearrangements in the language of gradient vector fields.

In Part~\ref{part:pathlifting}, we introduce the concept of \emph{path lifting}. This means that we are given an immersion $G\colon N\looparrowright\O$ and a Morse function $F_0\colon\O\to\R$. We suppose that there is a path of functions $f_\tau\colon N\to\R$, $\tau\in[0,1]$, such that $F_0\circ G=f_0$. We want to find a family of functions $F_\tau\colon\O\to\R$ and a regular homotopy $G_\tau \colon N \to \O$ such that $f_\tau=F_\tau\circ G_\tau$ and such that $F_\tau$ has no births away from $N$. The Path Lifting Theorem~\ref{thm:path_lifting} gives precise criteria under which such a lift can be found.

The proof of the Path Lifting Theorem in codimension two involves the finger move, which is stated at the end of Part~\ref{part:pathlifting} but proven in Part~\ref{part:finger}.
The finger move is one of the central notions of the paper. It allows us to trade intersections of membranes for extra double points of $G(N)$.  The whole of Part~\ref{part:finger} comprises the construction together with the proof that it has the desired effect.

Part~\ref{part:examples} contains examples and applications. The first example is the proof of Proposition~\ref{prop:braids}. The second example gives the details of the example in Figure~\ref{fig:stevedore}. It describes a finger move on a concordance in $S^3 \times [0,1]$ bounded by the stevedore knot $6_1$ and the unknot, converting that cylinder $S^1 \times [0,1] \subseteq S^3 \times [0,1]$, via a regular homotopy, to the trace of a regular homotopy between $6_1$ and the unknot.

Both examples serve as an introduction to the finger move in low dimensional situations where it can be more easily visualised. The reader who wishes to skip the technical construction of the finger move can read this part first. The prerequisite knowledge is the notion of a membrane and the statement of the Cancellation Theorem~\ref{thm:grimcanc}.

We prove Proposition~\ref{prop:cod1} on codimension one links in Section~\ref{sec:cod1}. Section~\ref{section:regular-homotopies-surfaces}
shows applications of our results to regular homotopies of surfaces.

\begin{remark*}
  We take the opportunity to point out that the proof of Vector Field Integration Lemma in \cite{BP} contains a mistake. The function constructed in \cite[Lemma 3.10]{BP}
is smooth on trajectories, but it need not be continuous in general: it can have jumps when a trajectory hits a neighbourhood of a critical point, while a nearby trajectory
does not hit this neighbourhood.
We give a rigorous proof of that result in this paper in a more general setting, that is for immersed manifolds.
\end{remark*}

\subsection{Assumptions and conventions}
Throughout, all manifolds will be compact and smooth, and all immersions and embeddings will be smooth.  Manifolds can be orientable or nonorientable, and need not be connected.

The notion of a `boundary' can refer to the boundary of a manifold, or to the point-set boundary of a subset $U \subseteq \O$.  In the latter case we write $\partial U$ for  $\ol{U} \sm \mathring{U}$, the closure minus the interior.  To avoid potential confusion 
we shall use  $\bd M$ when we mean the manifold boundary of a manifold $M$.  The notation $\bd M$ is somewhat unusual, but we prefer to avoid ambiguity.

Whenever a manifold has nonempty boundary, we will assume that the Morse function $F$ is such that for each connected component $\bd_i \O$ of the boundary $\bd \O$, there is an $a_i \in \R$ such that $F(\bd_i \O) = \{a_i\}$, i.e.\ $F$ is constant on each connected component of the boundary of $\O$.  For product manifolds $\O=Y \times I$, we will usually assume that $Y$ has no boundary, $F(Y \times \{a\}) = a$ for $a=0,1$ and $F(Y\times[0,1])=[0,1]$.

\subsubsection*{Acknowledgements}

The first and second authors thank the Max Planck Institute for Mathematics in Bonn, the University of Regensburg, and the Banach Center in Warsaw, for hospitality during the time that part of this paper was written.
MB was supported by OPUS 2019/B/35/ST1/01120 grant.
MP was partially supported by EPSRC New Investigator grant EP/T028335/2 and EPSRC New Horizons grant EP/V04821X/2.

\part{Morse immersions}\label{part:immersed_morse_thy}


We are now going to develop  {\em Morse immersions}, i.e.\ stratified Morse theory on the multiple-point sets of a generic immersion.
The key notion is Definition~\ref{def:immersed_morse_function}, a specialisation of the more general notion of a stratified Morse function; see Definition~\ref{def:Morse_function}, and compare~\cite{GM}.

The second definition, and actually the most important one, is that of a gradient-like immersed Morse vector field, or \emph{grim vector field} for short.
We study the ascending and descending submanifolds of critical points, which following Perron~\cite{Pe} and Sharpe~\cite{Sha} we call \emph{membranes}, as in the embedded case \cite{BP}.

The results on grim vector fields that we prove are mostly technical, but they are needed in the following parts of the paper. First, we
prove existence results, which, given the specific local form of a grim vector field, are not completely trivial. A related result
extends a suitable vector field from a submanifold to a grim vector field.

Grim vector fields are used to study the change of level sets of an immersed Morse function. In particular, we show that if a Morse function
has no critical points on the zeroth and first stratum, then it induces regular homotopy between level sets. This observation is used in the proof that immersed link concordance implies regular link homotopy.

\section{Stratified manifolds}\label{sec:stratified_manifolds}

The reader acquainted with work of Goresky and McPherson might ask about the relation of our immersed Morse theory with the stratified
Morse theory developed in \cite{GM}. The aim of this section is to review their approach and show how it relates to ours. In short, Goresky--McPherson develop Morse theory in much broader context, but they do not spend much time studying trajectories of vector fields on general stratified spaces. Their motivations were somewhat different to ours.

Recall that a \emph{stratification} $\Str$ of a topological space $\O$ is a set partition of $\O$ into smooth manifolds $\{\O[d]\}_{d \in \mathcal{I}}$, called the {\em strata}.
In the present paper the indexing set $\mathcal{I}$ will always be a subset of the nonnegative integers $\mathbb{N}_0$. We will assume the following two conditions.
\begin{enumerate}[label=(S-\arabic*)]
  \item \emph{Local finiteness:} every point in $\O$ has a neighbourhood that meets only finitely many strata.\label{item:S1}
	\item \emph{Local triviality:} for every $p \in \O[d]$, there is an open set $U \ni p$ in $\O$ with a stratum preserving diffeomorphism
	  $\varphi \colon U \toiso S \times T$ where $S$ is an open set in $\R^k$ for some $k$, trivially stratified, and $T$ is a stratified manifold with a point stratum $\{q\} \subseteq T$, and $U \cap \O[d]$ is identified with $S \times \{q\}$ under $\varphi$.\label{item:S2}
	\end{enumerate}

The definition of a stratification might be too general for various results to hold. Often one imposes regularity conditions on the stratification, which is easiest to do when $\O$ is a smooth manifold and the strata $\O[d]$ are smooth submanifolds. We will assume the following regularity conditions.

\begin{definition}\label{def:Whitney}
A stratification of $\O$ satisfies \emph{Whitney's conditions} if for every pair of strata, represented locally by disjoint submanifolds $X,Y \subseteq \O$, the following hold.

\begin{enumerate}[label=(W-\arabic*)]
  \item  Whenever a sequence of points $x_1, x_2,\dots\in X$ converges in $\O$ to a point $y\in Y$, and the sequence of tangent planes $T_{x_m}X$ converges to a plane $T \subseteq T_y\O$, then~$T$ contains the tangent plane to $Y$ at $y$, i.e.\ $T \supseteq T_y Y$. \label{item:W1}
  \item For each sequence $x_1, x_2,\dots$ of points in $X$ and each sequence $y_1, y_2,\dots$ of points in~$Y$, both converging to the same point~$y$ in $Y$, such that the sequence of secant lines $L_m$ between $x_m$ and $y_m$ converges to a line~$L$, and the sequence of tangent planes $T_{x_m}X$ converges to a plane $T$, we have that $L \subseteq T$.\label{item:W2}
\end{enumerate}
The limits of secant lines and of tangent planes are taken in the topology on the Grassmannian bundle of $T\O$ associated with $1$-dimensional (respectively $(\dim X)$-dimensional) subspaces. The Grassmannian space at each fibre inherits a topology as a quotient of the space of $\dim \O \times 1$ (respectively $\dim \O \times \dim S'$) matrices with rank 1 (respectively $\dim X$).
\end{definition}

Given a generic immersion $G\colon N\looparrowright \O$, it defines a stratification of $\O$ by declaring
\[\O[d] := \{w \in \O \, \colon \, |G^{-1}(w)| = d\}, \quad d\geq 0.\]

Another important example of a stratified manifold is given by manifolds-with-corners. Recall that $N$ is an $n$-dimensional manifold-with-corners if~$N$
is a second countable Hausdorff topological space locally modelled on $\R^{n-d}\times\R_{\ge 0}^d$, for some $d \geq 0$ that depends on the point in~$N$.
We define a stratification of $N$ by declaring that $w\in N[d]$ if a neighbourhood of $w$ in $N[d]$ is diffeomorphic
to an open subset of $\R^{n-d}\times\R_{\ge 0}^d$, via a diffeomorphism sending~$w$ to~$0$.
This type of stratification occurs in this article when discussing ascending and descending membranes of vector fields.

The following result will be used in Section~\ref{sec:morse-smale-condn}  to guarantee the analogue of the Morse-Smale condition in immersed Morse theory. Its proof can be found in \cite[Section I.1.3]{GM}.

\begin{lemma}[Stratified General Position]\label{disjoint}
  Let $\Str$ be a locally trivial stratification  of a closed manifold $\O$ satisfying Whitney's conditions, i.e.\  \ref{item:S1},\ref{item:S2},\ref{item:W1},\ref{item:W2} all hold for $\Str$.  Let $Y_1,Y_2 \subseteq \O$. Assume that for every stratum $S\in\Str$ the intersections $Y_i\cap S$ are compact submanifolds of $S$.

Then there exists a stratum preserving diffeomorphism $h$ of $\O$, stratum preserving isotopic to the identity, such that $h(Y_1\cap S)$ intersects $Y_2\cap S$ transversely for all strata $S\in\Str$.
In particular, if
\[
 \dim (Y_1\cap S) + \dim (Y_2\cap S) < \dim S
\]
then $h(Y_1\cap S)$ is disjoint from $Y_2\cap S$.
\end{lemma}

Having defined stratifications we can now  introduce stratified Morse functions.
The definition makes sense for any stratification, and so we give it in this generality, however studying Morse functions on stratifications violating Whitney's conditions is much more involved, and we will not do so.

As usual, given a smooth function $f \colon L \to \R$, where $L$ is a manifold of dimension $\ell$, a point $p \in L$ is a \emph{critical point} if there is a chart $\varphi \colon U \to L$ with $0 \in U \subseteq \R^{\ell}$ and $\varphi(0) = p$, such that $\left(\tmfrac{\partial(f \circ \varphi)}{\partial x_i}(0)\right)_{i=1}^{\ell}$ vanishes.  The critical point $p$ is \emph{nondegenerate} if the Hessian matrix of second partial derivatives $\left(\tmfrac{\partial^2(f \circ \varphi)}{\partial x_i\partial x_j}(0)\right)_{i,j=1}^{\ell}$ is nondegenerate.

\begin{definition}[Stratified Morse function]\label{def:Morse_function}
	A \emph{Morse function} on a stratification $\Str$ of $\O$ is a smooth function $F\colon \O\to \R$ such that:
	\begin{enumerate}[label=(M-\arabic*)]
	  \item\label{item:M1} for every stratum $S$, the restriction $F|_S$ has no degenerate critical points;
	  \item\label{item:M2} for every stratum $S$ and for every $p\in S$ and every limit plane
		  $Q=\lim_{p_i \to p} T_{p_i}S'$ we have that $dF_p(Q)\neq 0$, where the $p_i\in S'\neq S$ converge to $p$.
		  	\end{enumerate}
As above the limit of tangent planes is taken in the topology on the Grassmannian bundle of $T\O$ associated with $(\dim S'$-dimensional) subspaces.

	A \emph{critical point} of $F$ is by definition an ordinary critical point of one of the restrictions $F|_S$ and the \emph{index} of such a critical point is the ordinary index of the Morse function $F|_S$.
	
	If, in addition to \ref{item:M1} and \ref{item:M2}, all critical values are distinct, then $F$ is called an \emph{excellent Morse function} on $\Str$.
\end{definition}
Condition~\ref{item:M2} might sound technical, but its intuitive meaning is that a critical point of the restriction $F|_S$ does not lie on deeper strata. See \cite[Section I.1.4]{GM} for a more detailed discussion. We return to~\ref{item:M2} in \ref{item:IM2} of Definition~\ref{def:immersed_morse_function}, which gives a further specialisation of Definition~\ref{def:Morse_function} to immersed Morse functions.


\section{Generic immersions}\label{sec:transverse}


Linear subspaces $V_1,\dots, V_\ell$ of a vector space $V$ `are in general position', `meet generically', `intersect transversely', or `are transverse' if the diagonal map
\[
  V \ra \bigoplus_{i=1}^\ell (V/V_i)
\]
is an epimorphism. If $V$ is equipped with an inner product then this condition translates into the orthogonal complements of the $V_i$ being linearly independent in $V$.
If $M_1,\dots,M_\ell$ are smooth submanifolds of a smooth manifold $\O$, one says that they `meet, or intersect, transversely' at a point $x\in\O$ if the tangent spaces $T_xM_1,\dots, T_xM_\ell$ are transverse in $T_x\O$.


A smooth map $G \colon N \to \O$ is an \emph{immersion} if $dG \colon T_pN \to T_{G(p)}\O$ is injective for every $p \in N$.  Our immersions will be assumed \emph{neat}, which means that $G^{-1} (\bd \O) = \bd N$ and $G$ is transverse to $\bd \O$.



For a map $G \colon N\to \O$ and $d\in\N_0$, recall that  $\O[d] \subseteq \O$ is the set of points $x \in\O$ such that $G^{-1}(x)$ has cardinality $d$, i.e.\ there exist exactly $d$ points $u_1,\dots, u_d\in N$ such that $G(u_i)=x$.

\begin{definition}\label{def:generic-immersion}
For closed $N$, an immersion $G \colon N \to \O$  is \emph{generic}, denoted $N \looparrowright \O$, if for all $x \in \O[d]$, the linear subspaces $dG(T_{u_1}), \dots , dG(T_{u_d})$, for $G(u_i)=x$, meet generically in $T_x\O$.
If $\bd N$ is nonempty, an immersion $G$ is \emph{generic} if it is neat and the two restrictions $G|_{\bd N} \colon \bd N \to \bd \O$ and $G|_{N \sm \bd N} \colon N \sm \bd N \to \O \sm \bd \O$ are both generic immersions.

Note that the genericity condition implies that $\O[d]$ is empty whenever $dk > n+k$.
   \end{definition}

A subspace of $\O$ will be called a \emph{generically immersed manifold} if it is the image of a manifold $N$ under a generic immersion $G \colon N\imra\O$.
From now on $N$ and $\O$ will be assumed to be compact and immersions will be usually assumed generic.
When we study paths of immersions in later parts, we will have to consider the minimal generic failure to a generic immersion at isolated points.

\begin{proposition}
For a generic immersion $G \colon N \looparrowright \O$ the following hold:
  \begin{itemize}
\item $\O[d]$ is a submanifold of codimension $(d-1)k$ in $\O$ for all $d\in\N_0$,	
\item The closure of $\O[d]$ is the union $\bigcup_{d'\ge d} O[d']$,
\item $\{\O[d], d\in \N_0\}$ forms a stratification of $\O$ satisfying conditions  \ref{item:S1}, \ref{item:S2}, \ref{item:W1}, and~\ref{item:W2} from Section~\ref{sec:stratified_manifolds}.
	\end{itemize}
\end{proposition}

\begin{proof}[Sketch of proof]
The stratification has only finitely many nonempty strata and hence it is also locally finite, i.e.~\ref{item:S1} holds.  For local triviality, we let $x \in \O[d]$. Then we take $S$ to be an open neighbourhood of $x$ in $\O[d]$.  The final transversality condition of Definition \ref{def:generic-immersion} implies that we have a local description of the normal bundle to $\O[d]$ at $x$ as a stratified manifold $T \cong \R^{(d-1)k}$  with the origin as a stratum. Thus \ref{item:S2} holds.

In order for a sequence of points to occur as in \ref{item:W1} and \ref{item:W2}, there must be an open set $U \cong \R^n$ in $N$ containing the limit point $y$ and the tail of the sequence $(x_m)$, such that $G|_U$ is a smooth embedding and $y$ lies in a deeper stratum than the $x_m$.  In this case the Whitney conditions \ref{item:W1} and \ref{item:W2} are automatic.
\end{proof}

For example, the fact that $\O[d]$ has codimension $(d-1)k$ has the following consequences  in low dimensions. A surface $N$ generically immersed in a 4-manifold $\O$ cannot have triple points, because in that case $d=3$ and $k=2$, so triple points would have codimension $4$ in $N$. For generic immersions of 3-manifolds in 5-manifolds, the double points are arcs and circles, and the triple points are again empty.

The next result is a standard consequence of the multijet transversality theorem; see for instance
\cite[Section III.3]{GG}. We will prove some generalisations of this result while studying paths of immersions in Subsection~\ref{sub:paths_of_immersions}.

\begin{lemma}\label{lem:generic-immersion}
Generic immersions form an open set and dense subspace of the space of all immersion under the Whitney $C^\infty$-topology. 
Moreover, generic immersions are {\em stable} in the sense that every $G \colon N\imra \O$ has an open neighbourhood in $C^\infty(N,\O)$ in which each map differs from $G$ only by ambient isotopies of $N$ and $\Omega$.
\end{lemma}



%

\begin{definition}
	Let $G \colon N\imra \O$ be a generic immersion with $x\in \O[d]$. A \emph{branch} of $G$ through $x$
	is a neighbourhood $V \subseteq N$ of some point $v\in G^{-1}(x)$ such that the restriction $G|_{V}$ is one-to-one.
\end{definition}

Note that $G$ and $v\in G^{-1}(x)$ determine the germ of a branch $V$ and hence there are exactly $d$ distinct germs of branches through $w\in \O[d]$; see Figure~\ref{fig:8branches}.
\begin{figure}
  \includegraphics[width=3cm,angle=90]{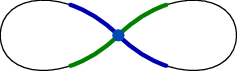}
  \caption{Two local branches passing through a double point of a figure 8 in the plane.}\label{fig:8branches}
\end{figure}

\section{Jet extensions and the Thom-Boardman stratification}\label{sec:strat_image}

\subsection{Multijet transversality}\label{sub:multijet}
For two smooth manifolds $X$ and $Y$, and an integer $r$, the \emph{jet space} $J^r(X,Y)$ is the space of Taylor expansions
up to order $r$ of maps from $X$ to $Y$ (a formal definition can be found in \cite[Section II.2]{GG}). For a $C^r$-smooth map
$G\colon X\to Y$,
we consider its \emph{jet extension} $\mj^rG\colon X\to J^r(X,Y)$.
The \emph{source map},
assigning to a Taylor expansion of a map at a point $x\in X$, the point $x$ itself,
is denoted $\alpha\colon J^r(X,Y)\to X$.

We now recall the construction of multijet spaces, referring to \cite[Section II.4]{GG} for more details.
Choose an integer $s\ge 1$, where the case $s=1$ recovers standard jet spaces. The space $X^{(s)}\subseteq X^{\times s}=X\times\dots\times X$ is the space of distinct $s$-tuples of points in $X$, i.e.\ the ordered configuration space of $s$ points in $X$.
We define $J^r_s(X,Y)$ to be the preimage of $X^{(s)}$ under the $s$-fold product of the source map $\alpha^{\times s}\colon J^r(X,Y)^{\times s}\to X^{\times s}$. The space $J^r_s(X,Y)$ is called the \emph{multijet space}.
For a smooth map $G\colon X\to Y$, there is the notion of a \emph{multijet extension} $\mj^r_sG\colon X^{(s)}\to J^r_s(X,Y)$, assigning to an $s$-tuple $x_1,\dots,x_s$ of pairwise distinct points in $X$, the Taylor expansions up to order $r$ at these points.
The multijet extension has the property that $\alpha^{\times s} \circ \mj^r_sG = \Id|_{X^{(s)}}$.

Before we state the Multijet Transversality Theorem we briefly introduce  residual subsets and their basic properties.

\begin{definition}[Residual subset, compare \expandafter{\cite[Definition III.3.2]{GG}}]\label{def:residual}
  A subset $A \subseteq T$ is \emph{residual} in $T$ if it is a countable intersection of subsets of $X$, each of whose interiors is dense in $X$. A space $T$ is called a \emph{Baire space} if any residual subset of $T$ is dense.
\end{definition}
Any Banach space is complete, hence a Baire space. In particular, if $X,Y$ are smooth manifolds, then the space of $r$-times differentiable maps
$C^r(X,Y)$ is a Baire space. The next result has a more complex proof, because $C^\infty$ is only a Fr\'echet space.
\begin{proposition}[see \expandafter{\cite[Proposition III.3.3]{GG}}]\label{prop:baire}
  Suppose $X,Y$ are two smooth manifolds. Then  $C^\infty(X,Y)$ is a Baire space.
\end{proposition}

Knowing that a subset is residual is often more useful than knowing it is dense. One reason is that countable intersections of dense subsets are not necessarily dense (let $(q_i)_{i=1}^\infty$ be an enumeration of the elements of $\Q$ and consider $A_i = \Q \sm \{q_i\}$), but the analogous property is true for residual subsets.
We record this as a lemma, whose proof is immediate from the definitions. 

\begin{lemma}\label{lem:countable-int-residual-is-residual}
  Suppose that $A_i \subseteq X$ is residual for $i=1,2,\dots$. Then $\bigcap_{i=1}^\infty A_i$ is residual.
\end{lemma}


The following result~\cite[Theorem II.4.13]{GG} will be extensively used in the future.

\begin{theorem}[Multijet Transversality Theorem]\label{thm:multijet}
  For any submanifold $S\subseteq J^r_s(X,Y)$ the set of maps $G\in C^\infty(X,Y)$ whose $s$-fold multijet extension $\mj^r_sG$ is
  transverse to $S$ is residual in $C^\infty(X,Y)$.
\end{theorem}

A residual set need not be open, in general.
The following corollary translates Theorem~\ref{thm:multijet} into a ready-to-use criterion. To prove it, use Theorem~\ref{thm:multijet}
in the case $S$ is not a manifold, but a stratified space. This does not pose problems: one simply inducts on the strata (note that we
strive to prove that the set of transverse maps is residual, not open, which requires extra conditions). We refer to the discussion
in \cite[Section 29]{Arn}.

\begin{corollary}\label{cor:multijet}
  Suppose $S$ is a Whitney stratified subset of $J^r_s(X,Y)$.
  \begin{itemize}
    \item[(i)]\label{item-cor-multijet-i} If $\codim S>s\dim X$, the space of maps from $H\colon X\to Y$,
  whose $s$-fold $r$-th multijet extension misses $S$, is residual in $C^\infty(X,Y)$.
    \item[(ii)]\label{item-cor-multijet-ii} If $\codim S>s\dim X+1$, then the space of paths $H\colon X \times [0,1]\to Y$ such that for all $\tau\in[0,1]$, the $s$-fold $r$-th jet extension
  of $H_\tau$ misses $S$, is residual in $C^\infty(X \times [0,1],Y)$.
    \item[(iii)]\label{item-cor-multijet-iii} More generally, if $\codim S > s\dim X+k$, then the space $k$ parameter families $H \colon X \times D^k \to Y$ such that for all $\tau \in D^k$, the
$s$-fold $r$-th jet extension of $H_\tau$ misses $S$, is residual in $C^\infty(X \times D^k,Y)$.
  \end{itemize}
\end{corollary}

The third item implies both the first and second, but since the first and second are the main statements we will need, we state these explicitly too.

Theorem~\ref{thm:multijet} generalises the standard Thom transversality theorem (case $s=1$). For $s=1$,
the space of maps missing a subset $S$ satisfying the hypothesis of Corollary~\ref{cor:multijet} is not only residual, but open-dense. Openness can be sometimes achieved by explicit methods.

\begin{remark*}
  We will use the terminology that some subsets of spaces of smooth functions, such as $\cI$, are defined using strata in the corresponding jet spaces. The strata we talk about are subsets of a finitely dimensional jet space, or a multijet space. These sets, like $\cI$, are defined as sets
  of functions whose jet extension misses some given strata. Whereas the actual subsets of smooth functions are infinite dimensional, as subsets of an infinite dimensional Fr\'echet space, e.g.\ $C^\infty(N,\Omega)$, the multijet transversality theorem allows us to analyse finite dimensionally, and hence conclude that the original subsets in the Fr\'echet space are residual.
\end{remark*}

\subsection{Thom--Boardman stratification}\label{sub:thom_boardman}


Suppose $Z,Y$ be two smooth compact manifolds of dimension $m$ and $m'$. For a map $\phi\colon Z\to Y$, and an integer $r\ge 0$,
we define the set
$\beta^r\phi$, to be the set of points $x\in Z$ such that $\dim D\phi(T_xZ)=\min(\dim Z,\dim Y)-r$, compare \cite[Secion VI.1]{GG}.
This space need not be a manifold, but assume for the moment it is. Then, setting $r_1=r$ and assuming $r>0$,
we can iterate this procedure. Namely, for $r_2\ge 0$
define $\beta^{r_1,r_2}\phi$ to be the set of points $x\in\beta^{r_1}\phi$ such that $D(\phi|_{\beta^{r_1}\phi})$ has rank
$\min(\dim\beta^{r_1}\phi,\dim Y)-r_2$. Thom--Boardman theory deals with these spaces. In particular, there is a formula
for the expected dimension of the sets $\beta^{r_1,\dots,r_\ell}\phi$.
The sets $\beta^{r_1,\dots,r_\ell}\phi$ can be defined, using jet extensions, even if $\beta^{r_1}$ is not smooth, compare \cite[Section VI.5]{GG}. We require that $r_1,\dots,r_{\ell-1}>0$ and $r_\ell\ge 0$.
\begin{definition}[Thom--Boardman map]\label{def:TB}
  A smooth map $\phi\colon Z\to Y$ is called \emph{Thom-Boardman} if all the $\beta^{r_1,\dots,r_\ell} Z$ are smooth of expected dimension.
\end{definition}
The jet transversality theorem implies the following result.

\begin{proposition}[compare \expandafter{\cite[Theorem VI.5.2]{GG}}]\label{prop:res}
  The set of Thom--Boardman maps is residual in $C^\infty(Z,Y)$.
\end{proposition}

Golubitsky-Guillemin~\cite[Theorem VI.5.2]{GG} proved residuality of maps satisfying an extra condition, called the NC-condition. We will use the extended version below.
Our discussion is specialised to the following situation.
Let $X,Y$ be two smooth compact manifolds of dimensions $m+1$ and $m$. Define $\cP(X,Y)\subseteq C^\infty(X,Y)$
by the conditions:
\begin{itemize}
  \item $\Pi$ is onto;
  \item for any $y\in Y$, $\Pi^{-1}(y)\cong [0,1]$,
  \item for any $x\in X$, $D\Pi(x)\colon T_x X\to T_{\Pi(x)}Y$ is onto.
\end{itemize}
In the applications, such $\Pi$ arises naturally from a non-vanishing vector field on $X$ and contraction along trajectories.
It is clear from the definition, that $\cP(X,Y)$ is open. Note that if $\Pi\in\cP$, $Z\subseteq X$ is a submanifold, and $\phi=\Pi|_Z$,
then $\beta^{r_1,\dots,r_\ell}\phi=0$ if at least any of the $r_i$ is greater than $1$. Moreover, if $r_1=\dots=r_\ell$ and $\dim Z=\dim X-k$,
then the expected dimension of $\beta^{r_1,\dots,r_\ell}\phi$ is $\dim Z-k\ell$. While latter observation follows from the general formula presented in \cite[Section VI.5]{GG} or \cite[Section I.2]{AVG}, it is quite instructive to understand case $\ell=1$. Then, for $x\in Z$,
we know that $x\in\beta^1\phi$, if and only if $\ker D\Phi\subseteq T_xZ$. Now $\dim\ker D\Phi=1$, and the codimension of $T_xZ$ in $T_xX$ is $k$. Hence, the condition $\ker D\Phi\subseteq T_xZ$ is of codimension $k$.

For this situation, we use the notation $\beta^{\mathbf{\ell}}\phi$ for $\beta^{1,\dots,1}\phi$, and $\beta^{\mathbf{\ell},0}$ for $\beta^{1,\dots,1,0}\phi$, where the sequence of $1$'s in both definitions has length $1$.
We adopt a specific variant of Definition~\ref{def:TBP}
\begin{definition}\label{def:TBP}
  The map $\Pi\colon X\to Y$ is the \emph{Thom--Boardman projection}, if $\Pi\in \cP(X,Y)$, $\phi:=\Pi|_Z$ is Thom--Boardman, and $\ell\ge 0$, $\beta^{\mathbf{\ell},0}\phi$ is an immersion with normal crossings.
\end{definition}
\begin{lemma}\label{lem:TBP}
  The set of Thom--Boardman projections is residual in $\cP(X,Y)$.
\end{lemma}
\begin{proof}
  Suppose $Z$ and $X$ are compact. The restriction map $C^\infty(X,Y)\to C^\infty(Z,Y)$ is open. As the preimage of a dense subset under an open map is dense, the preimage of a residual set under open map is open. By \cite[Theorem VI.5.2]{GG} (see Proposition~\ref{prop:res}), the set of maps $\phi$ that are Thom--Boardman and satisfy the NC-condition is residual. Hence, the set of map $\Pi\in C^\infty(X,Y)$ whose restriction shares these properties, also is. Next, $\cP(X,Y)$ is open in $C^\infty(X,Y)$, meaning that the set of maps $\Pi\in \cP(X,Y)$ satisfying that are Thom--Boardman and satisfy the NC-condition, is open in $\cP(X,Y)$. But this is exactly the statement of the lemma.
\end{proof}

\section{Immersed Morse functions}\label{sec:immersed_morse}
The general notion of a stratified Morse function (Definition~\ref{def:Morse_function}) can be specialised to the following definition.

\begin{definition}[Immersed Morse function]\label{def:immersed_morse_function}
For a generic immersion $G:N \imra \O$, consider its d-fold point startification $\{\O[d], d\geq 0\}$ of $\O$.
  A function $F\colon\O\to\R$ is called an \emph{immersed Morse function for $G$} if for each $d$ and each critical point $p$ of the restriction $F|_{\O[d]}$.
  \begin{enumerate}[label=(IM-\arabic*)]
    \item\label{item:IM1}  the Hessian of $F|_{\O[d]}$ is nondegenerate at $p$ and
    \item\label{item:IM2} $p$ is not a critical point of $F$
      restricted to the $(d-1)$-fold intersection $Y_{1}\cap\dots\cap \widehat Y_i \cap \dots \cap Y_{d}$, for every $i \in \{1,\dots,d\}$, where $Y_1,\dots, Y_d$ are branches of $\O[d]$ at $p$.
  \end{enumerate}
  If additionally for any two $p\neq p'$ such that $p$ is a critical point of $F|_{\O[d]}$ and $p'$ is a critical point of $F|_{\O[d']}$
  we have $F(p)\neq F(p')$, we call $F$ an \emph{excellent} immersed Morse function.
\end{definition}

\begin{remark*}
  Item~\ref{item:IM1} in Definition~\ref{def:immersed_morse_function} corresponds to condition \ref{item:M1} of Definition~\ref{def:Morse_function}. Item~\ref{item:IM2} corresponds to condition~\ref{item:M2}. In the Introduction, we referred to the pair $(F,G)$ as a Morse immersion but since $G$ is fixed in many constructions, it is important now to focus on the properties of $F$ relative to the fixed stratification $\{\O[d]$\} given by $G$.
\end{remark*}

\begin{convention}
If it is clear from the context, we will call an immersed Morse function simply a Morse function. A point $p$ is called a \emph{critical point of $F$} if $p$ is a critical point of $F|_{\O[d]}$ for some stratum $\O[d]$.
\end{convention}

We can characterise the local behaviour of a Morse function.

\begin{lemma}[Immersed Morse Lemma]\label{lem:immersed_morse}
  Suppose $F \colon \O \to \R$ is an immersed Morse function and let
	$p\in \O[d]$ be a critical point of $F|_{\O[d]}$ of index $h$. There exist local coordinates at $p$, denoted
	\[(x_1,\ldots,x_m,y_{11},\ldots,y_{1k},\ldots,y_{d1},\ldots,y_{dk}),\]
	where $m=n+k-dk$, such that the $j$th branch of $M$
	at $p$ is given, for $j=1,\dots,d$, by $\{y_{j 1}=\dots=y_{j k}=0\}$, and $F$ in these coordinates
	has the form
	\begin{equation}\label{eq:formofimmersed}
	  F(x_1,\ldots,y_{dk})=F(p)-x_1^2-\dots-x_h^2+x_{h+1}^2+\dots+x_m^2+\sum_{j=1}^d y_{j 1}.
	\end{equation}
\end{lemma}

In particular, the Immersed Morse Lemma implies that all critical points of an immersed Morse function are isolated.

\begin{proof}
We begin with a general result, from which we shall deduce the Immersed Morse Lemma.

\begin{lemma}\label{lem:normal}
  Suppose $M^n \subseteq \O^{n+k}$ is a generically immersed manifold, let $p \in \O[d]$, and assume there are local
  coordinates $(x_1,\dots,x_m,y_{11},\dots,y_{dk})$ near $p$ $($with $m=n+k-dk=\dim \O[d])$ such that
    the branches of $M$ through $p$ are given by $Y_j=\{y_{j1}=\dots=y_{jk}=0\}$.
    Assume that there is a smooth function $F\colon\O\to\R$ $($not necessarily Morse$)$ such that
    for each $j=1,\dots,d$ there exists an index $i_j$ with $\frac{\partial F}{\partial y_{ji_j}}(p)\neq 0$.

    Then there exist local coordinates $(\wt{x}_1,\dots,\wt{y}_{dk})$ near $p$ such that $Y_j=\{\wt{y}_{j1}=\dots=\wt{y}_{jk}=0\}$ for $j=1,\dots,d$
    and the function $F$ has the form
    \[F(\wt{x}_1,\dots,\wt{x}_m,\wt{y}_{11},\dots,\wt{y}_{dk})=F_1(\wt{x}_1,\dots,\wt{x}_m)+\sum_{j=1}^d\wt{y}_{j1}\]
    for some function $F_1$.
\end{lemma}

First we show how to prove the Immersed Morse Lemma from Lemma~\ref{lem:normal}.
Take coordinates $(x_1,\dots,y_{dk})$ near $p$ as in Lemma~\ref{lem:normal}.
Note that $\frac{\partial F}{\partial x_i}(p)=0$ for all $i=1,\dots,m$ for otherwise $p$ is not a critical point of $\O[d]$ (the coordinates
$(x_1,\dots,x_m)$ are local coordinates on $\O[d]$).
By item~\ref{item:IM2} of Definition~\ref{def:immersed_morse_function} we claim that for any $j=1,\dots,d$, there exists an index $i_j$ such that
$\frac{\partial F}{\partial y_{ji_j}}(p)\neq 0$. Indeed, if such an index does not exist for given $j$, the function $F$
restricted to $Y_1\cap\dots\cap Y_{j-1}\cap Y_{j+1}\cap\cdots\cap Y_d$ would have a critical
point at $p$.

By applying Lemma~\ref{lem:normal} we obtain that, after changing coordinates, $F(x_1,\dots,y_{dk})=F_1(x_1,\dots,x_m)+\sum_{j=1}^d y_{d1}$.
Clearly $F_1=F|_{\O[d]}$ so by item~\ref{item:IM1} of Definition~\ref{def:immersed_morse_function}, $F_1$ has a nondegenerate critical point at $p$. We apply the standard Morse Lemma to $F_1$ (see e.g.~\cite{Mi1}) to obtain a
change of coordinates of $x_1,\ldots,x_m$ to turn $F_1$ into a sum of quadratic terms. This concludes the proof of the Immersed Morse Lemma, modulo the proof of Lemma~\ref{lem:normal}, which give next.
\end{proof}

\begin{proof}[Proof of Lemma~\ref{lem:normal}]
  The proof uses induction in the next claim.  The precise statement we prove is the following.

  \emph{Claim.} For each $j=1,\dots,d$, there are local coordinates $(\wt{x}_1,\dots,\wt{y}_{dk})$ near $p$ such that $Y_j=\{\wt{y}_{j1}=\dots=\wt{y}_{jk}=0\}$
  and the function $F$ has the form
  \[F(\wt{x}_1,\dots,\wt{x}_m,\wt{y}_{11},\dots,\wt{y}_{dk})=F_1(\wt{x}_1,\dots,\wt{x}_m,\wt{y}_{11},\dots,\wt{y}_{jk})+\sum_{j'=j+1}^d\wt{y}_{j'1}.\]
    for some function $F_1$.

  Note that for $j=d$ the claim is trivial (take $x_i = \wt{x}_i$, $y_i = \wt{y}_i$ and $F_1=F$), while for $j=0$ the claim corresponds precisely to the statement of Lemma~\ref{lem:normal}. Now to prove the claim
  for $j-1$, assuming the claim for $j$, we use the inverse function theorem.
  For simplicity assume that variables for which the claim for $j$ holds are without tildes and the variables that we strive to define
  are going to have the tilde sign.

  First number the variables in such a way that $\frac{\partial F}{\partial y_{j1}}(p)\neq 0$.
  Define
  \[\wt{y}_{j1}=F_1(x_1,\dots,x_m,y_{11},\dots,y_{jk})-F_1(x_1,\dots,x_m,y_{11},\dots,y_{j-1,k},0,\dots,0).\]
  With this variable and $\wt{F}_1=F_1(x_1,\dots,x_m,y_{11},y_{j-1,k},0,\dots,0)$ we have
  \[F_1(x_1,\dots,x_m,y_{11},\dots,y_{jk})=\wt{F}_1(x_1,\dots,y_{j-1,k})+\wt{y}_{j1}.\]
  Therefore we need to make sure that the change of variables
  \begin{equation}\label{eq:local_variable_change}(x_1,\dots,x_m,y_{11},\dots,y_{jk})\mapsto (x_1,\dots,y_{j-1,k},\wt{y}_{j1},y_{j2},\dots,y_{jk})\end{equation}
  is a local diffeomorphism that preserves strata.

  Verification that the change \eqref{eq:local_variable_change} is a local diffeomorphism is a straightforward application of the inverse
  function theorem together with the observation that $\frac{\partial F}{\partial y_{j1}}(p)\neq 0$. Clearly the image of the set
  $\{y_{i1}=\dots=y_{ik}=0\}$ for $i<j$ is again the set $\{y_{i1}=\dots=y_{ik}=0\}$ after the change \eqref{eq:local_variable_change}. Therefore
  the change of variables preserves strata $Y_1,\dots,Y_{j-1}$. The strata $Y_{j+1},\dots,Y_d$ are unaffected. To see that $Y_j$ is preserved,
  note that locally near $p$ we have $y_{j1}=\dots=y_{jk}=0$ if and only if $\wt{y}_{j1}=y_{j2}=\dots=y_{jk}=0$. This uses again that $\frac{\partial F}{\partial y_{j1}}(p)\neq 0$.

  The change \eqref{eq:local_variable_change} provides the necessary induction step.
\end{proof}

\begin{theorem}[Density of Morse functions]\label{thm:density}
For every generic immersion $G:N \looparrowright \O$, there exists an immersed Morse function for $G$. Furthermore the set of immersed Morse functions is open and dense in $C^2(\Omega;\R)$.
\end{theorem}

While the theorem admits a direct proof using the transversality theorem, we will deduce it from Corollary~\ref{cor:paths}, which will be proven in Subsection~\ref{sub:immersed_cerf}.
The proof does not use material from Sections \ref{section:grim-vector-fields} -- \ref{sec:integrate}.

\section{Gradient like immersed (grim) vector fields}\label{section:grim-vector-fields}

The gradient of a Morse function is a vector field with specific properties. A gradient-like vector field with respect to an immersed Morse function $F$ has many of the key properties of the gradient of a Morse function, but is modified in the neighbourhood of the critical points so that the critical points of $F$ on all the strata are also critical points of the vector field.  
It is technically extremely useful to have the notion of gradient-like vector fields. They perform the same function as gradients, but are much more flexible.

\begin{definition}[Grim vector field]\label{def:grim}
  A vector field $\xi$ on $\Omega$ is called a \emph{gradient-like immersed vector field} with respect to an immersed Morse function $F$,
  for short a \emph{grim} vector field, if $\xi$ has the following properties.
  \begin{enumerate}[label=(G-\arabic*), series=grim]
    \item\label{item:G1}(Positivity)
      The directional derivative $\partial_{\xi} F$ is nonnegative and vanishes precisely at the critical points of $F$.
    \item\label{item:G2}(Tangency) If $w\in \O[d]$, then $\xi(w)\in T_{w}\O[d]$. That is, $\xi$ is tangent to the stratum.
    \item\label{item:G3} (Local behaviour at singular points) If $p\in \O[d]$ is a critical point of $\xi$, then there are local coordinates centred at $p$
      \[(x_1,\ldots,x_m,y_{11},\ldots,y_{1k},\ldots,y_{d1},\ldots,y_{dk})\]
      (with $m=n+k-dk$), such that $\O[d]$ is given locally by
      \[\{y_{11}=\dots=y_{1k}=0\}\cup\cdots\cup\{y_{d1}=\dots=y_{dk}=0\},\]
      the function $F$ is given by \eqref{eq:formofimmersed}, that is:
      \[ F(x_1,\ldots,y_{dk})=F(p)-x_1^2-\dots-x_h^2+x_{h+1}^2+\dots+x_m^2+\sum_{j=1}^d y_{j 1}\]
      for some $h$ with $0 \leq h \leq m$, and in these coordinates $\xi$ has the form
      \begin{equation}\label{eq:localgrim}
\xi=(-x_1,\ldots,-x_h,x_{h+1},\dots,x_m,\sum_{i=1}^ky_{1i}^2,0,\ldots,0,\sum_{i=1}^ky_{2i}^2,0,\ldots,0,\sum_{i=1}^ky_{di}^2,0,\ldots,0).
      \end{equation}
      In particular the $\frac{\partial}{\partial y_{ji}}$-coordinate of $\xi$ is $\sum_{\ell=1}^ky_{j\ell}^2$ for $i=1$ and is zero for $i=2,\ldots, k$.
  \end{enumerate}
\end{definition}

Note that in the local coordinates around the critical points, the difference between $\nabla F$ and $\xi$ are the terms $\sum_{i=1}^ky_{j i}^2$ in $\xi$, whereas the corresponding term in $\nabla F$ is $1$. The r\^{o}le in $\xi$ of the term $\sum_{i=1}^ky_{j i}^2$ is to arrange that $\xi$ vanishes at each critical point.  We do not have a cubic term in $F$, since if we did that, at critical points on deeper strata, $F$ would also have a critical point considered as a function on $\O$. The linear terms we use mean that critical points on deeper strata are not also critical points of $F\colon \O \to \R$.

\begin{figure}
  \input{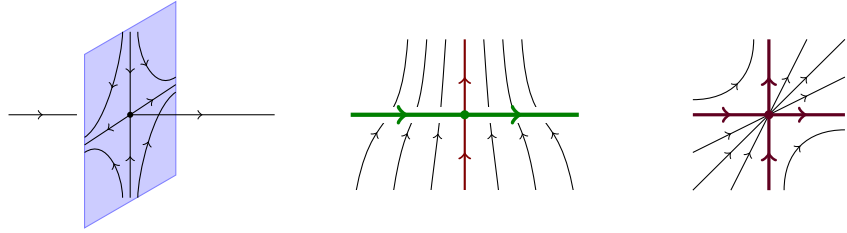}
  \caption{Trajectories of a grim vector field near a critical point. Left picture: a critical point of depth one and codimension two. Middle picture: a critical point of depth one and codimension two (the manifold $Y$ is the horizontal line, the `vertical' trajectories that are drawn go below the line). Right picture: a critical point of depth two and codimension one (the stratified manifold is the union of the central horizontal and vertical lines).}\label{fig:traj}
\end{figure}

\subsection{Existence of grim vector fields}

\begin{theorem}[Existence of grim vector fields]\label{thm:grim_exist}
	Every immersed Morse function $F$ admits a grim vector field. Moreover, for every immersed Morse function $F$ and every grim vector field $\xi$ for it,
	there exists a Riemannian metric on $\Omega$ such that $\xi=\nabla F$ away from an arbitrarily small neighbourhood of the set of critical points
	of $F$.
\end{theorem}

\begin{remark*}
	The conditions \eqref{eq:formofimmersed} and \eqref{eq:localgrim}  exclude the possibility that $\xi=\nabla F$ near a critical point of $F$
	restricted to each stratum.
\end{remark*}

\begin{proof}
  The proof is similar to the corresponding proof in~\cite[Section 3]{BP}.
	Let $p_1,\dots,p_s$ be the critical points of $F$. For each $i\in\{1,\dots,s\}$, choose an open neighbourhood $U_i'$ of $p_i$
	such that $F$ has the required special form of Definition~\ref{def:immersed_morse_function}.
	Choose $U_i$ to be a neighbourhood of $p_i$ such that $\ol{U}_i \subseteq U_i'$, and let $V$ be
	an open subset of $\O$ such that $V\cup\bigcup U_i'=\O$ and $U_j\cap V=\emptyset$ for any $i\in\{1,\dots,s\}$. On each of the $U_j'$ we
	choose a metric $g_i$ in which the local coordinate system of \eqref{eq:formofimmersed} is orthonormal. Define $\xi_i$ on $U_i$ by
	\eqref{eq:localgrim}.
	
	Let us now define the metric on $V$. Let $u\in V$ be a point in the $d$--th stratum. Choose a coordinate system
	$\{x_1,\ldots,x_m,y_{11},\ldots,y_{dk}\}$ (with $m=n+k-dk$ as usual)
	on an open set $U_u$
	containing $u$ and contained in $V$ such that the $j$--th branch of $\O[d]$ is given by $y_{j1}=\cdots=y_{jk}=0$. As $u\in V$,
	$u$ is not a critical point of $F|_{\O[d]}$. Therefore, there exists an index $i\in\{1,\ldots,m\}$ such that $\frac{\partial F}{\partial x_i}(u)\neq 0$.
	Without loss of generality we suppose it is $x_1$ and $\epsilon\in\{-1,1\}$ is such that
	$\epsilon\frac{\partial F}{\partial x_1}(u)>0$. Shrink $U_u$ if necessary
	in order to guarantee that $\epsilon\frac{\partial F}{\partial x_1}>\delta_u>0$ for some $\delta_u>0$ at all points in $U_u$.
	Now for each $u'\in U_u$ there exists a positive definite (and in particular invertible) symmetric matrix $A(u')$ such that the first row, and by symmetry also the first column,
	is \[\left(\epsilon\frac{\partial F}{\partial x_1}(u'),\ldots,\epsilon\frac{\partial F}{\partial y_{dk}}(u')\right).\]
The sign of the entries other than the first is not controlled, but by choosing the remaining entries of the matrix judiciously we can arrange for the entire matrix to be positive definite.
	We can also assume that the coefficients of $A(u')$ depend smoothly on $u'$. We define the metric $g_u$ on $U_u$ such that $A(u')$ is the matrix of the scalar product
	of vectors in $T_{u'}\Omega$ in the chosen coordinate system.

	The gradient of $F$ in the metric $g_u$ satisfies by definition that $\nabla F(u')^TA(u')\nu=dF(\nu)(u')$
	for all $\nu\in T_{u'}\Omega$ and for all $u' \in U_u$. As the first row and column of $A(u')$ is $\epsilon$
	times the vector
	of partial derivatives of $F$, it follows that $\nabla F(u')^T=(\epsilon,0,\dots,0)=
	\epsilon\frac{\partial}{\partial x_1}$. Hence, $\nabla F(u')$ is tangent
	to all branches of $M$ passing through $u'$. Define $\xi_u := \nabla F$
	in $U_u$. Note that $\partial_{\xi_u}\nabla F>0$ on the whole of $U_u$.

	Cover $\Omega$ by the sets $U_i'$ for $i=1,\dots,s$ and $U_u$ for $u\in V$.
	Let $\phi_1,\ldots,\phi_s,\{\phi_u\}_{u\in V}$ be a partition of unity subordinate
	to this covering. We define $\xi=\sum\phi_i\xi_i+\sum\phi_u\xi_u$ and $g=\sum\phi_i g_i+\sum\phi_u g_u$. From the construction we see that
	\begin{itemize}
	  \item $\xi$ has the local form \eqref{eq:localgrim} in each of the $U_i$, because $\xi=\xi_i$ in $U_i$;
	  \item if $w\in \O[d]$, then $\xi\in T_w \O[d]$, because all the $\xi_i$ and $\xi_u$
	    had this property and belonging to $T_w \O[d]$ is preserved under the linear
	    combination operation;
	  \item $\partial_{\xi}F\ge 0$ with equalities precisely at the critical points of $F$.
	    This follows from the fact that $\partial_{\xi_i}F\ge 0$ and $\partial_{\xi_u}F>0$.
	\end{itemize}
	This shows that $\xi$ is a grim vector field for $F$. Moreover, away from
	the neighbourhood $\bigcup_i U'_i$ of the critical points of $F$, we have $\nabla F=\xi$.
\end{proof}

An argument analogous to the proof of Theorem~\ref{thm:grim_exist} can be used to prove the following result.
Its rough meaning is that for a given Morse function $F$ we can extend a vector field $\eta$
from the submanifold $M$ to a grim vector field on the whole of $\O$.
The formulation is given in the form that best suits applications in
Part~\ref{part:pathlifting} below.

\begin{proposition}\label{prop:grim_extend}
  Let $F\colon\O\to\R$ be an immersed Morse function.
  Suppose $U\subseteq M$ is an open subset of $M$ contained entirely in the first stratum of $M$
  $($if $M$ is embedded we can take $U=M)$.
  Let $\eta$ be a vector field on $U$ that is tangent to $M$ and which is a gradient-like vector field for $F|_U$. Then for any open subset $V\subseteq U$ such that
  $\ol{V}\subseteq U$ there exists a grim vector field $\xi$ for $F$ such that $\xi|_V=\eta$.
  If $M$ is embedded and $U=M$, we can take $V=M$.
\end{proposition}

\begin{remark}\label{rem:grim_extend}
  Proposition~\ref{prop:grim_extend} is a generalisation of the following classical result. If $f\colon M\to\R$ is a Morse function, $U$ is an open set and $\eta$ is a vector field
  that is gradient-like for $f$ on $U$, then for any open subset $V$ such that $\ol{V}\subseteq U$, there exists a gradient-like vector field $\eta'$ on $M$ for $f$ with $\eta'|_V=\eta|_V$. This follows from Proposition~\ref{prop:grim_extend} by taking $M=\Omega$.
\end{remark}

\begin{proof}[Proof of Proposition~\ref{prop:grim_extend}]
The idea of the proof is first to extend the vector field $\eta$ to an open subset of $\O$
containing $V$ and glue it with some grim vector field on $\O$.

  Let $p_1,\dots,p_s$ be the critical points of $F$ contained in $U$. For each such point~$p_i$ there exist an open set $U_{p_i}$ and
  coordinates $x_1,\dots,x_n,y_1,\dots,y_k$
  such that
  \begin{itemize}
    \item $M$ is given by $\{y_1=\dots=y_k=0\}$;
    \item the vector field $\eta$ on $U_{p_i}\cap M$ is equal to $(-x_1,\dots,-x_h,x_{h+1},\dots,x_n)$ for some $h$. Note that $\eta$ has $n$ coordinates, and not $n+k$, because $\eta$ is a vector field
      on $M$ and not on~$\O$;
    \item with $h$ from the previous item, we have
  $F=-x_1^2-\dots-x_h^2+x_{h+1}^2+\dots+x_n^2+y_1+F(p_i)$.
  \end{itemize}
  The second item follows from the fact that $\eta$ is gradient like on $\O$.
 We extend the vector field to the whole of $U_{p_i}$ via
\[\xi_{p_i}=(-x_1,\dots,-x_h,x_{h+1},\dots,x_n,\sum_{\ell=1}^k y_\ell^2,0,\dots,0).\]

  Suppose $w\in U$ is not a critical point of $F$.
  Choose a neighbourhood $U_w$ of $w$ in $\Omega$
  such that it
  does not contain any critical points of $F$.
  By the rectifiability lemma for vector fields, see e.g.~\cite[Sections~7~and~32]{Arnold_ODE},
  on shrinking $U_w$ if needed, we may assume that there are coordinates $(x_1,\dots,x_n,y_1,\dots,y_k)$ on $U_w$
  such that $M\cap U_w$ is given by $\{y_1=\dots=y_k=0\}$ and
  $\eta=\frac{\partial}{\partial x_1}$ on $M\cap U_w$.

  We define $\xi_w=\frac{\partial}{\partial x_1}$. Note that $\partial_\eta F(w')>0$
  for $w'\in U_w\cap M$. Therefore, shrinking further $U_w$ if necessary, we may
  and shall assume that $\partial_{\xi_w}F(w')>0$ for all $w'\in U_w$.
  Finally, let $\xi_\O$ be any grim vector field for $F$.

  The union of the sets $\{U_w\}$ and $\{U_{p_i}\}$ covers the closure $\ol{V}\subseteq M$.
  As $\ol{V}$ is compact, there exists an open set $U_V\subseteq\O$ containing $\ol{V}$
  and whose closure is contained in the union of $U_w$ and $U_{p_i}$. Define $U_\O=\O\setminus\ol{U}_V$. Then $U_\O$ is disjoint from $\ol{V}$. Also $U_\O$, together with the sets
  $U_w$ and $U_{p_i}$, cover the whole of $\O$. We use a partition of unity
  to glue the vector fields $\xi_w$ on $U_w$, $\xi_{p_i}$ on $U_{p_i}$, and $\xi_\O$
  on $U_\O$, to a vector field $\xi$.

  Every critical point of $F$ belongs to precisely one subset of the cover.
  Hence, the vector field $\xi$ satisfies \ref{item:G3} near each critical point of $F$
  (note that \ref{item:G3} is usually not preserved under taking a convex combination
  of two vector fields, so an extra argument was needed).
\end{proof}

In a similar spirit we can prove the following result.

\begin{proposition}\label{prop:regulartogrim}
  Suppose $F\colon \O\to\R$ is an immersed Morse function and $\xi$ is a vector field on $\O$ satisfying $\partial_\xi F\ge 0$, with equality
  precisely at critical points of $F$ $($including critical points of $F$ restricted to deeper strata$)$, and such that $\xi$ is tangent to all the strata.

  Then there exists a grim vector field $\xih$ agreeing with $\xi$ everywhere except perhaps small neighbourhoods of the critical points of $F$.
\end{proposition}

\begin{proof}
  We sketch a quick argument. By the assumptions, $\xi$ already satisfies \ref{item:G1} and \ref{item:G2}
  from Definition~\ref{def:grim}. We need to take care of \ref{item:G3};
  more specifically, we need to make sure that $\xi$ satisfies \eqref{eq:localgrim}
  in some local coordinates near each of the critical points of $F$.

  To guarantee this, take $p\in\O$ to be a critical point
  of $F$ and assume $p$ lies on the $d$-th stratum.
  By the Immersed Morse Lemma~\ref{lem:immersed_morse},
  we may choose a neighbourhood $U_p$ of $p$
   such that there are coordinates $\{x_1,\dots,x_n,y_{11},\dots,y_{dk}\}$ with the property that
  the $j$-th branch is given by $\{y_{j 1}=\dots=y_{j k}=0\}$ and $F$ has the form $-x_1^2-\dots-x_h^2+x_{h+1}^2+\dots+x_m^2+y_{11}+\dots+y_{d1}+F(p)$. Here $h$ is the index of $p$
  and $m=n+k-dk$.
  Keeping in mind \eqref{eq:localgrim}, define a vector field on $U_p$:
  \[\xi_p=(-x_1,\dots,-x_h,x_{h+1},\dots,x_m,\sum_i y_{1i}^2,0,\dots,0,\sum_i y_{2i}^2,0,\dots,0).\]
  Since both $F$ and $\xi_p$ are given by explicit formulae, we
  may promptly verify that $\partial_{\xi_p}F\ge 0$ everywhere on $U_p$
  with equality only at $p$.

  We will now glue $\xi$ with $\xi_p$. Let $\phi\colon\O\to[0,1]$ be a bump function equal to $1$ in a small neighbourhood of $p$ and vanishing away from $U$. Replace $\xi$ by $\xih=(1-\phi)\xi+\phi\xi_p$. With this definition, $\xih$ still satisfies \ref{item:G1} and \ref{item:G2}, because
  both conditions are preserved under taking convex linear combination of vector fields.
  Moreover, $\xih$ satisfies \ref{item:G3} near $p$.
  We perform the same procedure for all critical points. The new vector field satisfies axiom \ref{item:G3}
  at all critical points.
\end{proof}

\subsection{Pull-backs}\label{sub:pull_back}
Let $G\colon N\to \O$ be a generic immersion, let $M=G(N)$ and let $F\colon\O\to\R$ be an immersed Morse function for $(\O,M)$. Choose a grim vector field $\xi$ on $\O$ for $F$.
Define $f\colon N\to\R$ by $f(u)=F \circ G(u)$. This is a Morse function on $N$; we refer to it as the `pull-back' of $F$. We ask whether we can find a vector field $\eta$ on $N$ that is a `pull-back' of $\xi$, and which is a gradient-like vector field for $N$.

As our first approximation we could try $\eta_0(u)=(DG(u))^{-1}(\xi)$, where $DG$ denotes the derivative. Since $\xi$ is tangent to $M$ and $DG(u)$ is injective, the vector field $\eta_0$ is well-defined. Moreover, unless $p=G(q)$ is a critical
point of $F$, $\eta_0(u)\neq 0$ and $\partial_{\eta_0} f>0$. However, if $p$ is a critical point of $F$ on the stratum $\O[d]$ with $d\ge 2$, then $\xi$ vanishes
at $p$ and so $\eta_0$ vanishes at each point of the preimage of $p$, whereas $G^{-1}(p)$ consists of precisely $d$ points, none of them being a critical point of $f$
by item~\ref{item:IM2} in Definition~\ref{def:immersed_morse_function}. Therefore, $\eta_0$ is not a gradient-like vector field for $f$ and some alterations are needed. These are described in the next lemma.

\begin{lemma}[Pull-back Lemma]\label{lem:pull_back_lemma}
  For any open subset $U$ of $N$ containing all the preimages $G^{-1}(p)$ of critical points of $F$ at depth $d$ at least $2$, there exists
  a vector field $\eta$ on $N$ such that:
  \begin{enumerate}[label=(P-\arabic*)]
    \item $\eta=\eta_0$ away from $U$;\label{lem:equal}
    \item $\eta$ is gradient-like for $f$.
  \end{enumerate}
\end{lemma}
\begin{proof}
Suppose $p\in \O[d]$, $d\ge 2$, is a critical point of $F$.
There are local coordinates $(x_1,\dots,y_{dk})$ in $\O$ near $p$ such that $\xi$ has the form
\eqref{eq:localgrim}. The point $p$
has precisely $d$ preimages under $G$. Let $q$ be one such preimage.
Renumbering coordinates if needed, we may and will assume that $q$ corresponds to the branch of $M$ through $p$ given by $y_{d1}=\dots=y_{dk}=0$.

The local coordinates near $p$ induce local coordinates in a neighbourhood $U_q$ of $q$: these are
\[(x_1,\dots,x_m,y_{11},\dots,y_{d-1,k}).\]
In these coordinates we have
\begin{align*}
  f&=-x_1^2-\dots-x_h^2+x_{h+1}^2+\dots+x_m^2+y_{11}+\dots+y_{d-1,1}\\
\eta_0&= (-x_1,\dots,-x_h,x_{h+1},\dots,x_m,\sum y_{1i}^2,0,\dots,\sum y_{d-1,i}^2,0,\dots).
\end{align*}
Here, as usual, $h$ denotes the index of $p$ and $m=n+k-dk$.

Shrink $U_q$ if needed so that the closure of $U_q$ is contained in $U$ and the sets $U_q$ corresponding to different critical points
are pairwise disjoint. Choose another neighbourhood $U'_q$ of $q$ such that $\ol{U'}_q \subseteq U_q$. Consider a bump function
$\phi_q\colon N\to [0,1]$ supported on~$U_q$ and equal to~$1$ on~$U'_q$.
For $u\in U_q$ we define
\[\eta(u)=\eta_0(u)+\phi\cdot\sum_{j=1}^{d-1}\smfrac{\partial}{\partial y_{j1}}.\]
For $u \in N \sm \cup U_q$ define $\eta(u) = \eta_0(u)$.
We have $\smfrac{\partial}{\partial y_{j1}}f=1$ for all $j=1,\dots,d-1$,
so the new vector field satisfies $\partial_{\eta}f>0$ everywhere on $U_q$.

By construction, $\partial_\eta f>0$ in a neighbourhood of preimages of all critical
points of $F$ at depth $2$ or more. As mentioned above, this condition was the
only obstruction for $\eta$ to be a gradient-like vector field.
\end{proof}

\subsection{Handle attachments}

Now we study the changes in the topology of the level sets while crossing a critical level
of an immersed Morse function $F \colon \O \to \R$. Recall that $\dim N =n$ and $\dim \O = n+k$. Therefore $\O[d]$ has dimension $n+k-dk$.

\begin{proposition}\label{prop:immersedhandleattachments}
  Suppose $p\in F$ is an index $h$  immersed critical point at the $d$-th stratum, and suppose that $F^{-1}(F(p))$ has no other critical points. Choose $\varepsilon>0$ small
	enough so that $F$ restricted to $F^{-1}(F(p)-\varepsilon,F(p)+\varepsilon)$ has $p$ as its only critical point. For $j=0,1,2,\ldots$ denote
	by $\O[j]^\pm$ the intersection of the stratum $\O[j]$ with $F^{-1}(F(p)\pm\varepsilon)$.
	\begin{itemize}
		\item If $j>d$, then $\O[j]^+$ is diffeomorphic to $\O[j]^-$;
		\item $\O[d]^+$ arises from $\O[d]^-$ by a surgery on an embedded $S^{h-1} \times D^{m-h+1}$, that is a surgery of index equal to the index of $p$, where $m=\dim\O[d] = n+k-dk$.
	\end{itemize}
\end{proposition}
\begin{proof}
  The first part follows because on $\O[j]\cap F^{-1}(F(p)-\varepsilon,F(p)+\varepsilon)$ the vector field $\xi$ has no critical points. The second part is completely
  standard.
\end{proof}

\begin{remark*}
  If $j<d$, then $\O[j]^+$ usually differs from $\O[j]^{-}$. A precise description of the change uses a generalisation of the `rising water principle'; see \cite[Section 6.2]{GS}.
  Since we do not need it in the paper, we shall not dive further into this problem.
  We aim, however, to return to the rising water principle in a subsequent paper.
\end{remark*}

We will need a generalisation of \cite[Lemma~4.7]{Mil65} and \cite[Lemma~3.10]{BP}, enabling us to realise an isotopy from the identity map of a level set to some other self-diffeomorphism of that level set as the flow of a gradient-like immersed vector field.  Recall that the \emph{flow} of a vector field $\xi$ on $\O$ is the 1-parameter family of diffeomorphisms of $\O$ generated by, or induced by, $\xi$. See \cite[pages~10--11]{Mi1} or \cite[Section 7]{Arnold_ODE}.

Suppose that $M$ is an immersed image in $\O$, that $F\colon \O \to \R$ is a Morse function and that $\xi$ is a grim vector field inducing a flow $\Xi_s$. Assume that $a,b\in\R$ are such that there are no critical points of $F$ in $F^{-1}[a,b]$. Use the flow $\Xi_s$ of $\xi$ to identify $F^{-1}[a,b]$ with $F^{-1}(a)\times[a,b]$, as follows.  For a point $z\in F^{-1}[a,b]$ we let $s_-$ and $s_+$ be the real numbers such that $F(\Xi_{s_-}(z))=a$ and
$F(\Xi_{s_+}(z))=b$. We map $z\in F^{-1}[a,b]$ to the pair $(\Xi_{s_-}(z),\frac{s_+b-s_-a}{s_+-s_-})\in F^{-1}(a)\times[a,b]$.   The inverse of this identification induces a diffeomorphism
\[\varphi \colon F^{-1}(a) = F^{-1}(a) \times \{b\} \to F^{-1}(b).\]

\begin{lemma}[Isotopy Insertion]\label{lem:isoinject}
  Let $F \colon \O \to \R$, $\xi$, $\varphi$, and $a,b \in \R$ be as above.
 Let $\Psi$ be a strata preserving self-diffeomorphism of $F^{-1}(a)$
  and suppose $\Psi$ is isotopic to the identity through strata-preserving
  diffeomorphisms.

  There exists a new grim vector field $\wt{\xi}$, agreeing with $\xi$
  away from $F^{-1}(a,b)$, such that the flow of $\wt{\xi}$ induces a strata-preserving diffeomorphism \[\wt{h} \colon (\O,M)\cap F^{-1}(a)  \to (\O,M)\cap F^{-1}(b)\] $($as described above$)$ such that
  $\wt{\varphi} = \varphi \circ \Psi.$
\end{lemma}

\begin{proof}
	We follow the proof of \cite[Lemma~4.7]{Mil65}.
	As described above, the flow $\Xi_s$ of $\xi$ yields an identification of $F^{-1}(a)\times[a,b]$ with $F^{-1}[a,b]$.
	Denote it by $\Phi\colon F^{-1}(a)\times[a,b]\to F^{-1}[a,b]$, so that $\Phi|_{F^{-1}(a) \times \{b\}} =\varphi \colon F^{-1}(a) \times \{b\} = F^{-1}(a) \to F^{-1}(b)$.
Let $\Psi_s\colon F^{-1}(a)\to F^{-1}(a)$, $s\in[a,b]$ be an isotopy such that $\Psi_a$ is the identity of $F^{-1}(a)$,
	and $\Psi_b=\Psi$. We require that $\Psi_s$ does not depend on $s$ for $s$ close to $a$ and $b$ and that $\Psi_s$ preserves the strata
	of $M$. Then $\Psi_s$ defines a map $\ol{\Psi}\colon F^{-1}(a)\times[a,b]\to F^{-1}(a)\times[a,b]$ by the formula
	$(w,s)\mapsto(\Psi_s(w),s)$.

	Define the new vector field $\wt{\xi}=D(\Phi\circ\ol{\Psi}\circ\Phi^{-1})\xi$
	(here $D$ denotes the derivative of the self-diffeomorphism of $F^{-1}[a,b]$).
	Note that since $\Phi$ and $\ol{\Psi}$ are both strata-preserving, and
	$\xi$ is tangent to the strata, the
	vector field $\wt{\xi}$ is also tangent to the strata of $M$.
	The assumption that
	$\Psi_s$ does not depend on $t$ for $t$ close to $\{a,b\}$ implies that $\xi$ agrees with $\wt{\xi}$ near $F^{-1}(\{a,b\})$, so
	we can extend the vector field $\wt{\xi}$ by $\xi$ to the whole of $\Omega$.

	By the definition of the flow, the flow of $\wt{\xi}$
	induces a diffeomorphism $\wt{\varphi}\colon F^{-1}(a) \to F^{-1}(b)$ with
	$\wt{\varphi} = \varphi \circ \Psi$, as required.
\end{proof}

The following simple result is a very useful feature of grim vector fields. It is a generalisation of Proposition~\ref{prop:immersedhandleattachments}.  It says that if we have an immersed Morse function with no critical points on the zeroth and first strata in the inverse image of an interval, then we obtain a regular homotopy.  We will use this as part of the proof that immersed link concordance implies regular link homotopy.





\begin{theorem}[Crossing Deeper Strata]\label{thm:deeperstrata}
	Suppose $F\colon \O \to[0,1]$ is an immersed Morse function and that $F$ does not have any critical points on the zeroth and first stratum. Then $\O\cong \O_0\times [0,1]$ where $\O_0=F^{-1}(0)$, and there exists a regular homotopy $H_t\colon M_0\to\O_0$,  where $M_0:=M\cap\O_0$, such that $M$ is the trace
	of the regular homotopy $H_t$. That is, $M=\ol{H}(M_0\times[0,1])$ where $\ol{H}(w,t)=(H_t(w),t)$.
\end{theorem}

\begin{proof}
	Since $F$ does not have critical points on the zeroth stratum, $F$ regarded as a Morse function on $\O$ has no critical points at all, so
	$\O \cong \O_0\times[0,1]$. As $M$ is the image of a generic immersion, we have $M=G(N)$ for some $N$. Define
	$f\colon N\to [0,1]$ by $f(w)=F(G(w))$. As~$F$ does not have critical points on the first stratum, $f$ is a Morse function without
	critical points, hence we can identify $N$ with $M_0\times[0,1]$ in such a way that $f$ is the projection onto the second factor. The homotopy
	$H_t$ is constructed as follows: for $x \in M_0$, let  $$H_t(x)=G(x,t) \in \O_0 \times \{t\} \cong \O_0.$$  

	We claim that $H_t$ is a regular homotopy. To see this, we need to show that $DH_t(x)$ is a monomorphism for all $(x,t)\in N\times[0,1]$. To see this, we take $w=(x,t)\in N\times[0,1]$. The map $G$ being an immersion means that $DG(w)$ is a monomorphism. As $f(w)=F(G(w))$,
	$DG$ takes $\ker Df$ to $\ker DF$. Since $Df$ is non-degenerate, $\ker Df$ is of codimension one in $T_w (N\times[0,1])$. If $DG|_{\ker Df}$ has nontrivial kernel at $w$, then $\dim DG(\ker Df(w))<\dim N$. And then $\dim DG(T_w(N\times[0,1]))<\dim N+1$, contradicting the fact
	that $DG$ is non-degenerate. But $DG|_{\ker Df}=DH_t$. So $DH_t$ is a monomorphism.
\end{proof}

The homotopy constructed in Theorem~\ref{thm:deeperstrata} might be not sufficient to prove that concordance implies
regular link homotopy.
In that setting, we will be given an identification of $\O$ with $\O_0 \times [0,1]$ and $N$ as $N_0 \times [0,1]$.  Theorem~\ref{thm:deeperstrata}  gives (possibly) different such product structures, and so does not give the desired regular homotopy.
We have the following improvement of Theorem~\ref{thm:deeperstrata}, under stronger assumptions.

\begin{proposition}\label{prop:crossingforhomotopy}
  Suppose $N=N_0\times[0,1]$, $\Omega=\Omega_0\times[0,1]$, and $G_\tau\colon N\to \Omega$ is a family of immersions such that
  $G_0(N_0\times\{0\})\subseteq \Omega_0\times\{0\}$, $G_0(N_0\times\{1\})\subseteq \Omega_0\times\{1\}$, and $G_\tau$ does not depend on $\tau$
  on $N_0\times\{0,1\}$. Assume that there exists a path
  of functions $F_\tau\colon\Omega\to[0,1]$, $\tau\in[0,1]$, such that $F_\tau$ is Morse $($as a function on $\Omega)$,
  $F_\tau^{-1}(0)=\Omega_0\times\{0\}$, $F_\tau^{-1}(1)=\Omega_0\times\{1\}$,
  $F_0$ is projection onto the second factor, and $F_1$ is an immersed Morse function for $G_1(N)$
  with no critical points on the first stratum.

  Then the maps \[H_i=N_0\to N_0\times\{i\}\xrightarrow{G_0|_{N_0 \times \{i\}}} \Omega\times\{i\}\to \Omega,\] $i=0,1$,
  are regularly homotopic. Moreover, if $N$ is disconnected and for each $\tau \in [0,1]$, we have that $G_\tau$ does not create intersections between different components of $N$, then neither does the regular homotopy between $H_0$ and $H_1$.
\end{proposition}

\begin{proof}
  As $F_0$ has no critical points (as a Morse function on $\Omega$), none of the $F_\tau$ have critical points either.
  Stability of Morse functions (see \cite[Proposition III.2.2]{GG}), lets us find a family $\Psi_\tau\colon \Omega\to\Omega$,
  preserving $\Omega_0\times\{0,1\}$, and a family of strictly increasing maps $\lambda_\tau\colon[0,1]\to[0,1]$, such that
  \[F_\tau = \lambda_\tau\circ F_0\circ \Psi_\tau.\]
  The family $\Psi_\tau$ can be assumed to be the identity on $\Omega_0\times\{0\}$ and to preserve $\Omega_0\times\{1\}$.
  For simplicity of the argument we may and will replace the path $F_\tau$ by $\lambda_\tau^{-1}\circ F_\tau$ so that we have $\lambda_\tau$ being the identity for all $\tau$.

  Now define a new family of immersions, $G'_{\tau} := \Psi_\tau\circ G_\tau$.  Then, $F_0\circ G'_\tau=F_\tau\circ G_\tau$. In particular, since by hypothesis $F_1$ has no critical points of depth one, with respect to the stratification determined by $G_1(N)$, it follows that $F_0$ has no critical points at depth $1$ with respect to the stratification determined by $G'_1(N)$. Thus by Theorem~\ref{thm:deeperstrata}, the composition $F_0\circ G'_1$ induces a product structure on $N$, which we denote by $\psi\colon N\xrightarrow{\cong}  N_0\times[0,1]$.
  Define a family of maps as
  \[H'_t\colon N_0\to N_0\times\{t\}\subseteq N\xrightarrow{G'_1}\Omega_0\times\{t\}=\Omega_0.\]
  By hypothesis, $G_\tau$ is independent of $\tau$ on $N_0\times\{0,1\}$, and moreover we arranged for $\Psi_\tau$ to act as the identity
  near $\Omega_0\times\{0,1\}$. Hence $H'_0=H_0$ and $H'_1=H_1$, and so $H'_t$ gives a regular homotopy between $H_0$ and $H_1$..

It remains to prove that if $G_\tau$ keeps connected components of $N$ separate, then so does the regular homotopy. Note that $G_\tau$ separates connected
  components if and only if $G'_\tau$ does. Hence in particular $G'_1$ separates connected components. But the homotopy $H_t$ in the proof of Theorem~\ref{thm:deeperstrata} is constructed by tracing along what is called $G(N)$ in the notation of that theorem, the role of which in the present proposition is played by $G'_1(N) = G_1'(N_0 \times [0,1]$.
  Hence  no intersections between different components are created.
\end{proof}

\section{Membranes}\label{sec:hypermembranes}

\subsection{Definitions}
Recall that $M^n \subseteq \Omega^{n+k}$ is a generically immersed manifold, $F\colon\Omega\to\R$ is an immersed Morse function and $\xi$
is a grim vector field for $F$.
In the case of a critical point on $\O\setminus M$, as in classical Morse theory,
the stable and unstable manifolds are discs (at least near the critical point), whose intersections with level sets $F^{-1}(c)$ are spheres whose dimension depends
on the index of the critical point.  If $p\in \O[d]$
is a critical point of $\xi$ and $d>1$,
the linear part of $\xi$ is degenerate at $p$.

Since we know the form of $\xi$
in a neighbourhood of $p$, we can explicitly describe the dynamics of the vector field near $p$. The objects of interest to us are
the set of points that are attracted to the critical point (in classical Morse theory, the stable or descending manifold) and the set of points that are
repelled (in classical Morse theory, this corresponds to the unstable or ascending manifold).

\begin{definition}[Membranes]\label{def:hypermembrane}
  For a critical point $p$ of $\xi$, the ascending (respectively descending) membrane $\Ha(p)$ (respectively $\Hd(p)$) the set of points $w$ such that a trajectory of $\xi$ through $w$ reaches $p$ in the infinite past (respectively in the infinite future).
\end{definition}

The notion is a straightforward generalisation of the membranes of Perron~\cite{Pe} and Sharpe~\cite{Sha}, which were defined for embedded submanifolds; see also \cite{BP}.

In local coordinates \eqref{eq:localgrim} the descending
membrane is given by
\begin{multline*}
\{x_{h+1}=\dots=x_{r}=0=y_{12}=\dots=y_{1k}=y_{22}=\dots=y_{2k}=\dots=y_{dk}\}\cap
\{y_{11}\le 0,\,y_{21}\le 0,\ldots,y_{d1}\le 0\}.\end{multline*}
Similarly, the ascending membrane is given by
\begin{multline*}
\{x_{1}=\dots=x_{h}=0=y_{12}=\dots=y_{1k}=y_{22}=\dots=y_{2k}=\dots=y_{dk}\}\cap \{y_{11}\ge 0,\,y_{21}\ge 0,\ldots,y_{d1}\ge 0\}.
\end{multline*}

The ascending and descending membranes	
  $\Ha(p)$ and $\Hd(p)$ are manifolds-with-corners. In particular they
  are stratified manifolds in the sense of Section~\ref{sec:stratified_manifolds}:
  the $d$-th stratum of $\Ha(p)$ is the intersection of $\Ha(p)$ with $\O[d]$.
  In particular, we will be able to use Stratified General Position Lemma~\ref{disjoint}
  to study intersections of membranes of different critical points.
  As a first step in this direction, we prove the following result.

\begin{lemma}\label{lem:dimension}
  Given a critical point $p$ of $\xi$ of index $h$ and depth $d$, for any $j\le d$ we have $\dim\Hd(p)\cap \O[j]=h+d-j$,
  and $\dim\Ha(p)\cap \O[j]=n-k(d-1)-h+d-j$.
\end{lemma}

\begin{proof}
	The stratum at depth $d$ has dimension $n-k(d-1)$. The stable manifold at the $d$-th stratum has dimension $h$ and the unstable manifold has
	the complementary dimension $n-k(d-1)-h$. This gives the claimed answer when $d=j$.  Now the dimension of the membrane at each stratum increases by one as we go to a more shallow level (i.e.\ decreasing~$j$ by one).
\end{proof}

\subsection{The Morse-Smale condition}\label{sec:morse-smale-condn}
In classical Morse theory, the Morse--Smale condition means that stable and unstable manifolds of different
critical points of a vector field intersect transversely. The generalisation to the case of grim vector fields is a straightforward extension of the
approach of \cite{BP}.

\begin{definition}\label{def:Morse_smale_immersed}
	Let $M\subseteq\Omega$ be an immersed manifold, $F \colon \O \to \R$ a Morse function and $\xi$ a grim vector field for $F$.
	We will say that $\xi$ satisfies the \emph{Morse--Smale condition}
	if for any two critical points $p$, $p'$ of $\xi$,
the ascending membrane of $p$ intersects the descending membrane of $p'$ transversely, in the sense of the intersection of stratified manifolds in the stratified manifold~$\O$.

More specifically, for every $j=0,1,\ldots$, the intersection of membranes $\Ha(p)\cap \O[j]$ and $\Hd(p')\cap \O[j]$ is required to be transverse in $\O[j]$.
\end{definition}

\begin{proposition}
  Every grim vector field can be perturbed outside of critical level sets to a vector field that satisfies the Morse--Smale condition.
\end{proposition}

\begin{proof}[Sketch of proof]
  The proof is essentially a repetition of the proof given in \cite[Theorem 5.2]{Mil65}. The only difference is that we use the fact that we need to use
  the analogue of \cite[Lemma 5.3]{Mil65} in the stratified category, but this is essentially the statement of Lemma~\ref{disjoint}.
\end{proof}

For Morse--Smale grim vector fields we can calculate the dimension of the intersections of membranes.

\begin{lemma}\label{lem:intersectiondimension}
	Suppose a grim vector field $\xi$ is Morse--Smale and $p,p'$ are two critical points with $F(p) < F(p')$, such that the index of $p$ is $h$ and the index of $p'$ is $h'$.
	Suppose that $p$ lies on the $d$--th stratum and $p'$ lies on the $d'$--th stratum. Then for every $c\in(F(p),F(p'))$ and for depth $j\ge 0$ we have
	\[\dim \Ha(p)\cap\Hd(p')\cap \O[j]\cap F^{-1}(c)= (h'+d')-(h+d)+(k-2)(j-d)-1.\]
\end{lemma}

\begin{proof}
	According to Lemma~\ref{lem:dimension} we have 
	\[\dim\Ha(p)\cap \O[j]=n-k(d-1)-h+d-j \text{ and } \dim\Hd(p')\cap \O[j]=h'+d'-j.\]
	We also have $\dim \O[j]=n-k(j-1).$ By transversality, the dimension of the intersection in $\O[j]$ is equal to
	\begin{align*}
  \left(h'+d'-j\right)+ &\left(n-k(d-1)-h+d-j\right)-\left(n-k(j-1)\right) \\ =& (h'+d')-(h+d)+(k-2)(j-d).
\end{align*}
	Intersecting $\Ha(p)\cap\Hd(p')\cap \O[j]$ with the level set
	$F^{-1}(c)$ drops the dimension by~$1$.
\end{proof}

The following corollary will be extremely useful.

\begin{corollary}\label{cor:highcodim}
	Suppose the codimension $k\ge 2$. With the notation of Lemma~\ref{lem:intersectiondimension}, if $\Ha(p)\cap\Hd(p')$ is nonempty, then $h'+d'>h+d$.
\end{corollary}

\begin{proof}
	By Lemma~\ref{lem:intersectiondimension}, 
\[\dim \Ha(p)\cap\Hd(p')\cap \O[j]\cap F^{-1}(c)= (h'+d')-(h+d)+(k-2)(j-d)-1.\]
If $\Ha(p)\cap\Hd(p')\neq\emptyset$, then this number is nonnegative for some $j$.
Clearly $j\le d$, because by the tangency condition \ref{item:G2} the depth of a  limit point of a trajectory is never smaller than the depth of a generic point of the trajectory.

Since $k \geq 2$, it follows that $(k-2)(j-d) \le 0$.  Thus
\[ (h'+d')-(h+d) -1 \geq  (h'+d')-(h+d)+(k-2)(j-d)-1 \geq 0, \]
and therefore $h'+d' > h+d$ as claimed.
\end{proof}

Intersections of membranes give rise to obstructions to performing rearrangement and cancellation of critical points.
The contrapositive of Corollary~\ref{cor:highcodim} can be used to show that such intersections do not occur.

\section{Grim neighbourhoods and broken trajectories}\label{section:broken-trajectory-lemma}

\subsection{Broken trajectories}

  Let $F\colon\O\to\R$ be a Morse function and let $\xi$ be a grim vector field for $M$.
Recall that a \emph{trajectory} of $\xi$ is a curve $\gamma\colon\R\to\O$ such that $\frac{d}{dt}\gamma(t)=\xi(\gamma(t))$. The \emph{limit points} of the trajectory are $z:= \lim_{t\to\infty}\gamma(t)\in\O$ (forward limit) and $w:= \lim_{t\to-\infty}\gamma(t)\in\O$ (backward limit).
We say that $\gamma$ reaches $z$ in the \emph{infinite future} and $w$ in the \emph{infinite past}.

\begin{definition}\label{def:zigzags_everywhere}
Let $\xi$ be a smooth vector field in $\Omega$.
  \begin{itemize}
    \item[(a)] A \emph{broken trajectory} of $\xi$ is a finite collection $\gamma_1,\dots,\gamma_s$ of trajectories of $\xi$ such that the forward limit of each of the $\gamma_i$ is the backward limit of $\gamma_{i+1}$ fo $i=1,\dots,s-1$.
    \item[(b)] If $\Omega'$ is the closure of an open set of $\Omega$, a \emph{zigzag trajectory} in $\Omega'$ is a finite collection of trajectories $\gamma_1,\dots,\gamma_s$
      all contained in $\Omega'$ and such that one of the limit points of $\gamma_i$ coincides with one of the limit points of $\gamma_{i+1}$ for every $i = 1,\dots,s-1$; see Figure~\ref{fig:zigzag}.
  \end{itemize}
\end{definition}

\begin{figure}
  \input{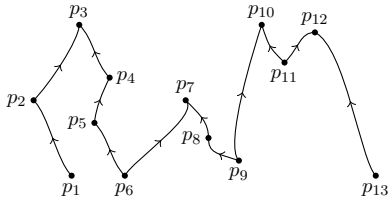}
  \caption{An example of a zigzag trajectory. All points $p_1,\dots,p_s$ are critical points
  of the vector field.}\label{fig:zigzag}
\end{figure}
\begin{definition}\label{def:relation}
  Let $F\colon\O\to\R$ be a Morse function and let $\xi$ be a grim vector field for $F$.
  Let $a<b$ be two non-critical values of $F$.
 For two points $w,z\in F^{-1}([a,b])$ we say that $w\sim_{0} z$ if there exists a $($subset of a$)$ trajectory of $\xi$ union its limit points, contained in $F^{-1}([a,b])$, connecting $w$ and $z$.
  We define  the relation $\sim_{\xi,a,b}$ as the transitive closure of the relation $\sim_{0}$.

  For any point $w\in F^{-1}[a,b]$ we define $\cK_{\xi,a,b}(w)$ to be closure of the set of points $z$ in $F^{-1}[a,b]$ such that $w\sim_{\xi,a,b} z$.
\end{definition}

\begin{remark*}
  We have that $w\sim_{\xi,a,b}z$ if and only if there is a zigzag trajectory of $\xi$ in $F^{-1}[a,b]$ connecting $w$ with $z$.
\end{remark*}

\subsection{Grim neighbourhoods}
In the study of dynamics of vector fields, there is the notion of an index pair; see e.g.~\cite{ConleyZehnder,Salamon}. If $\xi$ is a vector field,
an \emph{index pair} is a pair of sets $(X,L)$ such that $L\subseteq X$ is the exit set for $\xi$. That is, if $\gamma$ is a trajectory of $\xi$,
$\gamma(0)=x\in X$, then there exists $t_0>0$ such that for $t\in(0,t_0)$, if $x\in X\setminus L$ then $\gamma(t)\in X$, and if $x \in L$ then $\gamma(t)\notin X$.
 Index pairs are used to study the behaviour of $\xi$ near critical points.

\begin{definition}[Index triple]
  A triple $(X,L_\iin,L_\oout)$ forms an \emph{index triple} for $\xi$, if $(X,L_\oout)$ forms an index pair for $\xi$, and $(X,L_\iin)$ forms an index pair for $-\xi$.
\end{definition}

The definition means that any trajectory of $\xi$ that intersects $X$ enters $X$ through $L_\iin$ and exits through $L_\oout$.

\begin{example}\label{ex:index_point}
  Suppose $p\in\Omega$ is an isolated critical point of $\xi$. Then  any neighbourhood $U$ of $p$
  contains an index triple that contains~$p$ (see \cite[Theorem 4.3]{Sal2} for a more general statement).
\end{example}

In our applications, we will impose smoothness conditions on the index triple. This leads to a notion of a regular index triple.

\begin{definition}[Regular index triple]\label{def:regular_index_triple}
  A \emph{regular index triple} is a triple $(X,\piin X,\pout X)$ such that $X$ is a manifold-with-corners of codimension $0$ in $\O$, $\piin X,\pout X\subseteq\partial X$
  are manifolds with boundary (of codimension $0$ in $\partial X$), $\ptan  X:=\partial X\setminus (\piin X \cup \pout X)$ is a submanifold, and the corners of $\partial X$
  are at $\piin X\cap \ptan X$ and $\pout X\cap \ptan X$.
  We also require that $\piin X$  and $\pout X$  be disjoint.
\end{definition}

We point out that there is no ambiguity with writing $\bd X$ or $\partial X$ in Definition~\ref{def:regular_index_triple},
since $X$ is a codimension zero submanifold of $\O$, and so $\bd X = \partial X$, i.e.\ the manifold boundary and the point-set boundary/frontier agree.

It follows from the definition  that $\xi$ is everywhere tangent to $\ptan  X$: if not, the trajectory of~$\xi$ through a point in $\ptan X$
would leave $X$ immediately either in future or in the past, so that point would belong to either $\piin X$ or $\pout X$.

\begin{remark}\label{rem:remark}
For an arbitrary vector field, given an isolated critical point $p$, and a neighbourhood $V$ containing $p$ but no other critical points, there need not exist a regular index triple $X$ with $p \in X \subseteq V$.
However, if $p$ is a critical  point of a grim vector field $\xi$, then an index triple exists; see Lemma~\ref{lem:grimneigh} below.
\end{remark}

\begin{definition}[Grim neighbourhood]\label{def:grimneigh}
  Suppose $F$ is an immersed Morse function and $\xi$ is a grim vector field for $F$. A \emph{grim neighbourhood} of a set $C$
  is a regular triple index $(X,\piin X,\pout X)$ such that $C\subseteq X$, $X\setminus C$ has no critical points, and each of $\piin X,\pout X$ belong
  to a single level set of $F$.
\end{definition}

\begin{figure}
	\input{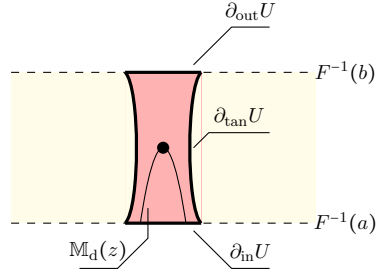}
	\caption{A grim neighbourhood.}\label{fig:grim}
\end{figure}

In most cases, we will consider a grim neighbourhood for an isolated critical point of $\xi$.
  We make the following observation.

  \begin{lemma}\label{lem:grim_observation}
    Let $U_C$ be a grim neighbourhood of a closed subset $C$ with $\partial_{\oout}U_C\subseteq F^{-1}(b)$ and $\partial_{\iin}U_C\subseteq F^{-1}(a)$ for some $a,b$ $($necessarily $a<b)$.

    Let $a',b'\in\R$ be such that $C\subseteq F^{-1}(a',b')$ and $a<a'$, $b'<b$. Then
    $U_C\cap F^{-1}(a',b')$ is also a grim neighbourhood of $C$.
  \end{lemma}
The proof of Lemma~\ref{lem:grim_observation} is completely straightforward and will be omitted. We pass to the construction of a grim neighbourhood.

\begin{lemma}\label{lem:grimneigh}
	Suppose $p\in\O$ is a critical point of $F$ and $a,b$ are such that there is no other critical point in $F^{-1}[a,b]$.
	Then for any open subset $V\subseteq F^{-1}(a)$ containing $\Hd(p)\cap F^{-1}(a)$ there exists a grim neighbourhood $U\subseteq\O$ of $p$ such that
	$\ol{V}=\partial_{\iin} U\subseteq F^{-1}(a)$ and $\partial_{\oout}(U)\subseteq F^{-1}(b)$.
\end{lemma}

\begin{proof}
  Let $W$ be the complement of $V$ in $F^{-1}(a)$. Define $U=F^{-1}(a,b)\setminus\Xi_{(a,b)}(W)$,
  where we recall that $\Xi_{(a,b)}(W)$ is the set of points in $F^{-1}(a,b)$ that are reached from $W$ by the flow of $\xi$.
  The verification that $U$ satisfies the desired conditions is straightforward.
\end{proof}

\begin{remark*}
  The same argument shows that if $V\subseteq F^{-1}(b)$ contains $\Ha(z)\cap F^{-1}(b)$, then there is a grim neighbourhood $U$ of $z$ such that $\partial_{\oout} U=\ol{V}$.
\end{remark*}

We can now state a result on grim neighbourhoods.

\begin{lemma}\label{lem:grimneighexist}
  For any open subset $V\subseteq F^{-1}[a,b]$ with $\cK_{\xi,a,b}(w) \subseteq V$, there exists a grim neighbourhood $V'$ of $\cK_{\xi,a,b}(w)$
  in $F^{-1}[a,b]$ such that $V'\subseteq V$.
\end{lemma}

\begin{proof}
  In the proof we will use a weak version of the Broken Trajectory Lemma~\ref{lem:limiting_lemma}, which appears below i.e.\ in the presence of the function $F$.
  This formulation of the Broken Trajectory Lemma is well-known, and is commonly used in the proof that $\partial^2=0$
  in Morse homology; compare e.g.\ \cite[Proof of Theorem 3.1]{Salamon}. It does not depend on Lemma~\ref{lem:grimneighexist}, so we do not
  have a circular dependence of lemmas.

  Suppose $\cK_{\xi,a,b}(w)$ contains critical points $p_1,\dots,p_r$. If $r=0$, that is, there are no critical points, then $\cK_{\xi,a,b}(w)$
  is just the intersection of $F^{-1}[a,b]$ with the trajectory through $w$. In that case, we let $w_0$ be the intersection of that
  trajectory with $F^{-1}(a)$. For any $m>0$, we choose $W_m$ to be the ball in $F^{-1}(a)$, with center $w_0$ and radius $1/m$, and $V_m$
  to be $\Xi_{[a,b]}W_m$, the set of points in $F^{-1}([a,b])$ that can be reached from $W_m$ by the flow of $\xi$.
  Then  $\bigcap_m V_m=\cK_{\xi,a,b}(w)$, so for large $m$, by compactness, $V_m\subseteq V$.

  For the rest of the proof, assume $r>0$.
  Choose a sequence of open sets $V_m\subseteq F^{-1}[a,b]$ containing $p_1,\dots,p_r$ such that $V_m\supset V_{m+1}$ and $\bigcap V_m=\{p_1,\dots,p_r\}$.
  For instance, one could take for $V_m$ the union of open balls of radius $1/m$ centred at the $p_i$ in some metric.
  For each $V_m$ set
  \[U_{m}=\bigcup_{z\in V_m} \cK_{\xi,a,b}(z).\]
  As $w$ is connected to at least one critical point by a trajectory of $\xi$, $w\in U_m$ for all $m$. From the construction
  of $U_m$, we also have
  $U_m\supseteq U_{m+1}$.
  We claim that $U_m$ is open, and that $\bigcap U_m=\cK_{\xi,a,b}(w)$.
 Before we prove openness, we make an observation.

  \begin{lemma}\label{lem:shelter}
    For $m$ sufficiently large, $U_m$ does not contain any critical point other than $p_1,\dots,p_r$.
  \end{lemma}

  \begin{proof}
    Suppose for contradiction that $x_m\in U_m$ is a critical point of $\xi$ different than $p_1,\dots,p_r$. Suppose a trajectory of $\xi$
    from (or to) $x_m$ hits a point $y_m\in V_m$. In fact, $x_m$ could be connected to yet another critical point of $\xi$, in that case,
    we choose $x_m$ to be a point connected by a trajectory (not a zigzag trajectory) to a point in $V_m$. On passing to a subsequence,
    the points $y_m$ converge to a point $y$, which is necessarily one of the points $p_1,\dots,p_r$. On the other hand, the points $x_m$
    are critical points of $\xi$ different than $p_1,\dots,p_r$. The number of critical points is finite, so the sequence $x_m$, up to passing to a subsequence, is constant. Call it $x$.
    By the Broken Trajectory Lemma~\ref{lem:limiting_lemma}, there is a broken trajectory from $x$ to $y$. But this means that there is a zigzag
    trajectory from $w$ to $x$, contradicting the assumption that $p_1,\dots,p_r$ were all of the critical points in $\cK_{\xi,a,b}(w)$.
  \end{proof}

 Next, we prove openness.

  \begin{lemma}\label{lem:grim_open}
    For sufficiently large $m$, the subset $U_m$ is open.
  \end{lemma}

  \begin{proof}
    Let $x\in U_m$. We want to show that an open subset of $x$ is contained in $U_m$. The easy case is that $x$ is a critical point of $\xi$. Then $x$ is one of the $p_1,\dots,p_r$ (by Lemma~\ref{lem:shelter}),
    and the open subset $V_m$, which contains $x$, is in $U_m$.

    Suppose $x$ is not a critical point of $\xi$. Then $x$ is connected by a zigzag trajectory to a point $v\in V_m$. If this trajectory is a zigzag trajectory, and
    not just a trajectory, choose a part of it $\gamma$
    that passes through $x$ and hits a critical point, say $p_1$. Either way, $\gamma$ must enter $V_m$, so $\gamma$ connects $x$ to some point $v\in V_m$. This means that there is a non-critical point $v\in V$ and a trajectory of $\xi$ connecting $x$ to $v$. The time to travel from $x$ to $v$ is finite, so
    by continuous dependence of solutions on the initial conditions, any point sufficiently close to $x$ can be connected to a point in $V_m$
    by a trajectory of $\xi$, and hence also lies in $U_m$. Hence $U_m$ is open, as desired.
  \end{proof}

  \begin{lemma}
We have that $\bigcap U_m=\cK_{\xi,a,b}(w)$.
       \end{lemma}
  \begin{proof}
     The inclusion $\cK_{\xi,a,b}(w)\subseteq U_m$ follows from the definition of $U_m$. Therefore, we have to prove that
  $\bigcap U_m\subseteq \cK_{\xi,a,b}(w)$.

  To this end, take $w'\in\bigcap U_m$. By definition, it is connected by a zigzag trajectory to a sequence of points $v_m\in V_m$.
  Arguing as in the proof of Lemma~\ref{lem:grim_open}, we show that $w'$ is connected by an actual trajectory of $\xi$ with a point $v_m$.
  The points $v_m$ have a converging subsequence, let $v$ be the limit. It is necessarily one of the critical points $p_1,\dots,p_r$.
  The trajectories connecting $w'$ to $v_m$ limit into
  a broken trajectory connecting $w'$ to $v$. This means that $w'$ is connected to one of the points $p_1,\dots,p_r$. But these points
  belong to $\cK_{\xi,a,b}(w)$. Hence $w'$ also belongs to $\cK_{\xi,a,b}(w)$.
We have shown that $\bigcap U_m=\cK_{\xi,a,b}(w)$.
\end{proof}

  \emph{Continuing the proof of Lemma~\ref{lem:grimneighexist}}, we aim to deduce that one of the $U_m$ must be contained in $V$. This follows from compactness of $\cK_{\xi,a,b}$ by the following classical argument. Consider the cover of $F^{-1}[a,b]$ by $V$ and $X_m:=F^{-1}[a,b]\setminus\ol{U}_m$.
  This is an open cover of a compact set, so it has a finite subcover. The subcover must contain some sets $X_{m_1},\dots,X_{m_\ell}$ and possibly $V$. Suppose $m_1<\dots<m_\ell$. Then $X_{m_1}\subseteq\dots\subseteq X_{m_\ell}$, therefore we may assume that the cover contains only one set $X_m$. It necessarily contains $V$, then. That is to say, for some $m$, $X_m\cup V=F^{-1}[a,b]$. Suppose $y\in F^{-1}[a,b]$ is such that
  $y\in\ol{U}_m$ but $y\notin V$. Then  $y\notin X_m\cup V$, which is a contradiction. This means that $\ol{U}_m\subseteq V$, so we just take $V'=U_m$.
  This completes the proof of Lemma~\ref{lem:grimneighexist}.
\end{proof}

\subsection{The Broken Trajectory Lemma}

The notion of a broken trajectory is connected with the slogan that the closure of the space of trajectories is obtained by adding the broken trajectories. For gradient-like vector fields of Morse functions, the result is standard; see \cite{Salamon}.
As we aim to use the result for the proof of Vector Field Integration Lemma below, we do not use Morse functions, only vector fields. The assumptions are therefore
somewhat more complicated than usual.

\begin{lemma}[Broken Trajectory]\label{lem:limiting_lemma}
  Assume $\xi$ is a vector field on $\Omega$, and $\Omega$ is presented as a disjoint union of three sets $\Omega_-\cup\Omega_0\cup\Omega_+$,
  where $\Omega_-$ and $\Omega_+$ are open, and $\Omega_0$ is compact.
  Suppose that
  \begin{enumerate}[label=(BT-\arabic*)]
    \item\label{item:BT_finite} $\xi$ has finitely many critical points in $\Omega_0$ and they are all isolated. Moreover, there are
      no critical points on $\ol{\Omega}_0\setminus\Omega_0$;
    \item\label{item:BT_crits} There is a pairwise disjoint family of subsets $\{X_p\}$, with $X_p\subseteq\Omega_0$, of pairwise disjoint
      regular index triples for all the critical points $p\in\Omega_0$;
    \item\label{item:BT_limitset} Any trajectory of $\xi$ through a point $z\in\Omega_0$ has either a forward limit at a critical
      point $p\in\Omega_0$, or it leaves in finite time to $\Omega_+$;
    \item\label{item:BT_backwardlimitset} Any trajectory of $\xi$ through a point $z\in\Omega_0$ has either a backward limit in a critical
      point $p\in\Omega_0$, or it leaves in finite time in the past to $\Omega_-$;
    \item\label{item:BT_forward} The set $\Omega_+$ $($respectively $\Omega_-)$ is forward invariant, respectively backward invariant. That is, a trajectory staying in $\Omega_+$, respectively $\Omega_-$, stays forever in the future in $\Omega_+$, respectively forever in the past in $\Omega_-$.\label{item:BT_nojump}.
  \end{enumerate}
  Suppose $\{z_\ell\}$ and $\{w_\ell\}$ are sequences of points in $\O_0$ and that for every $\ell$ there is a trajectory of $\xi$ passing through $z_\ell$ and then through $w_\ell$.
  Assume $\lim_{\ell\to\infty} z_\ell=z$ and $\lim_{\ell\to\infty} w_\ell=w$ and $z,w \in \Int \O$.
  Then either there exists a $($possibly broken$)$ trajectory of $\xi$ passing through $z$ and then through $w$,
  or a $($non-constant$)$ broken trajectory connecting a critical point of $\xi$ in $\Omega_0$ with itself.
  %
\end{lemma}
\begin{proof}
  Assume first that $z,w$ are not critical points of $\xi$. The case that at least one of $z,w$ is a critical point requires only minor adjustments to the argument below, and is left to the reader.

  By \ref{item:BT_finite}, we let $p_1,\dots,p_m$ be critical points of $\xi$ in $\Omega_0$. Choose a pairwise disjoint regular index triples
  $(X_1,\pout X_1,\piin X_1)$, \dots, $(X_m,\pout X_m, \piin X_m)$ for $p_1,\dots,p_m$, which exist by \ref{item:BT_crits}. Note that each of $X_1,\dots,X_m$
  is open and has compact closure. 
   We assume also that $z,w$ are disjoint from the closures of $X_1,\dots,X_m$; in particular, so are the points $z_\ell$ and $w_\ell$.

  We consider two simple cases.

  First, we let $T_\ell$ be the time it takes to go from $z_\ell$  to $w_\ell$. If $T_\ell$ has a bounded subsequence, on passing to a subsequence we have that $T_\ell\to T$ for some $T \in \R$.
  By continuous dependence of solutions on the initial conditions, we conclude that
  the trajectory of $\xi$ starting from $z$ reaches $w$ after time $T$.  This concludes the proof.

  The second simple case is that the trajectory through $z$ hits $\Omega_+$. Then  nearby trajectories also hit $\Omega_+$
  in finite time. Once a trajectory enters $\Omega_+$, it stays there forever by \ref{item:BT_forward}. Thus, we get a contradiction with contradicting the fact that the trajectories connect $z_\ell$ to $w_\ell$.

  The last case is that $T\to\infty$, and the trajectory through $z$ stays forever in $\Omega_0$. By \ref{item:BT_limitset}, it
  has to hit a critical point, say $p_{i_1}$. That is to say, at some moment, say $t_1$,
  it enters $X_{i_1}$ through, say $v_1\in \piin X_{i_1}$ and stays
  in $X_{i_1}$ forever on. Note that the trajectory hits the interior of $X_{i_1}$ after finite time. That is to say, there is
  a sequence of points $v_{1,\ell}\in \piin X_{i_1}$, converging to $v_1$, such that the trajectory through $z_{\ell}$ hits $v_{1,\ell}$.

  We claim that the trajectory through $v_{1,\ell}$ hits $\pout X_{i_1}$ for $\ell$ sufficiently large. Otherwise, this trajectory stays
  forever in $X_{i_1}$, hence $w_\ell\in X_{i_1}$, so $w\in\ol{X}_{i_1}$, which is a contradiction. Let $w_{1,\ell}\in \pout X_{i_1}$ be the point of the first
  exit of the trajectory of $\xi$ through $v_{1,\ell}$. Up to passing to a subsequence, we may and will assume that $w_{1,\ell}$ converge
  to a point $y_1$.

  The backward limit of the trajectory through $y_1$ has to be $p_{i_1}$. Otherwise, the trajectory through $y_1$ exits (in the past)
  $X_{i_1}$ through a point $v'_1$. As $y_1$ is the limit of $w_{1,\ell}$, and $\xi$ connects $v_{1,\ell}$ to $w_{1,\ell}$,
  we infer that $v'_1$ is the limit of $v_{1,\ell}$. But the limit of $v_{1,\ell}$ was $v_1$, the trajectory through $v_1$ hits $p_{i_1}$
  (does not reach $y_1$, in particular); the contradiction shows that the backward limit of $y_1$ is $p_{i_1}$. Compare Figure~\ref{fig:broken_traj}.

  \begin{figure}
    \input{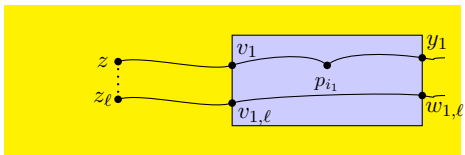}
    \caption{Notation of the proof of the Broken Trajectory Lemma~\ref{lem:limiting_lemma}.}\label{fig:broken_traj}
  \end{figure}

  We start the inductive process, by looking at the forward behaviour of the trajectory through $y_1$. To see this, recall that $w_{1,\ell}$
  converges to $y_1$. The
  trajectory through $w_{1,\ell}$ hits $z_\ell$ in the past by construction, so hits $w_\ell$ in the future. If the time required
  to reach $w_\ell$ from $w_{1,\ell}$ is bounded, then the trajectory through $y_1$ reaches $w$ in finite time. We declare the limit trajectory to be the union of a trajectory from $z$ through $v_1$ to $p_{i_1}$, and the trajectory from $p_{i_1}$ through $y_1$ to $w$.

  If the time is infinite, we repeat the procedure, i.e.\ we find the index $i_2$ such that the trajectory through $y_1$ hits $p_{i_2}$.
  We assume that $v_2$ is the point of last entry of this trajectory through $y_1$, and we let $v_{2,\ell}\in \piin X_{i_2}$
  be the sequence of points on the trajectory through $w_{1,\ell}$ converging to $v_2$. We let $w_{2,\ell}\in \pout X_{i_2}$ be
  the points on the same trajectory, when they exit $X_{i_2}$, and $y_2$ be the limit of a subsequence of $w_{2,\ell}$. As above,
  we show that the backward trajectory through $y_2$ hits $p_{i_2}$.

  We repeat the process to obtain a sequence of indices $i_1,\dots,$, points $v_1,v_2,\dots\in \piin X_{i_1},\piin X_{i_2},\dots$ and
  points $y_1,\dots\in \pout X_{i_1}, \pout X_{i_2}, \dots$. The point $y_i$ is the limit of points $w_{i,\ell}$ as $\ell\to\infty$,
  and the trajectory through the original point $z_\ell$ passes through all the points $w_{1,\ell},w_{2,\ell},\dots$.

  There are two possibilities: either the process stops after a finite time, that is, the trajectory through the point $y_m$
  reaches $w$ in finite time. Then  there is a trajectory from $z$ through $v_1$ to  $p_{i_1}$, from $p_{i_1}$ through $y_1$ and $v_2$ to $p_{i_2}$, and so on, and
  the trajectory from $p_{i_m}$ through $y_m$ to $w$. This is the desired broken trajectory.

  The other option is that the process does not stop. As the number of singular points is finite,
  there have to be repetitions among the indices $i_1,i_2,\dots$. Suppose $i_s=i_{s'}$ and $s'>s$. Then
  there is a broken trajectory that:
  \begin{itemize}
    \item starts at $p_{i_s}$ and leaves $X_{i_s}$ through $y_s$;
    \item enters $X_{i_{s+1}}$ through $v_{s+1}$ and hits $p_{i_{s+1}}$;
    \item resumes from $p_{i_{s+1}}$ and leaves $X_{i_{s+1}}$ through $y_{s+1}$;
    \item continues this behaviour until it enters $X_{i_{s'}}$ through $v_{s'}$ and hits $p_{i_{s'}}$.
  \end{itemize}
  This means that we have created a circular trajectory, as in the last clause of the lemma's statement.
\end{proof}

\section{Integrating grim vector fields}\label{sec:integrate}

The purpose of this section is to prove the following result, called the Vector Field Integration Theorem. It says that
under some natural assumptions, a vector field can be turned into a Morse function.  In the upcoming theorem and proof, $\O_{\pm}$ are open subsets, and $\partial \O_{\pm}$ denotes the point-set boundary, or frontier, namely the closure minus the interior.

\begin{theorem}[Vector Field Integration]\label{thm:integrate}
  Assume there are two $($possibly empty$)$ disjoint open subsets $\O_-$ and $\O_+$ of $\O$ and suppose $F\colon\ol{\O}_-\sqcup\ol{\O}_+\to\R$
  is an immersed Morse function such that $F|_{\partial \O_-}=a$ and $F_{\partial \O_+}=b$ for some real numbers $a<b$. Moreover suppose $F$ restricts to $F\colon\O_-\to(-\infty,a)$ and  $F\colon\O_+\to(b,\infty)$; see Figure~\ref{fig:integr1}.
  \begin{figure}
    \input{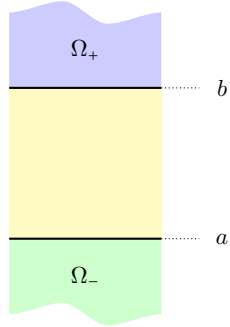}
    \caption{Notation of Vector Field Integration Theorem~\ref{thm:integrate}.}\label{fig:integr1}
  \end{figure}
  Suppose $\xi$ is a vector field on $\Omega$ that is a grim vector field for $F$ on $\O_-\sqcup\O_+$
  satisfying the following conditions $($reminiscent of \ref{item:G2} and \ref{item:G3} of Definition~\ref{def:grim}$)$:
  \begin{itemize}
    \item (compare \ref{item:G2}) If $w\in \O[d]$, then $\xi(w)\in T_{w}\O[d]$. That is, $\xi$ is tangent to all the strata.
    \item (compare \ref{item:G3}) If $p\in \O[d]$ is a critical point of $\xi$, then there are local coordinates centred at $p$
      \[(x_1,\ldots,x_m,y_{11},\ldots,y_{1k},\ldots,y_{d1},\ldots,y_{dk})\]
      $($with $m=n+k-kd)$, such that $\O[d]$ is given locally by the intersection of $d$ strata:
      \[\{y_{11}=\dots=y_{1k}=0\}\cap\ldots\cap\{y_{d1}=\dots=y_{dk}=0\},\]
      such that $\xi$ has the form
      \[\xi=(-x_1,\ldots,-x_h,x_{h+1},\dots,x_m,\sum_{j=1}^ky_{1i}^2,0,\ldots,0,\sum_{j=1}^ky_{2i}^2,0,\ldots,0,\sum_{j=1}^ky_{di}^2,0,\ldots,0).\]
  \end{itemize}
  Suppose additionally that $\xi$ satisfies the three conditions below.
  \begin{enumerate}[label=(G-\arabic*), resume=grim]
    \item\label{item:G4} For each trajectory $\gamma\colon\R\to\Omega$ of $\xi$, either the limit $\lim_{s\to \infty}\gamma(s)$ $($respectively $\lim_{s\to -\infty}\gamma(s))$
      exists and belongs to $\O\setminus\ol{\O_-\cup\O_+}$,
      or $\gamma$ enters $\O_+$ in the future $($respectively, enters $\O_-$ in the past$)$ and stays there forever.
    \item\label{item:G5} There are no broken circular trajectories, that is there is no collection of trajectories $\{\gamma_1,\ldots,\gamma_{n+1}\}$ with $\gamma_{n+1}=\gamma_1$
      such that $\lim_{s\to\infty}\gamma_{i}(s)=\lim_{s\to-\infty}\gamma_{i+1}(s)$ for $i=1,\ldots,n$.
    \item\label{item:G6} For each point $w\in\partial\O_-$ $($respectively $w\in\partial\O_+)$ the vector field $\xi$ points out of $\O_-$ $($respectively into $\O_+)$. In particular, there are no critical points on $\partial \O_- \cup \partial_+ \O$.
  \end{enumerate}
  Then $F\colon\ol{\O}_-\sqcup\ol{\O}_+\to\R$ extends to an immersed Morse function $F\colon\Omega\to\R$ such that $\xi$ is a grim vector field for $F$.
\end{theorem}

This generalises the vector field integration lemma in \cite[Section 3.1]{BP}.  We remark that there was a mistake in that proof, because
the function constructed in \cite{BP} was not necessarily continuous: it was smooth along trajectories, but could have jumps at points
such that a trajectory through that point hits the boundary of $V_i$. This is a rather technical mistake and can be fixed by carefully adding a smoothing
function.
We provide a new proof here, both extending to the immersed case and fixing the previous error.

\begin{proof}
  If there are no critical points of $\xi$ in $\O\setminus(\O_-\cup\O_+)$, the definition of $F$ is straightforward and
  follows \cite[Proof of Assertion 5, page 54]{Mil65}.
  Namely, take $w\in\O\setminus(\O_-\cup\O_+)$. By \ref{item:G4} the trajectory of $\xi$ through $w$ enters $\O_+$ in the future and
  $\O_-$ in the past. Let $T_+$ be the time it takes to go from $w$ to $\O_+$ and $T_-$ the time it takes
  to go from $\O_-$ to $w$. We set
\[F(w)=a+\frac{T_-}{T_-+T_+}(b-a).\]
The function $F$ is smooth except possibly at $\partial\O_-$ and $\partial\O_+$, but using a standard argument (similar to the
argument used in \cite[Proof of Assertion 5, page 54]{Mil65}) we can make it smooth on $\O$.

\smallskip
Next, suppose there are critical points of $\xi$ in $\O\setminus(\O_+\cup\O_-)$.
  Choose a partial order of all the critical points of $\xi$ in $\O\setminus(\O_+\cup\O_-)$ by requiring that $p\leq p'$
  if there exists a (possibly broken) trajectory of $\xi$ starting from $p$ and ending up in $p'$.
  The condition \ref{item:G5} says $p \leq p'$ is indeed a partial order.  Every partial order has an associated strict partial order, where we write $p < p'$ if $p \leq p'$ and $p \neq p'$.

  We will first construct a function $F$ near critical points and then use the flow of $\xi$ to extend it over the whole of $\O$.
  Define a function $F$ first on critical points of $\xi$. The only condition on the values of $F$ are that $F(p) \leq F(p')$
  whenever $p \leq p'$ and $a<F(p)<b$. Next, each critical point $p$ of $\xi$ has a neighbourhood in which the vector field has
  the form \eqref{eq:localgrim}. We choose pairwise disjoint neighbourhoods $U_p$ and impose the form
  of $F$ on $U_p$ to be given by \eqref{eq:formofimmersed}. Shrink $U_p$ in such a way that
  if $p<p'$, then $\sup_{w\in U_p} F(w)<\inf_{w'\in U_{p'}} F(w')$.

  Having defined $F$ on $\bigcup U_p$, we may speak of grim neighbourhoods of $p$ (as long as they are in $U_p$).

  \begin{lemma}\label{lem:noreturn}
    Suppose $p$ is a critical point of $\xi$.
    There exists a grim neighbourhood $V_p\subseteq U_p$ such that if a trajectory of $\xi$ leaves $V_p$ then it never returns to $V_p$ again.
  \end{lemma}

  \begin{proof}
    A grim neighbourhood $V_p$ exists by Lemma~\ref{lem:grimneigh} applied to $F|_{U_p}$. We shrink it until it satisfies the required conditions.
    First, fix $\varepsilon_p>0$ to be such that $F^{-1}(F(p)-\varepsilon_p)\cap\Hd(p)$ and $F^{-1}(F(p)+\varepsilon_p)\cap\Ha(p)$ are separated from the boundary of $U_p$ and each trajectory
    of $\xi$ that hits $p$ in the future (respectively in the past) hits $F^{-1}(F(p)-\varepsilon_p)\cap\Hd(p)$ (respectively hits $F^{-1}(F(p)+\varepsilon_p)$);
    see Figure~\ref{fig:choice}.
    \begin{figure}
      \input{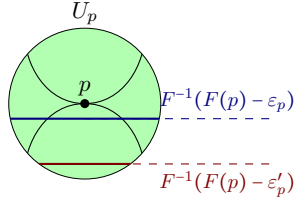}
      \caption{Choice of $\varepsilon_p$ in the proof of Lemma~\ref{lem:noreturn}. While $\varepsilon_p$ satisfies
      the conditions spelled out in the proof, $\varepsilon_p'$ is too large.}\label{fig:choice}
    \end{figure}
  This condition is nontrivial, because for the moment $F$ is only defined inside $U_p$. With this choice of $\varepsilon_p$ we let $\mathcal{V}_p$ be the family
  of all grim neighbourhoods of $p$ whose `in' boundary is on $F^{-1}(F(p)-\varepsilon_p)$ and whose `out' boundary is on $F^{-1}(F(p)+\varepsilon_p)$.
  Note that $\bigcap_{V_p\in\mathcal{V}_p} V_p$ is the union of the ascending and descending membrane of $p$ intersected with $F^{-1}(F(p)-\varepsilon_p,F(p)+\varepsilon_p)$.
  Observe that if $V,V'\in\mathcal{V}_p$, then $V\cap V'\in\mathcal{V}_p$. As $\O$ is second countable, a standard argument shows
  that there exists a nested family $V_1\supset V_2\supset \dots$ of grim neighbourhoods in $\mathcal{V}_p$ such that $\bigcap_{\ell} V_\ell=(\Ha(p)\cup\Hd(p))\cap F^{-1}(F(p)-\varepsilon_p,F(p)+\varepsilon_p)$

  Choose such a nested family. We will show that in this nested family, there is a grim neighbourhood $V_p$ such that if a trajectory of $\xi$ leaves $V_p$ then it never returns to $V_p$ again.
  Suppose for a contradiction that for every $\ell=1,2,\dots$, there exists a trajectory of $\xi$ leaving $V_\ell$ and entering $V_\ell$ again. Let $w_\ell$ be
  the first exit point and let $z_\ell$ be the next entrance point; see Figure~\ref{fig:broken}. We have that $w_\ell\in F^{-1}(F(p)+\varepsilon_p)$ and $z_\ell\in F^{-1}(F(p)-\varepsilon_p)$. By passing  to a subsequence we may and will assume that $z_\ell\to z$ and $w_\ell\to w$ for some $z,w$ i.e.\ the subsequences converge and their limits are $z$ and $w$ respectively. Now by assumption $z\in \Hd(p)$ and $w\in\Ha(p)$. In particular,
  there is a broken trajectory of $\xi$ starting at $z$ and ending at $w$. On the other hand, by the Broken Trajectory Lemma~\ref{lem:limiting_lemma} there exists a possibly broken trajectory of $\xi$ starting at $w$ and ending at $z$, or a circular trajectory.
  In the second case we get an immediate contradiction with \ref{item:G5}. For the first case,
    taking the union of these two broken trajectories we obtain a circular broken
    trajectory, contradicting~\ref{item:G5}. This completes the proof of the lemma.
    \begin{figure}
      \input{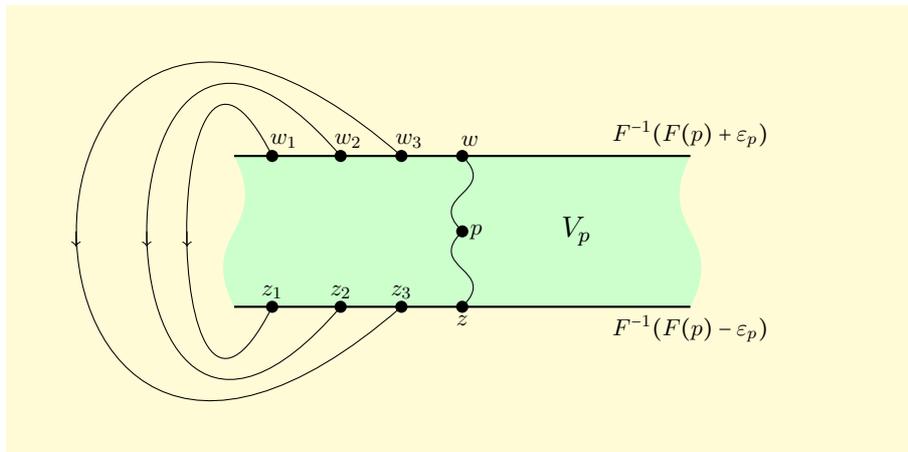}
      \caption{Existence of points $z_\ell,w_\ell$ and their limit in the proof of Lemma~\ref{lem:noreturn}.}\label{fig:broken}
    \end{figure}
  \end{proof}
As a next step towards the proof of the Vector Field Integration Theorem~\ref{thm:integrate}, we prove the following result.
\begin{lemma}\label{lem:noreturn2}
  Suppose $p<p'$. Then, on shrinking neighbourhoods $V_p$ and $V_{p'}$,
  we may assume that no trajectory starting from $V_{p'}$ hits
  $V_p$ in the future.
\end{lemma}

\begin{proof}
  Suppose such neighbourhoods do not exist. Acting as in the proof of Lemma~\ref{lem:noreturn}, we create a trajectory starting at $p'$ and hitting $p$ in the infinite future (the reader might imagine that the points $w_i$ of the proof of Lemma~\ref{lem:noreturn} belong $\pout V_{p'}$, while points $z_j$ belong to $\piin V_p$). As $p<p'$, there exists a possibly broken trajectory from $p$ to $p'$. Connecting these two broken trajectories, we obtain a circular broken trajectory, contradicting~\ref{item:G5}.
\end{proof}

Continuing the proof of the Vector Field Integration Theorem~\ref{thm:integrate}, we enumerate the critical points of $F$ by $p_1,\dots,p_S$ and denote by $c^-_i$ and $c^+_i$ the minimum and maximum of $F$ on $V_{p_i}$. We have $c^-_i<c^+_i \in \R$. We may and will assume that the critical points are ordered in such a way that $c^+_i<c^-_{i+1} \in \R$ for all~$i$.

We need to set up some more terminology. We will say that $F$ is defined \emph{up to the level set $a'$} if there is a set $\O_-'\subseteq\O$ such that $F$ is defined on $\O_-'$, $\partial\O_-'=F^{-1}(a')$ is the exit set of $\O_-'$, and $\xi$ is grim for $F$ on $\O_-'$.
If $F$ is defined up to a level set $a'$ such that $\xi$ has no critical points on $\O\setminus(\O_+\cup\O_-)$, we can extend $F$ to the whole of $\O$ using the argument from the beginning of the proof.

\smallskip
Suppose $F$ is defined up to the level set $a'$, where $a'\in[c^+_i,c^-_{i+1}]$. This assumption means in particular that
  $V_{p_i}\subseteq\O'_-$. We have two cases. Either $a'=c^-_{i+1}$ or $a'<c^-_{i+1}$. Assume first $a'<c^-_{i+1}$. Let $A=F^{-1}(a')$. Define subsets $A_{i+1},\dots,A_S \subseteq A$  by the condition that a trajectory of $\xi$ starting from $A_{\ell}$ hits $\ol{V}_{p_\ell}$ before hitting other neighbourhoods. Let $C_{\ell}$ be the union of the trajectories of $\xi$ starting from $A_{\ell}$ and before they hit $\ol{V}_{z_\ell}$; see Figure~\ref{fig:cyllinders_everywhere}.
  Also let $T_\ell\colon A_{\ell}\to(0,\infty)$ be the function measuring the shortest time taken to get from the given point in $A_{\ell}$
  to some point in~$V_{\ell}$.

  \begin{figure}
    \input{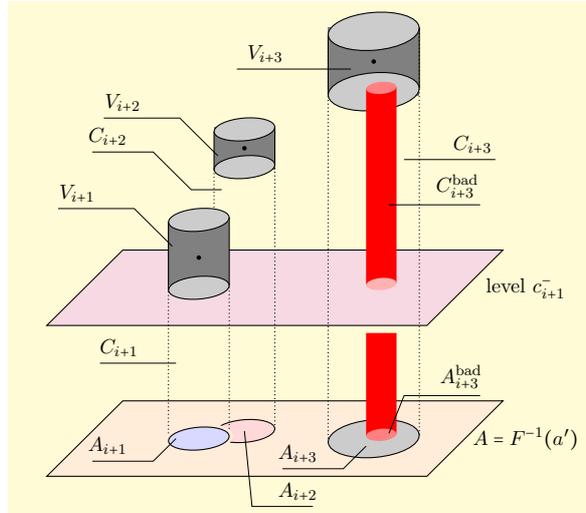}
    \caption{Proof of the Vector Field Integration Theorem~\ref{thm:integrate}.}\label{fig:cyllinders_everywhere}
  \end{figure}

  First, observe that $A_{i+1}$ is closed, because a trajectory starting from $A$ and hitting $\ol{V}_{i+1}$ does not hit any other critical points by the ordering
  of critical points. In particular $C_{i+1}$ is closed. Rescale the vector field $\xi$ by a function supported in a neighbourhood of $C_{i+1}$ (but disjoint from $A$ and $V_{i+1}$) in such a way that $T_{i+1}$ is constant on $A_{i+1}$ and equal to $c^-_{i+1}-a$. Redefine the functions $T_{\ell}$, for each $\ell \geq i+1$, with respect to the new vector field.

  Our goal is to define the value $F(w)$ for a point $w$ using the time required to go from $A$ to~$w$. The problem with doing precisely this is that some neighbourhoods $\ol{V}_p$ may be reached too early. To remedy this we need to slow down the vector field $\xi$ along some trajectories. To this end,
  for $\ell>i+1$ we let $A_{\ell}^{\bbad}$ be the set of points $w\in A_{\ell}$ such that $T_{\ell}(w)\le c^+_{i+1}$. Suppose $\ell_0$ is the smallest index for which $A_{\ell}^{\bbad}$ is  nonempty. Note that the closure of $A_{\ell}^{\bbad}$ is disjoint from the union $A_{i+1}\cup\dots\cup A_{\ell_0-1}$ (this union is a closed set). Define the cylinder $C_{\ell}^{\bbad}$ to be the union of all trajectories of $\xi$ starting from $A_{\ell}^{\bbad}$ before they reach $\ol{V}_{\ell}$.
  Rescale $\xi$ in the cylinder $C_{\ell}^{\bbad}$ in such a way that $\xi$ runs slower in $C_{\ell}^{\bbad}$ and so making $A_{\ell}^{\bbad}$ empty. By induction
  we arrive at the situation in which all the sets $A_{\ell}^{\bbad}$ are empty.

  Now let $\O_{i+1}$ be the set of points reached by $\xi$ in time less than or equal to $c^-_{i+1}-a'$. By construction, $\xi$ is nonvanishing
  on $\O_{i+1}$. For a point $w\in\O_{i+1}$ that is reached from $A$ in time~$t_w$ we set
  \[F(w)=a'+t_w.\]
  It is clear that $F$ is a continuous function, and it can be made smooth near $A$ by carefully rescaling the vector field $\xi$. In this way, we have defined
  $F$ up to the level set $c^-_{i+1}$.

  \smallskip
  Now suppose that $F$ is defined up to the level set $a'=c^-_{i+1}$. Define sets $A_{\ell}$, $C_{\ell}$ analogously, as well as the function $T_{\ell}$. Alter the vector field $\xi$ near the boundary of $V_{p_{i+1}}$
  to guarantee that $\partial_\xi F=1$ near the boundary of $V_{p_{i+1}}$. This is a technical step, which simplifies the construction later.

  For $\ell=i+2,\dots$ we define $A_{\ell}^{\bbad}$ as the set
  of points $w\in A_\ell$ such that $T_\ell(w)\le c^+_{i+1}-c^-_{i+1}$  and we perform an analogous rescaling as above to make $A_{\ell}^{\bbad}$ empty.

  Define $\O_{i+1}^0$ to be the union of the set of points in $\O$ reached from $A\setminus V_{p_{i+1}}$ by $\xi$ in time less or equal to $c^+_{i+1}-c^-_{i+1}$.
  For a point $w\in\O_{i+1}$ that is reached from $A\setminus V_{p_{i+1}}$ in time $t_w$ we set
  \[F(w)=a'+t_w.\]
  As $\partial_\xi F=1$ near the boundary of $V_{p_{i+1}}$,  the function $F$ is well-defined and continuous, and we can make it smooth by carefully altering $F$ near $\partial V_{p_{i+1}}$. An argument, following the ideas  presented in \cite[Proof of Assertion 5, page 54]{Mil65}, is presented as Lemma~\ref{lem:piecewise_smooth} below.
  In this way we have defined $F$ up to the level set $c^+_{i+1}$.

\smallskip
The above procedure in the proof of Vector Field Integration Theorem~\ref{thm:integrate} was to increase gradually the level sets up to which $F$ is defined. This was the key technical difficulty of the proof. At the final stage of the proof, we need to specify the starting level set.
If $\O_-$ is nonempty, the assumptions of the proposition tell us that $F$ is defined up to the level set $a$.
If $\O_-$ is empty, then the critical point $p_1$ is necessarily an index $0$ critical point. We set $\O_{-}=V_{p_1}$, and then $F$ is well-defined
up to the level set $c^+_0$.

A small technical problem arises: the extension we construct has not been guaranteed yet to be smooth near the level sets of $F$ corresponding to the values $c_i^{\pm}$: it is a priori only continuous and piecewise smooth.
The proof of the Vector Field Integration Theorem~\ref{thm:integrate} is completed using the following general result, which upgrades the continuous function we have just defined to a smooth function.

\begin{lemma}\label{lem:piecewise_smooth}
  Assume that $F\colon \Omega\to\R$ is a continuous function with a smooth level set $Z:=F^{-1}(c)$. Set $\Omega_-=F^{-1}(-\infty,c]$ and
  $\Omega_+=F^{-1}[c,\infty)$. Suppose $F$ is smooth on $\Omega_-$ and $\Omega_+$ and has no critical points near $\Omega_-$.

  Assume that $\xi$ is a smooth vector field such that $\partial_\xi F>0$ in a neighbourhood of $Z$ in $\Omega_-$ and $\partial_\xi F>0$
  in a neighbourhood of $Z$ in $\Omega_+$.

  Then, for any $\varepsilon>0$, there exists a smooth function $F_\varepsilon\colon\Omega\to\R$ such that $\partial_\xi F_\varepsilon>0$, $\|F-F_\varepsilon\|<\varepsilon$, and such that we have $F(z)=F_\varepsilon(z)$ whenever $|F(z)-c|>\varepsilon$.
\end{lemma}
Applying Lemma~\ref{lem:piecewise_smooth} to the function $F$ constructed above with $c$ ranging over $c_{i}^\pm$ and $\varepsilon$ choosen
in such a way that the intervals $[c_i^{\bullet}-\varepsilon,c_i^{\bullet}+\varepsilon]$, $\bullet=\pm$, do not overlap,
completes the proof of the Vector Field Integration Theorem~\ref{thm:integrate}.
\end{proof}

\begin{proof}[Sketch of proof of Lemma~\ref{lem:piecewise_smooth}]
  Choose $\delta>0$ sufficiently small, so that $F$ has no critical points on $F^{-1}[c-\delta,c+\delta]$. Let $Z_-=F^{-1}(c-\delta)$,
  $Z_+=F^{-1}(c+\delta)$. From the assumptions we deduce that the flow of $\xi$ flows from $Z_-$ to $Z_+$.

  Multiply the vector field $\xi$ by a smooth positive function in such a way that $\partial_\xi F\equiv 1$ near $Z_-$ and $Z_+$ and for any point $z\in Z_-$
  the time to reach $Z_+$ is equal to $2\delta$. We set
  \[F_\delta(z)=\begin{cases} F(z)&\colon F(z)\notin [c-\delta,c+\delta] \\ c-\delta+t(z)&\colon F(z)\in [c-\delta,c+\delta],\end{cases}\]
  where $t(z)$ is the time to reach $z$ starting from the unique point $z_-\in Z_-$ such that the trajectory of $\xi$ through $z_-$ hits $z$.

  The function $F$ is smooth on $F^{-1}(c-\delta,c+\delta)$.
  As the time to reach $Z_+$ from $Z_+$ is equal to $2\delta$, we infer that $F_\delta$ is continuous on $F^{-1}(c\pm\delta)$, in particular,
  it is continuous everywhere. Next, by construction, $\partial_\xi F_\delta\equiv 1$ on $F^{-1}[c-\delta,c+\delta]$. Since we assumed
  that $\partial_\xi F\equiv 1$ near $Z_\pm$, the function $F_\delta$ agrees with $F$ on a neighbourhood of $Z_\pm$. That is to say, $F_\delta$ is smooth.

  Next, for any $z\in F^{-1}[c-\delta,c+\delta]$ we have $F_\delta(z),F(z)\in[c-\delta,c+\delta]$. Hence $|F_\delta(z)-F(z)|<2\delta$.
  The statement is proved by setting $\varepsilon=\delta/2$.
\end{proof}
\begin{remark*}
  During the proof of the Vector Field Integration Theorem~\ref{thm:integrate} we were multiplying the vector field $\xi$ by a positive function. Note that this
  does not affect the veracity of the final statement. If $\xih$ is the `corrected' vector field, $\xih=\phi\xi$ for some $\phi\colon\O\to\R_{>0}$ such that
  $\phi\equiv 1$ in a neighbourhood of each of the critical points of $\xi$, then $\xih$ is a grim vector field for $F$ if and only if $\xi$ is.
\end{remark*}

\part{Families of Morse functions.}\label{part:just_paths}

Before we prove the Path Lifting Theorem~\ref{thm:path_lifting},
which is the main technical tool of the present paper, we need to investigate paths of Morse functions in detail. In this part, Part~\ref{part:just_paths}, we study one-parameter families of Morse functions.  Classical theorems on rearrangement and cancellations of critical points
of a Morse function are generalised to the case of immersed Morse functions and phrased in the language of a one-parameter family of functions
(see Sections~\ref{sec:rear} and~\ref{sec:cancel}). In the case
of cancellations, these function cease to be Morse functions for one value of the parameter. This motivates the study of one-parameter families
in the spirit of Cerf. We recall his approach in Section~\ref{sec:cerf}.

Now we introduce a few conventions used throughout Part~\ref{part:just_paths} and~\ref{part:pathlifting}.
We consider a smooth compact manifold $N$ of dimension $n$ is immersed into an ambient manifold $\Omega$ of dimension $n+k$. The immersion
is usually denoted by $G$, possibly with some decorations (e.g.\ subscripts). Both $N$ and $\Omega$
are allowed to have boundary, in which case we require that $G$ maps the boundary, and only the boundary, to the boundary. Moreover, any Morse function
is assumed to be locally constant on the boundary, while vector fields are transverse to the boundary.

Immersed Morse functions on $\Omega$ are denoted by capital $F$,
whereas Morse functions on $N$ are denoted by lowercase $f$. Usually we have $f=F\circ G$, but the precise formula can vary, and it is always
specified explicitly.
A grim vector field on $\Omega$ is denoted by $\xi$. A vector field on $N$ is denoted by $\eta$.

A path of functions is usually denoted by $F_\tau$.
The moments on the path are usually denoted by Greek letters, like $F_{\tau}$ for $\tau\in[\alpha,\beta]$ as opposed to level sets of functions, which are denoted
by Latin letters, like $F^{-1}[a,b]$.

The path $F_{\tau}$ is \emph{supported} on a subset $U\subseteq\O$, if $F_{\tau}|_{\O\setminus U}$ does not depend on $\tau$.
The \emph{concatenation} of paths $F^1_{\tau}$ and $F^2_{\tau}$ is denoted by $F^1_{\tau} \cdot F^2_{\tau}$.

In Part~\ref{part:just_paths}, we focus on immersed Morse functions on $\O$, where $G\colon N\to\Omega$ is an immersion and $N$ has all
connected components of the same dimension. Necessary changes if this is not the case are given in Section~\ref{sec:multidim}.
\section{Regular paths}\label{sec:cerf}

The aim of this section is to introduce regular paths. We specify generic conditions that guarantee nice behaviour of a function or a map,
as well as conditions on paths of objects in which the mildest possible degeneracies occur.
We begin by recalling the stratification of the space $\cF:=C^\infty(N,\R)$ in the spirit of \cite{Cerf}. In particular, we introduce
the notion of a \emph{regular path of functions}, also called an $\cF^1$-path.

Next, we use analogous methods to stratify the space of functions $C^\infty(\Omega,\R)$ (with a fixed immersed submanifold in $\Omega$)
and of smooth maps $N \to \Omega$, in particular to study regular
(in a suitable sense) paths of immersions. Next, we combine the latter two notions. First, in the presence of a function $F\colon\Omega\to\R$, we study the set of maps $G\colon N\to\Omega$, such that $F$ is an immersed Morse function with respect to $M:=G(N)$. Finally, we study
pairs $(F,G)$, where $F\colon\Omega\to\R$ and $G\colon N\to\Omega$.
We specify what it means for a path of such pairs $(F_\tau,G_\tau)$ to be regular.
In Subsection~\ref{sub:neat}, we discuss the necessary changes in the theory when $\Omega$ and $N$ have boundary.

%

\subsection{Ambient Cerf theory}\label{sub:ambient_cerf}
Cerf's theory \cite{Cerf} deals with paths of functions, that is with smooth maps $f_{\bullet}\colon[0,1]\to \mathcal{F}$, where
\[\mathcal{F}:=C^\infty(N,\R).\]
Here $N$ is either a smooth, closed manifold, or a smooth compact cobordism $(N;\bd_0 N,\bd_1 N)$ with a decomposition of the boundary $\bd N = \bd_0 N \sqcup \bd_1 N$. In the latter case we impose neatness conditions, specified in Subsection~\ref{sub:neat} below. Now we recall the parts of this theory that are needed in the proof of Path Lifting Theorem~\ref{thm:path_lifting}.

Recall that a Morse function is called \emph{excellent} if the function mapping critical points to critical values is injective.  The subspace of excellent Morse functions is open and dense in $\mathcal{F}$, which follows from Theorem~\ref{thm:density} and an easy argument to see that the excellent condition is open and dense.

We will analyse the function space $\mathcal{F}$ with a special emphasis on paths in $\cF$. The space $\cF$ has a decomposition \cite{Cerf,Mather_II}, of which we need only a part.
We describe subsets $\cF^0$, $\cF^1_{\alpha}$, and  $\cF^1_{\beta}$ in $\cF$.
\begin{itemize}
\item $\cF^0$ consists of all excellent Morse functions, meaning that the critical values of distinct critical points are distinct.
\item $\cF^1_{\alpha}$ are functions which in addition to nondegenerate critical points have exactly one cubic singularity (that is, an $A_2$ singularity)
  with local normal form
\[x_1^3-x_2^2 -\dots-x_j^2+x_{j+1}^2+\dots+x_n^2.\]
All critical values are assumed to be distinct.
\item $\cF^1_{\beta}$ consists of all Morse functions with exactly two critical points having the same value.
\end{itemize}

 Let $\cF^1:=\cF^1_{\alpha}\cup \cF^1_{\beta}$. Then $\cF^0$ and $\cF^0\cup \cF^1$ are open in $\cF$ by \cite[I,2.1]{Cerf}.
 When we want to emphasise the domain we write $\cF(N)$, $\cF^0(N)$, and $\cF^1(N)$.
 We make the following definition.

\begin{definition}\label{def:regular_path}
 A path $f_{\bullet} \colon [0,1]\to \cF^0\cup \cF^1$ is called an {\em $\cF^1$-path} or a \emph{regular path}
 if $f_\tau\in \cF^1$ for only finitely many time parameters $\tau_i$, $f_0,f_1\in\cF^0$, and if $f_{\bullet}$ intersects $\mathcal{F}^1$ transversely.
 \end{definition}

 The next result follows from the fact that $\cF^1$ is a codimension one subspace of $\mathcal{F}$; its proof can be found in
 \cite[II.3.2, III.1.3, III.2.3]{Cerf}.

 \begin{lemma}\label{lem:regular_path}
  Each path with endpoints in $\cF^0$ can be perturbed rel.\ boundary to an $\cF^1$-path. In particular, if
  $f_0,f_1\in\cF^0$, then there is an $\cF^1$-path $f_{\tau}$, $\tau\in[0,1]$ connecting $f_0$ and $f_1$.
\end{lemma}

The study of $\cF^1$-paths can be divided into the study of paths that intersect $\cF^1$ at precisely one time value.

\begin{definition}\label{def:paths-of-xx}
  An $\cF^1$-path $f_{\tau}$, $\tau\in[0,1]$  is called a \emph{path of birth}, respectively a \emph{path of death}, or a \emph{path of rearrangement},
  if $f_\tau\in\cF^1$ at precisely one time value $\tau_0$, and:
  \begin{itemize}
    \item \emph{birth:} $f_{\tau_0}\in\cF^1_\alpha$ and $f_\tau$ has two more critical points for $\tau>\tau_0$ than for $\tau<\tau_0$;
    \item \emph{death:} $f_{\tau_0}\in\cF^1_\alpha$ and $f_\tau$ has two fewer critical points for $\tau>\tau_0$ than for $\tau<\tau_0$;
    \item \emph{rearrangement:} $f_{\tau_0}\in\cF^1_\beta$.
  \end{itemize}
\end{definition}

There is a local normal form for each of these three paths. That is, any path of birth, death or rearrangement can be changed by homotopy keeping
the endpoints fixed to a path which crosses the stratum $\cF^1$ in a specific way. These forms are called \emph{elementary paths of birth, death, or rearrangement}; see \cite[Part III]{Cerf}. The description of normal forms will be useful when discussing lifting of elementary paths. These normal forms will be discussed in Section~\ref{subsection:path-birth} for the birth case, and Section~\ref{sec:path_of_death} for the death case.
For the case of paths of rearrangements, we will reformulate Cerf's method using the notion of $\xi$-paths in Section~\ref{sec:xi_path}, because vector fields allow us greater control when constructing regular homotopies.

While phrasing and proving Path Lifting Theorem~\ref{thm:path_lifting}, it will be much easier (and enough for all applications so far)
to lift a path up to suitably defined homotopy. This motivates the following definition.

\begin{definition}\label{defn:homotopy-of-paths}
  Let $f_{\tau}$ and $\wt{f}_\tau$ be $\cF^1$-paths. 
\begin{enumerate}[label=(\roman*)]
  \item  Suppose that $f_0 = \wt{f}_0$ and $f_1 = \wt{f}_1$. Then $f_{\tau}$ and $\wt{f}_\tau$ are \emph{$\cF^1$-homotopic} if they are homotopic rel.\ endpoints ($\tau=0,1$) through $\cF^1$-paths in $\cF_0 \cup \cF_1$.
  \item   We say that $f_\tau$ and $\wt{f}_\tau$ are \emph{lax homotopic} if there is a homotopy $h_{\sigma,\tau}$
  of paths such that for every $\sigma$ the path $\tau\mapsto h_{\sigma,\tau}$ is an $\cF^1$-path, $h_{0,\tau}=f_\tau$, $h_{1,\tau}=\wt{f}_\tau$
  and the paths $\sigma\mapsto h_{\sigma,0}$, $\sigma\mapsto h_{\sigma,1}$ belong to $\cF^0$.
\item Suppose $f_\tau,\wt{f}_\tau$ are $\cF^1$-paths and $g_\sigma$, $\sigma\in[0,1]$ is a path in $\cF^0$ (i.e.\ a path of excellent Morse functions) such that $g_0=f_0$ and $g_1=\wt{f}_0$. We say that $f_\tau$ and $\wt{f}_\tau$ are \emph{lax-homotopic over $g_\sigma$} if there exists
a lax homotopy $h_{\sigma,\tau}$ as in the previous item, with the additional property that $h_{\sigma,0}=g_\sigma$.
  \item  If $f_{\tau}$ and $\wt{f}_\tau$ are lax homotopic over a constant path $g_\sigma\equiv f_0=\wt{f}_0$,
    then the paths $f_\tau$ and $\wt{f}_\tau$ are called \emph{left-homotopic}.
\end{enumerate}
\end{definition}

\begin{remark}\label{rem:weak_homotopy_revisited}
  Consider a pair of $\cF^1$-paths $f_{\tau}$ and $\wt{f}_\tau$  with the same endpoints $f_0 = \wt{f}_0$ and $f_1 = \wt{f}_1$. Then $f_{\tau}$ and $\wt{f}_\tau$  are lax homotopic if and only if they are $\cF^1$-homotopic.

We describe the proof of the harder, forward direction.
  If $f_\tau,\wt{f}_\tau$ are lax homotopic, and $h_{\sigma,\tau}$ is the corresponding homotopy, then replacing $\tau\mapsto h_{\sigma,\tau}$
  by the concatenation of paths $\sigma'\mapsto h_{\sigma',0}$, $\sigma'\in[0,\sigma]$, $\tau\mapsto h_{\sigma,\tau}$ and the inverse of the path
  $\sigma'\mapsto h_{\sigma',1}$, $\sigma'\in[0,\sigma]$, yields an $\cF^1$-path for all $\sigma \in [0,1]$, with endpoints $f_0$ and $f_1$. Hence it is straightforward to construct an $\cF^1$-homotopy.
\end{remark}

%
%

\subsection{Paths of immersions}\label{sub:paths_of_immersions}
We now begin the discussion of the immersed case, which will stretch over the next few subsections.
We will consider an $n$-dimensional compact manifold
$N$ and a compact manifold $\Omega$. The aim of this subsection is to study singularities that may occur on a generic
path of immersions from $N$ to $\Omega$. Most of the results are not new; we refer e.g.\ to \cite[Section 3.2]{Ekholm}.

\begin{definition}
  The space $\cI(N,\Omega)$, in short $\cI$, is by definition the space of $C^\infty$ immersions from $N$ to $\Omega$. If $N$ and $\Omega$ have boundary,
  we assume the immersion to be neat and generic on the boundary (Definition~\ref{defn:neat-and-very-neat} below).
\end{definition}

We do not discuss the case where $N$ and $\Omega$ have boundary in detail until Subsection~\ref{sub:neat}.
Note that $\cI(N,\Omega)$ is open in $C^\infty(N,\Omega)$; see \cite[Lemma II.5.5]{GG}.
Define \[\cI^0=\cI^0(N,\Omega)\]
to be the space of generic immersions (see Definition~\ref{def:generic-immersion}).
By Lemma~\ref{lem:generic-immersion}, $\cI^0$ is open and dense in $\cI$.

Our aim is to study one-parameter deformations of a generic immersion. Our approach is essentially that of Ekholm \cite[Section 3]{Ekholm},
except that we do not have restrictions on dimensions: these restrictions are not needed for \cite[Section 3]{Ekholm}, rather they are used
in later sections of that work.

In the next definition we define a preliminary subspace of immersions, containing all immersions where genericity fails at depth $d$ for at least one point. After the definition, we will proceed to consider the points where genericity fails more than the minimal possible amount.

\begin{definition} \label{def:cI_1}
  Let $d\ge 2$.
  The space $\wt{\cI}^1_{d}$ is by definition the subspace of immersions $G\in\cI$ for which there exists
  a $d$-tuple $u_1,\dots,u_{d}$  of distinct points of $N$ that are all mapped by the immersion to the same point $w:=G(u_1)=\dots=G(u_{d})$,
  such that there are pairwise disjoint open subsets $U_1,\dots,U_{d}$ of $N$, with $U_j$ containing $u_j$,
  with the properties:
  \begin{enumerate}[label=(CI-\arabic*)]
    \item $G(U_1),\dots,G(U_{d-1})$ intersect transversely at $w$  (compare the beginning of Section~\ref{sec:transverse});
    \label{item:CI1}
    \item with $M_1=G(U_1)\cap\dots\cap G(U_{d-1})$, $M_2=G(U_{d})$, $T_wM_1+T_wM_2$ is a positive codimension
      subspace in $T_w\Omega$.\label{item:CI3}
  \end{enumerate}
\end{definition}

For $d \geq 2$ we will consider subspaces $\cI^2_{\alpha,d}$, $\cI^2_{\beta,d}$, and $\cI^2_{\gamma,d,d'}$ of $\wt{\cI}^1_d$, whose
definition is given below.
We will then set
\[\cI^2_{\alpha}:=\bigcup_{d}\cI^2_{\alpha,d},\, \cI^2_{\beta}=\bigcup_{d}\cI^2_{\beta,d},\,\cI^2_{\gamma}=\bigcup_{d,d'}\cI^2_{\gamma,d,d'}.\]
Then with \[\cI^2:=\cI^2_\alpha\cup\cI^2_\beta\cup\cI^2_\gamma,\]
we note that $\cI^2 \subseteq \wt{\cI}^1$ and define
\[\cI^1=\wt{\cI^1}\setminus \cI^2.\]
The idea of the definition of $\cI^1$ is that we remove from $\wt{\cI}^1$ all the immersions, namely those in $\cI^2$ for which the failure of genericity is worse than the minimal possible failure, so that a generic path lies in $\cI^1$.

Now we give the definitions of $\cI^2_{\alpha,d}$, $\cI^2_{\beta,d}$, and $\cI^2_{\gamma,d,d'}$.

\begin{itemize}
  \item (\emph{extra branch passing through tangency}) The subspace $\cI^2_{\alpha,d} \subseteq \wt{\cI}^1_d$ consists of maps such that
    there exists a set of points $u_1,\dots,u_{d}$
    such that $G(u_1)=\dots=G(u_{d})$,
    satisfying items~\ref{item:CI1} and~\ref{item:CI3} of Definition~\ref{def:cI_1}, but
    there is another point $u_{d+1}$ mapped to $G(u_1)$.
  \item (\emph{higher tangency}) The subspace $\cI^2_{\beta,d}  \subseteq \wt{\cI}^1_d$ consists of maps for which there is a set of points $u_1,\dots,u_d$
    satisfying items~\ref{item:CI1}, and~\ref{item:CI3} of Definition~\ref{def:cI_1},
    and $T_wM_1+T_wM_2$ is of codimension at least~2 in $T_w\O$.
  \item (\emph{two tangencies at the same time}) For $d' \geq 2$, the subspace $\cI^2_{\gamma,d,d'} \subseteq \wt{\cI}^1_d$ is the set of maps for which there is a set of points $u_1,\dots,u_d$ satisfying~\ref{item:CI1} and~\ref{item:CI3},
    as well as another set of points $u'_{1},\dots,u'_{d'}$ with $G(u'_1)=\dots=G(u'_{d'})\neq G(u_1)$ satisfying
    \ref{item:CI1} and~\ref{item:CI3}.
\end{itemize}

The following result echoes \cite[Lemma 3.4]{Ekholm}.

\begin{proposition}\label{prop:gen_path}
  Suppose $G_\tau\colon N\hookrightarrow\Omega$, $\tau\in[0,1]$, is a path in $\cI$ such that $G_0,G_1\in\cI^0$.
  Then there exists a path $G'_\tau$, arbitrarily close $($in the $C^\infty$ topology$)$ to $G_\tau$, with $G'_0=G_0$ and $G'_1=G_1$,
  such that $G'_\tau$ lies in $\cI^0$ for all but finitely many values of $\tau$, and for these values we have that $G_\tau\in\cI^1$.
\end{proposition}

In the proof of  Proposition~\ref{prop:gen_path}, we will heavily use  multijet transversality from Subsection~\ref{sub:multijet}.
As a step towards Proposition~\ref{prop:gen_path}, we give the following result.

\begin{lemma}\label{lem:avoids_0}
  Suppose $r>0$, $i_1,i_2\ge 1$ are such that $(i_1-1)k\le n$ and $(i_2-1)k\le n$.
  We consider the subspace of $J^1_{i_1+i_2}(N,\O)$ determined by the condition corresponding to maps $G\colon N\to\Omega$ such that there are pairwise distinct points $u_1,\dots,u_{i_1+i_2}\in N$ such that
  $G(u_1)=G(u_2)=\dots=G(u_{i_1+i_2})$, and there exist open subsets $U_1,\dots,U_{i_1+i_2}$ of $N$, for which the images $G(U_1),\dots,G(U_{i_1})$
  are transverse at $w:=G(u_1)$, the images $G(U_{i_1+1}),\dots,G(U_{i_1+i_2})$ are transverse at $w$, and
  \[\dim(T_w(G(U_1)\cap\dots\cap G(U_{i_1}))+T_w(G(U_{i_1+1})\cap\dots\cap G(U_{i_1+i_2})))\le\dim\Omega-r.\]
  The subspace of the $(i_1+i_2)$-fold 1-multijet space $J^1_{i_1+i_2}(N,\O)$ has codimension
  \[(i_1+i_2-1)(n+k)+r(n-(i_1+i_2-1)k+r)\] in  $J^1_{i_1+i_2}(N,\O)$.
\end{lemma}

\begin{proof}
  The condition that $G(u_1)=G(u_2)=\dots=G(u_{i_1+i_2})$ is of codimension $(i_1+i_2-1)\dim\Omega = (i_1 +i_2-1)(n+k)$, which accounts for the first summand.
  The condition that $G(U_1),\dots,G(U_{i_1})$ are transverse at $w$ is open. Define \[M_1:=G(U_1)\cap\dots\cap G(U_{i_1}).\]
  By transversality, near $w$, this is a manifold of dimension $s_1:=n-(i_1-1)k$.

  Analogously, transversality of $G(U_{i_1+1}),\dots,G(U_{i_1+i_2})$ is an open condition; we set
  \[M_2 := G(U_{i_1+1})\cap\dots\cap G(U_{i_1+i_2}).\]
  This is a manifold (near $w$) of dimension $s_2:=n-(i_2-1)k$.
  Set $V_1=T_wM_1$ and $V_2=T_wM_2$.
  The condition that $V_1+V_2$ forms a codimension~$r$ subspace of $T_w\Omega$ can be computed as follows.
  Choose a basis $e_{i1},\dots,e_{is_i}$ of $V_i$. We have two cases.
  \begin{itemize}
    \item if $\ell=n+k-r-s_1\ge 0$, then we choose the vectors $e_{11},\dots,e_{1s_1},e_{21},\dots,e_{2\ell}$ generically. They span a subspace of dimension $n+k-r$. The remaining $s_2-\ell$ vectors, $e_{2,\ell+1},\dots e_{2s_2}$ are all compelled to stay the codimension $r$ subspace, which gives $r(s_1+s_2-(n+k-r))=r(n-(i_1+i_2-1)k+r)$ conditions.
    \item if $\ell<0$, we can choose only the vectors $e_{11},\dots,e_{1,n+k-r}$ points generically so that they span a subspace of dimension $n+k-r$. There remains $s_1-(n+k-r)+s_2=s_2-\ell$ vectors $e_{1,n+k-r+1},\dots,e_{1s_1}$, $e_{21},\dots,e_{2s_2}$, which are all forced to belong to the codimension $r$ subspace. This gives the same amount of conditions as the first case.
  \end{itemize}
  This accounts for the second summand.
  Altogether, we have $(i_1+i_2-1)(n+k)+r(n-(i_1+i_2-1)k+r)$ conditions as desired.
\end{proof}

\begin{proof}[Proof of Proposition~\ref{prop:gen_path}]
  Choose $d\ge 2$.
  If $kd>n+1$, then the condition that there exist $d$ points $u_1,\dots,u_d$ such that $G(u_1)=\dots=G(u_d)$
  is of codimension $(d-1)(n+k)>nd+1$
  in the multijet space $J^0_d(N,\Omega)$. Hence the stratum  $\cI^{1}_{d}$ is missed by
  a (one-dimensional) path of immersions by Corollary~\ref{cor:multijet}~(ii).

Hence, for the rest of the proof, we suppose $d \geq 2$ and $kd\le n+1$.  The strategy is as follows.
Every path of immersions can be assumed to lie in $\cI^0 \cup \wt{\cI}^1$. We then consider the codimension of the subspaces $\cI^2_{\alpha,d}$, $\cI^2_{\beta,d}$, and $\cI^2_{\gamma,d,d'}$, and we check that the complements are each residual, so each subspace can be avoided by a generic path of immersions.  Then  we use that a countable intersection of residual sets (given by these three sets, for all values of $d$ and $d'$) is residual (Lemma~\ref{lem:countable-int-residual-is-residual}).

  \begin{lemma}\label{lem:on_alpha}
    The conditions on $\cI^2_{\alpha,d}$ define a subspace of codimension equal to
    $n(d+1)+k+1$ in $J^1_{d+1}(N,\Omega)$. In particular, $\cI^2_{\alpha,d}$ is
    missed by a generic $k$-parameter family of immersions from $N$ to $\Omega$.
  \end{lemma}

  \begin{proof}
    By Lemma~\ref{lem:avoids_0} with $r=1$, $i_1=d-1$, $i_2=1$, we get $dn+1$ conditions. The extra point $u_{d+1}$ mapped to $G(u_1)$
    gives $(n+k)$ more conditions. This gives $n(d+1)+k+1$ conditions. These conditions involve $i_1+i_2+1 = d+1$ points of $N$,
    so we end up with a codimension $n(d+1)+k+1$ subspace of $J^1_{d+1}(N,\Omega)$. Since $\dim N^{(d+1)}=n(d+1)$,
    the space $\cI^2_{\alpha,d}$ is avoided by a generic $k$-parameter family of immersions by Corollary~\ref{cor:multijet}~(iii).
  \end{proof}

  \begin{lemma}\label{lem:on_beta}
    The conditions on $\cI^2_{\beta,d}$ define a subspace of codimension greater than or equal to $nd+4$
    in $J^1_{d}(N,\Omega)$. A generic $3$-parameter family of immersions from $N$ to $\Omega$ avoids $\cI^2_{\beta,d}$.
  \end{lemma}

  \begin{proof}
    We apply Lemma~\ref{lem:avoids_0} for $i_1=d-1$ and $i_2=1$.
    As $r\ge 2$, this gives codimension $nd+(r-1)(n-(d-1)k)+r^2>nd+r^2\ge nd+4$  by Corollary~\ref{cor:multijet}~(iii).
  \end{proof}

  Next, we deal with the case of independent tangencies. For this, we assume that $d,d' \geq 2$ are such that both $kd\le n+1$ and $kd'\le n+1$.

  \begin{lemma}\label{lem:on_gamma}
    The conditions on $\cI^2_{\gamma,d,d'}$ define a subspace of codimension greater than $n(d+d')+2$ in $J^1_{d+d'}(N,\Omega)$. In particular, a generic one-parameter
    family avoids $\cI^2_{\gamma,d,d'}$.
  \end{lemma}

  \begin{proof}
  Conditions~\ref{item:CI1}, and~\ref{item:CI3} give $nd+1$ conditions involving points $u_1,\dots,u_d$
  and, independently, $nd'+1$ conditions involving points $u'_1,\dots,u'_{d'}$. Altogether,
  we get $n(d+d')+2$ conditions in $J^1_{d+d'}(N,\Omega)$. We conclude by Corollary~\ref{cor:multijet}~(ii).
  \end{proof}

  We can now finish the proof of Proposition~\ref{prop:gen_path}.
  Lemmas~\ref{lem:on_alpha}, \ref{lem:on_beta}, and~\ref{lem:on_gamma} show that the sets of one-parameter families
  missing $\cI^2_{\alpha,d}$, $\cI^2_{\beta,d}$ and $\cI^2_{\gamma,d,d'}$ form a residual set in the set of all smooth immersions
  from $N$ to $\Omega$, so a residual set in the space of all smooth immersions from $N$ to $\Omega$ by
  openness of $\cI(N,\Omega)$ in $C^\infty(N,\Omega)$.
  The intersection of finitely many residual sets is residual by Lemma~\ref{lem:countable-int-residual-is-residual}, which gives the statement of Proposition~\ref{prop:gen_path}.  
\end{proof}

\begin{definition}
A path $G_\tau$, $\tau\in[0,1]$ of immersions from $N$ to $\Omega$ is called \emph{regular} if $G_0,G_1\in\cI^0$ and $G_\tau\in\cI^0$
for all but finitely many values of $\tau$, and if whenever $G_\tau$ is not in $\cI^0$, then it lies in $\cI^1$.
\end{definition}

Proposition~\ref{prop:gen_path} can be rephrased in the following way.

\begin{corollary}\label{cor:gen_path}
  Every path $G_\tau$ with endpoints generic immersions can be perturbed rel.\ endpoints to a regular path of immersions.
\end{corollary}

We conclude this subsection with the following observation. Suppose $G_\tau$ is a regular path such that for $\tau\neq\frac12$, $G_\tau$
is a generic immersion, and for $\tau=\frac12$, $G_\tau$ has a tangency at the $d$-th stratum. The topology of $G_\tau(N)$ changes: in particular
the $d$-th stratum undergoes a surgery while crossing the value $\tau=\frac12$. A precise description can be deduced from \cite[Lemma 3.5]{Ekholm}, which can be easily generalised to the case when $\dim N=n$ and $\dim\Omega=n+k$ (as mentioned above, Ekholm's paper \cite{Ekholm}
has specific dimension constraints, but the proofs in his Section~3 work without these restrictions). 

\begin{remark*}
  Note that a regular homotopy is a path of immersions, but it is not the case that every regular homotopy is a regular path of immersions.
\end{remark*}

\subsection{Paths of immersed Morse functions}\label{sub:immersed_cerf}

In this subsection, we consider a fixed map $G\colon N\to\Omega$, which we assume to be a generic immersion.
We set $M=G(N)$ and define the strata $\O[0],\O[1],\dots$ as in Subsection~\ref{sub:immersed_morse_intro}.
Consider the space of maps $\cA := C^\infty(\Omega;\R)$.

\begin{definition}\label{def:spaces_cA}
Let $d \geq 0$.
  \begin{itemize}
    \item (\emph{non Morse critical point}) The subspace $\cA_{\alpha,d}\subseteq\cA$ is by definition the space of functions $F\in\cA$ such that there exists
      a point $p\in \O[d]$ with $Df_d(p)=0$ (with $f_d:=F|_{\O[d]}$)  and such that $D^2f_d(p)$ has nontrivial kernel.
      Inside $\cA_{\alpha,d}$ we specify the following subsets.
      \begin{itemize}
	  \item (\emph{non-degenerate non-Morse singularity}) Define the subspace $\cA^1_{\alpha,d}\subseteq \cA_{\alpha,d}$  by the conditions that $\dim\ker D^2f_d(p)=1$, and for $v\in\ker D^2f_d(p)$, $v\neq 0$, we have $D^3f_d(p)(v,v,v)\neq 0$.
	  \item (\emph{degenerate non-Morse singularity}) Define the subspace $\cA^2_{\alpha,d} \subseteq \cA_{\alpha,d}$ by the condition that $\dim\ker D^2f_d(p)\ge 2$, or for all $v\in\ker D^2f_d(p)$,
	  we have $D^3f_d(p)(v,v,v)=0$.
  \end{itemize}
  \item (\emph{two critical points at the same level}) The subspace $\cA^1_{\beta,d_1,d_2}\subseteq\cA$ is the set of maps such that there are two critical points $p_1$ of $F|_{\O[d_1]}$
    and $p_2$ of $F|_{\O[d_2]}$ such that $F(p_1)=F(p_2)$, but no third critical point has this property. The subspace $\cA^2_{\beta,d_1,d_2,d_3}$
    is the set of maps such that there exists $p_i$, critical points of $F|_{\O[d_i]}$, with $F(p_1)=F(p_2)=F(p_3)$.
  \item (\emph{extra branch passing through a critical point}) For each $s$ with $1 \leq s <d$, define the subspace $\cA_{\gamma,d,s}\subseteq\cA$ to be the subspace of smooth functions such that there is a critical point $p\in \O[d]$ of $F|_{\O[d]}$ and there are
      branches $Y_1,\dots,Y_s$, through $p$ such that $F$ restricted to $Y_1\cap\dots\cap Y_s$ has a critical point at $p$.
    \item (\emph{two events at the same time}) The subspace $\cA_{\omega}$ is the subspace of maps that satisfy independently two conditions
    from the above list, e.g. two non-Morse critical points, two critical points at the same level etc.
\end{itemize}
  We let
  \[\cA_\alpha=\bigcup_d\cA_{\alpha,d},\, \cA_{\beta}=\bigcup_{d_1,d_2}\cA_{\beta,d_1,d_2},\text{ and } \cA_\gamma=\bigcup_{d,s}\cA_{\gamma,d,s}.\]
  Furthermore we set
  \[\cA^1_\alpha=\bigcup_d\cA^i_{\alpha,d};\;\; \cA^2_\alpha=\bigcup_d\cA^i_{\alpha,d}; \;\; \cA^1_{\beta}=\bigcup_{d_1,d_2}\cA^1_{\beta,d_1,d_2} \text{ and } \cA^2_{\beta}=\bigcup_{d_1,d_2,d_3}\cA^i_{\beta,d_1,d_2,d_3},\]
   so that $\cA_\alpha=\cA_\alpha^1\cup\cA_\alpha^2$, and $\cA_\beta=\cA^1_\beta\cup\cA^2_\beta$.
  The set $\cA^0$ is the complement
  \[\cA^0 := \cA \sm (\cA_\alpha\cup\cA_\beta\cup\cA_\gamma).\]
\end{definition}

  Note that the set $\cA^0$ consists precisely of the excellent immersed Morse functions in the sense of Definition~\ref{def:immersed_morse_function}.
  The subspace $\cA_\alpha$ corresponds to the failure of condition~\ref{item:IM1}. The subspace $\cA_{\beta}$ corresponds to the non-excellent Morse functions,
  while $\cA_{\gamma}$ is the failure of the~\ref{item:IM2} condition.

  \begin{lemma}\label{lem:codim_cA}~
  \begin{enumerate}[label=(\roman*)]
    \item The set $\cA^1_{\alpha,d}$ is defined, via the jet extension map, using a union of $($finitely many$)$ strata of codimension at least $n+k+1$ in $J^3(\Omega;\R)$.
    \item The set $\cA^2_{\alpha,d}$ is defined using a union of strata of codimension at least $n+k+2$ in $J^3(\Omega;\R)$.
    \item   The set $\cA^1_{\beta,d_1,d_2}$ is defined by a stratum of codimension $2(n+k)+1$ in $J^1_2(\Omega;\R)$,
      while $\cA^2_{\beta,d_1,d_2,d_3}$ is defined by a stratum of codimension $3(n+k)+2$ in $J^1_3(\Omega;\R)$,
    \item   The set $\cA_{\gamma,d,s}$ is defined by a stratum of codimension at least $n+2k$ in $J^1(\Omega;\R)$.
  \end{enumerate}
\end{lemma}

\begin{proof}
  The stratum $\cA_{\alpha,d}$ is defined by the following conditions: a point $p$ belongs to $\O[d]$ (codimension equal to $\codim \O[d]\subseteq\Omega$),
  the derivative $DF(p)$ vanishes at $T_p \O[d]$ (codimension equal to $\dim \O[d]$).
  Now $\det D^2f_d(p)=0$ is one more condition. Altogether, there are $1+\dim\Omega = n+k+1$ conditions.
  Next, within $\cA_{\alpha,d}$ we specify $\cA^1_{\alpha,d}$ by open conditions (codimension zero). The set $\cA^2_{\alpha,d}$ is
  the complement of $\cA^1_{\alpha,d}$ in $\cA_{\alpha,d}$ and it is a union of two strata (one is when $\dim\ker D^2f_d(p)>1$, the
  other is when $D^3f_d(p)(v,v,v)=0$). Each of these two strata is defined using one extra condition, so the codimension is one higher, namely $n+k+2$.

  The stratum $\cA^1_{\beta,d_1,d_2}$ is defined by $p_1\in \O[d_1]$ and $DF(p_1)$ vanishes on $T_{p_1}\O[d_1]$ (codimension equal to $\dim\Omega$
  as above). The same codimension applies for $p_2\in \O[d_2]$ being a critical point of $F|_{\O[d_2]}$. Next, there is an additional
  condition $F(p_1)=F(p_2)$, so we get $2\dim\Omega + 1 = 2(n+k) +1$ conditions. The same reasoning gives the codimension
  of the stratum $\cA^2_{\beta,d_1,d_2,d_3}$.

  Finally, consider $\cA_{\gamma,d,s}$. For $p\in \O[d]$, the condition that $DF$ vanishes on $T_p(Y_1\cap\dots\cap Y_s)$
  specifies a stratum of codimension $\dim(Y_1\cap\dots\cap Y_s)=\dim \O[d]+k(d-s)\ge \dim \O[d]+k$. Together with $p\in \O[d]$ ($\dim\Omega-\dim \O[d]$ conditions),
  we have codimension at least $n+2k$.
\end{proof}
We deal with $\cA_{\omega}$. We use the following principle. 

\begin{lemma}[Principle of independent singularities]\label{lem:A_omega}
  Let $X,Y$ be two smooth manifolds.
  Suppose two properties of a function define subspaces $\cA_1 \subseteq J^{a_1}_{b_1}(X,Y)$, and $\cA_2 \subseteq J^{a_2}_{b_2}(X,Y)$. A simultaneous occurrence of these two properties defines a subspace $\cA_{12}$ of $J^{\max(a_1,a_2)}_{b_1+b_2}(X,Y)$ of codimension $\codim \cA_1+\codim\cA_2$. In particular, if two properties $\cA_1,\cA_2$ have codimension at least $b_1\dim\Omega+1$ and $b_2\dim\Omega+1$ respectively,
  then a generic 1-parameter family avoids the situation when two properties appear simultaneously.
\end{lemma}
\begin{proof}
  Say the first property depends on the values of functions and differentials (up to ordeer $a_1$) at points $p_1,\dots,p_{b_1}$,
  and the total number of conditions is $\codim\cA^1$
  conditions on them. Suppose analogously, for the second property, that it involves $\codim\cA^2$ conditions on the values of functions
  and differentials at another $b_2$-tuple of points, $p'_{1},\dots,p'_{b_2}$. The simultaneous occurrence means that we simply merge the conditions, so that we have $\codim\cA^1+\codim\cA^2$ conditions on a $(b_1+b_2)$-tuple of points $p_1,\dots,p_{b_1},p'_1,\dots,p'_{b_2}$.

  The second part follows from Corollary~\ref{cor:multijet}(ii).
\end{proof}
We point out that multijet transversality is adapted to the situation, where the two tuples of points $p_1,\dots,p_{b_1}$ and $p'_1,\dots,p'_{b_2}$ in the proof of Lemma~\ref{lem:A_omega} do not coincide. That is, the two properties are really considered as independent.

\begin{corollary}\label{cor:paths}
  The set of functions $F\in\cA$ avoiding $\cA_\alpha\cup\cA_\beta\cup\cA_\gamma$ is open and dense.
  The set of paths of functions $F_\tau\in\cA$, $\tau\in[0,1]$ avoiding $\cA^2_\alpha$, $\cA^2_\beta$, $\cA_\gamma$ and $\cA_\omega$, and
  intersecting $\cA^1_\alpha$ and $\cA^1_\beta$ in finitely many values of $\tau \in (0,1)$, is open and dense.

  In particular, any two functions $F_0,F_1\in \cA\setminus\cA_{\alpha}\cup\cA_\beta\cup\cA_\gamma$ can be connected by such a path.
\end{corollary}

\begin{proof}
  The parameter counting argument of Lemma~\ref{lem:codim_cA} and the Multijet Transversality Theorem~\ref{thm:multijet}, used via Corollary~\ref{cor:multijet}, provide us with density both of functions and of paths, where for $\cA_\omega$ we use the independence principle enunciated in Lemma~\ref{lem:A_omega}.
  As $\cA_\alpha$
  and $\cA_\gamma$ involve only a single point (so can be handled by the Jet Transversality Theorem), we have openness. However, the
  set of functions avoiding $\cA_\beta$ is clearly open, as well as the set of paths having finitely many rearrangements.
\end{proof}

\begin{definition}
We call a path of functions as in the corollary an \emph{$\cA^1$-regular path} of functions on $\O$.
\end{definition}

We remark that Corollary~\ref{cor:paths} implies Theorem~\ref{thm:density}, so we have now provided the proof of the latter theorem, as promised.

\subsection{Paths of immersions relative to a function}\label{sub:paths_relative}
Now we consider the situation where $F\colon\Omega\to\R$ is fixed and $F$ is Morse (in the classical sense) as a function on $\Omega$. Suppose
$G\colon N\looparrowright \Omega$ is an immersion.

\begin{definition}\label{def:another_F_word}
  The map $G$ is called \emph{$F$-regular} if $G$ is a generic immersion (i.e.\ $G\in\cI^0$), and $F$ is immersed Morse with respect to $M=G(N)$.
\end{definition}


We let $\cR^0$ denote the set of all $F$-regular immersions. 
We now consider the complement of $\cR^0$ in $\cR:= C^{\infty}(N,\O)$.
The following definition is similar in spirit to Definition~\ref{def:spaces_cA}, except
that in Definition~\ref{def:spaces_cA} we discussed the subspaces of $C^\infty(\Omega,\R)$, while here we consider the
analogous conditions, but on the space $\cR$ of smooth maps from $N$ to $\Omega$.
\begin{definition}\ \label{def:spaces_cR}
Let $\cR := C^{\infty}(N,\O)$ and fix an excellent Morse function $F \colon \O \to \R$.
  \begin{itemize}
    \item (\emph{a non-Morse singularity at the stratum of depth $d$}) The subspace $\cR_{\alpha,d}(F) \subseteq \cR$ of smooth maps is by  definition the space of maps $G\in\cI^0$ such that, with $M:=G(N)$ and  $f_d=F|_{\O[d]}$,  there is  a point $p\in \O[d]$ with $Df_d(p)=0$, $f_d=F|_{\O[d]}$,  and with $D^2f_d(p)$ degenerate.
      Inside $\cR_{\alpha,d}(F)$ we specify the following subspaces.
      \begin{itemize}
	\item (\emph{non-degenerate case}) The subspace $\cR^1_{\alpha,d}(F) \subseteq \cI^0 \subseteq \cR_{\alpha,d}(F)$ consists of those maps that additionally satisfy that $D^2f_d(p)$ has one-dimensional kernel spanned by $v$ and $D^3f_d(p)(v,v,v)\neq 0$.
	\item (\emph{degenerate case}) The subspace $\cR^2_{\alpha,d}(F) \subseteq \cR_{\alpha,d}(F)$ consists of those maps that additionally satisfy that $D^2f_d(p)$ has either at least two-dimensional kernel, or it is a one dimensional kernel
	  and $D^3f_d(p)(v,v,v)=0$ for all $v\in\ker D^2f_d(p)$.
      \end{itemize}
    \item (\emph{two critical points on the same level set}) The subspace $\cR^1_{\beta,d_1,d_2}(F)  \subseteq \cI^0 \subseteq  \cR$ is the set of maps $G\in\cI^0$ such that with $M=G(N)$,
      there are two critical points $p_1$ of $F|_{\O[d_1]}$
      and $p_2$ of $F|_{\O[d_2]}$ such that $F(p_1)=F(p_2)$, but no other critical points have critical value $F(p_1)$. The subspace $\cR^2_{\beta,d_1,d_2,d_3}(F)$ is the set of maps such that $p_i\in \O[d_i]$ is a critical point of $F|_{\O[d_i]}$, $i=1,2,3$ and $F(p_1)=F(p_2)=F(p_3)$;
    \item (\emph{extra branch passing through critical points}) The subspace $\cR_{\gamma,d,s}(F)  \subseteq \cR$ is the space of maps $G\in\cI^0$
      such that there is a critical point $p\in \O[d]$ of $F|_{\O[d]}$ and there are
      branches $Y_1,\dots,Y_s$, $s<d$, through $p$ such that $F$ restricted to $Y_1\cap\dots\cap Y_s$ has a critical point at $p$.
    \item (\emph{failure to be a generic immersion}) The subspace $\cR_{\delta,d}  \subseteq \cR$ is the space of maps having a $d$-fold non-transversality point (similar to $\wt{\cI}^1_d$). We let $w\in\Omega$ be a non-transversality
      point of $G$ and let $u_1,\dots,u_d$ be the preimages of $W$ and let $U_1,\dots,U_d$ be pairwise disjoint open neighbourhoods in $N$
      of $u_1,\dots,u_d$. We specify two subspaces of $\cR_{\delta,d}$.
      \begin{itemize}
	\item (\emph{simple non-transversality point}) The subspace $\cR^1_{\delta,d}(F) \subseteq \cR_{\delta,d}$ is the space of maps in $\cI^1$, where for any subset $(i_1,\dots,i_r)$
	  of $(1,\dots,d)$ for which $M':=G(U_{i_1})\cap\dots\cap G(U_{i_r})$ is a transverse intersection, and $\dim M'>0$,
	  the restriction $F|_{M'}$ has nonvanishing derivative at $w$.
	\item (\emph{degenerate non-transversality point}) The subspace $\cR^2_{\delta,d}(F) \subseteq \cR_{\delta,d}$ is the space of maps $G$ where either $G\in\cI^2$, or $F|_{M'}$ has a critical point at $w$ for some transverse intersection $M'$ defined as in the previous item.
	\item (\emph{two events happening at the same time}) The subspace $\cR^2_\omega(F)$ is the subspace of maps that satisfy independently two conditions
    from the above list, e.g. two non-Morse critical points, two critical points at the same level etc.
      \end{itemize}
  \end{itemize}
We let
\[\cR^i_\alpha(F) := \bigcup_d\cR^i_{\alpha,d}(F),\,\, \cR^1_{\beta}(F):= \bigcup_{d_1,d_2}\cR^1_{\beta,d_1,d_2}(F), \text{ and } \cR_\gamma(F)=\bigcup_{d,s}\cR_{\gamma,d,s}(F).\]
We also define $\cR^i_\delta(F)=\bigcup_{d}\cR^i_{\delta,d}(F)$ for $i=1,2$, and $\cR^2_\beta(F)=\bigcup_{d_1,d_2,d_3}\cR^2_{\beta,d_1,d_2,d_3}(F)$. Set
\[\cR^1(F)=\cR^1_{\alpha}(F)\cup\cR^1_\beta(F)\cup\cR^1_\delta(F) \text{ and }\cR^2(F)=\cR^2_{\alpha}(F)\cup\cR^2_\beta(F)\cup\cR_\gamma(F)\cup\cR^2_\delta(F).\]
\end{definition}

  Similarly to the case of the $\cA$ spaces, the subspace $\cR_\alpha(F)$ corresponds to the failure of condition~\ref{item:IM1} of $F$ as an immersed Morse function
  on $M=G(N)$. The subspace $\cR_{\beta}$ corresponds to non-excellent Morse functions,
  while $\cR_{\gamma}$ is the failure of the~\ref{item:IM2} condition. Finally, $\cR_\delta$ corresponds to maps $G$ that are
  not immersions.
  The following statement has essentially the same proof as for the $\cA$ spaces.

 \begin{lemma}\label{lem:cR_function}
    The set of maps $G$ that avoid $\cR^1(F)\cup\cR^2(F)$ is open-dense.
  \end{lemma}

  The 1-parameter variant of Lemma~\ref{lem:cR_function} holds as well, but the $\cR^1$ stratum cannot be avoided in general.

\begin{lemma}\label{lem:cR_path}
  Suppose $k>1$. Suppose $G_\tau$ is a path in the space of immersions $\cI=\cI(N,\Omega)\subseteq C^\infty(N,\Omega)$. Then $G_\tau$ can be perturbed in such a way that $G_\tau$ avoids $\cR^2(F)$, avoids $\cI^2$,
  and hits $\cR^1(F)\cup\cI^1$ only at finitely many points. Moreover, if the original path was such that $F$ is immersed Morse
  with respect to $G_0(N)$ and $G_1(N)$, then we may assume that the perturbation  fixes $G_0$ and $G_1$.
\end{lemma}

\begin{proof}
The parameter counting argument for $\cR_\alpha,\cR_\beta$, and~$\cR_\gamma$ is the same as in the case of the
corresponding spaces $\cA$, so the set of maps missing $\cR_\alpha^2\cup\cR_\gamma$ is residual. For $\cR_\delta$ note that the conditions defining $\cR^1_{\delta}$ form an open-dense subset
in the space $\cI^1$: indeed, non-vanishing of the derivative of $F$ at each of $M'$ is an open-dense condition.
The case of $\cR_\omega$ is handled by Lemma~\ref{lem:A_omega}, because the principle remains the same.
The intersection of a residual set with an open-dense subset is residual, hence dense.
\end{proof}

Motivated by Lemma~\ref{lem:cR_path}, we introduce the following definition.

\begin{definition}\label{def:F_path}
  Suppose $F$ is an excellent Morse function on $\Omega$. A path $G_\tau\in\cI$, $\tau\in[0,1]$, of
  immersions from $N$ to $\Omega$ is called
  an \emph{$F$-regular path} if $G_0,G_1$ are generic immersions, $F$ is immersed Morse with respect to $G_0(N),G_1(N)$,
  and the path $G_\tau$ is transverse to $\cR^1(F)$ avoiding $\cR^2(F)$.
\end{definition}

With this definition, Lemma~\ref{lem:cR_path} allows us to state that any path of immersions can be perturbed to an $F$-regular path.
Note that a function belonging to an $F$-regular path of functions need not be itself an $F$-regular function.

\subsection{Double paths}\label{sub:double_path}

We now merge the notion of a path of Morse functions and the path of immersions into the notion of a double path.
A \emph{double path} is a pair of paths $(F_\tau,G_\tau)$, $\tau\in[0,1]$, such that $F_{\tau} \colon \O \to \R$ is an $\cF^1$-path (Definition~\ref{def:regular_path}), and for each $\tau$, $G_\tau\colon N\to\Omega$ is an immersion.

We will specify regularity conditions on a double path, which guarantee that for all but finitely many $\tau\in[0,1]$, $F_\tau$ is an immersed Morse function with respect to the pair $(\Omega,G_\tau(N))$, while for those finitely many parameter values where it is not, $F_\tau$ has mildest possible singularities.

The logic behind our construction is that we will usually fix an $\cF^1$-regular path $F_\tau\colon\Omega\to\R$,
and perturb $G_\tau$ as in Subsection~\ref{sub:paths_relative}. We will make precise what it means for
$G_\tau$ to be an $F_\tau$-regular path of functions, where $F_\tau$ changes with $\tau$ too.

To begin the plan sketched above, fix an $\cF^1$-path of functions $F_\tau \colon \O \to \R$ on $\Omega$.  Let $\cD := C^\infty(N\times[0,1],\Omega)$. We think of a path $G_\tau$ as an element of $\cD$ (by currying).
To specify generic regularity conditions, one defines subspaces $\cD_{\alpha}$, $\cD_{\beta}$, $\cD_{\gamma}$, and $\cD_{\delta}$ of $\cD$, with respect to the fixed path $F_\tau$, by analogy with the spaces $\cR_\alpha$, $\cR_\beta$, $\cR_\gamma$, and~$\cR_\delta$.
  There is one extra subspace needed, $\cD^2_{\varepsilon}$, which takes care of points where $F_\tau$ fails to be Morse at the same time as $G_\tau$ fails to be generic.
Here are the precise definitions.

\begin{definition}\ \label{def:spaces_cD}
Fix an $\cF^1$-path $F_{\tau}$.
\begin{itemize}
    \item (\emph{non-Morse singularity at higher depth}) The subspace $\cD_{\alpha,d}  \subseteq \cD $ of smooth paths is by  definition the space of paths $G_\tau$ such that there exists $\tau_1$ with $F_{\tau_1}$ a Morse function and  $G_{\tau_1} \in \cR_{\alpha,d}(F_{\tau_1})$.
      Inside $\cD_{\alpha,d}$ we specify the following subspaces.
      \begin{itemize}
	\item (\emph{non-degenerate non-Morse critical point}) The subspace $\cD^1_{\alpha,d} \subseteq  \cD_{\alpha,d}$ consists of those paths such that  there exists $\tau_1$ with  $F_{\tau_1}$ a Morse function and $G_{\tau_1} \in \cR_{\alpha,d}^1(F_{\tau_1})$.
	\item (\emph{degenerate non-Morse critical point}) The subspace $\cD^2_{\alpha,d} \subseteq \cD_{\alpha,d}$ consists of those paths $G_\tau$ such that there exists $\tau_1$ with  $F_{\tau_1}$ a Morse function and  $G_{\tau_1} \in \cR_{\alpha,d}^2(F_{\tau_1})$.
%
      \end{itemize}
    \item (\emph{two critical points on the same level set}) The subspace $\cD^1_{\beta,d_1,d_2}  \subseteq  \cD$ is the set of paths $G_\tau$ such that there exists $\tau_1 \in [0,1]$ with  $F_{\tau_1}$ a Morse function and  $G_{\tau_1} \in \cR^1_{\beta,d_1,d_2}(F_{\tau_1})$.
    \item (\emph{three or more critical points on the same level set}) The subspace $\cD^2_{\beta,d_1,d_2,d_3}  \subseteq  \cD$ is the set of paths $G_\tau$ such that there exists $\tau_1 \in [0,1]$ with  $F_{\tau_1}$ a Morse function and  $G_{\tau_1} \in \cR^2_{\beta,d_1,d_2,d_3}(F_{\tau_1})$.
    \item (\emph{extra branch passing through a critical point}) For $s<d$, the subspace $\cD_{\gamma,d,s}  \subseteq \cD$ is the space of paths $G_\tau$
      such that there exists $\tau_1 \in [0,1]$ with  $F_{\tau_1}$ a Morse function and $G_{\tau_1} \in \cR_{\gamma,d,s}(F_{\tau_1})$.
    \item (\emph{failure to be a generic immersion}) The subspace $\cD_{\delta,d}  \subseteq \cD$ is the space of paths $G_\tau$
      such that there exists $\tau_1 \in [0,1]$ with  $F_{\tau_1}$ a Morse function and  $G_{\tau_1} \in \cR_{\delta,d}(F_{\tau_1})$.
  We specify two subspaces of $\cD_{\delta,d}$.
      \begin{itemize}
	\item The subspace $\cD^1_{\delta,d} \subseteq \cD_{\delta,d}$ is the space of paths $G_\tau$
      such that there exists $\tau_1 \in [0,1]$ with  $F_{\tau_1}$ a Morse function and $G_{\tau_1} \in \cR^1_{\delta,d}(F_{\tau_1})$.
	\item The subspace $\cD^2_{\delta,d} \subseteq \cD_{\delta,d}$ is the space of paths $G_\tau$
      such that there exists $\tau_1 \in [0,1]$ with  $F_{\tau_1}$ a Morse function and  $G_{\tau_1} \in \cR^1_{\delta,d}(F_{\tau_1})$.
      \end{itemize}
\item (\emph{failure of $F_\tau$ to be excellent Morse}) The subspace $\cD^2_{\varepsilon}$ is the set of paths $G_\tau$ for which there exists $\tau_1 \in [0,1]$ such that $F_{\tau_1}$ is not Morse and $G_{\tau_1}$ is not a generic immersion.
    \item (\emph{two events at the same time}) The subspace $\cD_{\omega}$ is the subspace of paths such that for
      some $\tau_1\in[0,1]$, $G_{\tau_1}\in \cR_\omega(F_{\tau_1})$.
  \end{itemize}
\end{definition}

We let
\[\cD^i_\alpha  := \bigcup_d\cD^i_{\alpha,d} ,\,\, \cD^1_{\beta} := \bigcup_{d_1,d_2}\cD^1_{\beta,d_1,d_2} ,  \cD_\gamma  =\bigcup_{d,s}\cD_{\gamma,d,s},\text{ and } \cD^i_\delta = \bigcup_{d}\cD^i_{\delta,d}.\]
Set also $\cD^2_\beta:=\bigcup_{d_1,d_2,d_3}\cD^2_{\beta,d_1,d_2,d_3}$.
\[\cD^1 =\cD^1_{\alpha} \cup\cD^1_\beta \cup\cD^1_\delta  \text{ and }\cD^2 =\cD^2_{\alpha} \cup\cD^1_\beta\cup\cD_\gamma \cup\cD^2_\delta \cup \cD^2_{\varepsilon}\cup \cD_\omega.\]

\begin{proposition}\label{prop:perturb_to_regular}
Fix an $\cF^1$-path of functions $F_\tau \colon \O \to \R$.
  For any path $G_\tau$ of immersions, there is a $C^\infty$-close regular path of immersions $G_\tau'$ such that $G'_\tau$ misses $\cD^2$, and hits
  $\cD^1 $ only for finitely many values of $\tau$.
\end{proposition}

\begin{proof}[Sketch of proof]
The dimension counting argument for the spaces $\cD^i_{\bullet}$, for $\bullet \in \{\alpha, \beta, \gamma, \delta\}$, is analogous to the dimension counting argument for the $\cA$ spaces, given in Lemma~\ref{lem:codim_cA} and Corollary~\ref{cor:paths}; as usual this relies upon the Multijet Transverality Theorem~\ref{thm:multijet}.
It follows that the path $G_\tau$ can be perturbed to avoid $\cD^2_{\alpha} \cup \cD^2_{\gamma} \cup \cD^2_{\delta}$ and to intersect
$\cD^1$ at finitely many values. Lemma~\ref{lem:A_omega} can be applied to show that the path $G_\tau$ can also be perturbed
to avoid $D_\omega$.
It remains to deal with $\cD^2_{\varepsilon}$. Suppose $\tau_1,\dots,\tau_r$ are precisely the values for which $F_{\tau_i}$ is not Morse.  The condition $\cD^2_{\varepsilon}$ means that $G_{\tau_i}$ is not a generic immersion for one of these values. Since being a generic immersion is open and dense, we can perturb $G_{\tau}$ to arrange that $G_{\tau_i}$ is a generic immersion for $i=1,\dots,r$. After this perturbation, $G_\tau$ avoids~$\cD^2_{\varepsilon}$.
\end{proof}

\begin{definition}\label{def:regular_dp}
  A double path is called \emph{regular} if:
  \begin{itemize}
    \item $F_0$ is immersed Morse with respect to $G_0(N)$, and $F_1$ is immersed Morse with respect to $G_1(N)$.
    \item $F_\tau$ is an $\cF^1$-path of Morse functions, when regarded as a path of functions on $\Omega$;
    \item $G_\tau$ is a regular path of immersions;
    \item $G_\tau$ intersects $\cD^1 $ at finitely many points and misses $\cD^2 $.
  \end{itemize}
\end{definition}

Proposition~\ref{prop:perturb_to_regular} can be rephrased in the following manner.

\begin{corollary}\label{cor:perturb_to_regular}
  If $(F_\tau,G_\tau)$ is a double path, then there exists a regular double path $(F'_\tau,G'_\tau)$ arbitrarily close to it. Moreover, if
  $G_0$ is a generic immersion and $F_0$ is immersed Morse with respect to $G_0(N)$, then we can take $F'_0=F_0$, and $G'_0=G_0$. 
\end{corollary}

\begin{proof}
  First perturb $F_\tau$ to be an $\cF^1$-path (Definition~\ref{def:regular_path}), and then use Proposition~\ref{prop:perturb_to_regular} to perturb $G_\tau$.
\end{proof}
\begin{example}
  Suppose $(F_\tau,G_\tau)$ is such that $F_\tau$ is a $\cF^1$-path and $G_\tau=G_0$ is an regular immersion. Then $(F_\tau,G_\tau)$ is a regular double path.
\end{example}

\begin{example}
  Assume $(F_\tau,G_\tau)$ is such that $F_\tau=F_0$ and $G_\tau$ is a regular $F_0$-path in the sense of Definition~\ref{def:another_F_word}. Then $(F_\tau,G_\tau)$ is a regular double path.
\end{example}

We need the following statement.

\begin{lemma}\label{lemma:already-F1-path}
  Suppose $(F_\tau,G_\tau)$ is a regular double path. Then $F_\tau\circ G_\tau$ is an $\cF^1$-path of functions on $N$.
\end{lemma}

\begin{proof}
  Take $u\in N$ and choose $U\subseteq N$, a neighbourhood of $u$, such that $G_\tau|_U$ is an embedding.
  With this choice, it is clear that $u$ is a critical point of $F_\tau\circ G_\tau$ if and only if $G_\tau(u)$
  is a critical point of $F_\tau$ restricted to $U$. As $G_\tau$ omits $\cD_\gamma$, if $F_\tau|_{G_\tau(U)}$ has a critical point on $G_\tau(U)$, $G_\tau(u)$ must belong to the first stratum. This shows that the critical points of $F_\tau\circ G_\tau$ correspond to critical points of $F_\tau$ on the first stratum.

  The correspondence goes further.
  Morse critical points of $F_\tau\circ G_\tau$ correspond to Morse critical points of $F_\tau$ on the first stratum, and births/deaths of pairs of critical points of $F_\tau\circ G_\tau$ correspond to births/deaths of pairs of critical points of $F_\tau$ on the first stratum. That is, the double path missing $\cD^2_{\alpha,1}$ implies that the non-Morse critical points of $F_\tau\circ G_\tau$ are precisely births and deaths. A refinement of the argument, showing that $F_\tau\circ G_\tau$ is actually transverse to $\cF^1$ if $G_\tau$ is transverse to $\cD^1_{\alpha,1}$, is left to the reader.

  Transversality of $G_\tau$ to $\cD_{\beta,1,1}$ means that there are finitely many rearrangements along the path $F_\tau\circ G_\tau$.
In other words, $F_\tau\circ G_\tau$ is a path of functions with finitely many births, deaths and rearrangements.
  For all but finitely many values of $\tau$, $F_\tau\circ G_\tau$ is excellent Morse, for the remaining values
  it belongs to $\cF^1$.
\end{proof}

\subsection{Neat paths for manifolds with boundary}\label{sub:neat}

So far, the discussion of path of maps from $\Omega$ to $\R$ and from $N$ to $\Omega$ was done for $N$ and $\Omega$ closed. In our applications,
we allow for $N$ and $\Omega$ to be manifolds with boundary, provided the maps behave nicely near the boundary.
In this short subsection, we make this intuition precise. The bottom line is that all the results that are proved about paths of functions on closed manifolds carry over
to the case of compact manifolds with boundary, as long as suitable neatness conditions are preserved.

We recall the definition of neat immersions and extend the definition to \emph{very neat immersions}.

\begin{definition}\label{defn:neat-and-very-neat}
An immersion $G\colon N\looparrowright\O$ is \emph{neat} if
$G^{-1} (\bd \O) = \bd N$, and there exist collar neighbourhoods of $\bd N$ and $\bd \O$ such that in these coordinates, for $x \in \bd N$ and $t \in [0,1]$ we have that $G(x,t) = (G(x,0),t)$.
\end{definition}

\begin{definition}
  A path $G_\tau$, $\tau\in[0,1]$ of immersions is \emph{very neat} if $G_0$ is neat, the restriction $G|_{\bd N}\colon\bd N\looparrowright\bd\O$ is a generic immersion, and if there is a neighbourhood $U$ of $\bd N$ in $N$, and a neighbourhood
  $W$ of $\bd\O$ in $\O$, such that $U\cong \bd N\times[0,1)$, $W\cong\bd\O\times[0,1)$, and for each $\tau \in [0,1]$ and $(x,t) \in N \times [0,1]$, we have that $G_\tau(x,t)=(G_0(x,0),t)$. That is, we require the path to be independent of $\tau$ near the boundary of $N$.
\end{definition}

Any neat path of immersions can be perturbed to a regular, neat path of immersions.  In fact,
being neat implies that $G_\tau$ is a regular immersion on $U$ for each $\tau$. Next, regularity
is local, so we can choose a smaller product neighbourhood $U'$ of $\bd N$
and a perturbation of $G_\tau$ to a regular path of immersion, such that
the perturbation is supported away from $U'$. Then  the newly created path is still a neat path, with $U'$ replacing $U$ and $G_0(U')$ replacing $W$.

Suppose $\O$ is a manifold whose boundary is a union $\bd_-\O\sqcup\bd_+\O$. A function $F\colon\O\to[0,1]$ is called \emph{neat},
if $F^{-1}(0)=\bd_-\O$, $F^{-1}(1)=\bd_+\O$ and $F$ has no critical points in a neighbourhood $W$
of $\bd\O$.  In the immersed case we assume moreover that $M$ is a neatly immersed manifold (i.e.\ the image of a neat immersion)
and that $F$ has no critical points near $\bd\O$ on each stratum.

\begin{definition}\label{def:neat_path_F}
  A path of functions $F_\tau\colon\O\to\R$, $\tau\in[0,1]$, is called \emph{neat} if,
  for any $\tau$, $F_\tau^{-1}(0)=\bd_-\O$, $F_\tau^{-1}(1)=\bd_+\O$, and there is an open subset $W_\tau \subseteq\O$ containing $\bd\O$
  such $F_\tau|_W=F_0$ and $F_0$ has no critical points on $W_\tau$.
\end{definition}

\begin{proposition}\label{prop:perturb-to-neat-path}
  Any neat path of functions can be perturbed to a neat $\cF^1$-path, possibly at the expense of shrinking the open subset $W$.
\end{proposition}

This follows as above, because the property of being an $\cF^1$-path is local.
We now pass to double paths of functions.

\begin{definition}
  A double path $(F_\tau,G_\tau)$ is called \emph{neat} if $F_\tau$ is neat and $G_\tau$ is very neat.
\end{definition}

\begin{proposition}
  If $(F_\tau,G_\tau)$ is a neat double path, there exists a perturbation, fixing the endpoints at $\tau =0,1$, to a neat and regular double path.
\end{proposition}

\begin{proof}
  The proof combines the arguments in this subsection. First, by Proposition~\ref{prop:perturb-to-neat-path}
we can perturb $F_\tau$ to a neat $\cF^1$-path at the expense of shrinking a neighbourhood of $\bd\O$
on which $F_\tau$ is independent of $\tau$. Next, we perturb  $G_\tau$ to a very neat path of immersions such that $(F_\tau,G_\tau)$ is a regular double path, as in Proposition~\ref{prop:perturb_to_regular}, using the local argument once again to avoid changing $G_{\tau}$ near $\bd N$. 
\end{proof}

\section{Rearrangement for immersed Morse functions}\label{sec:rear}

Motivated by Cerf theory, we state the rearrangement theorem under conditions which guarantee disjointness of the appropriate ascending and descending membranes. The rearrangement theorem creates a suitable path of functions in which a rearrangement occurs.
Despite this formulation, the proof of the rearrangement theorem is
a relatively straightforward generalisation of an analogous result for the embedded case~\cite{BP}. This in turn was
proven in a similar way to the corresponding result in Milnor's book \cite{Mil65}. We begin by introducing some notation.

\begin{theorem}[Rearrangement]\label{thm:grim_rearrangement}
	Let $M$ be an $n$-dimensional immersed manifold of codimension $k$ in $\O$.
	Let $F\colon\O\to\R$ be a Morse function and let $p$ be a critical point of $F$. Let $\xi$ be
	a grim vector field for $F$.
 Let $[a,b]$ be an interval in $\R$ containing $F(p)$. 
	
	
	For every $c\in[a,b]$ and for any open subset $V$ of $F^{-1}[a,b]$ containing $\cK_{\xi,a,b}(p)$, there exists a path of Morse functions $F_\tau$, $\tau\in[0,1]$, supported in $V$,  that moves the critical point $p$ to the level set $c$. More specifically, the following hold for $F_\tau$.
	\begin{itemize}
	 \item $F_0=F$ i.e.\ the path starts at $F$.
	 \item $F_1(p)=c$.
	 \item For every $\tau \in [0,1]$ the critical points of $F_\tau$ are the same as the critical points of $F_0$.
     \item There exists a neighbourhood $U$ of $p$ such that $F_\tau(w)=F_0(w)+F_\tau(p)-F_0(p)$ for each $w\in U$.  In particular the index of $p$ is unchanged.
      \item For every $w\notin V$ and for every $w \in F_0^{-1}\{a,b\}$ we have $F_\tau(w)=F_0(w)$ for all $\tau$. That is, $F_\tau(c) \neq F_0(x)$ implies that $x \in V$.
      \item The vector field $\xi$ is a grim vector field for $F_\tau$ for every $\tau \in [0,1]$.
    \end{itemize}
\end{theorem}

The theorem implies that we can move the value of $p$ under $F_1$ to any given $c \in [a,b]$ by only changing $F$ within a pre-determined neighbourhood of $\cK_{\xi,a,b}(p)$.

\begin{proof}
	The proof is an extension of the argument of Milnor in \cite[Section 4]{Mil65}. 	
	Set \[\O' := F^{-1}[a,b] \text{ and } \cK_p := \cK_{\xi,a,b}(p).\]
We claim that there exists a function $\mu\colon\O'\to[0,1]$
	such that $\mu$ is constant on trajectories of~$\xi$, and moreover $\mu\equiv 0$ outside $V$
	and $\mu\equiv 1$ in a neighbourhood of $\cK_p$.
	To see this, first construct two grim neighbourhoods $U_1,U_2$ of $\cK_{p}$ with the property that $U_2\subseteq U_1\subseteq V$ and the boundary of $U_2$
	intersects the boundary of $U_1$ only at the level sets $F^{-1}(a)$ and $F^{-1}(b)$ and
\begin{equation}\label{eq:misterious_intersection} \partial(\ol{U}_1\cap F^{-1}(a))\cap\partial(\ol{U}_2\cap F^{-1}(a))= \partial(\ol{U}_1\cap F^{-1}(b))\cap\partial(\ol{U}_2\cap F^{-1}(b)) = \emptyset;
	\end{equation}
	see Figure~\ref{fig:rearr7}.
	The existence of a grim neighbourhood of $\cK_p$ follows directly from Lemma~\ref{lem:grimneighexist}. Note that the assumption of Lemma~\ref{lem:grimneighexist} is satisfied by the properties of $\xi$.
	\begin{figure}
	  \input{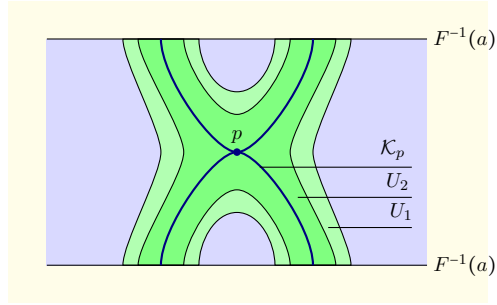}
	  \caption{Proof of the rearrangement theorem.}\label{fig:rearr7}
	\end{figure}
	The function $\mu$ is constructed as follows.
	\begin{itemize}
	  \item Away from $\ol{U}_1$ we set $\mu=0$;
	  \item On $\ol{U}_2$ we set $\mu=1$;
	  \item We first extend $\mu$  on $F^{-1}(a)$ across $F^{-1}(a)\cap(\ol{U}_1\setminus\ol{U}_2)$
	    to a smooth function.
	\end{itemize}
	Finally, take a point $w\in U_1\setminus \ol{U}_2$.
	As there are no critical points of $\xi$ in $U_1\setminus U_2$, and both
	$U_1$ and $U_2$ are $\xi$-invariant, the trajectory of $\xi$ through $w$ hits $F^{-1}(a)$ in the past at a point $z\in F^{-1}(a)\cap (\ol{U}_1\setminus\ol{U}_2)$.
	We declare $\mu(w):=\mu(z)$. This completes the definition of the function $\mu \colon \Omega' \to [0,1]$.

  Now choose an auxiliary smooth function
  $\Psi\colon\R\times[0,1]\times[0,1]\to\R$, written as $\Psi(t,\tau,s)$, such that:
  \begin{enumerate}[label=(O-\arabic*)]
    \item\label{item:O1} 
      We have $\Psi(t,\tau,s)=t$,  whenever at least one of the following holds:
      \begin{itemize}
	\item  $t\le a$;
	\item $t\ge b$;
	\item $\tau=0$;
	\item $s=0$.
      \end{itemize}
    \item\label{item:O2} The map $t\mapsto \Psi(t,\tau,s)$ is monotone increasing for all $\tau$ and $s$.
    \item\label{item:O3} For all $\tau$, and for sufficiently small $\delta>0$ the function $t\mapsto \Psi(t,\tau,1)$ is linear with derivative $1$ if $t\in [F(p)-\delta,F(p)+\delta]$.
    \item\label{item:O4} $\Psi(F(p),1,1)=c$.
  \end{enumerate}
We define
\[F_\tau(w)=\Psi(F(w),\tau,\mu(w)).\]
We have $\partial_\xi F_\tau(w)=\frac{\partial\Psi}{\partial t}\partial_\xi F+\frac{\partial\Psi}{\partial s}\partial_\xi\mu$. Now
by \ref{item:O2} we have $\frac{\partial\Psi}{\partial t}>0$, and since $\xi$ is grim we know that $\partial_\xi F\ge 0$ (with equality only at the critical points of $F$). By the construction of $\mu$ we have that $\partial_\xi\mu=0$. It follows that $\partial_\xi F_\tau(w)\ge 0$ with equality only at the critical points of~$F$.

Verifying that all the required properties of $F_\tau$ are satisfied is routine. For example, by \ref{item:O1} we see that $F_\tau(w)=F(w)$ whenever $F(w)\notin[a,b]$ or $\tau=0$.
The behaviour of the function $F_\tau$ near $p$ is governed by \ref{item:O3} and the rearrangement is guaranteed by \ref{item:O4}. The last condition in \ref{item:O1},
that is $\Psi(t,\tau,0)=t$ together with the fact that $\mu=0$ outside of $U_2$ implies that we do not change the function $F$
except near $\cK_p$.
\end{proof}

\begin{corollary}\label{cor:used_to_be_a_theorem}
  Let $F\colon\O\to\R$ be a Morse function, let $p,p'$ be two critical points of $F$, and let $a<b$ be two non-critical values such that $a<F(p)<F(p')<b$.
  Suppose there is a grim vector field $\xi$ such that $\cK_{\xi,a,b}(p)$ and $\cK_{\xi,a,b}(p')$ are disjoint. Then there exists
  a path of Morse functions $F_\tau$, $\tau\in[0,1]$, $F_0=F$, such that $\xi$ is a grim vector field for $F_\tau$ for every $\tau$, and such that $F_1(p)>F_1(p')$. The path is supported on an arbitrarily small open neighbourhood $V \subseteq F^{-1}([a,b])$ of $\cK_{\xi,a,b}(p)$ and $\cK_{\xi,a,b}(p')$.
\end{corollary}

\begin{proof}
  Apply the Rearrangement Theorem~\ref{thm:grim_rearrangement} to $p$ with $c=\frac12(F(p')+b)$.
\end{proof}

\begin{remark*}
  If $F_\tau$ is a path constructed in Corollary~\ref{cor:used_to_be_a_theorem}, then
  for every $\tau$ the critical points of $F_\tau$ are the same as the critical points of $F_0$. Furthermore, there exists a neighbourhood $U$ of $p$
  such that $F_\tau(w)=F_0(w)+F_\tau(p)-F_0(p)$ for each $w\in U$.  In particular the index of $p$ is the same for every $\tau \in [0,1]$.
\end{remark*}

As explained in \cite[Section 4]{BP} in the embedded case, in codimension $k>1$ we can always use the local rearrangement theorem to order critical points in such a way that higher index critical points have higher critical value. In the immersed case there is an analogous result, once we have replaced the index $h$ with $h+d$, where $d$ is the depth.
The statement and the proof are essentially the same as in
Milnor's book \cite[Section~4]{Mil65} with a necessary technical modification for the immersed case.
We begin with the following definition.

\begin{definition}\label{def:ordered_function}
  A Morse function $F\colon\O\to\R$ is called an \emph{ordered Morse function} if for any two critical points $p$ and $p'$ of indices $h$ and $h'$ and at depth $d$ and $d'$, respectively, the condition $h'+d'>h+d$ implies that $F(p')>F(p)$.
\end{definition}

\begin{remark*}
  An ordered Morse function is not required to be excellent. Two critical points are allowed to have the same critical value, as long as the
  sum $h+d$ is equal for both points.
\end{remark*}

\begin{theorem}[Global Rearrangement]\label{thm:grim_global_rearrangement}
	Suppose $k\ge 2$. Let $M^n\subseteq\Omega^{n+k}$ be an immersed manifold and let $F\colon\Omega\to\R$ be an immersed Morse function.
	Then there exists a path $F_\tau$ of Morse functions such that $F_0=F$ and $F_1$ is ordered. The path
	can be chosen in such a way that $\xi$ is a grim vector field for $F_{\tau}$ for all $\tau$.

	Furthermore, if $p$ and $p'$ are critical points of $F$ and $h+d=h'+d'$ $($where $h,h'$ are the indices and $d,d'$ are the depths$)$,
	then any prescribed order of $F_1(p)$ and $F_1(p')$ can be achieved. In particular, we can arrange, if desired, for $F_1$ to be excellent.
\end{theorem}

\begin{proof}
	By Corollary~\ref{cor:highcodim} if $h+d\ge h'+d'$ and $\xi$ is Morse--Smale, then $\Ha(p)\cap\Hd(p')=\emptyset$.
	We conclude the proof in the standard way, e.g.\ as in \cite[Section~4]{Mil65}.
\end{proof}

\section{\texorpdfstring{$\xi$}{xi}-paths}\label{sec:xi_path}

A $\xi$-path is, roughly speaking, a family of functions with a common grim vector field $\xi$. The vector field $\xi$ gives us a precise
control on the behaviour of the family of functions. We describe paths on $\Omega$, i.e.\ in the immersed setting.
The case of paths on $N$, i.e.\ on a smooth manifold (where the paths are called $\eta$-paths)
is then given as a specialisation. Both notions of a $\xi$-path and of an $\eta$-path
are used throughout the rest of Parts~\ref{part:just_paths} and~\ref{part:pathlifting}.

\subsection{$\xi$-paths for immersed Morse functions}
\begin{definition}\label{def:a_xi_path}
Let $F=F_0 \colon \O \to \R$ be an immersed Morse function and let $\xi$ be a grim vector field with respect to $F_0$.
A path $F_\tau \colon \O \to \R$ of immersed Morse functions is called a \emph{$\xi$-path}
if $\xi$ is a grim vector field for $F_\tau$ for every $\tau$.
\end{definition}

\begin{example}
  The path constructed in Rearrangement Theorem~\ref{thm:grim_global_rearrangement} is a $\xi$-path.
\end{example}

We make the following observation.

\begin{lemma}\label{lem:xi_path_fixes_crits}
  If $F_\tau$ is a $\xi$-path, then a point $p\in\O$ is a critical point of $F_\tau$ if and only if $p$
  is a critical point of $F_0$. The index of $p$ as a critical point of $F_\tau$ does not depend on $\tau$.
\end{lemma}

\begin{proof}
    By  definition $\xi$ vanishes only at critical points of $F_\tau$. Therefore, critical points of $F_\tau$
  are the same for all $\tau$.

  Suppose $p$ is a critical point of $F_\tau$ at depth $d$. By the definition, the index of $p$ is the dimension
  of the stable manifold of $\xi$ restricted to the stratum $\O[d]$. As such, it does not depend on $\tau$.
\end{proof}

\begin{definition}[A special $\xi$-path]\label{def:special_xi_path}
  A $\xi$-path $F_{\tau} \colon \O \to \R$ is \emph{special} if for every critical point $p$ of $\xi$, there exists a neighbourhood
  $U$ of $p$ and a real valued function $c_p \colon [0,1] \to \R$ such that $F_\tau|_U=F_0|_U+c_p(\tau)$ for all $\tau \in [0,1]$.
\end{definition}

\begin{lemma}\label{lem:xi_connects}
  Suppose $F_0$ and $F_1$ are two immersed Morse functions with a common grim vector field $\xi$. Assume that
  $F_0-F_1$ is locally constant in some neighbourhood of the critical points of $\xi$. Then there exists a special $\xi$-path
  $F_\tau$ connecting $F_0$ and $F_1$.
\end{lemma}

\begin{proof}
  Define $F_\tau=\tau F_1+(1-\tau)F_0$. At $\tau = 0,1$ we get $F_0$, $F_1$ respectively.  As $F_0-F_1$ is constant in some neighbourhood of each of the critical points, for each $\tau\in[0,1]$, in these neighbourhoods we can write $F_\tau = F_0 + \tau K_p$ for some $K_p \in \R$ that depends on the critical point $p$.
  Therefore $F_\tau$ satisfies the conditions \ref{item:IM1} and~\ref{item:IM2} of Definition~\ref{def:immersed_morse_function} for each $\tau \in [0,1]$. Thus $F_\tau$ is immersed Morse.

  Obviously, $\partial_\xi F_\tau>0$ except at critical points of $F_\tau$. Hence $\xi$ is grim for $F_\tau$.
\end{proof}

A special case of a $\xi$-path is a path of rearrangement as constructed in the proof of the Rearrangement Theorem~\ref{thm:grim_rearrangement}.

\begin{definition}[Elementary $\xi$-path of rearrangement]\label{def:elementary}
  A special $\xi$-path $F_\tau \colon \O \to \R$ is called an \emph{elementary $\xi$-path of rearrangement}
  if there is a unique critical point $p$ such that $c_p(\tau)$ is not identically zero, and
  there exists a unique critical point $q$ such that the function
      $\tau\mapsto F_\tau(p)-F_\tau(q)$ crosses zero as $\tau$ goes from $0$ to $1$. Moreover,
      we assume that for this pair $F_\tau(p)-F_\tau(q)$ has precisely one zero for $\tau\in[0,1]$ and this is a simple zero, i.e.\ at the zero, the derivative with respect to $\tau$ does not vanish.
\end{definition}

The following result is an analogue of Cerf's uniqueness of rearrangement.

\begin{proposition}\label{prop:unique_if_xi}
  Suppose $F_\tau$ and $\wt{F}_\tau$ are two elementary $\xi$-paths of rearrangement
  that move the critical value of $p_-$ above the critical value of another critical point $p_+$
  and $F_0=\wt{F}_0$. Assume that $F_0$, $\wt{F}_0$, $F_1$, and $\wt{F}_1$ are excellent immersed Morse functions.
  Then the paths $F_\tau$ and $\wt{F}_\tau$ are left-homotopic $($Definition~\ref{defn:homotopy-of-paths}$)$.
\end{proposition}
\begin{proof}
  In a neighbourhood of $p_+$, the functions $F_\tau$ and $\wt{F}_\tau$ are independent of $\tau$.
  In particular, $F_\tau(p_+)=\wt{F}_\tau(p_+)=F_0(p_+)$.

  Reparametrise $\wt{F}_\tau$ and $F_\tau$ if needed (which can be achieved by a left-homotopy) so that $F_\tau(p_-),\wt{F}_\tau(p_-)<F_0(p_+)$ for $\tau<1/2$ and
  $F_\tau(p_-),\wt{F}_\tau(p_-)>F_0(p_+)$ for $\tau>1/2$. For $\sigma\in[0,1]$, define
  \[H_{\sigma,\tau}(z)=(1-\sigma)F_\tau(z)+\sigma\wt{F}_\tau(z).\]

  We first show that $H_{\sigma,\tau}$ is Morse. We clearly have $\partial_\xi H_{\sigma,\tau}\ge 0$ with
  equality only at critical points of $F_0$. That is to say, the only critical points of $H_{\sigma,\tau}$ are
  those of $F_0$. We need to show that $H_{\sigma,\tau}$ satisfies both items of Definition~\ref{def:immersed_morse_function}
  at these points.
  To this end, recall that
  $F_\tau$ and $\wt{F}_\tau$ are special $\xi$-paths. Hence, for each critical point $p'$ there exists a neighbourhood $U$
  such that $F_\tau$ and $\wt{F}_\tau$ restricted to $U$ differ from $F_0$ by a constant (depending on $\tau$ and on $U$).
  Hence, $H_{\sigma,\tau}$ on $U$ differs from $F_0$ by a constant. The Morse condition on $H_{\sigma,\tau}$ follows
  from the Morse condition on $F_0$. Hence, $H_{\sigma,\tau}$ is Morse.

  Clearly, for all $\sigma \in [0,1]$,  $H_{\sigma,\tau}(p_-)<H_{\sigma,\tau}(p_+)$ for $\tau<1/2$ and $H_{\sigma,\tau}>H_{\sigma,\tau}(p_+)$ for $\tau>1/2$. Therefore,
  a rearrangement occurs at $\tau=1/2$ for each path $\tau\mapsto H_{\sigma,\tau}$ for a fixed value of $\sigma$. Moreover,
  the path $F_\tau$ being an $\cF^1$-path means that the stratum $\cF^1_\beta$ is intersected transversely, which amounts to saying that
  \[\frac{d}{d\tau}(F_\tau(p_-)-F_\tau(p_+))|_{\tau=1/2}>0.\]
  An analogous condition is satisfied by $\wt{F}_\tau$, hence it is also satisfied by $H_{\sigma,\tau}$ for all $\sigma$. That is,
  $\tau \mapsto H_{\sigma,\tau}$ intersects the $\cF^1$-stratum of $C^\infty(\O,\R)$ transversely, as required by Definition~\ref{defn:homotopy-of-paths}.
\end{proof}

Now we discuss the case of two paths of rearrangements relative to different vector fields.

\begin{proposition}\label{prop:uniqueness_without_Cerf}
  Let $F\colon\O\to\R$ be an immersed Morse function, let $p$ be a critical point of ~$F$ with $F(p)=c$.
  Assume that $\xi$ and $\xih$ are grim vector fields for $F$.
  Let $F_\tau$ $($respectively $\widehat{F}_\tau)$, be an elementary $\xi$-path $($respectively, an elementary $\xih$-path$)$
  of rearrangement that moves the point $p$ to the level $c'=F_1(p)$, with $F_0=\widehat{F}_0$.

  Assume also that there exists an interval $[a,b]$ containing $c$ and $c'$, such that
  $\xi$ and $\xih$ are both tangent to $\cK_{\xi,a,b}(p)$ and agree in a neighbourhood of $\cK_{\xi,a,b}(p)$.
Then $F_\tau$ and $\widehat{F}_\tau$ are left-homotopic.
\end{proposition}

\begin{proof}
  Let $U$ be a neighbourhood of $\cK_{\xi,a,b}(p)$ such that $\xi=\wh{\xi}$ in $U$.
  Define $\xi_\sigma=(1-\sigma)\xi+\sigma\xih$. This need not be a grim vector field in general (a convex combination
  of grim vector fields is not necessarily grim), but it is certainly grim in $U$ because there $\xi_{\sigma} = \xi = \wh{\xi}$.

  For each $\sigma$, let $\tau\mapsto F_{\sigma,\tau}$ be
  the path as in the proof of Theorem~\ref{thm:grim_rearrangement}. To construct these paths, we use the same functions $\mu$ and $\Psi$ for
  all $\sigma$:
  note that $\xi_\sigma$ does not depend on $\sigma$ on $U$, so we can indeed use the same $\mu$.
  Note that for this construction we do not need
  $\xi_\sigma$ to be grim everywhere on $\O$, only on $U$, because the path constructed in Theorem~\ref{thm:grim_rearrangement} is supported on~$U$. By construction, all the paths $\tau\mapsto F_{\sigma,\tau}$ are $\cF^1$-paths, so $F_{0,\tau}$ and $F_{1,\tau}$ are left-homotopic.

  The paths $F_\tau$ and $F_{0,\tau}$ need not be equal, but they are left-homotopic by virtue of Proposition~\ref{prop:unique_if_xi}.
  Likewise, the path $\widehat{F}_\tau$ and $F_{1,\tau}$ are left-homotopic for the same reason. Hence, $F_\tau$ and $\widehat{F}_\tau$
  are left-homotopic as well.
\end{proof}

\subsection{Paths on a smooth manifold}
We pass to studying paths on the manifold $N$. We have two possibilities: either we assume that $N$ is closed,
or that $N$ is compact with boundary and the functions on $N$ behave nicely near the boundary. The precise formulation of
nice behaviour was given in Subsection~\ref{sub:neat}. For simplicity, in this subsection we assume that $N$ is closed, leaving
necessary adaptations to the reader.

\begin{definition}\label{def:eta-path}
An \emph{$\eta$-path} is a path of Morse functions $f_\tau \colon N \to \R$ with a common gradient-like vector field $\eta$.
An \emph{$\eta$-path} of rearrangement is an $\eta$-path $f_\tau$ such that precisely two critical points are rearranged along $f_\tau$. 
\end{definition}

Recall that vector fields on $N$ are denoted using letter $\eta$, therefore what was said for $\xi$-paths for $\Omega$, will be now put into the context of $\eta$-paths on $N$.

\begin{lemma}\label{lem:linear_eta_path}
  Suppose $f_0,f_1\colon N \to \R$ are two Morse functions. Assume $\eta$ is a gradient-like vector field for both $f_0$ and $f_1$. Then
  the path $f_\tau=(1-\tau)f_0+\tau f_1$ is an $\eta$-path of Morse functions.
\end{lemma}

\begin{proof}
  The same argument that was used in the proof of Lemma~\ref{lem:xi_path_fixes_crits} shows that $f_0$ and $f_1$ have the same critical points.
  As $\partial_\eta f_0,\partial_\eta f_1>0$ away from the set of critical points, we infer that $\partial_\eta f_\tau>0$ away from the set
  of critical points of $f_0$. In particular, critical points of $f_\tau$ are the same as the critical points of $f_0$.

  It remains to show that $f_\tau$ is Morse.
  Take one critical point $q\in N$. Let $W^s$ and $W^u$ be the tangent spaces at $q$ to the stable and unstable manifolds of $\eta$ respectively. Since $f_0$ and $f_1$ are Morse, $D^2f_0(q)$ and $D^2f_1(q)$ are both negative (respectively: positive)
  definite forms on $W^s$ (respectively: on $W^u$). This shows that $D^2f_\tau(q)=(1-\tau)D^2f_0(q)+\tau D^2f_1(q)$ is negative definite on $W^s$ (respectively:
  positive definite on $W^u$). This means that $D^2f_\tau(q)$ is of full rank. That is, $q$ is a Morse critical point of $f_\tau$. This implies that $f_\tau$
  has only Morse critical points and that $\partial_\eta f_\tau\ge 0$ with equality only at these points.
\end{proof}

As a consequence we prove a variant of Proposition~\ref{prop:uniqueness_without_Cerf} for paths starting from different  points.

\begin{lemma}\label{lem:uniq1}
  Suppose $\eta$ is a common gradient-like vector field for two functions $f \colon N \to \R$ and $\wt{f} \colon N \to \R$ belonging to the same connected component
  of $\cF^0$. Then any two $\eta$-paths of rearrangements starting from $f$ and $\wt{f}$ that move a critical point $q_-$ above the critical point $q_+$  are lax homotopic.

  Moreover, if $f=\wt{f}$, then the paths are left-homotopic. 
\end{lemma}

\begin{proof}
  Choose two $\eta$-paths $f_\tau$ and $\wt{f}_\tau$ of rearrangements that intersect the $\cF^1_\beta$ stratum at $\tau=1/2$, with $f_0=f$ and $\wt{f}_0 = \wt{f}$.
  Set $h_{\sigma,\tau}=(1-\sigma)f_\tau+\sigma\wt{f}_\tau$. We have proved in Lemma~\ref{lem:linear_eta_path}
   that $\eta$ is gradient-like for $h_{\sigma,\tau}$. We need to show that the paths $\tau\mapsto h_{\sigma,\tau}$ rearrange precisely the critical points $q_-$ and $q_+$ for all $\sigma$.

  As $f$ and $\wt{f}$ belong to the same
  connected component of $\cF^0$, for any two critical points $q_1,q_2$ of $f$ (which are also critical points of $\wt{f}$), we have
  $f(q_1)<f(q_2)$ if and only if $\wt{f}(q_1)<\wt{f}(q_2)$.
  It is then easy to check that if $q_1,q_2$ are two critical points, then:
  \begin{itemize}
    \item if $q_1=q_-$ and $q_2=q_+$, then
      $h_{\sigma,\tau}(q_1)<h_{\sigma,\tau}(q_2)$ for $\tau<1/2$.
      There is equality for $\tau=1/2$ and $h_{\sigma,\tau}(q_1)>h_{\sigma,\tau}(q_2)$ for $\tau>1/2$.
    \item if $\{q_1,q_2\}\neq\{q_-,q_+\}$, then $h_{\sigma,\tau}(q_1)<h_{\sigma,\tau}(q_2)$ if and only if $f(q_1)<f(q_2)$.
  \end{itemize}
  That is to say, for any $\sigma\in[0,1]$, the path $\sigma\mapsto h_{\sigma,\tau}$ is an $\eta$-path of rearrangement intersecting $\cF^1_\beta$ at $\tau=1/2$. As in the proof of Proposition~\ref{prop:unique_if_xi}, we check that if the intersections of $f_\tau$ and $\wt{f}_\tau$ with $\cF^1_\beta$ are transverse, then so is the intersection of $\sigma\mapsto h_{\sigma,\tau}$. It follows that $h_{\sigma,\tau}$ is a lax
  homotopy between the paths $f_\tau$ and $\wt{f}_\tau$.

  Note that the lax homotopy is over the path $h_{\sigma,0}=(1-\sigma)f_0+\sigma\wt{f}_0$. That is, if $f_0=\wt{f}_0$, then, $h_{\sigma,0}=f_0$, proving that $h_{\sigma,\tau}$ is in fact a left-homotopy.
\end{proof}

Another way of creating lax homotopic paths is the following.

\begin{lemma}\label{lem:cut_and_paste}
  Suppose $f \colon N \to \R$ is a Morse function and $\eta$ is a gradient-like vector field for~$f$. Assume
  that $q_-$ and $q_+$ are two consecutive $($with respect to the value of $f)$ critical points of~$f$. Suppose also $U$ is an open subset containing a neighbourhood of the unstable manifold of $q_-$ up to a level set above $q_+$, as well as a neighbourhood of $q_+$. 

  Assume $h_{\sigma,0}$, $\sigma\in[0,1]$, $h_{00}=f$, is a path of excellent Morse functions whose support is disjoint from $U$. Let $\wt{\eta}$ be a gradient-like vector field for $h_{10}$  agreeing with $\eta$ on $U$.

  If $f_\tau$, $\tau\in[0,1]$ is an $\eta$-path of rearrangement starting at $f$ and lifting $q_-$ above $q_+$ and $\wt{f}_{\tau}$ is an $\wt{\eta}$-path of rearrangement
  starting at $h_{10}$ and lifting $q_-$ above $q_+$, then the paths $\tau\mapsto f_{\tau}$ and $\tau\mapsto \wt{f}_{\tau}$ are lax homotopic
  over $h_{\sigma,0}$.
\end{lemma}
\begin{figure}
  \input{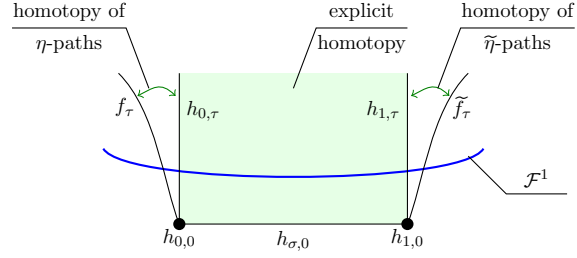}
  \caption{Proof of Lemma~\ref{lem:cut_and_paste}, schematic of paths.}\label{fig:concat_re}
\end{figure}
\begin{proof}
  The aim of the proof is to create a homotopy of paths that is guided by $h_{\sigma,0}$ away from $U$ and and an $\eta$-path inside $U$.
  Such a homotopy
  will be constructed explicitly. In the second step, we will use this homotopy to show that $f_{\tau}$ and $\wt{f}_{\tau}$ are lax homotopic;
  compare Figure~\ref{fig:concat_re}.

  To begin with, choose
  $h_{0,\tau}$ to be an $\eta$-path of rearrangement starting at $h_{00}$, lifting $q_-$ above $q_+$ and supported on $U$.
  Assume that rearrangement occurs at $\tau=1/2$ and $h_{0,\tau}$ intersects $\cF^1_\beta$ transversely.
  For $\tau\in[0,1]$ and $\sigma\in[0,1]$ define
  \begin{equation} \label{eq:wtfst}
    h_{\sigma,\tau}(u) = \begin{cases} h_{0,\tau}(u) & u\in U \\ h_{\sigma,0}(u) & u\notin U.\end{cases}
  \end{equation}
  The definition of $h_{\sigma,\tau}$ coincides with previously defined $h_{0,\tau}$ if $\sigma=0$, and agrees with given $h_{\sigma,0}$
  if $\tau=0$. Indeed, if $\tau=0$, the right-hand side of \eqref{eq:wtfst} is $h_{\sigma,0}$ away from $U$, and $h_{0,0}$ in $U$,
  but in $U$, $h_{0,\sigma}=h_{0,0}$. Hence, the right-hand side is equal to $h_{\sigma,0}$. Furthermore,
  note that the right-hand side of \eqref{eq:wtfst} if $\sigma=0$ is equal to $h_{0,\tau}$ in $U$, and $h_{0,0}$ away from $U$, but
  $h_{0,\tau}$ is supported on $U$. Hence, \eqref{eq:wtfst} indeed extends the definition of $h_{\sigma,\tau}$ from the set $\{\tau\sigma=0\}$.

  The paths $\tau\mapsto h_{0,\tau}$ and $\sigma\mapsto h_{\sigma,0}$ have disjoint supports. This means that $h_{\sigma,\tau}$ is smooth and depends smoothly on $\sigma,\tau$. Next, since $\sigma\mapsto h_{\sigma,0}$ is a path
  of excellent Morse functions, no critical point of $f$ other than $q_\pm$ has critical value in the interval
  $[f(q_-),f(q_+)]$. That is to say, the only rearrangements that occur along $\tau\mapsto h_{\sigma,\tau}$ are at $\tau=1/2$, where the critical values of $q_-$ and $q_+$ agree.
  It follows that $\tau \mapsto h_{\sigma,\tau}$ is a path of rearrangements for each $\sigma$, and $\sigma \mapsto (\tau \mapsto h_{\sigma,\tau})$ is a lax homotopy between
  $\tau \mapsto h_{\tau,0}$ and $\tau \mapsto h_{\tau,1}$, and this is clearly a lax homotopy over $h_{\sigma,0}$.

  In the remainder of the proof, we will show that $\tau\mapsto h_{1,\tau}$ and $\wt{f}_\tau$ are left-homotopic and that $\tau\mapsto h_{0,\tau}$ and $f_\tau$ are left-homotopic. Concatenating these homotopies will give us a lax homotopy over $h_{\sigma,0}$ between $f_\tau$ and $\wt{f}_{\tau}$.

  \smallskip
  \emph{Proving homotopy between $h_{\tau,1}$ and $\wt{f}_{\tau}$.} We aim to show that $h_{1,\tau}$ and $\wt{f}_\tau$ are both $\wt{\eta}$-paths of rearrangement. For $\wt{f}_\tau$, this is one of the assumptions of the lemma. For $h_{1,\tau}$, we note that $\wt{\eta}$ is gradient-like
  for $h_{0,1}$, that is for the starting point. The path $\tau\mapsto h_{1,\tau}$ is supported on $U$, and $\eta$ is gradient-like for $h_{1,\tau}$ in $U$. But on $U$, $\eta=\wt{\eta}$. That is, $\wt{\eta}$ is gradient-like for $h_{1,\tau}$ for all $\tau$, hence it is a $\wt{\eta}$-path. Any two $\wt{\eta}$-paths of rearrangement with the same starting point and rearranging the same pair of critical points are left-homotopic by Lemma~\ref{lem:uniq1}.

\smallskip
\emph{Proving homotopy between $f_{0,\tau}$ and $f_{\tau}$.}
By construction, $\tau\mapsto h_{0,\tau}$ is an $\eta$-path of rearrangement. By the assumptions, $f_\tau$ also is an $\eta$-path.
Two $\eta$-paths of rearrangement rearranging the same pair of critical points are left-homotopic by Lemma~\ref{lem:uniq1}.
\end{proof}

\begin{remark}\label{rem:cut_and_paste}
  The proof of Lemma~\ref{lem:cut_and_paste} implies also that the paths $f_\tau$ and the concatenation of $\wt{f}_\tau$ with $h_{\sigma,0}$
  are left-homotopic. Given the two left homotopies at the end of the proof, the statement is equivalent to saying that the paths $\tau\mapsto h_{0,1}$ and the concatenation of paths
  $h_{\sigma,0}$ and $h_{1,\tau}$ are left-homotopic. But this left homotopy is easily constructed using $h_{\sigma,\tau}$.
\end{remark}

\section{The Cancellation Theorem}\label{sec:cancel}

The cancellation theorem is about the possibility of simplifying a Morse function by cancelling a pair of critical points.
The assumptions are similar to those in the ambient case~\cite[Section 5]{Mil65}: the two critical points must be of consecutive indices
and there should be precisely one trajectory of a Morse--Smale gradient-like vector field connecting the two critical points. In the immersed case, there is one more assumption, namely the two critical points and the unique trajectory between them should all be at the same depth.

In the proof of Path Lifting Theorem~\ref{thm:path_lifting}, we will need only the case of critical points at depth $1$, which was already done in \cite[Theorem 5.1]{BP}. However, we will need a more detailed statement than in \cite{BP} for the purpose of using uniqueness of death. Therefore, we give a more
detailed statement and a more detailed proof.

\begin{theorem}[Cancellation]\label{thm:grimcanc}
  Let $F\colon\O\to\R$ be an immersed Morse function with respect to $M=G(N)$, for $G \colon N\to \Omega$ a generic immersion,  with a grim vector field $\xi$ satisfying the Morse-Smale condition.
  Let $p_-$ and $p_+$ be two critical points of $F$ that are at depth $d$, the index of $p_-$ is equal to $h$, and suppose that the index of
  $p_+$ is equal to $h+1$.

  Suppose there is a single trajectory $\gamma$ of $\xi$ connecting $p_-$ and $p_+$, and that this trajectory belongs to the $d$-th stratum.
  Finally, assume that there are no critical points of $F$ in $F^{-1}[p_-,p_+]$ other than $p_-$ and $p_+$.

  Then the critical points $p_-$ and $p_+$ can be cancelled. That is, there exists a smooth family $\xi_\sigma$, $\sigma\in[0,1]$
  of grim vector fields for $F$, and a path $F_\tau$, $\tau\in[0,2]$ such that:
  \begin{enumerate}[label=(C-\arabic*)]
    \item\label{item:Canc_start} $F_0=F$, $\xi_0=\xi$;
    \item\label{item:Canc_perturb} The vector fields $\xi_\sigma$ are grim vector fields for $F_0$, they all satisfy the Morse--Smale
      conditions, and $\gamma$ is the single trajectory of $\xi_\sigma$ connecting $p_-$ to $p_+$;
    \item\label{item:Canc_coors} The vector field $\xi_1$ is grim for all functions $F_\tau$, $\tau\in[0,1]$, in particular these functions
      have the same critical points;
    \item\label{item:Canc_norear} The path $F_\tau$, $\tau\in[0,1]$ can be chosen to be supported on a predefined grim neighbourhood of $\gamma$;
    \item\label{item:Canc_form}
    In a smaller neighbourhood $U$ of $\gamma$, there exist coordinates $x_1,\dots,x_m$, $y_{11},\dots,y_{dk}$ $($with $m=n+k-dk)$ and a strictly increasing function $\Upsilon \colon \R \to \R$
      such that for $\tau\in[1,2]$, $F_\tau= \Upsilon \circ H_\tau^1$, where
      \[H_\tau^1(x_1,\dots,y_{dk}) = x_1^3-3x_1 + 6(\tau-1) x_1 - x_2^2-\cdots-x_h^2+x_{h+1}^2+\dots+x_m^2+y_{11}+\cdots+y_{d1}.\]
  \end{enumerate}
  In particular, $F_2$ has the critical points $p_-$ and $p_+$ removed.
\end{theorem}

  In the statement of Theorem~\ref{thm:grimcanc} we specify a path $F_\tau$, $\tau\in[0,2]$. The part $\tau\in[0,1]$ is technical and serves as a preparation for the second part. The actual cancellation occurs along the part of the path with $\tau\in[1,2]$.  Recall that $n+k := \dim \O$, $n = \dim N$, $m = n+k -dk$, $h$ is the index, and $d$ is the depth. 

  The path $F_{\tau}$, $\tau\in[0,2]$ constructed in Theorem~\ref{thm:grimcanc} (or more generally, the double path $(F_{\tau},G)$ with $G$ constant) depends on many choices. In the absolute case, Proposition~\ref{prop:cerf_milnor} shows that different choices lead to left-homotopic paths.
  Compare Remark~\ref{rem:grimcanc_choices}.

  We also note that with $G_\tau:=G$ for $\tau\in[0,2]$, $G_\tau$ is a regular immersion for all $\tau$, and $F_\tau$ crosses the $\cF^1$ stratum only at one point, when the two critical points $p_-$ and $p_+$ are cancelled. That is $(F_\tau,G_\tau)$ is a regular double path.

\begin{proof}[Proof of Theorem~\ref{thm:grimcanc}]
  The proof relies on several technical lemmas. We first give the proof of the theorem, and then prove the lemmas.
  The first lemma is an analogue of \cite[Assertion 6]{Mil65}. It specifies a local coordinate system. It also takes care of the property~\ref{item:Canc_perturb}.

  \begin{lemma}\label{lem:coor_system}
    There is a neighbourhood $U$ of $\gamma$, and a path $\xi_\sigma$ of grim vector fields for $F$, with $\xi_0=\xi$, satisfying the Morse-Smale condition, such that $\xi_\sigma=\xi$ away from $\gamma$, and there is a coordinate system $(x_1,\dots,x_r,y_{11},\dots,y_{dk})$
    on the whole of $U$, such that the branches are given by $\{y_{i1}=\dots=y_{ik}=0\}$, and
    \begin{equation}\label{eq:xi_canc}
      \xi_1=(v(x_1),-x_2,\dots,-x_h,x_{h+1},\dots,x_r,\sum y_{1i}^2,0,\dots,\sum y_{2i}^2,0,\dots,\sum y_{di}^2,0,\dots).
    \end{equation}
    Here $v$ vanishes for precisely two values: $0$ and $1$, is positive for $v\in(0,1)$, negative elsewhere and $\frac{\partial v}{\partial x_1}$
    is $1$ near $x_1=0$ and $-1$ at $x_1=1$.
  \end{lemma}

  \begin{remark*}
    The function $v$ is not specified here, as it is not specified in \cite{Mil65} either. Careful analysis of the proof of
    \cite[Assertion 6, page 55]{Mil65} reveals that we can find the coordinate system for any predefined function $v$ satisfying
    the above restrictions. The same applies here.
  \end{remark*}
  As mentioned above, we defer the proof of the lemma until the end of the proof of the theorem.
  Note that in the coordinates of the lemma, $p_-=(0,\dots,0)$ and $p_+=(1,0,\dots,0)$.
  The coordinate system constructed in Lemma~\ref{lem:coor_system} is defined in a neighbourhood of $\gamma$, that is, locally.
  In the next lemma, we extend it to a grim vector field, that is, to a semi-local coordinate system.
  \begin{lemma}\label{lem:extend}
    There is a grim neighbourhood $U'$ of $\gamma$, containing $U$, such that the coordinate system of Lemma~\ref{lem:coor_system} can be extended
    over $U'$, and such that $\xi_1$ has still the form \eqref{eq:xi_canc} in $U'$.
  \end{lemma}

  Given the lemma, keeping in mind \ref{item:Canc_form}, we introduce an auxiliary function on $U'$.
  \begin{equation}\label{eq:g_tau_def}
    H(x_1,\dots,y_{dk})=x_1^3-3x_1-x_2^2-\cdots-x_h^2+x_{h+1}^2+\dots+x_m^2+y_{11}+\cdots+y_{d1}.
  \end{equation}

  \begin{lemma}
    On $U'$ we have $\partial_{\xi_1}H\ge 0$ with equality only at $p_-$ and $p_+$.
  \end{lemma}

  \begin{proof}
    The result is immediate from \eqref{eq:xi_canc}.
  \end{proof}

  As cancelling the critical points of $H$ is straightforward, we aim to replace locally $F$ by $H$.
  The result constructs a path $F_\tau$, $\tau\in[0,1]$ with the properties \ref{item:Canc_norear}.

  \begin{lemma}\label{lem:replace}
    There is a strictly increasing function $\Upsilon\colon\R\to\R$, a neighbourhood $U_1$ of $\gamma$, $U_1\subseteq U$ and a $\xi_1$-path of functions $F_\tau$, $\tau\in[0,1]$
    such that $F_0=F$ and $F_1|_{U_1}=\Upsilon\circ H$.
  \end{lemma}
  We defer the proof of the lemma until the end of the proof of the theorem. The rest of the proof of Theorem~\ref{thm:grimcanc}
  is now straightforward.
  Choose a cut-off function $\omega$ supported in $U_1$, equal to $1$ on a neighbourhood $U_2 \subseteq U_1$ of $\gamma$. For $\tau\in[1,2]$ we set
  \[H_\tau^{\omega}(x_1,\dots,y_{dk})=x_1^3-3x_1 + 6(\tau-1)\omega x_1 - x_2^2-\cdots-x_h^2+x_{h+1}^2+\dots+x_m^2+y_{11}+\cdots+y_{d1},\]
  and then
  \[
    F_\tau(z)=\begin{cases}
      F_1(z) & z\notin U_1\\
      \Upsilon\circ H_\tau^{\omega}(z) & z\in U_1.
    \end{cases}
  \]
 A straightforward check using the chain rule and the fact that $\Upsilon$ is strictly increasing shows that $F_2$ has no critical points in $U_2$ (which gives $U$ in the statement of Theorem~\ref{thm:grimcanc}, where $\omega \equiv 1$. In $U_2$,  we also have that $F_\tau= \Upsilon\circ H_\tau^{1}$.     The proof of Theorem~\ref{thm:grimcanc} is complete, modulo Lemmas~\ref{lem:coor_system},  \ref{lem:extend} and~\ref{lem:replace}, whose proofs follow.
\end{proof}

\begin{proof}[Proof of Lemma~\ref{lem:coor_system}]
  Choose local neighbourhoods $U_-,U_+$ of critical points with local Morse coordinates as in Definition~\ref{def:grim}.
  In particular, $\xi$ has the form \eqref{eq:localgrim}. Assume that $U_-$ and $U_+$ are balls in these coordinates, and that $\gamma$ intersects
  $\partial U_-$ only at  one point, which we call~$w_-$. Moreover, assume that $w_+$ is the single point of intersection of $\gamma$ with $\partial U_{+}$.
  Let $V_0\subseteq \partial U_-$ be a neighbourhood of $w_-$ in $\partial U_-$ such that any trajectory starting from $V_0$ eventually hits $U_+$. Define $U_{0}$ to
  be the set of points on these trajectories, that is the set of $x$ such that the trajectory through $x$ hits $V_0$ in the past and $U_{+}$ in the future.

  \begin{figure}
    \input{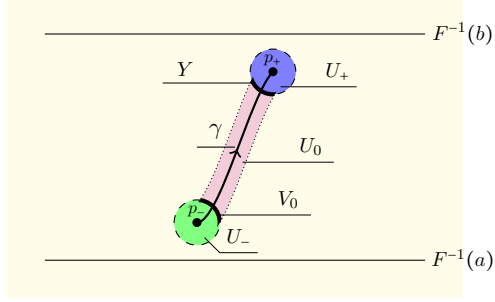}
    \caption{Notation of proof of Cancellation Theorem~\ref{thm:grimcanc}. }\label{fig:grimcanc}
  \end{figure}

  Extend the coordinate system from $U_-$ to $U_{0}$ in the following way. For a point $x\in U_{0}$, a trajectory of $\xi$ through $x$ hits a point $w_x\in V_0$.  Near $w_x$ we already have coordinates from $U_-$. Then we assign
  the coordinates to $x$ using the flow $\xi$ in such a way that $\xi$ has the form \eqref{eq:localgrim} on the whole of $U_-\cup U_{0}$.
  For simplicity, we will assume that $\ol{U}_0\cap U_-$, respectively $\ol{U}_0\cap U_+$ lies on one level set of $F_\tau$.
  Denote this level set $c_-$, respectively $c_+$.

  On the intersection $\ol{U}_{0} \cap \ol{U}_{+}$ we have two coordinate systems. One comes from $U_{0}$ and the other comes from $U_{+}$. If the two
  coordinate systems match (up to possibly shifting the value of $x_1$), then the lemma is proved with $\xi_\sigma\equiv\xi$.
  If not, we need to find a suitable
  isotopy between the two coordinate systems and alter the vector field $\xi$ using the Isotopy Insertion Lemma~\ref{lem:isoinject}
  to induce this isotopy by the flow. The proof builds on the proof of \cite[Assertion 6]{Mil65}, and uses it in the last part.

  Write $Y=\ol{U}_0\cap \ol{U}_+$. The two coordinate systems specify maps $h_0\colon Y\to Z_0$, $h_1\colon Y\to Z_0'$,
  where $Z_0,Z_0'\subseteq \R^{n+k-1}=\R^{h-1}\oplus\R^{m-h}\oplus\R^k\oplus\dots\oplus\R^k$, with $d$ copies of $\R^k$. 
   Both $h_0$ and $h_1$ take the unique intersection point of $\gamma$ with $Y$ to $(0,\dots,0)\in \R^{n+k-1}$.

 We aim to show that there is a compactly supported homotopy of $h_1\circ h_0^{-1}$ to a map that is the identity on a neighbourhood of $(0,\dots,0)\in Z_0$.
  To be more precise, denote the first summand of the space $\R^{n+k-1}$ by $L_0^s$ (`$s$' for `stable'), the second by $L_0^u$ (`$u$' for `unstable')
  and all the others by $L_1,\dots,L_d$.
  Set $L_0=L_0^s\oplus L_0^u$.
  Note that the choice of coordinate systems on $U_-$ and $U_+$ involves choosing a numbering of branches. We choose these two numberings
  to be consistent, that is, the flow of $\xi$ maps the $i$-th branch in $U_-$ to the $i$-th branch in $U_+$.

  We can summarise the properties of $h_1\circ h_0^{-1}$ in the following lemma.

  \begin{lemma}\label{lem:isotopy_we_start_with}
    For any subset $\mathfrak{p}$ of $\{1,\dots,d\}$, let $L_\mathfrak{p}=\bigoplus_{i\in\mathfrak{p}}L_i$. Then  for any $\mathfrak{p}$
    the map $h_1\circ h_0^{-1}$:
    \begin{enumerate}[label=(H-\arabic*)]
      \item maps $L_0\oplus L_\mathfrak{p}\to L_0\oplus L_\mathfrak{p}$; \label{item:H-branches}
      \item Moreover, the image of $L_0^u$ in $L_0$ intersects $L_0^s$ transversely and only at $0$.\label{item:H-transverse}
    \end{enumerate}
  \end{lemma}

  \begin{proof}
    Item~\ref{item:H-branches} follows from the fact that $L_0\oplus L_{\mathfrak{p}}$ is the image under $h_0$ of the intersection of branches $\bigcap_{i\notin \mathfrak{p}} Y_i$.
    Item~\ref{item:H-transverse} follows from the fact that the flow
    is Morse--Smale.
  \end{proof}

  The proof of Lemma~\ref{lem:coor_system} consists of subsequent
  simplifications of the composition $h_1\circ h_0^{-1}$, done on
  smaller and smaller neighbourhoods of $(0,\dots,0)\in Z_0$.
  The first result in this direction is reduction to the linear case.

  \begin{lemma}[Reduction to the Linear Case]\label{lem:reduction}
    There exists an open subset $Z_1\ni\{0\}$, $Z_1\subseteq Z_0$,
    and an isotopy $h^1_\tau$ with  $h^1_0=h_1\circ h_0^{-1}$ such that $h^1_\tau$ is independent of $\tau$ in a neighbourhood of the boundary of $\ol{Z}_0$ and $h^1_1=D(h_1\circ h_0^{-1})(0)$ identically on $Z_1$. Moreover for each $\tau$, $h^1_\tau$ satisfies items~\ref{item:H-branches} and~\ref{item:H-transverse}.
  \end{lemma}

  \begin{proof}[Proof of Lemma~\ref{lem:reduction}]
    To simplify the proof, endow $\R^{n+k-1}$ with an inner product. Suppose $V_\ell$ is a ball with radius $1/\ell$ and centre $0$. For sufficiently large $\ell$, the closure of the
    ball $V_\ell$ is contained in $Z_0$. The isotopy will be independent
    of $\tau$ away from $V_\ell$.

    Let $\phi_\ell$ be
    a cut-off function supported on $V_{\ell}$, equal to $1$ on $V_{2\ell}$ and whose derivative is bounded by $\|D\phi_\ell\| \leq 4\ell$. With
    $A:=D(h_1\circ h_0^{-1})(0)$, define
    \[h^1_{\ell,\tau}=A\tau\phi_\ell+(1-\tau\phi_\ell) h_1\circ h_0^{-1}.\]
    Note that for all choices of $\ell$, item~\ref{item:H-branches} is satisfied because it is satisfied by $A$ and $h_1\circ h_0^{-1}$.
    Condition~\ref{item:H-transverse} is open in the $C^1$-norm.
    Thus, we need to show that $h^1_{\ell,\tau}$
    is close to $h_1\circ h_0^{-1}$ in the $C^1$-norm. This is equivalent to saying that $\kappa_\ell:=\phi_\ell(A-h_1\circ h_0^{-1})$ converges
    to $0$ in the $C^1$-norm. By the Taylor formula, there is a constant $c_2<\infty$ such that $\|Ax-h_1\circ h_0^{-1}(x)\|\le c_2 \|x\|^2$.
    Thus, if $\phi_\ell$ is supported on the ball of radius $1/\ell$, we know that the $C^0$-norm of $\kappa_\ell$ is bounded by $c_2/\ell^2$.
    Looking at the derivatives, we note
    \[\| D\kappa_\ell(x)\| =\| D\phi_\ell\cdot(A-h_1\circ h_0^{-1})+\phi_\ell D(A-h_1\circ h_0^{-1})\| \leq \| D\phi_\ell\cdot(A-h_1\circ h_0^{-1})\|+ \|\phi_\ell D(A-h_1\circ h_0^{-1})\|.\]
    The first term is bounded by $4\ell c_2/\ell^2=4c_2/\ell$, because we control the derivative of $\phi_\ell$ via $\|D\phi_\ell\| \leq 4\ell$, and $\|A-h_1\circ h_0^{-1}\| \leq c_2/\ell^2$. The second term is bounded by
    $2c_2/\ell$, because $|\phi_\ell| \leq 1$, and by differentiating the Taylor approximation $\|D(A-h_1\circ h_0^{-1})\| \leq 2c_2\|x\|$, which on the ball of radius $1/\ell$ is bounded by $2c_2/\ell$. Thus altogether $\|D\kappa_\ell(x)\|\le 6c_2/\ell$.

    Thus, for sufficiently large $\ell$, $h^1_{\ell,\tau}$ satisfies~\ref{item:H-transverse}. We then declare $Z_1:=V_{2\ell}$
    and $h^1_\tau:=h^1_{\ell,\tau}$.
  \end{proof}

The next step reduces the linear map to a linear map preserving
  the decomposition of $\R^{n+k-1}$ into $L_0\oplus L_1\oplus\dots\oplus L_d$.

  \begin{lemma}[Reduction to block structure]\label{lem:reduction_block}
    There is an open neighbourhood $Z_2\subseteq Z_1$ of $0$, a path $h^2_\tau$ of isotopies,
    supported on a compact set contained in $Z_1$ and containing $Z_2$
    such that $h^2_0=h^1_1$, and $h^2_\tau$ satisfies items~\ref{item:H-branches} and~\ref{item:H-transverse}. Moreover,
    $h^2_1$ preserves the decomposition $\R^{n+k-1}=L_0\oplus L_1\oplus\dots\oplus L_d$.
  \end{lemma}

  \begin{proof}
    With the decomposition $\R^{n+k-1}=L_0\oplus\dots\oplus L_d$,
    we can write $A_{ij}$, $i,j=0,\dots,d$ for the submatrix
    giving the map from $L_i$ to $L_j$. With this notation, we have
    \begin{itemize}
      \item $A_{ij}$ is zero except if $i=j$ or $j=0$;
      \item $A_{00}$ maps $L_0^u$ to a subspace transverse to $L_0^s$.
    \end{itemize}
    In fact, the first part follows from~\ref{item:H-branches}, the second from~\ref{item:H-transverse}.
    As $A$ is linear, these two conditions are actually equivalent to the conditions~\ref{item:H-branches}  and \ref{item:H-transverse}.
    This form of $A$ is almost what is required
    by Lemma~\ref{lem:reduction_block}, except for the terms $A_{i0}$ that spoil the block structure of~$A$.
    Write $A=A_0+A_1$, where $A_0$ is the sum of diagonal blocks $A_{ii}$ and $A_1$ is the sum of the off-diagonal blocks $A_{i0}$ ($i>0$).

    Define $h^2_\tau(x)=A_0x+(1-\tau\phi(x))A_1x$ for a cut-off function $\phi$. We require that $\phi$ be supported on $Z_5$, and that it is equal to $1$ on a smaller neighbourhood $Z_6$ of $0\in\R^{n+k-1}$
    Note that $h^2_\tau$ is no longer linear (the off-diagonal terms in a matrix representing $h^2_\tau$ depend on $x$. Nevertheless,
    conditions \ref{item:H-branches} and~\ref{item:H-transverse} are easily seen to be satisfied. Indeed,
    condition~\ref{item:H-branches} follows from the block structure. The transversality condition~\ref{item:H-transverse} involves only the $A_{00}$ block, which is the same for $h^2_\tau$ and $A$.
  \end{proof}

  We can now modify the map $A$ to the identity acting block by block. Recall that $Z_5$ is the neighbourhood of $0\in\R^{n+k}$
  such that $h^2_1(x)$ is the linear map given by the block matrix.
  Shrink $Z_5$ if needed to make sure it has a product structure, i.e.\ $Z_5=(Z_5\cap L_0)\times\dots\times (Z_5\cap L_d)$.

  \begin{lemma}[Blockwise modification]\label{lem:blockwise}
    For any $i=0,\dots,d$, there is an isotopy $h_{i,\tau}^3$ of restrictions of $Z_5\cap L_i$, supported
    such that $h_{i,0}^3=A_{ii}$ and $h_{i,1}^3$ is the identity on an open subset $Z_{7i}\subseteq Z_5\cap L_i$.

    The isotopy preserves conditions~\ref{item:H-branches} and~\ref{item:H-transverse}.
  \end{lemma}

  \begin{proof}
    The case $i=0$ corresponds to the absolute case (classical cancellation theorem).
    This is the statement of \cite[Theorem 5.6]{Mil65}. Suppose $i>0$.
    We need to find an isotopy $h_{i,\tau}^3$ such that $h_{i,0}^3$ is $A_{ii}$, $h_{i,1}^3$ on a smaller subset is the identity.
    This is a straightforward problem in linear algebra, which we leave to the reader.
  \end{proof}

  By taking the direct sum of the maps $h_{i,\tau}^3$, we obtain the following result.

  \begin{corollary}
    There is an isotopy $h^3_\tau$ supported on $Z_5$ such that $h_0^3 = h^2_1$ and $h^3_1$ is the identity on $Z_6:=Z_{60}\times\dots\times Z_{6d}$.
    The isotopy preserves conditions~\ref{item:H-branches} and~\ref{item:H-transverse}.
  \end{corollary}
  \emph{Summarizing the proof of Lemma~\ref{lem:coor_system}}, we concatenate the isotopies $h^1_\tau$ (reducing to the linear part), $h^2_\tau$ (reducing to the block part), and $h^3_\tau$ (reducing to the identity), to obtain an isotopy whose starting map is $h_1\circ h_0^{-1}$, and whose end map
  is the identity on $Z_6$. Denote this isotopy by $h_\sigma$. Note that $h_\sigma$ is the identity near the boundary of $Z_0$. We inject this
  isotopy into $U_0$ by modifying the vector field $\xi$. As a result, we obtain a family of vector fields $\xi_\sigma$,
  such that the map induced by $\xi_\sigma$ is $h_\sigma$. The vector field $\xi_\sigma$ is tangent to the branches
  (because of \ref{item:H-branches}), by~\ref{item:H-transverse} it does not create any new trajectories between $p_-$ and $p_+$, and it is Morse--Smale, as desired.

  The end of the proof of Lemma~\ref{lem:coor_system} follows the same lines as the proof of Milnor's cancellation theorem \cite[Page 57]{Mil65}.
  Let $U_6$ be the subset of $U_0$ made out of trajectories that pass through $h_0(Z_6)$.
 The flow of $\xi_1$ induces the identity in the coordinates on $U_-$ and $U_+$. Hence, we can glue these coordinates in such a way that $\xi_1$ has the form \eqref{eq:xi_canc} in these coordinates.
\end{proof}

\begin{remark}\label{rem:coor_system}
 The proof of Lemma~\ref{lem:coor_system} required a modification of the vector field $\xi$ by injecting an isotopy $h_\sigma$
  that is a concatenation of the isotopies $h^1_\tau$, $h^2_\tau$, and $h^3_\tau$. Suppose $\xi_\sigma$ is a vector field
  obtained by injecting only an isotopy $h_\tau$, for $\tau\in[0,\sigma]$. By the property~\ref{item:H-transverse}, which is
  preserved by $h_\tau$, there is precisely one trajectory of $\xi_\sigma$ on the first stratum that connects $p_-$ with $p_+$.   This property will be important when discussing the uniqueness of death.
\end{remark}

We are now in position to prove Lemma~\ref{lem:extend}.

\begin{proof}[Proof of Lemma~\ref{lem:extend}]
  Recall that $\ol{U}_0\cap\ol{U}_+$ belongs to a single level set $F^{-1}(c_+)$. Choose a pair of grim neighbourhoods $X_-$ and $X_+$
  of $\Ha(p_-)$ and $\Hd(p_+)$ respectively, in $F^{-1}[a,b]$, such that $X_-\cap F^{-1}(c_+)$ intersects $X_+\cap F^{-1}(c_+)$ along an open
  subset in $F^{-1}(c_+)$ whose closure is contained in $Z_7$. By shrinking $X_-$ and $X_+$ if needed, we may assume that
  there are coordinates in both $X_-$ and $X_+$, on which $\xi_1$ has the form \eqref{eq:xi_canc}. The flow of $\xi_1$ induces
  the identity in these coordinates by Lemma~\ref{lem:coor_system}, hence the two coordinate systems match together to produce a coordinate
  system on $X_-\cup X_+$.
\end{proof}

We can now prove Lemma~\ref{lem:replace}, and by this conclude the proof of Cancellation Theorem~\ref{thm:grimcanc}.

\begin{proof}[Proof of Lemma~\ref{lem:replace}]
  We use the notation of the proof of Lemma~\ref{lem:extend}.
  In particular, $X_-\cup X_+$ is a grim neighbourhood of $\Ha(p_-)\cup\Hd(p_+)$ in $F^{-1}[a,b]$. Choose $a',b'$ in such a way that
  $F(a)<F(a')<F(p_-)<F(p_+)<F(b')<F(b)$. Let $X_0=(X_+\cup X_-)\cap F^{-1}[a',b']$.   Let $X_1$ be a grim neighbourhood of $\Ha(p_-)\cup\Hd(p_+)$
  in $F^{-1}[a',b']$ such that $\ol{X_1\cap F^{-1}(a')}\subseteq X_0$.
  Such an $X_1$ exists because of Lemma~\ref{lem:grimneighexist}. Define $\mu\colon F^{-1}(a')\to [0,1]$
  to be a cut-off function equal to $1$ on $X_1\cap F^{-1}(a')$ and to $0$ away from $X_0\cap F^{-1}(a')$. Extend $\mu$
  to a $\xi$ invariant function on the whole of $F^{-1}[a,b]$.

  Set $\zeta_-$ to be the infimum of $H$ on $X_0\times[0,1]$ (the last coordinates being for the $\tau$ variable),
  and set $\zeta_+$ to be the supremum of $H$.
  Choose $\Upsilon \colon \R \to \R$ in such a way that $a<\Upsilon(\zeta_-)<\Upsilon(\zeta_+)<b$, and $\Upsilon$ is linear
  near $F(p_-)$ and $F(p_+)$ with derivative $1$.
  On $F^{-1}[a',b']$ we define the function $F_\tau$ via:
  \begin{equation}\label{eq:H_tau_def}
    H_\tau=(1-\tau\mu)F+\tau\mu \Upsilon\circ H.
  \end{equation}
  We have $\partial_{\xi_1} H_\tau=(1-\tau\mu)\partial_{\xi_1} F+\tau\mu\partial_{\xi_1}(\Upsilon\circ H)$. This is clearly a non-negative
  number, because $\partial_{\xi_1} F$ and $\partial_{\xi_1}(\Upsilon\circ H)$ are nonnegative, and positive except at $p_-$ and $p_+$. Moreover, $\mu\equiv 1$ in a neighbourhood of $p_-$ and $p_+$,
  so the path $H_\tau$ is actually a special $\xi_1$-path
  (see Definition~\ref{def:special_xi_path}). Furthermore, $H_\tau$ on $F^{-1}(a')$ is strictly greater than $a$,
  and on $F^{-1}(b')$ is strictly smaller that $b$. This allows us to define a function $F_\tau$ in the following way.

  We set $F_\tau=F$ away from $F^{-1}[a,b]$ and $F_\tau=H_\tau$ on $F^{-1}[a',b']$.
 Suppose $z\in F^{-1}(a,a')$ (the case $z\in F^{-1}(b',b)$ is analogous). As $F$ has no critical points in $(a,a')\cup (b',b)$,
  there are unique points $z_0\in F^{-1}(a)$ and $z_1\in F^{-1}(a')$ such that $\xi_1$ flows from $z_0$ through $z$ to $z_1$.
  Suppose $t_1$ is the time taken to get from $z_0$ to $z_1$ and $t_0$ is the time to get from $z$ to $z_1$. We set
  \[ F_\tau(z) := \frac{t_0}{t_1} F(z_0)+\frac{t_1-t_0}{t_1}H_\tau(z_1).\]
  This interpolation is continuous. There might be non-smooth points at the level sets $a,a',b',b$, but in a presence
  of a vector field, we can smooth them in a standard way; see Lemma~\ref{lem:piecewise_smooth}.
\end{proof}

\section{Paths of birth}\label{subsection:path-birth}
We defined paths of birth in Definition~\ref{def:paths-of-xx}. Now we are going to specify a concrete class of paths of birth,
so-called elementary paths of birth. The construction is due to Cerf \cite[Section III.2.1, p.~66]{Cerf}.
Fix $h=0,\dots,n$, $\varepsilon>0$, and $\tau\in[0,1]$. Define the function $\lbirth_\tau\colon\R^n\to\R$ by
\[\lbirth_\tau(x_1,\dots,x_n)=-x_1^2-\dots-x_h^2+x_{h+1}^2+\dots+x^2_{n-1}+x_n^3-(2\tau-1)\varepsilon x_n.\]
The function $\lbirth_\tau$ is a standard bifurcation of an $A_2$ singularity: it has no critical points
for $\tau<1/2$, it has a single critical point for $\tau=1/2$ and two Morse critical points
of indices $h$ and $h+1$, respectively, for $\tau>1/2$.

Choose a bump function $\omega\colon\R^n\to[0,1]$, identically equal to $1$ in a neighbourhood of $0$ and having
compact support.
Define
\begin{equation}\label{eq:ellomega}
\lbirth_{\omega,\tau}(x_1,\dots,x_n)=-x_1^2-\dots-x_h^2+x_{h+1}^2+\dots+x^2_{n-1}+x_n^3-(2\tau\omega-1)\varepsilon x_n.
\end{equation}
Choose sufficiently small $\varepsilon>0$. Then $\lbirth_{\omega,\tau}$ has no critical points in $\omega^{-1}(0,1)$. In particular, $\lbirth_\tau$ and $\lbirth_{\omega,\tau}$ have the same critical points, and they agree
in the neighbourhood of these critical points; see \cite[Section III.1.1]{Cerf}.

Define $\psi \colon \R^n \to \R^n$ by
\[\psi(x_1,\dots,x_n)=(x_1,\dots,x_{n-1},\lbirth_0(x_1,\dots,x_n)),\]
so that $\lbirth_0\circ\psi^{-1}=x_n$.

\begin{definition}\label{def:birth_on_Rn}
  The path $\birth_\tau=\lbirth_{\omega,\tau}\circ \psi^{-1} \colon \R^n \to \R$ is called the \emph{elementary path of birth on $\R^n$} (of index $h$).
\end{definition}

The path $\birth_\tau$ is a path such that $\birth_0=x_n$ and $\birth_\tau$ acquires a pair of critical points. Moreover, as $\omega$ has compact support, $\birth_\tau=\birth_0$ away from a compact set in $\R^n$. This motivates the following definition;
compare \cite[Section III.1.2]{Cerf}.

\begin{definition}[Elementary path of birth]\label{def:el_path_birth}
  A path $f_\tau\colon N\to\R$ is an \emph{elementary path of birth} at $q_0$ (of index $h$), if there exists an open set $U$ containing $q_0$, and local coordinates $x_1,\dots,x_n$ in $U$ such that
\begin{itemize}
  \item $q_0=(0,\dots,0)$;
  \item $f_\tau=f_0$ away from $U$;
  \item $f_\tau(x_1,\dots,x_n)=\Upsilon\circ\birth_\tau(x_1,\dots,x_n)$
\end{itemize}
for some smooth, increasing function $\Upsilon\colon\R\to\R$.
\end{definition}

\begin{remark}\label{rem:arbitrarily_small}
  By choosing appropriate $\omega$ (with small support) we can
  assume that $\birth_\tau=\birth_0$ away from an arbitrarily small neighbourhood
  of $(0,\dots,0)$.
\end{remark}
We quote two results of Cerf.

\begin{proposition}[see \expandafter{\cite[Proposition III.1.3.1]{Cerf}}]\label{prop:elem_birth}
  Any path of birth $f_\tau$ can be given an arbitrarily small perturbation to another path of birth $\wt{f}_\tau$ such that for some
  $\delta_0>0$, $\wt{f}_\tau$ restricted to $\tau\in[1/2-\delta_0,1/2+\delta_0]$ is an elementary path of birth.
\end{proposition}

In the proof of Proposition~\ref{prop:elem_birth}, Cerf uses the fact that through each point of $\cF^1_\alpha$
one can pass an elementary path of birth. It follows promptly that we can assume that the paths $\wt{f}_\tau$ and $f_\tau$
can be assumed to cross $\cF^1_\alpha$ at the same point. This leads to the following statement.

\begin{corollary}\label{cor:elem_birth}
  Any path of birth is $\cF^1$-homotopic to an elementary path of birth.
\end{corollary}

The second result shows that paths of birth are, up to lax homotopy, determined by the index of the critical points
that are created, and the connected component of the level set, in which they are constructed.

\begin{proposition}[see \expandafter{\cite[Corollaire III.1.3.2, page 67]{Cerf}}]\label{prop:unicite_naiss}
  Suppose two paths of birth $\wt{f}_\tau$ and $f_\tau$ have $f_0$ and $\wt{f}_0$ in the same connected component of $\cF^0$
  and $f_1$ and $\wt{f}_1$ in the same connected component of $\cF^0$. Suppose the two paths create critical points
  $q_0$, respectively $\wt{q}_0$, on the same component of the fixed level set $f_{1/2}^{-1}(c)$, and $q_0$ and $\wt{q}_0$ are of
  the same index. Then $f_\tau$ and $\wt{f}_\tau$ are lax homotopic.
\end{proposition}

Our statement slightly differs from Cerf's result, hence we indicate how to use Cerf's result to prove Proposition~\ref{prop:unicite_naiss}.

\begin{proof}
  Suppose $g_0$ and $g_1$ are the two points of intersection of $f_\tau$ and $\wt{f}_\tau$ with $\cF^1$. By Corollary~\ref{cor:elem_birth},
  there exists elementary paths of birth $f_{0,\tau}$ and $f_{1,\tau}$, passing through $g_0$, respectively $g_1$, and such
  that $f_{0,\tau}$ and $f_\tau$ are $\cF^1$-homotopic and $f_{1,\tau}$ and $\wt{f}_\tau$ are $\cF^1$-homotopic.   It is enough to show that $f_{0,\tau}$ and $f_{1,\tau}$ are lax homotopic. This follows
  promptly from \cite[Corollaire III.1.3.2]{Cerf}.
\end{proof}

\section{Paths of death}\label{sec:path_of_death}

Throughout Section~\ref{sec:path_of_death} we will assume that $f\colon N\to\R$ is a Morse function, and that $q_-$ and $q_+$ are critical points of indices $h_-$ and $h_+=h_-+1$ respectively. Set $a=f(q_-)$, $b=f(q_+)$.  Assume that $a<b$ and there are no critical points of $f$ with critical values in $(a,b)$.  Fix $c\in (a,b)$.

First, we recall the definition of Cerf's ascending and descending caps (what he calls, \emph{en fran\c{c}ais, nappes ascendantes et descendantes}). These are an abstract form of (parts of) the ascending and descending manifolds of critical points. Next, we recast the statements of Cerf in the language of vector fields.

\subsection{Paths of death via ascending and descending caps}

This subsection is based on \cite[Section III.2]{Cerf}.

\begin{definition}[Pair of caps]\label{def:caps}
  A \emph{pair of caps} is a pair of smooth embeddings $\varphi_D\colon D^{h_+}\to N$, $\varphi_A\colon D^{n-h_-}\to N$, where $D^{h_+}$ and
  $D^{n-h_-}$ are closed discs with $\varphi_A(0)=q_-$, $\varphi_D(0)=q_+$,
  and the composition $f\circ \varphi_D$  (respectively $f\circ\varphi_A$) is a quadratic function equal to $c$ at the boundary $\bd D^{h_+}$
  (respectively $\bd D^{n-h_-}$) and having a global maximum $b$ at zero (respectively, having a global minimum $a$ at zero).
\end{definition}

A pair of caps shall usually be denoted by $(D,A)$ with $D := \varphi_D(D^{h_+})$, $A := \varphi_A(D^{n-h_-})$.

\begin{definition}[Caps in good position]\label{def:good_caps}
  Pair of caps $(D,A)$ is called a \emph{pair of caps in good position}, in short a \emph{pair of good caps}, if the boundaries of $D$ and $A$ intersect transversely at a single point in $f^{-1}(c)$.
\end{definition}

\begin{example}\label{ex:good_pair_eta}
  Let $\eta$ be a gradient-like vector field for $f$. We set $D=W^s_{q_+}(\eta)\cap f^{-1}[c,b]$, $A=W^u_{q_-}(\eta)\cap f^{-1}[a,c]$, that is,
  $D$ and $A$ are parts of the stable (respectively, unstable) manifolds of $q_+$ (respectively $q_-$).
  Then $(D,A)$ form a pair of caps. Moreover, $(D,A)$ are in a good position
  if and only if $\eta$ satisfies the assumptions of Milnor's Cancellation Theorem,  that is, if $W^s_{q_+}(\eta)$ intersects
  $W^u_{q_-}(\eta)$ transversely along a single trajectory connecting $q_-$ and $q_+$; compare \cite[Theorem 5.4]{Mil65}.
\end{example}

\begin{definition}
  If $\eta$ satisfies the assumptions of Milnor's Cancellation Theorem, the pair $(D,A)$ of Example~\ref{ex:good_pair_eta} is
  called the pair of good caps \emph{associated with $\eta$} and denoted $(D_\eta,A_\eta)$.
\end{definition}

\begin{remark*}
  A gradient vector field $\nabla f$ (for some metric) gives rise to a pair of caps $(D,A)$ and any pair
  of caps arises from a gradient vector field; see \cite[Section III.2.1]{Cerf}. As any gradient-like vector field is
  an actual gradient
  for some choice of metric, we can rephrase Cerf's statement as: `any pair of caps $(D,A)$ can be constructed
  via a suitable gradient-like vector field $\eta$'.
\end{remark*}

Suppose $(D,A)$ is a pair of good caps.
The following is a key notion for constructing elementary paths of death; see \cite[page 70]{Cerf}.

\begin{definition}[Double neighbourhood adapted to $(D,A)$]\label{def:double_nbhd}
  A map $\varphi\colon D^n\to N$ is a \emph{double neighbourhood adapted to a pair of good caps $(D,A)$}, if:
    \begin{itemize}
      \item $U=\varphi(D^n)$ contains $D\cup A$;
      \item with coordinates $x_1,\dots,x_n$ on $U$ induced from coordinates on $D^n$, we have $f\circ\varphi=\Upsilon\circ\birth_1$
	for some increasing function $\Upsilon\colon\R\to\R$;
      \item if $\nabla$ denotes the gradient in $U$ in the metric where $x_1,\dots,x_n$ are orthonormal coordinates, then the caps $D$ and $A$ are given by the ascending and descending manifolds of $\nabla f$ respectively.
    \end{itemize}
\end{definition}

\begin{remark*}
Throughout the paper, a double neighbourhood adapted to $(D,A)$ is always chosen with respect to the chosen standard model as described
  in \cite[Section III.2.2]{Cerf}.
\end{remark*}

Suppose $\varphi\colon U\to N$ is a double neighbourhood adapted to $(D,A)$. 

\begin{definition}[compare \expandafter{\cite[Section III.2.3]{Cerf}}]\label{def:el_path_death}
  The path of functions $f_\tau(u)$ given by $f_\tau(u)=f(u)$ if $u\neq U$ and $f_\tau(u)=\Upsilon\circ\birth_{1-\tau}(\varphi^{-1}(u))$ is called
  the \emph{elementary path of death associated with~$\varphi$}.
\end{definition}

\begin{lemma}[\expandafter{\cite[Corollary, page 72]{Cerf}}]\label{ars:two}
    Any path of death $f_\tau$ can be written as a composition $f^1_{\bullet}\cdot \wt{f}_{\bullet}\cdot f^0_{\bullet}$ such that $f^1$ and $f^0$ are paths of excellent Morse functions and $\wt{f}_\tau$ is an elementary path of death
    supported on an arbitrarily small neighbourhood of some pair $(D,A)$ of caps in good position.
\end{lemma}

\begin{lemma}[Homotopy of paths of death]\label{lem:homotopy_of_death}
  Suppose $\varphi_\sigma$, $\sigma\in[0,1]$ is a smooth family of maps from $D^n\to N$ satisfying the conditions of Definition~\ref{def:double_nbhd}. Let $\tau\mapsto h_{\sigma,\tau}$ be the path
  of Definition~\ref{def:el_path_death} for $\varphi_\sigma$ starting from a given path $h_{\sigma,0}=g_\sigma$ in $\cF^0$. Then $h_{\sigma,\tau}$ is a left homotopy between paths $h_{0,\tau}$ and $h_{1,\tau}$.
\end{lemma}
\begin{proof}
  First, it is routine to see that $h_{\sigma,\tau}$ depend smoothly on the parameters $\sigma$ and $\tau$.
  By construction, any path $\tau\mapsto h_{\sigma,\tau}$ crosses $\cF^1$ transversely, and at precisely one parameter value of $\tau$ (not equal
  to $0$ and $1$). This indicates that $\tau\mapsto h_{\sigma,\tau}$ is an $\cF^1$-path. Moreover,
  the endpoints $h_{\sigma,1}$ all belong to $\cF^0$, so they are in the same connected
  component to $\cF^0$. This shows that $h_{\sigma,\tau}$ is a homotopy between $h_{0,\tau}$ and $h_{1,\tau}$ as desired.
\end{proof}

Let $\cP$ denote the subspace of all double neighbourhoods adapted to a pair of good caps $(D,A)$ with the topology given by uniform convergence.
Let $\cP{(D,A)}$  be the subspace of
all double neighbourhoods adapted to a fixed pair of good caps $(D,A)$ (Cerf denotes this space by $\cP'$).
The following result of Cerf is essential to prove uniqueness of deaths.

\begin{lemma}[see \expandafter{\cite[page 70, item 3c]{Cerf}}]\label{ars:three}
  The space $\cP(D,A)$ is nonempty and path-connected. If $\cN$ denotes the space of all pairs of good caps $(D,A)$,
  then the assignment $\cP\to\cN$ is a locally trivial fibration with fibres $\cP(D,A)$.
\end{lemma}

Combining Lemma~\ref{lem:homotopy_of_death} with Lemma~\ref{ars:three}, we obtain the following result.

\begin{theorem}[Uniqueness of Death, geometric version]\label{thm:uni_death_one}
  Assume $g_\sigma$ is a path of excellent Morse functions.
  Suppose $\varphi_{D,\sigma}\colon D^{h_+}\to N$ and $\varphi_{A,\sigma}\colon D^{n-h_-}\to N$ are smooth families of functions $($with $\sigma\in[0,1])$ such that $D_\sigma:=\varphi_{D,\sigma}(D^{h_+})$
  and $A_\sigma:=\varphi_{A,\sigma}(D^{n-h_0})$ form a pair of good caps for $g_\sigma$ all $\sigma\in[0,1]$. Suppose $\varphi_0$ and $\varphi_1$ are two double neighbourhoods adapted to $(D_0,A_0)$ and $(D_1,A_1)$, respectively.

  Let $h_{0,\tau}$ and $h_{1,\tau}$ be the paths of death associated with $\varphi_0$ and $\varphi_1$ as in Definition~\ref{def:el_path_death},
  where $h_{0,0}=g_0$ and $h_{1,0}=g_1$. Then the paths $\tau\mapsto h_{0,\tau}$ and $\tau\mapsto h_{1,\tau}$ are lax homotopic over $g_\sigma$.
\end{theorem}

\begin{proof}
  The assumptions imply that there is a path $\zeta_\sigma$ in the space $\cN$, connecting pairs of good caps $(D_\sigma,A_\sigma)$, the path
  being given by $(\varphi_{D,\sigma},\varphi_{A,\sigma})$. By Lemma~\ref{ars:three}, there is a lift of $\zeta_\sigma$ to a path
  $\wt{\zeta}_\sigma$ in $\cP$, whose endpoints are $\varphi_0$ and $\varphi_1$. By Lemma~\ref{lem:homotopy_of_death} the paths
  constructed using $\varphi_0$ and $\varphi_1$ are lax homotopic over $g_\sigma$.
\end{proof}

\begin{remark*}
  Cerf's statement on unicity of death \cite[Proposition III.2.4.4]{Cerf} has slightly different statement, because Cerf uses extra assumptions
  on dimension and simple connectivity of the spaces to guarantee that the rather complicated assumptions of Theorem~\ref{thm:uni_death_one} are satisfied. But our proof follows his proof very closely. 
\end{remark*}

\subsection{Paths of death via vector fields}
Our Cancellation Theorem~\ref{thm:grimcanc} (where the absolute case is obtained by setting $d=0$) creates a certain path of functions
along which two critical points are destroyed. The initial piece of data is a choice of a gradient-like vector field. We aim to combine
the two approaches.

Assume $f\colon N\to\R$ is a Morse function, $q_-$ and $q_+$ are two critical points of indices $h$ and $h+1$ respectively, with
no critical points in $f^{-1}[f(q_-),f(q_+)]$ apart from $q_-,q_+$, and $\eta$ is a Morse--Smale
vector field, gradient-like for $f$, such that there exists a unique trajectory of $\eta$ connecting $q_-$ and $q_+$. Let
$f_\tau$, $\tau\in[0,2]$, be the path of functions from Theorem~\ref{thm:grimcanc}, and let $\eta_\sigma$, for $\sigma\in[0,1]$, be the path of vector fields
from that theorem: that is $\eta_0$ is the original vector field, and $\eta_1$ is the vector field obtained after applying the modifications required
by Lemma~\ref{lem:coor_system}; compare Remark~\ref{rem:coor_system}.
Recall that in the absolute case we use the notation $f,\eta$, instead of $F,\xi$.

\begin{proposition}\label{prop:cerf_milnor}
  Let $(D_\sigma,A_\sigma)$ be the good pair of caps constructed via $\eta_\sigma$ as in Example~\ref{ex:good_pair_eta}.
  Let $\wt{f}_\tau$ be an elementary path of death with respect to the good pair of caps $(D_0,A_0)$ with $\wt{f}_0=f_0$. Then $\wt{f}_\tau$ is left-homotopic
  to $f_\tau$.
\end{proposition}

\begin{proof}
  Let $\tau\mapsto h_{\sigma,\tau}$ be the elementary path of death starting from $f_0$
  with respect to the good pair of caps $(D_\sigma,A_\sigma)$.
  By Lemma~\ref{lem:coor_system} (and Remark~\ref{rem:coor_system}), the assumptions of Theorem~\ref{thm:uni_death_one} are
  satisfied, so $h_{0,\tau}$ and $h_{1,\tau}$ are lax homotopic over $g_\sigma:=h_{\sigma,0}$. As $h_{\sigma,0}=f_0$, we conclude that $h_{0,\tau}$ and $h_{1,\tau}$ are left-homotopic. Also note that $\wt{f}_\tau=h_{0,\tau}$.

  We aim to prove that $f_\tau$ and $h_{1,\tau}$ are left-homotopic. Note that Lemma~\ref{lem:coor_system} in
  the proof of Theorem~\ref{thm:grimcanc} constructs an explicit coordinate system and the sets $X_-$ and $X_+$
  that are a double neighbourhood adapted to $(D_1,A_1)$. 

  In this neighbourhood, the path $f_{1+\tau}$, $\tau\in[0,1]$, is -- by construction --
  an elementary path of death; compare item~\ref{item:Canc_form} of
  Theorem~\ref{thm:grimcanc}.  By Theorem~\ref{thm:uni_death_one}, $f_{1+\tau}$, for $\tau\in[0,1]$, is lax homotopic to $h_{0,\tau}$ over $f_{\tau}$, $\tau\in[0,1]$.

  This implies that the paths $\wt{f}_\tau$ and $f_{1+\tau}$ are lax homotopic over $f_{\tau}$. That is, there exists a homotopy $\wt{h}_{\sigma,\tau}$ such that $\wt{h}_{\sigma,0}=f_{\sigma}$, $\wt{h}_{0,\tau}=\wt{f}_\tau$, and $\wt{h}_{1,\tau}=f_{1+\tau}$. Reparametrizing this homotopy to
  \[h'_{\sigma,\tau}=\begin{cases}
    \wt{h}_{2\sigma\tau,0} & \tau\le \frac12\\
    \wt{h}_{\sigma,2\tau-1} & \tau\ge \frac12
  \end{cases}
  \]
  yields a left-homotopy between $\wt{f}_\tau$ and $f_{\tau}$ (the latter path being for $\tau\in[0,2]$).
\end{proof}

\begin{remark}\label{rem:grimcanc_choices}
  The construction of the path in the proof of Theorem~\ref{thm:grimcanc} involved many choices, like the choice of isotopy $h_\tau$,
  of neighbourhoods $X_-,X_+$, etc. Proposition~\ref{prop:cerf_milnor} shows in particular that (at least in the absolute case) the choices
  do not matter, up to left-homotopy of paths.
\end{remark}

From Proposition~\ref{prop:cerf_milnor} we give another variant of Uniqueness of Death Theorem~\ref{thm:uni_death_one},
which is phrased in terms of vector fields and their stable/unstable manifolds only.

\begin{lemma}[Uniqueness of Death, Vector Field version, common starting function]\label{lem:uniqueness_of_death}
  Suppose $\eta_\sigma$, $\sigma\in[0,1]$ is a family of gradient-like vector fields for $f$ satisfying the Morse--Smale
  conditions. Assume that for all $\sigma$, $\eta_\sigma$ has a single trajectory connecting critical points~$q_-$ and~$q_+$.

  The paths of death from Theorem~\ref{thm:grimcanc}, constructed using $\eta_0$ and $\eta_1$ and cancelling $q_-$ and $q_+$, are left-homotopic.
\end{lemma}
\begin{proof}
  Let $\tau\mapsto h_{\sigma,\tau}$ denote the path of death from Theorem~\ref{thm:grimcanc} constructed with $\eta_\sigma$.
  Let $(D_\sigma,A_\sigma)$ be the pair of good caps constructed from $\eta_\sigma$. Let $\tau\mapsto \wt{h}_{\sigma,\tau}$
  be Cerf's path of death constructed with $(D_\sigma,A_\sigma)$.
  By Proposition~\ref{prop:cerf_milnor} the paths $h_{0,\tau}$ and $\wt{h}_{0,\tau}$, as well as the paths $h_{1,\tau}$ and $\wt{h}_{1,\tau}$, are left-homotopic. By the Uniqueness of Death Theorem~\ref{thm:uni_death_one}, the paths $\wt{h}_{0,\tau}$ and $\wt{h}_{1,\tau}$
  are left-homotopic.
\end{proof}

Lemma~\ref{lem:uniqueness_of_death} admits a refinement if $\eta_0$ and $\eta_1$ are gradient-like vector fields for two distinct Morse functions belonging to the same connected component of $\cF^0$. The precise formulation might sound artificial, and it is far from the most general statement, but it gives us a ready-to-use criterion to be applied in later sections.
The argument resembles the
proof of Lemma~\ref{lem:cut_and_paste}.  We are again using a cut-and-paste argument to create a homotopy.

\begin{lemma}\label{lem:uniqueness_after_rear}
  Suppose $g_\sigma$, $\sigma\in[0,1]$ is a path of excellent Morse functions, $q_-,q_+$ are two critical points of each of $g_\sigma$, and $\eta$ is a gradient-like vector field for $f_0$, such that $q_-$ and $q_+$ are in cancelling position.
  Write $D$ for the unstable manifold of $q_+$ intersected with $g_0^{-1}[g_0(q_-),g_0(q_+)]$, and $A$ for the stable manifold of $q_-$ intersected with $f_0^{-1}[f_0(q_-),f_0(q_+)]$.

  Suppose there exists a neighbourhood $U$ of $D\cup A$ such that $g_{\sigma}|_U=g_0|_U$ for all $\sigma$. Assume that $\eta'$ is a gradient-like
  vector field for $g_1$ such that $\eta'|_U=\eta|_U$.

  Then any two Milnor paths of death starting from $g_0$ with $\eta$ and starting from $g_1$ with $\eta'$ are lax homotopic over $g_\sigma$.
\end{lemma}

\begin{proof}
  By Proposition~\ref{prop:cerf_milnor} the paths from Theorem~\ref{thm:grimcanc} are left-homotopic
  to Cerf paths of death associated to good caps for $\eta$ and $\eta'$, respectively. Therefore, it is enough to prove that the Cerf
  paths of death associated with good pair of caps constructed with $\eta$ and $\eta'$ are lax homotopic over $g_\sigma$.

  Now, the good pair of caps associated with $\eta$ and $\eta'$
  are isotopic: this is precisely the part of $D,A$ cut out by a level set between $g_0(q_-)$ and $g_0(q_+)$. We conclude by Theorem~\ref{thm:uni_death_one}.
\end{proof}

Finally, we have a statement on uniqueness of death for a family of functions with a common vector field.

\begin{lemma}\label{lem:uni_common}
  Suppose $h_{\sigma,0}$, $\sigma\in[0,1]$ is a path of excellent Morse functions and $\eta$ is a Morse--Smale gradient-like vector field for all $h_{\sigma,0}$.
  Assume there exists precisely one trajectory of $\eta$ connecting two critical points $q_-$ and $q_+$ and that $h_{\sigma,0}$ is independent of $\sigma$ near $\cK_-\cap \cK_+$, where $\cK_-$, $\cK_+$ are the intersections of the unstable manifold of $q_-$ $($respectively, the stable manifold of $q_+)$ with the level sets $h_{0,0}^{-1}[h_{0,0}(q_-),h_{0,0}(q_+)]$. The paths of death starting with $h_{0,0}$ and $h_{1,0}$ and constructed with $\eta$, cancelling this pair,  are lax-homotopic over $h_{\sigma,0}$.
\end{lemma}
\begin{proof}
  Let these two paths of death, starting from $h_{0,0}$ and $h_{1,0}$ be called $g_{\tau}$ and $\wt{g}_\tau$ respectively.
  Let $U$ be an open set containing $\cK_-\cup\cK_+$ on which $h_{\sigma,0}$ is equal to $f_{0,0}$. Let $U_1$ be another neighbourhood of $\cK_-\cup\cK_+$
  with $\ol{U}_1\subseteq  U$. Construct a path of death $h_{0,\tau}$ starting from $h_{0,0}$, cancelling $q_-$ with $q_+$, with guiding vector field $\eta$, and such that the path is supported on $U_1$. Also let $\phi\colon N\to\R$ be a cut-off function, supported on $U$ and equal to $1$ on $U_1$.
  For $\sigma\in[0,1]$ define:
  \[h_{\sigma,\tau}=h_{0,\tau}\phi+(1-\phi)h_{\sigma,0}.\]
  Away from $U$, $h_{\sigma,\tau}=h_{\sigma,0}$. On $U_1$, $h_{\sigma,\tau}=h_{0,\tau}$. On $U\setminus U_1$, $h_{0,\tau}=h_{0,0}=h_{\sigma,0}$, so $h_{\sigma,\tau}=h_{0,0}$.
  In particular $\sigma\mapsto h_{\sigma,\tau}$ is an $\cF^1$-homotopy of paths of death.

  The path $h_{1,\tau}$ is a path of death, supported on $U_1$, with starting point $h_{1,0}$ and constructed using $\eta$. Any two $\eta$ paths
  with the same starting point are left-homotopic. Thus, $h_{1,\tau}$ is left-homotopic to $\wt{g}_\tau$. Likewise, $h_{0,\tau}$ is left-homotopic to $g_\tau$. Hence, $g_\tau$ and $\wt{g}_\tau$ are lax homotopic over $h_{\sigma,0}$. See Figure~\ref{fig:uni_common} for the notation.
\end{proof}
  \begin{figure}
    \input{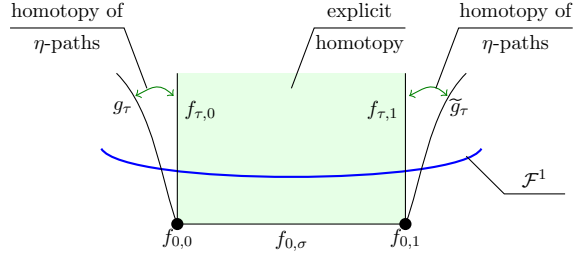}
    \caption{Schematic of the proof of Lemma~\ref{lem:uni_common}.}\label{fig:uni_common}
  \end{figure}
\part{Path Lifting}\label{part:pathlifting}

This part gives the main technical tool needed to prove the Concordance Implies Regular Homotopy Theorem~\ref{thm:concordance},
namely the Path Lifting Theorem~\ref{thm:path_lifting}. First we need some more advanced setup, namely
%
the lifting lemmas from Sections~\ref{sec:elementary_lift}, \ref{sec:lifting-paths-of-birth}, \ref{sec:lifting-paths-of-rearangment}, and \ref{sec:lifting-paths-of-death}.

The Path Lifting Theorem is one of the key novelties of this paper. To state it, suppose that
$N$ is a compact manifold, and let $f_\tau\colon N\to \R$, $\tau\in[0,1]$ be an $\cF^1$-path of functions. The precise definition of an $\cF^1$-path was given in Definition~\ref{def:regular_path}.
Let $G\colon N\to\O$ be a generic immersion
and suppose $F_0\colon\O\to\R$ is such that $F_0\circ G=f_0$. We ask whether there exists a regular path of immersions $G_\tau \colon N \to \O$ with $G_0=G$,
and an $\cF^1$-path functions $F_\tau\colon\O\to\R$ such that $F_\tau\circ G_\tau=f_\tau$ and $F_\tau$ has no new critical points away from $G_\tau(N)$.
The goal of this part is to prove Path Lifting Theorem~\ref{thm:path_lifting}, which states that this is possible. An important step in the proof is the \emph{finger move}.  We delay a detailed description of the finger move until Part~\ref{part:finger}, since this is a significant detour which takes around 30 pages.

Throughout Part~\ref{part:just_paths}, we assume that $N$ has all
connected components of the same dimension. Necessary changes if this is not the case are given in Section~\ref{sec:multidim} at the end of this part.
\section{Generalities on path lifting}\label{sec:elementary_lift}

In this section we
make precise the notion of lifting a path of functions, and
address the problem of lifting elementary paths.

\begin{definition}\label{def:lifting}
  Suppose $f_{\tau} \colon N \to \R$, $\tau\in[0,1]$ is an $\cF^1$-path of functions on $N$, as in  Definition~\ref{def:regular_path}.  Let $G\colon N\looparrowright\O$ be a generic immersion
  with $M=G(N)$.
  Finally, let $F\colon\O\to\R$ be an immersed Morse function such that $F\circ G=f_0$.

  Suppose we are given a path $(F_\tau,G_\tau)$ with $\tau\in[0,1]$, $F_\tau\colon\O\to\R$ and $G_\tau\colon N\to\O$ such that $F_0=F$, $G_0=G$, and $(F_\tau,G_\tau)$ is a regular double path (see Definition~\ref{def:regular_dp}) with the property that $F_\tau \colon \Omega \to \R$ is Morse
  for all $\tau \in [0,1]$ (not necessarily immersed Morse).
  Set $\wt{f}_\tau=F_\tau\circ G_\tau$.

\begin{enumerate}
  \item  We say that the pair $(F_{\tau},G_\tau)$ \emph{lifts the family $f_{\tau}$} if $f_\tau=\wt{f}_\tau$.
  \item  We say that $(F_\tau,G_\tau)$ \emph{weakly lifts the family $f_\tau$} if $f_1 = \wt{f}_1$ and  $\wt{f}_\tau$ is $\cF^1$-homotopic to $f_\tau$.
\end{enumerate}
\end{definition}

It is a key property of the lift that the function $F_{\tau}$ is Morse for all $\tau\in[0,1]$, that is, it
does not acquire any critical points in the top stratum. It is not hard to lift a path $f_{\tau}$ at the cost of creating critical points away from $N$: take $G_\tau = G$ for all $\tau$, choose a path of functions on $\O$ extending the given function on $N$, and perturb it to be generic
using the methods of Subsection~\ref{sub:immersed_cerf}. But lifting this way, we would not have
  any control on the critical points which might appear on the zeroth stratum, so this would not be a very useful approach.

We begin with Lemma~\ref{lem:lift_morse}, which lifts a path of Morse functions with no rearrangements.
Then we prove lemmas on lifting rearrangements, births, and deaths. These lemmas give us an inductive argument to prove the main theorem, namely the
Path Lifting Theorem~\ref{thm:path_lifting}.

\begin{lemma}[Lifting paths of excellent Morse functions]\label{lem:lift_morse}\
  \begin{itemize}
    \item[(a)]
      Assume $N$ and $\O$ are closed.
      Suppose $f_{\tau}$ is a path of excellent Morse functions on
      $N$, $G_0\colon N\to\O$ is a generic immersion, and $F_0\colon\O\to\R$ is an immersed
      Morse function for $M=G_0(N)$, such that $F_0 \circ G_0 = f_0$.

      Then there exists a family $\Psi_\tau$ of self-diffeomorphisms of $N$, and a family $\Upsilon_\tau$ of orientation preserving self-diffeomorphisms of $\R$,
      such that the composition
      \[F_\tau=\Upsilon_\tau\circ F_0\circ G_0\circ\Psi_\tau\]
      defines a lift of the path $f_{\tau}$, that is, for $G_\tau=G_0\circ\Psi_\tau$ we have $f_\tau=F_\tau\circ G_\tau$.

    \item[(b)]
      If $N$ and $\O$ are compact, $f_\tau$ is a neat path, $F_0$ is neat and $G_0$ is neat, then the statement holds, with
      $(F_\tau,G_{\tau})$ also being neat.
  \end{itemize}
\end{lemma}
\begin{proof}
  Assume first that $N$ and $\Omega$ are closed.
  By stability of Morse functions~\cite[Proposition III.2.2]{GG}, there is a family $\Psi_\tau$ of self-diffeomorphisms of $N$
and a family $\Upsilon_\tau\colon\R\to\R$ of strictly increasing functions such that $f_\tau=\Upsilon_\tau\circ f_0\circ\Psi_\tau$. We define $G_\tau=G_0\circ\Psi_\tau$
and $F_\tau=\Upsilon_\tau\circ F_0$. It is elementary to check that $F_\tau$ satisfies all the needed assumptions.

If $N$ and $\Omega$ have boundary, the argument of \cite{GG} provides us with $\Psi_\tau$ being the identity near the boundary of $N$
and $\Upsilon_\tau$ being the identity near $0$. This means that the path $G_\tau$ is independent of $\tau$ near the boundary of $N$,
while $F_\tau$ is independent of $\tau$ near the boundary of $\O$. That is to say, the double  path $(F_\tau,G_{\tau)}$ is neat.
\end{proof}

\begin{remark}\label{rem:lift_morse}
Lemma~\ref{lem:lift_morse} not only says that the path $f_{\tau}$ can be lifted, but also gives a very precise recipe for a lift. For example, it follows
from the form of $F_\tau$ and $G_\tau$ that the image $M=G_\tau(N)$ is independent of $\tau$, only the parametrisation changes. In particular,
$G_\tau$ is a generic immersion for all $\tau$ and $F_\tau$ is a path of immersed Morse functions. In particular $(F_\tau,G_\tau)$
is a regular double path. 
Moreover,
if $\xi$ is a grim vector field for $F_0$, then one readily checks that $\xi$ is a grim vector field for the whole family $F_\tau$.
\end{remark}

The following corollary
will be useful. 

\begin{corollary}\label{cor:left_homotopy}
  Suppose $f_\tau$ is an $\cF^1$-path of functions, and $(F_\tau,G_\tau)$ are such that $f_\tau$ and  $\wt{f}_\tau:=F_\tau\circ G_\tau$ are left-homotopic.
  Then there exists a regular double path $(F'_\tau,G'_\tau)$ that weakly lifts $f_\tau$.
\end{corollary}
\begin{proof}
  For technical reasons, we apply a small perturbation to $(F_\tau,G_\tau)$, denoted
  by $(\wt{F}_\tau,\wt{G}_\tau)$ so that the latter forms a regular double path; see Corollary~\ref{cor:perturb_to_regular}.
  As $G_0$ is a generic immersion, and $F_0$ is an immersed Morse function with respect to $G_0(N)$, we may assume that
  the perturbation fixes $(F_0,G_0)$. The condition that $(\wt{F}_\tau,\wt{G}_\tau)$ is a \emph{small perturbation} means that
  we insist that $\wt{F}_\tau\circ\wt{G}_\tau$ and $F_\tau\circ G_\tau$ be left-homotopic. From this it follows that $\wt{F}_\tau\circ\wt{G}_\tau$ and $f_\tau$ are left-homotopic. Let $\wt{h}_{\sigma,\tau}$ be such left-homotopy. That is to say, $\wt{h}_{0,\tau}=\wt{F}_\tau\circ\wt{G}_\tau$,
  while $\wt{h}_{1,\tau}=f_\tau$. Denote $g_\tau=\wt{h}_{\tau,1}$;
  see Figure~\ref{fig:concat_lift}.
  \begin{figure}
    \input{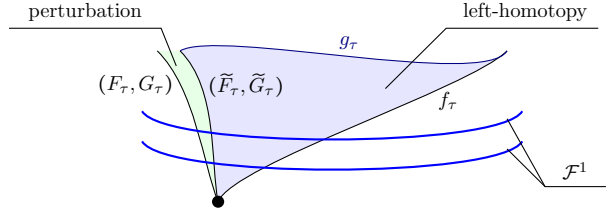}
    \caption{Schematic of the proof of Corollary~\ref{cor:left_homotopy}.}\label{fig:concat_lift}
  \end{figure}
  Since $(\wt{F}_\tau,\wt{G}_\tau)$ is a regular double path,
  $\wt{G}_1$ is a generic immersion and $\wt{F}_1$ is an immersed Morse function with respect to $\wt{G}_1$. Lemma~\ref{lem:lift_morse}
  allows us to find a lift of $g_\tau$ starting from $(\wt{F}_1,\wt{G}_1)$. We denote this lift by $(\wt{F}_{1+\tau},\wt{G}_{1+\tau})$, $\tau\in[0,1]$. By construction, $\wt{G}_{1+\tau}$ is a generic immersion for all $\tau$, and $\wt{F}_{1+\tau}$ is an immersed Morse function. That is,
  $(\wt{F}_{1+\tau},\wt{G}_{1+\tau})$ is a regular double path. We concatenate now the paths $(\wt{F}_\tau,\wt{G}_\tau)$, $\tau\in[0,1]$
  and $(\wt{F}_{1+\tau},\wt{G}_{1+\tau})$, to obtain a path $(\wt{F}_{\tau},\wt{G}_\tau)$, $\tau\in[0,2]$.
  The path is a regular double path (as a concatenation of two regular double paths), with $\wt{F}_2\circ\wt{G}_2=g_1=f_1$.

  After rescaling the time, we obtain a path $(F'_\tau,G'_\tau)$, with $F'_\tau=\wt{F}_{2\tau}$, $G'_\tau=\wt{G}_{2\tau}$. We claim that $f'_\tau:=F'_\tau\circ G'_\tau$ is $\cF^1$-homotopic to $f_\tau$. The homotopy is quite easy to construct explicitly. We define it as a concatenation
  of the homotopy $\wt{h}_{\sigma,\tau}$ and the path $g_\sigma$.
  \[h_{\sigma,\tau}=\begin{cases}
      \wt{h}_{\sigma,2\tau} & \tau\le \frac12\\
      g_{\sigma + (2\tau-1)(1-\sigma)} & \tau\ge\frac12.
    \end{cases}
  \]
  It is routine to check that $\wt{h}_{\sigma,\tau}$ being a left-homotopy implies that $h_{\sigma,\tau}$ is an $\cF^1$-homotopy.
\end{proof}
Motivated by Corollary~\ref{cor:left_homotopy} we introduce a piece of terminology.
\begin{definition}\label{defn:completed}
  A path $(F_\tau,G_\tau)$, $\tau\in[0,\ell]$ can be \emph{promoted to a weak lift of $f_\tau$}, if (up to a possible perturbation
  of $(F_\tau,G_\tau)$ rel.\ starting point, there exists a regular double path $(F_{\ell+\tau},G_{\ell+\tau})$, $\tau\in[0,1]$
  such that the whole path $(F_{(\ell+1)\tau},G_{(\ell+1)\tau})$ is a weak lift of $f_\tau$.
\end{definition}

Corollary~\ref{cor:left_homotopy} admits the following generalisation.

\begin{lemma}\label{lem:lax_to_lift}
  Suppose $f_\tau$, $\tau\in[0,1]$ is an $\cF^1$ path, while $(F_\tau,G_\tau)$, for $\tau\in[0,\ell]$ and $\ell>1$, is a regular double path such that:
  \begin{itemize}
    \item $g_\sigma:=F_\sigma\circ G_\sigma$, $\sigma\in[0,\ell-1]$ is an $\cF^0$-path;
    \item $\wt{f}_\tau:=F_{\ell-1+\tau}\circ G_{\ell-1+\tau}$ is lax homotopic to $f_\tau$ over $g_\tau$.
  \end{itemize}
  Then there exists a path $(F_{\ell+\tau},G_{\ell+\tau})$, $\tau\in[0,1]$, such that $f_\tau$ and $F_{(\ell+1)\tau}\circ G_{(\ell+1)\tau}$ are homotopic rel.\ endpoints as $\cF^1$-paths. That is, $(F_\tau,G_\tau)$ can be promoted to a weak lift of $f_\tau$.
\end{lemma}
\begin{proof}
  For simplicity, assume that $\ell=2$. Let $h_{\sigma,\tau}$ be a lax homotopy over $g_\tau$ between $f_\tau$ and $\wt{f}_\tau$. Set $\wt{g}_\sigma=h_{1-\sigma,1}$. This is an $\cF^0$-path, moreover, $\wt{g}_0=\wt{f}_1=F_2\circ G_2$ and $\wt{g}_1=f_1$. Lift this path using Lemma~\ref{lem:lift_morse} to a regular double path $(F_{2+\tau},G_{2+\tau})$ with $F_{2+\tau} \circ G_{2+\tau}= \wt{f}_{\tau}$.  We claim that $F_{3\tau}\circ G_{3\tau}$ is homotopic to $f_\tau$.
  This is routine. We set
  \[\wt{h}_{\sigma,\tau}=\begin{cases}
      g_{3\sigma\cdot \tau} & \tau\in[0,1/3]\\
      h_{\sigma,3\tau-1} & \tau\in[1/3,2/3]\\
      \wt{g}_{3\sigma\tau-3\tau+1} &\tau\in[2/3,1].
    \end{cases}
  \]
  With this choice $h_{0,\tau}$ is a reparametrisation of $f_\tau$, while $h_{1,\tau}$ is a concatenation of $g_\tau$, $\wt{f}_\tau$ and $\wt{g}_\tau$.  That is, $h_{1,\tau}=F_{3\tau}\circ G_{3\tau}$ up to reparametrisation.
\end{proof}

\section{Lifting paths of birth}\label{sec:lifting-paths-of-birth}
Throughout Section~\ref{sec:lifting-paths-of-birth}, we assume that $N$ and $\Omega$ are closed. The proof can be easily adapted to the case that
 $N$ and $\O$ have nonempty boundary, provided that $f_{\tau}$ is a neat path, $F_0$ is neat,
and $G_0$ is neat.
The goal of this section is to prove the following result.

\begin{lemma}[Lifting paths of birth]\label{lem:lift_birth}
  Suppose $f_{\tau} \colon N \to \R$ is a path of birth of index $h$.
  Let $G_0\colon N\to \O$ be a generic immersion and let $F_0\colon\O\to\R$ be an immersed Morse function such that
  $F_0\circ G_0=f_0$.

  Then there exists a path $F_{\tau} \colon \O \to \R$ and
  a path $G_\tau$ of generic immersions, such that:
  \begin{itemize}
    \item the path $F_{\tau}$ is excellent Morse except for $\tau=1/2$;
    \item $F_\tau$ has a birth at the first stratum for $\tau=1/2$;
    \item $(F_\tau,G_\tau)$ weakly lifts the path $f_\tau$;
    \item $G_\tau$ is as close to $G_0$ as we please $($for positive distance$)$.
  \end{itemize}
\end{lemma}

\begin{proof}
  The idea of the proof is to perturb the map $G$, if needed, so that the point at which the birth occurs is on the first stratum.
  This constructs the path $G_\tau$. The next part creates the birth for the immersed Morse function $F$. Finally, we invoke Uniqueness of Birth
  Proposition~\ref{prop:unicite_naiss} to show that the path we construct indeed lifts the original path. Except for Lemma~\ref{lem:birth_no_crit} below, the result is straightforward. The details follow.

  Without loss of generality we may assume that the birth occurs at $\tau=1/2$. Let $q\in N$ be the point at which birth occurs and let $c=f_{1/2}(q)$ be the level set of $q$. Suppose $\delta>0$ is such that,
  for $\tau\in[1/2-\delta,1/2)$,  no critical point of $f_\tau$ has critical value $c$. Such a $\delta$ exists, because $f_{1/2}$ has $q$ as the only critical point at the level set $c$. The path $f_{\tau}$ for $\tau\in[0,1/2-\delta]$ is a path of excellent Morse functions, which we can lift by Lemma~\ref{lem:lift_morse}. It is enough to lift the restricted
  path $f_\tau$ for $\tau\ge 1/2-\delta$. The interval $[1/2-\delta,1/2]$ can be stretched to $[0,1/2]$, so the path
  $f_\tau$ for $\tau\in[1/2-\delta,1]$ can be reparametrised to a path on $[0,1]$ agreeing with the old path on $[1/2,1]$.
  Therefore, without losing generality, we can assume that $f_\tau$ has no critical points at the level set $c$ for $\tau<1/2$.

  The subset of $C^\infty(N,\Omega)$ consisting of maps that take $q$ to the first stratum is clearly open-dense. Hence, we perturb $G_0$ in such a way that $q$ is mapped to the first stratum.
  This perturbation can be done via a family of generic immersions. That is, let
  $G_\tau$, $\tau\in[0,1/4]$, be a path of generic immersions, supported near $p$ and
  such that $p:=G_{1/4}(q)$ belongs to the first stratum. By Lemma~\ref{lem:cR_path}, we may and will assume that $G_\tau$ is $F_0$-regular (see Definition~\ref{def:another_F_word}).
  Extend the path $G_\tau$ by setting $G_\tau=G_{1/4}$ for $\tau>1/4$. 

  Choose a neighbourhood $U$ of $p$ with coordinates $x_1,\dots,x_n,y_1,\dots,y_k$ such that $G_{1/4}(N)\cap U=\{y_1=\dots=y_k=0\}$. Note that $F_{0}$ restricted to $G_{1/4}(N)$ does not have a critical point at~$p$. Therefore there is an index $i=1,\dots,n$, such that $\frac{\partial F_{0}}{\partial x_i}(p)\neq 0$. Without loss of generality we assume that $i=n$.

  Consider the map
  \[\psi(x_1,\dots,x_n,y_1,\dots,y_k)=(x_1,\dots,x_{n-1},F_{0}(x_1,\dots,x_n,y_1,\dots,y_k),y_1,\dots,y_k).\]
  This map is a local diffeomorphism since the derivative is invertible. By composing the coordinate chart with the local inverse map, and shrinking $U$ if necessary to where the inverse is defined,
  we obtain new coordinates on $U$. By a mild abuse of notation we still denote these coordinates by $(x_1,\dots,y_k)$, but we have that $F_{0}\equiv x_n$.

  Set $F_\tau=F_0$ for $\tau\le 1/4$, so that for the time $\tau\in[0,1/4]$ we modify (perturb) $G$ and for $\tau>1/4$ we modify $F$.
  Choose a bump function $\phi$ supported on $U$, with $\phi\equiv 1$
  in a smaller neighbourhood $U'$ of $p$.
  For $\tau\in[1/4,3/4]$, define
  \[F_\tau=F_{0}+\birth_{2\tau-1/2}(x_1,\dots,x_n)-x_n+(\tau-1/4)\varepsilon\phi y_1,\]
  where $\birth_\tau$ is the elementary path of birth of index $h$
  as in Definition~\ref{def:el_path_birth} such that $\birth_{\tau}=\birth_0$ away from $U'$. The r\^ole of the last
  term in the definition of $F_\tau$ is to make sure that the function $F_\tau$ has no new critical points when regarded as a function on
  $\Omega$. The real number $\varepsilon>0$ is sufficiently
  small and will be determined shortly.

  \begin{lemma}\label{lem:birth_no_crit}
    For sufficiently small $\varepsilon>0$ and $\delta>0$, the function $F_\tau$ for $\tau\in[0,1/2+\delta]$ has no critical
    points on the zeroth stratum in $U'$.
  \end{lemma}

  \begin{proof}
    Let $U''\subseteq U'$ be an open subset containing $p$ such that $\phi\equiv 1$ on $U''$. We choose $\delta>0$ by the condition
    that all the critical points of the function $\birth_{2\tau-1/2}(x_1,\dots,x_n)$ belong to $U''$ for $1/4 \leq \tau\le 1/2+\delta$. Such
    $\delta$ exists, because $\birth_{2\tau-1/2}$ creates a critical point at $p \in U''$ for $\tau=1/2$ and the critical points $p_-,p_+$
    of $\birth_\tau$ are at points $(0,\dots,0,\pm \alpha(\tau))$, where $\alpha(\tau)$ depends smoothly on $\tau$.

    This means that for $1/4 \le \tau\le 1/2+\delta$,
    $\birth_{2\tau-1/2}$ has no critical points in $\ol{U'}\setminus U''$. Choose a Riemannian metric. Then  there is a constant $c>0$
    such that $\|D\birth_\tau(z)\|>c$ for all $z\in\ol{U'}\setminus U''$. Set $C$ to be the upper bound on $\ol{U'}\setminus U''$
    for $\|D\phi y_1\|$. If $\varepsilon<\frac{c}{2C}$, then
    \[\|DF_\tau\|> c-\smfrac{3}{4} \varepsilon C > c - \smfrac{3cC}{8C} = \smfrac{5}{8} c > \smfrac12c\]
     on $\ol{U'}\setminus U''$, so the derivative is nonzero there. Next, on $U''$,
    the derivative with respect to $y_1$ is positive, so there are no critical points on $U''$. Thus there are no critical points on the zeroth stratum of $U'$, as desired.
  \end{proof}

  \begin{corollary}
    The function $F_\tau$ is immersed Morse for $\tau\in[0,1/2+\delta]\setminus\{1/2\}$. For $\tau>1/2$ it has two more critical
    points on the first stratum than for $\tau <1/2$.
  \end{corollary}

We continue with the proof of Lemma~\ref{lem:lift_birth}.
  Reparametrise $F_\tau$ to $F_{\vartheta(\tau)}$ where the diffeomorphism $\vartheta\colon[0,1/2+\delta]\to[0,1]$ has positive derivative
  and maps $1/4$ to $1/4$ and $1/2$ to $1/2$. The reparametrized path still acquires the new critical points at $\tau=1/2$, but the path is extended up to $[0,1]$ (and not
  on $[0,1/2+\delta]$).

  Define $\wt{f}_\tau=F_{\vartheta(\tau)}\circ G_\tau$. Note that $\wt{f}_\tau$, for $\tau\in[0,1/4]$, is arbitrary close to $f_0$, in particular~$\wt{f}_\tau$
  belongs to the same stratum of $\cF^0$ as $f_0$. Next, $\wt{f}_\tau$ for $\tau\in[1/4,1]$ acquires a critical point at $\tau=1/2$, that is,
  it crosses the stratum $\cF^1$ at $\tau=1/2$. The crossing is transverse, the path $\birth_\tau$ crosses the stratum transversely and
  the reparametrisation $\vartheta$ we used has non-vanishing derivative at $\tau=1/2$. In particular, $\tau\mapsto\wt{f}_\tau$ is a path of birth.

  Now the birth on $\wt{f}_\tau$ occurs at the same point of $N$ as the birth for $f_\tau$. Moreover, the indices of the critical points created
  by $\wt{f}_\tau$ and $f_\tau$ are the same. We invoke Proposition~\ref{prop:unicite_naiss} to see that $\wt{f}_\tau$ and $f_\tau$ are
  left-homotopic. By Corollary~\ref{cor:left_homotopy}, the double path $(F_{\vartheta(\tau)},G_\tau)$ can be promoted to a weak lift of $f_\tau$.
%
\end{proof}

\section{Lifting paths of rearrangement}\label{sec:lifting-paths-of-rearangment}
Next, we lift paths of rearrangement. Throughout Section~\ref{sec:lifting-paths-of-rearangment} we
 will assume that $k=\dim\O-\dim N>1$.
Remark~\ref{rem:codim_one_lift_obs} explains the obstruction to lifting if $k=1$, so the assumption is necessary.

\begin{lemma}[Lifting paths of rearrangement]\label{lem:lift_rearr}
  Let $f_0\colon N\to\R$ be a Morse function and let $\eta$ be a gradient-like vector field for $f_0$. Assume $\eta$ is Morse--Smale.
  Let $q_-,q_+$ be two critical points of $f_0$ with $\ind q_-\ge\ind q_+$ and $f_0(q_-)=a<b=f_0(q_+)$. Suppose
  there are no other critical points of $f_0$ in $f_0^{-1}[a,b]$.
  Assume that there exists a generic immersion $G_0\colon N\to \O$ and an immersed excellent Morse function $F_0\colon\O\to\R$
  such that $f_0=F_0\circ G_0$.

  If $k>1$, then an elementary $\eta$-path $f_\tau$, $\tau\in[0,1]$, moving the critical point $q_-$ above the critical point $q_+$,
  can be weakly lifted to a regular double path $(F_\tau,G_\tau)$ such that $F_\tau$ is Morse on the zeroth and first stratum for all $\tau$, and $F_\tau \circ G_\tau$ is an $\eta$-path $($Definition~\ref{def:eta-path}$)$.

  The path $F_\tau$ is supported on $F_0^{-1}[a-\varepsilon,b+\varepsilon]$, while $G_\tau$ is supported on $f_0^{-1}[a-\varepsilon,b+\varepsilon]$, where $\varepsilon>0$ can be chosen as small as we please. Moreover, if $f_\tau$ is neat, $G_0$ is very neat, and $F_0$ is neat,
  then $(F_\tau,G_\tau)$ is neat.
\end{lemma}

We recall that the condition that $f_\tau$ is elementary means
that $\tau\mapsto f_\tau(q_-)-f_\tau(q_+)$ has precisely one zero in the interval $[0,1]$ and this is a simple zero; compare Definition~\ref{def:elementary}. This amounts to saying that $f_\tau$ intersects $\cF^1_\beta$ transversely at a single point.

In general, not all maps $G_\tau$ will be generic immersions, in particular, some new self-intersections of the components of $N$ can be created.
In the follow-up to Lemma~\ref{lem:lift_rearr}, we control the double points created by $G_\tau$.

\begin{addendum}\label{lem:lift_rearr_immersion}
  The path $G_\tau$ of Lemma~\ref{lem:lift_rearr} can be constructed to have the following properties.
  \begin{enumerate}[label=(IR-\arabic*)]
    \item if $G_0$ is an embedding, then $G_\tau$ is an embedding for all $\tau$;\label{item:ir_embedding}
    \item if $N=N_1\sqcup\dots\sqcup N_\ell$ and the images $N_i$ and $N_j$ $($for all $i\neq j)$ under $G_0$ are disjoint, then $G_\tau(N_i)\cap G_\tau(N_j)=\emptyset$ for all $\tau$ whenever $i\neq j$.\label{item:ir_disjoint_image}
    \item if $k=2$ and $n\le 3$, then $G_\tau$ is a generic immersion for all $\tau$. \label{item:ir3}
  \end{enumerate}
\end{addendum}

The proof of Lemma~\ref{lem:lift_rearr} takes the remainder of Section~\ref{sec:lifting-paths-of-rearangment}.
Addendum~\ref{lem:lift_rearr_immersion} is proved alongside Lemma~\ref{lem:lift_rearr}.
We first introduce the notation, and list two extra conditions \ref{item:LR1} and~\ref{item:LR2}. Then we prove the lemma assuming~\ref{item:LR2}.
Next, we arrange the function $F_0$ and the embedding $G_0$
in such a way that \ref{item:LR1} holds. Then  we show how to improve a pair $(F_0,G_0)$ satisfying \ref{item:LR1}
so that \ref{item:LR2} is satisfied. A summary of the entire proof is given in Subsection~\ref{sub:summary_rearr}.

\subsection{Notation of the proof of Lemma~\ref{lem:lift_rearr}}

Let $h_-:= \ind q_-$, $h_+:=\ind q_+$. Set $p_-:=G_0(q_-)$ and $p_+:=G_0(q_+)$.
Then $p_-,p_+$ are critical points of $F_0$ of indices $h_-,h_+$, respectively. Both of $p_-$ and $p_+$ are at
depth $1$. In particular, there are neighbourhoods of $q_-$ and $q_+$, such that $G_0$ maps these neighbourhoods
to the first stratum of $M=G_0(N)$. For $\varepsilon>0$ sufficiently small,
$G_0$ maps
$\cK_{\eta,a-\varepsilon,a+\varepsilon}(q_-)$ and $\cK_{\eta,b-\varepsilon,b+\varepsilon}(q_+)$ (refer to Definition~\ref{def:relation} for
the construction of $\cK$)
to the first stratum. Shrink $\varepsilon$ if necessary to ensure that $F_0$ has no critical points
with critical values in $[a-\varepsilon,a+\varepsilon]\cup[b-\varepsilon,b+\varepsilon]$ other
than $p_-$ and $p_+$.

Denote $\cK_- := \cK_{\eta,a,b+\varepsilon}(q_-)$ and $\cK_+ := \cK_{\eta,a-\varepsilon,b}(q_+)$. With this notation, $\cK_-$ is
the unstable manifold of $q_-$ and $\cK_+$ is the stable manifold of $q_+$. The dimensions are $\dim\cK_-=n-h_-$, $\dim\cK_+=h_+$.
By the Morse--Smale condition on $\eta$, and since
$h_-\ge h_+$, it follows that $\cK_-$ and $\cK_+$ are disjoint.
Set \begin{equation}\label{defn-mathcal-L}
      \cL_-=G_0(\cK_-) \text{ and }\cL_+=G_0(\cK_+).
    \end{equation}

\begin{figure}
  \input{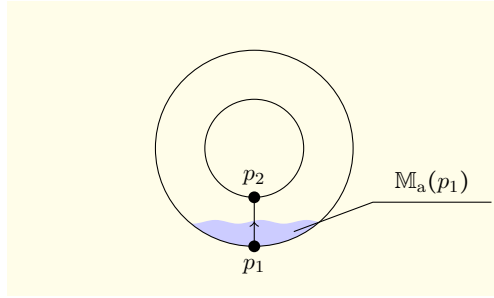}
  \caption{Lifting paths of rearrangements. Here $M$ is a union of two circles in $\R^2$. Two critical points $p_1$ and $p_2$
  are connected by a trajectory staying in the zeroth stratum. The point $p_2$ cannot be moved below $p_1$.}\label{fig:codim_one_obs}
\end{figure}

\begin{remark}\label{rem:codim_one_lift_obs}
  If $k=1$ and $\ind p_+=\ind p_-$, there can be a trajectory of $\xi$ from $p_-$ to $p_+$ in the zeroth stratum, even
  though there are no trajectories in the first. Such a zero-stratum trajectory is an obstruction to lifting rearrangements in codimension one.  Hence our assumption that $k \geq 2$; compare Figure~\ref{fig:codim_one_obs}.
\end{remark}

\subsection{Proof of Lemma~\ref{lem:lift_rearr} under extra conditions}\label{sub:extra_cond}

We first list extra conditions
which simplify the proof of Lemma~\ref{lem:lift_rearr}.
\begin{enumerate}[label=(LR-\arabic*)]
  \item\label{item:LR1}
    The points $p_-=G_0(q_-)$ and $p_+=G_0(q_+)$ are the only critical points of $F_0$ at depth $0$ or $1$ in $F_0^{-1}[a-\varepsilon,b+\varepsilon]$;
  \item\label{item:LR2} Either $\cL_-$ or $\cL_+$ belong to the first stratum of $G_0(N)$, i.e.\ at least one of $\cL_-$ or $\cL_+$ misses the singular image of $G_0$.
\end{enumerate}
Condition~\ref{item:LR2} will be used to construct a lift, while Condition~\ref{item:LR1} is used to guarantee Condition~\ref{item:LR2}.
\begin{lemma}\label{lem:extra_cond}
  Suppose $(F_0,G_0)$ satisfies \ref{item:LR2}.
  Then there exists a path $(F_\tau,G_\tau)$ with $G_\tau=G_0$ for all $\tau \in [0,1]$
  that weakly lifts $f_\tau$.
\end{lemma}

\begin{proof}
  Suppose $\cL_-$ belongs to the first stratum. As the first stratum is open, it follows that there exists an open set $V\subseteq N$
  containing $\cK_-$ such that $G_0(V)$ belongs to the first stratum. Choose a grim vector field $\xi$ on $\Omega$ for $F_0$,
  which is equal to $DG_0(\eta)$ in a neighbourhood of~$\cL_-$ in $M=G_0(N)$; compare Proposition~\ref{prop:grim_extend}.
  Assume that $\xi$ is immersed Morse--Smale. Let $\wt{\eta}$ be the pull-back of $\xi$.

  There might be critical points of $F_0$ in $F_0^{-1}[a,b]$ other than $p_-$ and $p_+$. 
  If that is the case, take a critical point $p\in F_0^{-1}[a,b]$ distinct from $p_-$ and $p_+$. Assume it has $h_p+d_p\le h_-+1$ and it has the smallest value of $F_0$ among critical points with this property. The Morse--Smale condition implies that there are no broken trajectories in $F_0^{-1}[a,F_0(p)]$
  connecting $p_-$ with $p$, so we rearrange using $\xi$, applying Theorem~\ref{thm:grim_rearrangement}, so that the critical value of $p$ is smaller than $a$. Using this procedure,
  we move all critical points with $h_p+d_p\le h_-+1$ below the level set $a$. We denote by $(F_\sigma,G_\sigma)$, $\sigma\in[0,1]$,
  the corresponding $\xi$-path,
  noting that it is supported away from $\cL_-$ and away from all critical points of $F_0\circ G_0$. By construction, $G_\sigma=G_0$. On shrinking $V$ if needed, we may and will assume that the path is supported away from $G_0(V)$.

  We claim that for $F_1$, $\cK_{\xi,a,b}(p_-)$ and $\cK_{\xi,a,b}(p_+)$ are disjoint. Clearly,
  By Corollary~\ref{cor:highcodim}, as $h_{p_+}\le h_{p_-}$,
  implies that there are no broken trajectories from $p_-$ to $p_+$. But also, if there exists a critical point $p$ in $F_1^{-1}[a,b]$
  and a trajectory from $p$ to $p_+$, then $h_p+d_p\le h_{p_+}+1$, also by Corollary~\ref{cor:highcodim}. But $h_{p_+}+1\le h_{p_-}+1$
  and all critical points with $h_p+d_p\le h_{p_-}+1$ have been moved away from $F_1^{-1}[a,b]$. Thus, no critical point is connected
  with $p_+$, so indeed, $\cK_{\xi,a,b}(p_-)\cap \cK_{\xi,a,b}(p_+)=\emptyset$.

  We use the Rearrangement Theorem~\ref{thm:grim_rearrangement} to find
  an elementary $\xi$-path $F_{1+\tau}$, $\tau\in[0,1]$ of Morse functions that lifts $p_-$ above $p_+$. This path can be chosen to be
  supported on an open set $U\subseteq\Omega$ such that $U\cap M\subseteq G_0(V)$. In particular, $\wt{f}_\tau:=F_{1+\tau}\circ G_{1+\tau}$, $\tau\in[0,1]$
  can be supported on $V$.

  The path $\sigma\mapsto g_\sigma:=F_\sigma\circ G_\sigma$, $\sigma\in[0,1]$ is a path of excellent Morse functions.
  We are in position to use Lemma~\ref{lem:cut_and_paste}. As an outcome, $\wt{f}_\tau$ and $f_\tau$, $\tau\in[0,1]$, are lax homotopic
  over $h_{\sigma,0}$.
  This is precisely the situation of Lemma~\ref{lem:lax_to_lift} with $\ell=2$. That is, we can promote $(F_\tau,G_\tau)$
  to a weak lift of $f_\tau$.

  If $\cL_+$ belongs to the first stratum, the proof is analogous. The vector field $\xi$ is assumed  to extend $DG_0(\eta)$ from an open subset
  of $M$ containing $G_0(\cK_+)$ and we construct a $\xi$-path that moves the critical point $p_+$ below $p_-$.
  This completes the  proof of Lemma~\ref{lem:lift_rearr}.
\end{proof}

Suppose $G_0$ satisfies~\ref{item:LR2}. The lift $(F_\tau,G_\tau)$ we have constructed
has the following properties:
\begin{itemize}
  \item $G_\tau$ is a generic immersion for all $\tau$;
  \item the image $G_\tau(N)$ does not change; in fact, the only moment we change $G_\tau$ is when we complete the path $\wt{f}_\tau$
    to a weak lift. Corollary~\ref{cor:left_homotopy} constructs this completion in such a way that $G_\tau(N)$ changes by reparametrisation of $N$;
  \item the function $F_\tau$ is immersed Morse for all $\tau$. The vector field $\xi$ used in the construction of the rearrangement path
    is grim for all $F_\tau$; compare Remark~\ref{rem:lift_morse}.
\end{itemize}

\subsection{Enforcing condition~\ref{item:LR1}}\label{sub:LR1}

We return to assuming the hypotheses of Lemma~\ref{lem:lift_rearr}, and we arrange for \ref{item:LR1} to hold.

\begin{lemma}\label{lem:moving}
  There exists a path $F_\tau\colon\O\to\R$ of immersed Morse functions such that:
  \begin{itemize}
    \item $F_\tau(z)=F_0(z)$ if $F_0(z)<a-2\varepsilon$ or $F_0(z)>b+2\varepsilon$;
    \item $F_\tau=F_0$ in a neighbourhood of $\cL_-$ and $\cL_+$;
    \item the only critical points of $F_1$ with critical values in $[a-\varepsilon,b+\varepsilon]$ are $p_-$ and $p_+$, except if $k=2$ and $h_-=h_+$, in which case there is also potentially a finite number of critical points at depth $d>1$, for which the sum of the index and the depth is equal to $h_-+1$;
    \item Suppose that $N=N_1\sqcup\cdots \sqcup N_\ell$ and the images $G_0(N_1),\dots,G_0(N_\ell)$ are pairwise disjoint.
    If $p_-$ and $p_+$ belong to different connected components, then all the other critical points can be moved so that their critical values do not lie in $[a-\varepsilon,b+\varepsilon]$.
    If $p_+$ and $p_-$ belong to the same component of $G_0(N)$, then the only critical
      points from the previous item that cannot in general be moved away from $[a-\varepsilon,b+\varepsilon]$
      are those that lie on the same component of $G_0(N)$ as $p_-$ and $p_+$;
    \item The composition $F_\tau\circ G_0$ is a path of excellent Morse functions on $N$.
  \end{itemize}
\end{lemma}

Note that this lemma does indeed arrange for \ref{item:LR1} to hold. In the case $k=2$ and $h_- = h_+$, the critical points that can occur with critical values in $[a-\varepsilon,b+\varepsilon]$ have depth at least 2, and so are not relevant to \ref{item:LR1}.

\begin{proof}[Proof of Lemma~\ref{lem:moving}]
Note that  $\cL_-$ and $\cL_+$ (defined in~\eqref{defn-mathcal-L}) are stratified manifolds of dimensions $n-h_-$ and $h_+$, respectively. Assuming that $G_0$ is generic, $\cL_-\cap \cL_+=\emptyset$.
  Choose a grim vector field $\xi$ for $F_0$ which is Morse--Smale and also satisfies the following extra condition.
  \begin{cond}
    The membranes of all the critical points of $\xi$ in $F_0^{-1}[a,b]$ different from $p_-$ and $p_+$ are transverse to $\cL_-$ and $\cL_+$.
  \end{cond}
  This can be achieved in the same way as the Morse--Smale condition; see Subsection~\ref{sec:morse-smale-condn}.
  Using this condition we prove the following result.
  \begin{lemma}\label{lem:dim_asc}
    Let $p\neq p_-,p_+$ be a critical point of $F_0$ in $F_0^{-1}[a-\varepsilon,b+\varepsilon]$ of index $h_p$ and depth $d_p>0$. If $h_p+d_p< h_-+1$, then the descending membrane of $p$
    is disjoint from $\cL_-$. If $h_p+(k-1)(d_p-1)> h_+$, then the ascending membrane is disjoint from $\cL_+$.
  \end{lemma}

  \begin{remark*}
If $d_p=0$, the membranes of $p$ belong to the zeroth stratum, so they are automatically disjoint from $\cL_-$ and $\cL_+$.
  \end{remark*}

  \begin{proof}
    By Lemma~\ref{lem:dimension},
    $\dim \Hd(p)\cap \O[j]=h_p+d_p-j$ and $\dim \Ha(p)\cap \O[j]=n-k(d_p-1)-h_p+d_p-j$.
    For each $j >0$, ($\cL_- \cap \O[0] = \emptyset$ by construction), we have
    $\dim \cL_-\cap \O[j]=n-h_--k(j-1)$. Then
    $\Hd(p)\cap \cL_-\cap \O[j]$ is empty if:
    \begin{equation}\label{eq:empty1}h_p+d_p-j+n-h_--k(j-1)<n-k(j-1) = \dim \O[j].\end{equation}
    Transforming this formula yields
    \[h_p + d_p <h_-+j.\]
    This holds for all $j>0$, since we assume that
    $h_p+d_p<h_-+1$.
    Thus $\Hd(p) \cap \cL_- \cap \O[j] = \emptyset$ for all $j \geq 0$.
    Hence, since $M = \cup_{j \geq 0} \O[j]$,  we have that $\Hd(p)\cap \cL_- = \emptyset$, as required.

    The argument for the $\cL_+$ is analogous. Again, $\cL_+ \cap \O[0] = \emptyset$. As $\dim \cL_+\cap \O[j]=h_+-k(j-1)$,
    $\Ha(p)\cap \cL_+\cap \O[j]=\emptyset$ if
    \begin{equation}\label{eq:empty2}n-k(d_p-1)-h_p+d_p-j+h_+-k(j-1)< n-k(j-1).\end{equation}
    This translates into $h_+<h_p+k(d_p-1)-(d_p-j)$. This holds for all $j>0$
    if $h_+<h_p+(k-1)(d_p-1)$. Thus under this condition  $\Ha(p)\cap \cL_+\cap \O[j]=\emptyset$ for all $j \geq 0$, and hence $\Ha(p)\cap \cL_+=\emptyset$.
  \end{proof}

  \begin{remark}\label{rem:strict_explain}
    The intersection of $\cL_-$ with the descending membrane $\Hd(p)$ need not consist of trajectories of $\xi$, so it can be nonempty even if the expected
    dimension is zero. Therefore, we get strict inequalities in Lemma~\ref{lem:dim_asc}.
  \end{remark}
  Continuing the proof of Lemma~\ref{lem:moving}, we introduce a useful piece of terminology.
  \begin{definition}\label{def:safe_rearrangement}
    A $\xi$-path of rearrangement $F_\tau$ is \emph{safe} if the support of $F_\tau$ is disjoint from~$\cL_-\cup\cL_+$.
  \end{definition}

  Continuing the proof of Lemma~\ref{lem:moving}, we state and prove the following result.
  \begin{lemma}\label{lem:moving_step_1}
    There exists an $F_\tau$ path of rearrangements that is a concatenation of safe $\xi$-path of rearrangements, supported on $F_0^{-1}[a-\varepsilon,b+\varepsilon]$ such that for any two pairs of critical points of $F_\tau$, $p,p'$ with $p,p'\in F_0^{-1}[a+\varepsilon,b-\varepsilon]$,
    we have $h_p+d_p\le h_{p'}+d_{p'}$ whenever $F_1(p)<F_1(p')$.

    Furthermore, the only critical points between $p_-$ and $p_+$ necessarily have $h_-+d_-\le h_p+d_p\le h_++d_+$.
  \end{lemma}
  \begin{remark*}
    If $h_-+d_->h_++d_+$, then the statement of the lemma means that after the rearrangements are made,
    there are no critical points between $p_-$ and $p_+$
  \end{remark*}
  \begin{proof}
    We present an algorithm for performing the necessary rearrangements. We take two consecutive critical points $p,p'$, that is points such that $F_0(p)<F_0(p')$ and such that there are no critical points between $p$ and $p'$.
    We claim that at least one of the situations occur.
    \begin{enumerate}[label=(SR-\arabic*)]
      \item $h_p+d_p \le h_{p'}+d_{p'}$;\label{item:SR1}
      \item $h_p+d_p>h_{p'}+d_{p'}$ and at least one of $d_p,d_{p'}$ is zero;\label{item:SR2}
      \item $h_p+d_p>h_{p'}+d_{p'}$ and $h_{p'}+d_{p'}<h_-+1$; \label{item:SR3}
      \item $h_p+d_p>h_{p'}+d_{p'}$ and $h_p+(k-1)(d_p-1)>h_+$.\label{item:SR4}
    \end{enumerate}
    The claim is proved by contradiction. If none of the four cases occurs, we have (i) $h_p+d_p>h_{p'}+d_{p'}$, (ii) $h_{p'}+d_{p'}\ge h_-+1$, and (iii) $h_p+(k-1)(d_p-1)\le h_+$.  We also have that (iv) $h_- \geq h_+$,  by the assumptions of Lemma~\ref{lem:lift_rearr}, and $d_p >0$, $d_{p'} >0$ by \ref{item:SR2}. Combining these yields:
  \[h_p + d_p = h_p + (d_p-1) +1 \stackrel{(k \geq 2)}{\leq} h_p + (k-1)(d_p-1) + 1 \stackrel{(iii)}{\leq} h_+ +1 \stackrel{(iv)}{\leq} h_- +1 \stackrel{(ii)}{\leq} h_{p'} + d_{p'} \stackrel{(i)}{<} h_p + d_p.\]
  The first inequality used that $d_p \geq 1$.
  Since this is absurd, we deduce that the above list is indeed exhaustive, as asserted.

  We describe now a local algorithm deciding on a case-by-case basis which type of rearrangement should be performed for two consecutive points.

  In case~\ref{item:SR1}, there is no need to rearrange.
  In case~\ref{item:SR2}, the membranes of the critical point at depth zero are disjoint from $\cL_-$ and $\cL_+$, so we can rearrange by moving $p$ above $p'$ (if $d_p=0$), or $p'$ below $p$ (if $d_{p'}=0$). If $d_p=d_{p'}=0$, we can perform either of the two moves. Note that in this move, we rearrange only a depth zero critical point, so that if $p$ or $p'$ happen to be either of $p_-$ or $p_+$, then they are not moved.

  In case~\ref{item:SR3}, the dimension counting argument of Lemma~\ref{lem:dim_asc} ensures that the descending membrane of $p'$ is disjoint from the set $\cL_-$. Therefore, we can perform safe rearrangements moving $p'$ below $p$, unless $p'=p_-$ or $p'=p_+$.

  If $p'=p_-$, then there is a critical point below $p_-$, with $h_p+d_p>h_{p_-}+d_{p_-}$. Originally, the function $F_0$ has no critical points in $F_0^{-1}[a-\varepsilon,a]$. If during the procedure we are now describing, some critical point lands below $p$, then it must have
  $h_p+d_p<h_{p_-}+d_{p_-}$.  Therefore, the only situation where the move is forbidden is if $p'=p_+$ is the top critical point.

  Similarly, in case~\ref{item:SR4}, the ascending membrane of $p$ is disjoint from $\cL_+$, so we can safely move $p$ above $p'$. The discussion
  of special cases is analogous: we cannot perform the move of case~\ref{item:SR4} only if $p=p_-$.

  Having specified the local algorithm, we apply it first for all the critical points between $p_-$ and $p_+$.
  In this way, using a bubble sort, we arrange critical points in ascending order of $h_p+d_p$. Next, we move to the case when $p=p_-$. If the critical point $p'$ directly above $p_-$ has $h_p+d_p > h_{p'}+d_{p'}$, then, as $h_p+d_p=h_-+1$, we land in case~\ref{item:SR3}. Therefore, we can move $p'$ below $p_-$. Inductively,
  we move all critical points below $p_-$, for which $h_{p'}+d_{p'}<h_-+1$.

  Next, suppose that $p'=p_+$ and $p$ is immediately below $p_+$ with $h_p+d_p>h_{p'}+d_{p'}$, and $p\neq p_-$. If $d_p=0$, we fit into case~\ref{item:SR2}, we lift $p$ above $p'$. If $d_p\neq 0$, then, as $k\ge 2$, $h_p+(k-1)(d_p-1)\ge h_p+d_p-1$, so we are in case~\ref{item:SR4}. We can lift the critical point~$p$ above~$p_+$.

  Therefore, the only critical points that can remain between $p_-$ and $p_+$ are those with $h_p+d_p\ge h_-+d_-$ (because all others can
  be safely pushed below $p_-$ or above $p_+$) and, simultaneously, $h_p+d_p\le h_++d_+$.
  \end{proof}

  We study the critical points that remain between $p_-$ and $p_+$ in greater detail. These are precisely the points at which the local algorithm
  fails to move away from the region between $p_-$ and $p_+$. The only situation when this can happen
  is if  $h_-+d_-=h_++d_+$.

  \begin{corollary}\label{cor:moving_step_2}
    Suppose $h_-+d_-=h_++d_+$. Then all the critical points $p$ with $d_p=0$ can be moved above the critical point $p_+$. Moreover, if $k\ge 3$,
    all the critical points can be moved.
  \end{corollary}
  \begin{proof}
    We refine the part of the proof of Lemma~\ref{lem:moving_step_1}. First take two critical points $p$ and $p'$ between $p_-$ and $p_+$
    such that $p'$ is right above $p$ and $p'$ is allowed to be $p_+$. Necessarily, $h_p+d_p=h_{p'}+d_{p'}$ (now all critical points between $p$ and $p'$ satisfy this). If $d_p=0$ and $d_{p'}>0$, then we safely move $p$ above $p'$ as in case~\ref{item:SR2} of the proof of Lemma~\ref{lem:moving_step_1}. Iterating this process, we safely rearrange all critical points with $d_p=0$ by moving them above $p_+$. This completes the first part of the corollary.

    Next, suppose $k\ge 3$ and that we have a critical point $p$ right below $p_+$. Suppose it is not $p_-$.
    By the assumptions, $h_p+d_p=h_++d_+$. As all critical points in the zeroth stratum have been moved above $p_+$, we necessarily have $d_p>0$.
    Since $p\neq p_-$, we have $d_p\neq 1$ too, so $d_p\ge 2$. Then
    \[h_p+(k-1)(d_p-1)=h_p+(k-2)(d_p-1)+d_p-1>h_p+d_p-1=h_++d_+-1 > h_+.\]
    Thus, we are in case~\ref{item:SR4}, so we can safely lift $p$ above $p_+$.
  \end{proof}

  Corollary~\ref{cor:moving_step_2} takes care of the first three items of Lemma~\ref{lem:moving}.

  We now refine the above procedure to prove the fourth item of Lemma~\ref{lem:moving}. Suppose $G_0(N_1),\dots,G_0(N_\ell)$ are disjoint, and $p_-:=G_0(q_-)$
  belongs to $G_0(N_1)$. This is relevant only if $h_-=h_+$, because this condition is part of the statement of the fourth item. Suppose $p$ is a critical point in $F_0^{-1}[a-\varepsilon,b+\varepsilon]$ at depth $d_p\ge 2$ and with
  $h_p+d_p=h_-+1$, and suppose there are no critical points with critical values in $(F(p_-),F(p))$.
  Assume that $p$ belongs, say, to $G_0(N_2)$. By the dimension condition, there are no trajectories between $p_-$ and $p$
  that belong to the zeroth stratum. On the other hand, $\cL_-$ belongs to a different component of $G_0(N)$ than the part of
  the membrane of $p$ belonging to the first and deeper strata (here we use the fact that $\cL_-\subseteq G_0(N_1)$ and $p\in G_0(N_2)$).
  Hence, the descending membrane of $p$ is disjoint from $\cL_-$, so we can safely move $p$ below $p_-$. This proves the fourth item.


  Concatenating all the $\xi$-paths of rearrangement used in these two procedures (first organizing all the critical points between $p_-$ and $p_+$ and then second moving most of their critical values out of the region between the values of $p_-$ and $p_+$) creates a $\xi$-path of
  rearrangement $F_\tau$ with support disjoint from $\cL_-\cup\cL_+$, i.e.\ a safe path. The only critical points of $F_1$ in $F_1^{-1}(a,b)$ are those with $h_p+d_p=h_-+1=h_++1$ and $d_p>0$.
  If $\varepsilon>0$ is small, $F_1^{-1}[a-\varepsilon,b+\varepsilon]$
  also contains only these points.
  We have not moved any of the critical points at depth 1, and so $F_\tau \circ G_0$ is a path of excellent Morse functions, as required by the last item of the lemma.
  This concludes the proof of Lemma~\ref{lem:moving}.
  \end{proof}

    The path constructed in Lemma~\ref{lem:moving} does not change $G_0$, so in particular no new double points of $M=G_0(N)$ are
  created. Moreover, $F_\tau$ is a path of immersed Morse functions, without births or deaths on the deeper strata.

We have done enough to obtain \ref{item:ir_embedding} of Addendum~\ref{lem:lift_rearr_immersion}.
If $G_0$ is an embedding then \ref{item:LR2} is automatically satisfied, and so no further operations are necessary. The procedure described in Subsection~\ref{sub:summary_rearr} will give rise to a regular double path $(F_\tau,G_\tau)$ with $G_\tau$ an embedding for all $\tau$.

\subsection{Enforcing condition~\ref{item:LR2}: the regular homotopy}\label{sub:LR2_move}

The next result, ensuring that it is possible to change the map $G_0$ by a path of immersions
in such a way that $G_1$ maps $\cK_-$ to the first stratum, is the most difficult step
in the proof of Lifting Rearrangement Lemma~\ref{lem:lift_rearr}. The precise construction of the isotopy $G_\tau$ stretches across Subsections \ref{sub:LR2_move}, \ref{sub:f_ing_immersion}, \ref{sub:LR2_gen}, and \ref{sub:lowdim}. The main part of the proof occurs in this subsection. In Subsection~\ref{sub:f_ing_immersion} we show that we can arrange for $G_\tau$ to be an immersion for all $\tau$. In Subsection~\ref{sub:LR2_gen} we show how to modify the construction to arrange for \ref{item:ir_disjoint_image} to hold. In Subsection~\ref{sub:lowdim} we address~\ref{item:ir3}. Then in Subsection~\ref{sub:summary_rearr} we summarise the entire proof.

\begin{proposition}\label{prop:create_isotopy}
  Suppose $(F_0,G_0)$ satisfies condition~\ref{item:LR1}, and $\eta$ is gradient-like for $F_0\circ G_0$.
  There exists a generic path of immersions $G_\tau\colon N\to\R$,
  $\tau\in[0,1]$, such that $G_\tau$ is a regular $F_0$-path $($see Definition~\ref{def:F_path}$)$,
  $F_0\circ G_\tau$ is an $\eta$-path $($Definition~\ref{def:eta-path}$)$ of
  excellent Morse functions, and $G_1(\cK_-) \cap F_0^{-1}([a,b])$ is contained in the first stratum of $G_1(N)$.
\end{proposition}

\begin{proof}[Proof of Proposition~\ref{prop:create_isotopy} modulo Subsection ~\ref{sub:f_ing_immersion}]  We will prove the lemma modulo a key technical lemma, that will be proven in the following subsection.
  Define
  \begin{equation} \label{eqn:defn-some-submanifolds}
    \Omega'=F_0^{-1}[a-\varepsilon,b+\varepsilon] \text{ and }
    N'=G_0^{-1}(\Omega')=f_0^{-1}[a-\varepsilon,b+\varepsilon].
  \end{equation}

  The idea is to push the set $G_0(\cK_-)\cap \O[2]$ above the level set of $F_0^{-1}(b)$,
  so that $G_0(\cK_-)$ does not intersect $\O[2]$ below $F_0^{-1}(b)$.   Hence also $G_0(\cK_-)$ will not intersect $\O[d]$ for $d \geq 3$, because $\O[d] \subseteq \overline{\O[2]}$.
  The push will be guided by a vector field which is
  not necessarily tangent to $G_0(N)$.
  Choose a vector field on $\Omega'$, which is a gradient vector field for $F_0$ as \emph{an ordinary Morse function}. This means that
  the vector field need not be tangent to $M$. To emphasise the difference between grim vector fields and this vector field,
  we denote the latter by $\nabla F_0$. Note that $F_0$, regarded as an ordinary Morse function,
  has no critical points on $\Omega'$ (by Condition~\ref{item:LR1}), therefore $\nabla F_0$ does not vanish in $\Omega'$.

  We will assume that $\nabla F_0$ is generic. The precise meaning of the genericity
  condition will be specified in few places, especially in Subsection~\ref{sub:LR2_gen}.
  The next result, Lemma~\ref{lem:projection_vs_gradient}, gives a suitable framework for genericity: instead of perturbing
  the Riemannian metric defining $\nabla F_0$, we can perturb the induced projection. This allows us to use transversality
  arguments in a more flexible way. 

  A gradient-like vector field $\nabla F_0$ induces a projection from $\Omega'$ onto $Y:=F_0^{-1}(a+\varepsilon)$:
  a point $z\in\Omega'$ is mapped to the unique point $y\in Y$ lying on the same trajectory of $\nabla F_0$.
  We denote this projection by $\Pi \colon \O' \to Y$. For future use we give the following standard result.

  \begin{lemma}\label{lem:projection_vs_gradient}
    Suppose $\Pi'\colon\Omega'\to Y$ is another projection, and $\Pi'$ is sufficiently close $($in the $C^1$-norm$)$ to $\Pi$.
    Then there is a gradient-like vector field for $F_0$ inducing $\Pi'$.
  \end{lemma}

  \begin{proof}
    For any $c\in[a+\varepsilon,b+\varepsilon]$ the restriction $\Pi|_{F_0^{-1}(c)}$ is a local diffeomorphism, because $\nabla F_0$ is transverse to the level sets of $F_0$. This property is open, hence
    if $\Pi'\in C^\infty(\Omega',Y)$ is sufficiently close to $\Pi$, then $\Pi'$ restricted to $F_0^{-1}(c)$, for $c\in[a+\varepsilon,b+\varepsilon]$, is a local diffeomorphism as well. 

    Choose a metric on $\Omega'$. For each point $z\in\Omega'$, let  $\xi(z) \in \ker \Pi'$ be the unit vector such that $\partial_{\xi(z)} F_0(z) >0$. (We know that $\partial_{\xi(z)} F_0(z) \neq 0$ because $\xi(z)$ is transverse to the level set $F_0^{-1}(F_0(z))$.)  We choose
    the orientation of $\xi(z)$ in such a way that $\partial_{\xi(z)}F_0(z)>0$. By construction, $\xi(z)$ is a gradient-like vector field
    for $F_0$. Moreover, since $\xi(z)\in\ker D\Pi'(z)$ for all $z$, the trajectories of $\xi(z)$ are contained in the fibres of $\Pi'$.
  \end{proof}

  We use this lemma to specify the first of several genericity conditions imposed on $\nabla F_0$.

\begin{lemma}\label{lem:generic_Pi}
  The set of maps $\Pi$ such that for each sign $\pm$,  $(\Pi\circ G_0)^{-1}(\Pi\circ G_0(q_\pm)) = \{q_{\pm}\}$,
is open dense in the set of all smooth maps from $\Omega'$ to $Y$.
\end{lemma}

\begin{proof}
  The double point set of $\Pi\circ G_0$ is of codimension $k-1$, hence it may be avoided generically by the zero-dimensional manifold $\{q_-,q_+\}$.
It follows from transversality that the given set of maps is open-dense.
\end{proof}

  It follows that there exists a nonempty open neighbourhood $U\subseteq N$ of $q_-$
  with the property that a trajectory of $\nabla F_0$ through any point in $G_0(u)$ for $u\in U$, does not hit $G_0(N)$. Decrease $\varepsilon$, if necessary to ensure that
  \begin{equation}\label{eq:ckminus}
    \cK_-\cap f^{-1}[a,a+2\varepsilon]\subseteq U.
  \end{equation}
  Define $Z\subseteq f^{-1}(a+\varepsilon)$ to be that subset of $z \in f^{-1}(a + \varepsilon)$ that satisfy Condition~\ref{cond:on_Z} below.
  \begin{cond}\label{cond:on_Z}
    Whenever $w \in N$ lies on a trajectory of $\eta$ through $z$ and $a+\varepsilon \le f(w)< a+2\varepsilon$, then the trajectory of $\nabla F_0$ through $G_0(w)$ only   intersects $G_0(N)$ in $\Omega'$ at $G_0(w)$.
  \end{cond}
  We refer to the schematic of $Z$ in Figure~\ref{fig:zeta_def}.
 The conditions defining $Z$ are open, hence $Z$ is an open subset of $f^{-1}(a+\varepsilon)$. Note that \eqref{eq:ckminus} implies also
  that $\cK_-\cap f^{-1}[a+\varepsilon]\subseteq Z$. In particular $Z$ is nonempty.

  As $\cK_-$ is disjoint from $\cK_+$, there exists an open subset $Z_0\subseteq Z$ (`open' meaning open in $f^{-1}(a+\varepsilon)$)
  containing $\cK_-\cap f^{-1}(a+\varepsilon)$ and disjoint from $\cK_+$.

  \begin{figure}
    \input{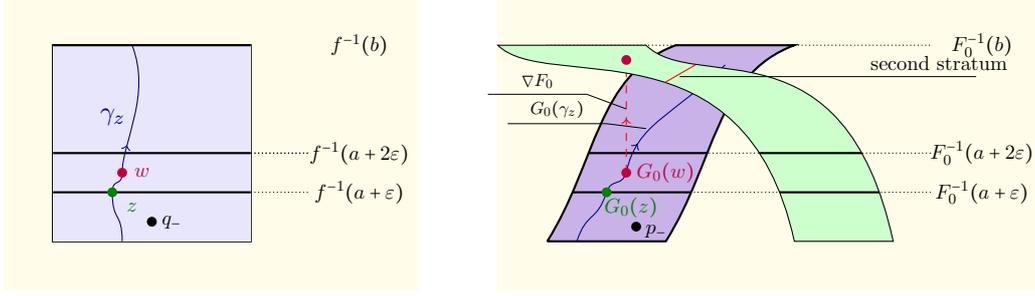}
    \caption{The point $z$ does not belong to $Z$. In fact, the trajectory $\gamma_z$ of $\eta$, in $N$ passes through $w$ (left picture), and the trajectory of $\nabla F_0$ through $G_0(w)$, in $G_0(N) \subseteq \O$, reaches a point on $G_0(N)$ (right picture).}\label{fig:zeta_def}
  \end{figure}

  Define $R\colon\R\to\R$ to be a smooth increasing function such that $R(x)=x$ for $x\notin[a+\varepsilon,b+\varepsilon]$ and such that
  $R$ maps:
  \begin{itemize}
    \item $[a+\varepsilon,a+2\varepsilon]$ onto $[a+\varepsilon,b]$;
    \item $[a+2\varepsilon,b]$ onto $[b,b+\varepsilon/2]$;
    \item $[b,b+\varepsilon]$ onto $[b+\varepsilon/2,b+\varepsilon]$.
  \end{itemize}
  The function $R$ will be our \emph{rescaling function}, denoting the height of the lift. Note that, by construction
  \begin{equation}\label{eq:Rx_and_x}
    R(x)\ge x\textrm{ for all }x\in\R.
  \end{equation}
  We define a new function, which is called the \emph{lift function} $P\colon\Omega'\times[0,1]\to\Omega'$.
  For $\theta\in[0,1]$, and $z\in\Omega'$, we define $P(z,\theta)$ to be the unique point in $\Omega'$ belonging to the
  same trajectory of $\nabla F_0$ as $z$, such that
  \begin{equation}\label{eq:Pz_def}
    F_0(P(z,\theta))=F_0(z)+\theta(R(F_0(z))-F_0(z)).
  \end{equation}
  Note that the function $P\colon\Omega'\times[0,1]\to\Omega'$ is a smooth function, because of the smooth dependence of solutions of an ODE
  on initial conditions.

  We aim to define $G_\tau$ in such a way that a point $u\in\cK_-$ is mapped to $P(G_0(u),\tau)$, i.e.\ it is moved up
  from the level set $f(u)$ to the level set of $(1-\tau)f(u)+\tau R(f(u))$.
  For the map to be smooth, we need to also lift points near $\cK_-$. To this end, we need
  to define a cut-off function
  \[\mu \colon f^{-1}[a+\varepsilon,b+\varepsilon]\to [0,1].\]
  In the construction, we will adjust the function $\mu$ in such a way that
  \begin{itemize}
    \item[(i)]  $\eta$  is a gradient-like vector field for $F_0\circ G_\tau$ for all $\tau\in[0,1]$; and
    \item[(ii)]    $G_\tau$ is an immersion for all $\tau \in [0,1]$.
  \end{itemize}
We address the first property next. The  second property will be recorded below as Lemma~\ref{lemma:G-tau-immersion}, the proof of which we will defer to Subsection~\ref{sub:f_ing_immersion}.

  To prove that $\eta$ is gradient-like for $F_0\circ G_\tau$, we will need the following properties
  of $\mu$:
  \begin{enumerate}[label=($\mu$-\arabic*)]
    \item $\partial_\eta\mu\ge 0$, and equality holds only at critical points of $f$ and at places where the value of $\mu$ lies in $\{0,1\}$;\label{item:mu_0}
    \item the support of $\mu$ intersected with $f^{-1}(a+\varepsilon)$ is contained in $Z_0$;\label{item:mu_1}
    \item $\mu$ is equal to $1$ on $\cK_-\cap f^{-1}[a+2\varepsilon,b+\varepsilon]$ and vanishes on $\cK_+$.\label{item:mu_2}
  \end{enumerate}
 The next lemma shows that such a function exists.

 \begin{lemma}\label{lemma:mu-exists}
   There exists a smooth function $\mu \colon f^{-1}[a+\varepsilon,b+\varepsilon]\to [0,1]$ satisfying \ref{item:mu_0}, \ref{item:mu_1}, and \ref{item:mu_2}.
 \end{lemma}

\begin{proof}
   Take a smooth function $\mu_0\colon f_0^{-1}(a+\varepsilon)\to [0,1]$ supported on $Z_0$
  and equal to $1$ on $\cK_-\cap f^{-1}(a+\varepsilon)\subseteq Z_0$. Define $\O_0' := F_0^{-1}[a+2\varepsilon,b+\varepsilon]$ and  $N_0' := G_0^{-1}(\O_0')$.

  Let $\nu$ be a smooth non-decreasing function from $\R_{\ge 0}$ to $[0,1]$
  equal to $0$ for $t \in \R$ close to $0$ and equal to $1$ for $t>t_0$,
  where $t_0$ is the minimal time needed to reach $f_0^{-1}(a+2\varepsilon)$
  by going from $Z_0$ via $\eta$.

  Then define $\mu\colon N'_0\to\R_{\ge 0}$ via $\mu(z)=\nu(t)\mu_0(z_0)$
  for any $z\in N'_0$ if the trajectory of $\eta$ through $z$
  hits the level set $a+\varepsilon$ at the point $z_0$ and the time to go from $z_0$ to $z$ is equal to $t$.
  With this construction, $\mu$ has all the required properties.
  \end{proof}

  We will impose one more condition on $\mu$ in Subsection~\ref{sub:f_ing_immersion}, and show that $\mu$ can be assumed to also satisfy this.

  Now we define a path of maps $G_\tau\colon N\to\O$. The definition is as follows. Take $u\in N$. Set $G_\tau(u)=G_0(u)$ if $f_0(u)\notin N'_0$.
  If $f_0(u)\in N'_0$, we set
  \begin{equation}\label{eq:Gtau_def}
    G_\tau(u)=P(G_0(u),\tau\mu(u)).
  \end{equation}
  Compare Figure~\ref{fig:liftG}.
  \begin{figure}
    \input{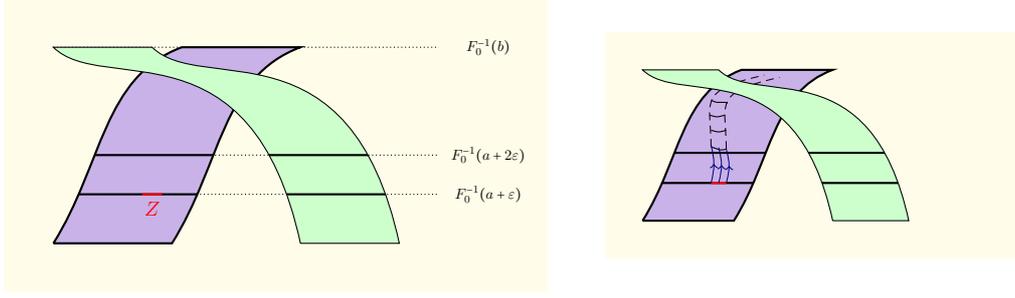}
    \caption{A schematic of $G_\tau$. Points on a trajectory through $Z$ are lifted up by flowing along $\nabla F_0$.}\label{fig:liftG}
  \end{figure}
  We list the properties of $G_\tau$.
  \begin{itemize}
    \item $G_\tau(u)$ depends smoothly  on $\tau$ and $u$, in particular $G_\tau$ is a homotopy between the maps $G_0$ and $G_1$;
    \item $G_\tau(u)=G_0(u)$ on $\cK_+$ by \ref{item:mu_2}. Moreover, if a trajectory of $\eta$ through $u$ misses $Z$, then $G_\tau(u)=G_0(u)$;
    \item $G_\tau=G_0$ above the level set $f_0^{-1}(b+\varepsilon)$ and
      below the level set $f_0^{-1}(a+\varepsilon)$.
  \end{itemize}
  Set
  \[\wt{f}_\tau=F_0\circ G_\tau \colon N \to \R.\]

  \begin{lemma}\label{lem:wtf_is_eta}
    The family $\wt{f}_\tau \colon N \to \R$ is an $\eta$-path of excellent Morse functions.
  \end{lemma}

  \begin{proof}
    It is clear from the construction that $\wt{f}_\tau$ does not depend on $\tau$ for any of the critical points of $f_0$. In fact, the only
    critical point of $f_0$ in $N'_0$ is $q_+$, and we have that $\mu(q_+)=0$.

    We prove that $\partial_\eta f_\tau\ge 0$ with equality only at critical points of $f_\tau$.
    Given a point $u\in N$, \eqref{eq:Pz_def} implies that $P(G_0(u),\tau\mu(u))$ is at the level set
    $F_0(G_0(u))+\tau\mu(u)(R(F_0(G_0(u)))-F_0(G_0(u)))$.
    Noting that $F_0(G_0(u))=f_0(u)$ and $f_\tau(u)=F_0(G_\tau(u))$ we see that
    \eqref{eq:Pz_def} and \eqref{eq:Gtau_def} imply:
    \begin{equation}\label{eq:dmu}
      f_\tau(u)=f_0(u)+\tau \mu(u)(R(f_0(u))-f_0(u)).
    \end{equation}
    Applying $\partial_\eta$ to both sides of \eqref{eq:dmu} we obtain.
    \begin{equation}\label{eq:mega_partial_derivative}
      \begin{split}
	\partial_\eta f_\tau(u)=&\partial_\eta f_0(u)+\\
	&+\tau(R(f_0(u))-f_0(u))\partial_\eta\mu+(R'(f_0(u))\partial_\eta f_0(u)-\partial_\eta f_0(u))\tau\mu=\\
	=&\underbrace{(1-\tau\mu)\partial_\eta f_0(u)}_{(1)}+\underbrace{\tau(R(f_0(u))-f_0(u))\partial_\eta\mu}_{(2)}+
	\underbrace{\tau\mu R'(f_0(u))\partial_\eta f_0(u)}_{(3)}.
      \end{split}
    \end{equation}
    We study the terms (1), (2), and (3).
    \begin{itemize}
      \item[(1)] As $\eta$ is a gradient-like vector field and $\mu(u)\in[0,1]$, term (1) is non-negative. It is zero only if $\mu(u)=\tau=1$
	or $u$ is a critical point of $f$.
      \item[(2)] By construction, $\partial_\eta\mu\ge 0$, with equality at the critical points, and also $R(f_0(u))-f_0(u)\ge 0$ (compare \eqref{eq:Rx_and_x}), so (2) is non-negative.
      \item[(3)] The derivative $R'$ is positive. The term (3) is non-negative. It is positive unless $\tau\mu=0$ or $u$ is a
	critical point of $f_0$.
    \end{itemize}
    From the list, since $\tau \mu$  cannot be $0$ and $1$ simultaneously, it is clear that $\partial f_\tau(u)\ge 0$ with equality exactly at the critical points of $f_0$.
  \end{proof}

  Continuing the proof of Proposition~\ref{prop:create_isotopy}, we have the
  following result.

  \begin{lemma}\label{lem:reduce_to_1}
    $G_1(\cK_-)\cap F_0^{-1}[a,b]$ is disjoint from the set of the double points of $G_1(N)$.
  \end{lemma}

  \begin{proof}
    Take $w\in\cK_-\cap f_0^{-1}[a,b]$. Consider the following three cases.
    \begin{itemize}
      \item If $f_0(w)\le a+\varepsilon$, then $G_1(w)=G_0(w)$ belongs to the first stratum, because $\cK_-\cap f_0^{-1}[a,a+\varepsilon]\subseteq U$.
      \item Suppose $f_0(w)\in[a+\varepsilon,a+2\varepsilon]$. Let $u\in f_0^{-1}(a+\varepsilon)$ belong to the same trajectory of $\eta$
	as $w$. Then $u\in Z$ and by Condition~\ref{cond:on_Z}, $G_1(w)$ belongs to the first stratum.
      \item If $f_0(w)>a+2\varepsilon$, then $\mu(w)=1$, so by \eqref{eq:dmu} we have $f_\tau(w)=(1-\tau)f_0(w) + R(f_0(w))>b$.
	So $G_1(w)\notin F_0^{-1}[a,b]$. \qedhere
    \end{itemize}
  \end{proof}

 We will need one more result, whose proof is deferred to later subsections.

 \begin{lemma}\label{lemma:G-tau-immersion}
There exists a choice of $\mu$ such that $G_\tau$ is an immersion for all $\tau \in [0,1]$.
  \end{lemma}

We will prove Lemma~\ref{lemma:G-tau-immersion} in Subsection \ref{sub:f_ing_immersion}. Modulo this,   Lemmas~\ref{lem:wtf_is_eta},~\ref{lem:reduce_to_1}, and ~\ref{lemma:G-tau-immersion} conclude the proof of Proposition~\ref{prop:create_isotopy}.
\end{proof}

 %
%
%
%

In the next subsection, we will check that $G_\tau$ is an immersion for all $\tau$, proving Lemma~\ref{lemma:G-tau-immersion}. This done, we
will perturb $G_\tau$ in such a way that $(F_\tau,G_\tau)$ becomes a regular double path (see Definition~\ref{def:regular_dp}).
The construction will be further tweaked in Subsections~\ref{sub:LR2_gen} and \ref{sub:lowdim}, in order to arrange for \ref{item:ir_disjoint_image} and \ref{item:ir3} to hold.
A summary of the entire proof of Lemma~\ref{lem:lift_rearr} will be given in Subsection~\ref{sub:summary_rearr}.

\subsection{Enforcing condition~\ref{item:LR2}: proving that $G_\tau$ is an immersion}\label{sub:f_ing_immersion}
We will now prove that if $\nabla F_0$ is sufficiently generic, and for a careful choice of $\mu$, then $G_\tau$ is an immersion for all~$\tau$.

Recall that $\Omega'=F_0^{-1}[a-\varepsilon,b+\varepsilon]$. Set $Y=F_0^{-1}(a+\varepsilon)$. As explained in Subsection~\ref{sub:LR2_move}, the
vector field $\nabla F_0$ induces a projection $\Pi\colon \Omega'\to Y$.
We begin with the following observation.

\begin{lemma}\label{lem:PiGtau}
  The map $\Pi\circ G_\tau\colon N\to Y$ does not depend on $\tau$.
\end{lemma}

\begin{proof}
  For any point $u\in N$, by the construction of $G_\tau$, the points $G_\tau(u)$, $\tau\in[0,1]$ belong to the same trajectory of $\nabla F_0$. Therefore, they are mapped by
  $\Pi$ to the same point.
\end{proof}

\begin{corollary}\label{cor:immersion}
  Let $u\in N$ and let $z=G_0(u)$. Suppose that $\nabla F_0$ is not
  tangent at $z$ to the branch of $G_0(N)$ containing $G_0(u)$. Then $G_\tau$ is an immersion near $u$ for all $\tau$.
\end{corollary}

\begin{proof}
By the chain rule, $D(\Pi \circ G_0) = D\Pi(G_{0}(u)) \circ DG_0(u)$.
  As $G_0$ is an immersion, $DG_0(u)$ has trivial kernel.
  The differential $D\Pi(z)$ has kernel spanned by $\nabla F_0(z)$.  By assumption, it follows that the kernel of $D\Pi(z)$ intersects the image
  of $DG_0(u)$ only at $0$. Hence $D(\Pi\circ G_0)(u)$ has trivial kernel. By Lemma~\ref{lem:PiGtau}, $D(\Pi\circ G_\tau)(u)$
  has trivial kernel for all $\tau$. Since $D(\Pi \circ G_\tau) = D\Pi(G_{\tau}(u)) \circ DG_\tau(u)$, it follows that $DG_\tau(u)$ has trivial kernel for all $\tau$, and hence by continuity of the derivative, $G_\tau$ is an immersion near $u$.
\end{proof}

Thus we only need to study points at which $\nabla F_0$ is tangent to $G_0(N)$ at $z$. We show that generically, the set of such points $z \in G_0(N)$  is itself a manifold.
In proving genericity, it is most convenient to perturb the immersion $G_0$.

\begin{lemma}\label{lem:tangent_generic}
  There is an open-dense subset of maps $G_0$ such that the set of points $u \in N$, with the property that $\nabla F_0$ is tangent to the branch
  of $M$ with tangent vectors $DG_0(u)(T_u N) \subseteq T_{G_0(u)}\O$, forms a smooth manifold of dimension $n-k$, missing $q_-$ and $q_+$.
\end{lemma}

\begin{proof}
  The condition $\nabla F_0(G_0(u))\subseteq \Iim DG_0(u)$ defines a smooth submanifold in the 1-fold jet space $J^1(N,\Omega)$
  of maps from $N$ to $\Omega$; compare Subsection~\ref{sub:paths_of_immersions} for the relevant definitions. The codimension is equal to $k$. This can be seen as follows: $DG_0(u)$ is a codimension $k$ subspace and $\nabla F_0(G_0(u))$ is a 1-dimensional vector. The condition that a vector
  fits into a codimension $k$ space is of codimension $k$.
  By the Thom transversality theorem (compare Theorem~\ref{thm:multijet} for $s=1$) the set of maps $G_0$ transverse to this stratum is open-dense.
  If, possibly after a perturbation, $G_0$ is transverse to this stratum, then the set of points in $N$ such that the jet extension of $G_0$ hits the stratum,
  is a smooth submanifold of the same codimension, that is, $k$.
\end{proof}

\begin{remark*}
  We can understand Lemma~\ref{lem:tangent_generic} in the broader context of the Thom-Boardman stratification of Subsection~\ref{sub:thom_boardman}.
  The set of points $u$ where $\nabla F_0(G_0(u))$ is tangent to $G_0(N)$ at $G_0(u)$ actually corresponds to the Thom-Boardman stratum $\beta^1$ of the map $\Pi\circ G_0$.  In particular, the dimension of this set can be alternatively computed using e.g.\ \cite[Section I.2]{AVG}.
\end{remark*}

\begin{figure}
  \input{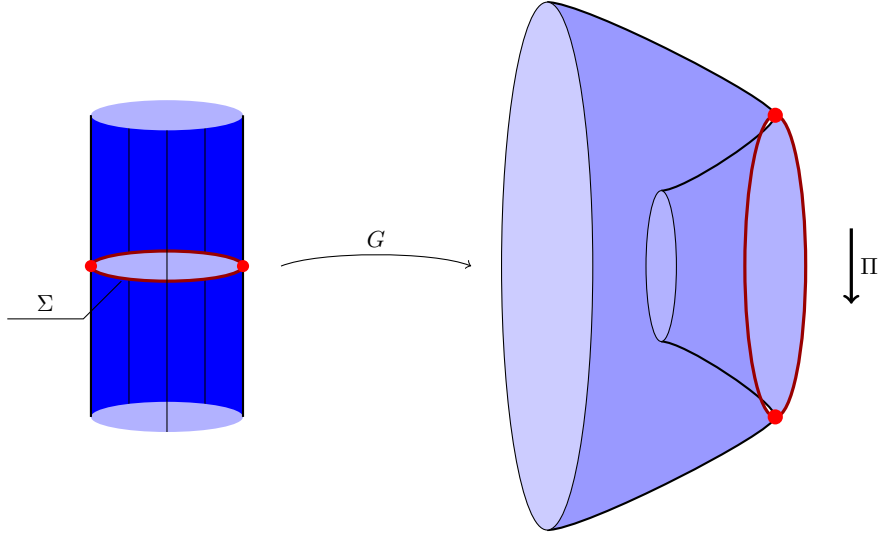}
  \caption{The set $\Sigma$ and its stratification. The thickened points on $\Sigma$ represent the second level of the Thom-Boardman stratification with respect to the map $\Pi\circ G$.}\label{fig:sigma}
\end{figure}

Perturb $G_0$ so that it satisfies the genericity condition spelled out in Lemma~\ref{lem:tangent_generic}. Let $\Sigma\subseteq N$
be the set of points $u$ such that the image of $DG_0(u)$ contains $\nabla F_0(G_0(u))$; see Figure~\ref{fig:sigma}. As critical points of $f_0$ form a finite set, by further perturbing $G_0$, we may and shall assume that
$\Sigma$ misses all critical points of $f_0$.
For each point $u$ in $\Sigma$, there is a unique
vector $\upsilon_u\in T_uN$ such that $DG_0(u)(\upsilon_u)=\nabla F_0$. Hence we obtain a section $\upsilon \colon \Sigma \to TN$ of $TN|_{\Sigma}$.
 We make a trivial observation for future use.

\begin{lemma}\label{lem:positive_multiple}
  If $\upsilon_u$ is parallel to $\eta(u)$,
  then $\upsilon_u$ is not a negative multiple of $\eta(u)$.
\end{lemma}

\begin{proof}
We have $f_0 = F_0 \circ G_0$,  $\upsilon_u \in T_u N$, and $DG_0(\upsilon_u) = \nabla F_0$.
Thus
\begin{equation}\label{eqn:directional-derivatives}
  \partial_{\upsilon} f_0(u) \stackrel{(1)}{=} \partial_{D G_0(\upsilon)} F_0(G_0(u)) \stackrel{(2)}{=} \partial_{\nabla F_0} F_0(G_0(u)) \stackrel{(3)}{=}\|(\nabla F_0)(G_0(u))\|^2 \geq 0.
     \end{equation}
     Here, (1) is functoriality of the differential. The second equality (2) is the definition of $\upsilon_u$. The third equality (3) is the definition of the directional
     derivative as the scalar product with the gradient.
     Hence $\partial_{\upsilon}f_0\ge 0$.
Moreover, $\partial_{\eta}f_0\ge 0$, because the vector field $\eta$ is gradient-like for $f_0$. Notice that we assumed that $\Sigma$ does not pass through any of the critical points. Hence, $\partial_{\eta}f_0$ is actually positive.
  If $\upsilon_u$ is a negative multiple of $\eta$, we must have $\partial_{\upsilon}f_0=\partial_{\eta}f_0=0$, which
  can hold only if $u$ is a critical point of $f_0$. But $u\in\Sigma$ is not a critical point.
\end{proof}

The key property of $\upsilon_u$ is that we can translate the condition on $G_\tau$ being an immersion into a condition involving $\mu$
and $\upsilon$.

\begin{lemma}\label{lem:mu_partial}
  Suppose $u\in\Sigma$. Assume that the function $\mu$ satisfies
  \begin{equation}\label{eq:on_mu}
    \partial_{\upsilon}\mu(u)\ge 0.
  \end{equation}
  Then $G_\tau$ is an immersion near $u$ for all $\tau$.
\end{lemma}

\begin{proof}
  We know that $\dim\ker D(\Pi\circ G_0)(u)=1$ is spanned by $\upsilon_u$. Therefore,
  the kernel of $DG_\tau(u)$ can either be trivial, or it can be a one-dimensional subspace spanned by $\upsilon_u$.
  To prove the lemma, we need to make sure
  that $\upsilon_u$ is not in the kernel of $DG_0(u)$. Since $f_\tau=F_0\circ G_\tau$, if $\upsilon_u\in\ker DG_\tau(u)$, then
  $\partial_{\upsilon} f_\tau(u)=0$. Hence, it is enough to show that \eqref{eq:on_mu} implies
  $\partial_{\upsilon} f_\tau(u)>0$.

  To this end, we use \eqref{eq:dmu}. Differentiating both sides of \eqref{eq:dmu} with respect to $\upsilon$, we obtain
  a formula analogous to \eqref{eq:mega_partial_derivative}.
  \[
	\partial_\upsilon f_\tau(u)
	=(1-\tau\mu)\partial_\upsilon f_0(u)+\tau(R(f_0(u))-f_0(u))\partial_\upsilon\mu+
	\tau\mu R'(f_0(u))\partial_\upsilon f_0(u).
      \]
  Note that $\partial_{\upsilon}f_0$ is positive, since it is $\partial_{\nabla F_0}F_0$ by \eqref{eqn:directional-derivatives}.  An analysis akin to the one following~\eqref{eq:dmu} leads to the statement that $\partial_{\upsilon}\mu(u)\ge 0$ implies that $\partial_{\upsilon}f_\tau>0$.
\end{proof}

In light of Lemma~\ref{lem:mu_partial} (and keeping in mind Corollary~\ref{cor:immersion}), we proceed to the main technical result
of this subsection.

\begin{proposition}\label{prop:mu_is_good}
  There exists a smooth function $\mu\colon f_0^{-1}[a+\varepsilon,b+\varepsilon]\to[0,1]$ satisfying
  items \ref{item:mu_0}, \ref{item:mu_1}, and \ref{item:mu_2} above and such that for any $u\in\Sigma$, $\partial_{\upsilon}\mu(u)\ge 0$
  with equality only at points where $\mu(u)=0,1$.
\end{proposition}

\begin{proof}
One might expect this proof to  start with a function $\mu$
  satisfying items~\ref{item:mu_0}, \ref{item:mu_1}, and~\ref{item:mu_2}, and then improving it to satisfy the statement of Proposition~\ref{prop:mu_is_good}. However, there are technical problems near $\mu^{-1}(\{0,1\})$. To avoid them, we will proceed in a slightly indirect way.

  Let $\kappa>0$ be an integer: we will describe how it is determined at the end of the proof. Choose a family $S_1,S_2,\dots,S_\kappa$
  of open subsets of $f_0^{-1}(a+\varepsilon)$ with the properties that $\ol{S}_1 \subseteq Z_0$, $\ol{S}_{i+1} \subseteq S_i$ and $\cK_-\cap f^{-1}(a+\varepsilon)\subseteq S_\kappa$.  For $i=1,\dots,\kappa$, let $T_i$ be the subset of $f^{-1}[a+\varepsilon(1+\frac{i-1}{4\kappa}),b+\varepsilon(1+\frac{\kappa-i-1}{4\kappa})]$ consisting of the points that are reached from $S_i$ by a trajectory of $\eta$.
  By construction
  \[\ol{T}_{i+1}\subseteq T_i.\]
   We note that here we assume $\eta$ has no critical points other than
  $q_+$ in $f^{-1}[a+\varepsilon,b+\frac54\varepsilon]$, and not just in $f^{-1}[a+\varepsilon,b+\varepsilon]$. But this does not pose any problems to arrange, e.g.\ by decreasing $\varepsilon$.

  By Condition~\ref{cond:on_Z}, no trajectory of $\eta$ starting from $Z$ hits $\Sigma$ before reaching the level set $f^{-1}(a+2\varepsilon)$. This is because of Condition~\ref{cond:on_Z}. We assume that the set $S_1$ is chosen in such a way that no trajectory of $\eta$
  starting from $S_1$ hits $\Sigma$ before reaching the level set $f^{-1}(a+\frac32\varepsilon)$.

  We construct a function $\zeta\colon T_1\to[0,\frac{5}{4}]$ with the following properties:
  \begin{enumerate}[label=($\zeta$-\arabic*)]
    \item $\partial_\eta\zeta>0$ on $T_1$;\label{item:zeta_0}
    \item Suppose $u\in T_1$ is connected to $S_1\setminus S_\kappa$
      by a trajectory of $\eta$. Then $\zeta(u)\in[0,\frac14]$;\label{item:zeta_1}
    \item $\zeta$ is greater or equal to $1$ on $\cK_-\cap f^{-1}[a+2\varepsilon,b+\varepsilon]$.\label{item:zeta_2}
  \end{enumerate}
  There is a correspondence between \ref{item:mu_0} and~\ref{item:zeta_0}, \ref{item:mu_1} and~\ref{item:zeta_1}, as well as
  between \ref{item:mu_2} and~\ref{item:zeta_2}.
  The precise connection will be made precise in due course, at the end of the proof; see   \eqref{eqn:final-defn-of-mu} for the final definition of $\mu$ and the proof, using the conditions \ref{item:zeta_0}, \ref{item:zeta_1}, and \ref{item:zeta_2}, that the conditions \ref{item:mu_0}, \ref{item:mu_1}, and \ref{item:mu_2} are satisfied for this definition (recall that we proved in Lemma~\ref{lemma:mu-exists} that a function $\mu$ satisfying these criteria exists, but we need a refined version to guarantee that $G_\tau$ is an immersion for all $\tau$).

  \begin{lemma}
  There exists $\zeta$ exists satisfying \ref{item:zeta_0}, \ref{item:zeta_1}, and~\ref{item:zeta_2}.
  \end{lemma}

  \begin{proof}[Sketch of proof]
  As these conditions resemble strongly the conditions for $\mu$, the proof that  $\zeta$  exists satisfying these properties is a straightforward
  adaptation of the proof of Lemma~\ref{lemma:mu-exists}; we therefore leave the details to the reader.
\end{proof}

  Our goal is to improve the function $\zeta$ so that $\partial_{\upsilon}\zeta(u)>0$ for all $u\in \Sigma\cap T_\kappa$, and hence since the $T_i$ are nested this proves that $\partial_{\upsilon}\zeta(u)>0$ for  all points $u$ in $\Sigma \cap T_i$, for all $i$.
  We will need the following technical result.

  \begin{lemma}\label{lem:leaving}
    Suppose $X$ is a closed submanifold of $\Sigma$. Suppose $U$ is an open subset of~$X$
    such that
    neither $\upsilon$, nor $\eta$ is tangent to $X$ at any point of $\ol{U}$.
    Then  there exists a regular index triple $V\subseteq N$ $($see Definition~\ref{def:regular_index_triple}$)$ such that $V\cap X=U$ and
      if a trajectory of $\eta$ exits $\ol{V}$ at $v_+$ and then re-enters $\ol{V}$ at $v_-$, then $\zeta(v_-)>\zeta(v_+)$.
  \end{lemma}

  \begin{proof}
    Choose a Riemannian metric on $\Omega$. Let $E$ be the normal bundle for $X$ in $N$.
    Since neither $\upsilon$ nor $\eta$ is tangent to $\ol{U}$, we may and will assume that $\eta$ and $\upsilon$ are orthogonal to $T\ol{U}$.
    As $\upsilon$ is not parallel to $\eta$ either,
    we choose a metric in which $\eta$ is orthogonal to $\upsilon$. In particular, $\eta$ and $\upsilon$
    can be regarded as sections of the normal bundle to $\ol{U}$.
    Let $L$ be the rank~1 subbundle of $E|_{\ol{U}}$ generated by $\eta$.
    Let $L^\perp$ be the orthogonal complement to $L$ in $E|_{\ol{U}}$.
    We have that $\upsilon_u\in L^\perp_u$, a fact which we will not use until
    Lemma~\ref{lem:improving}.
    For $\rho>0$, we let $\wt{D}(\rho)$ denote the disc
    bundle of $L^\perp$ consisting of closed discs of radius $\rho$.
    By compactness of $\ol{U}$, we may and will choose $\rho$ sufficiently small so that $\exp\colon \wt{D}(\rho)\to N$ is a diffeomorphism onto its
    image. Let $\ol{D}(\rho)\subseteq N$ be the image of $\wt{D}(\rho)$ under the $\exp$ map. Clearly, $\ol{D}(\rho)\cap X=\ol{U}$. We define $D(\rho)$ as the interior of $\ol{D}(\rho)$. Note that $\ol{D}(\rho)$ is a codimension one submanifold of $N$.

    We have chosen $\eta$ to be orthogonal to $L^\perp$. This means that $\eta$ is transverse to $\ol{D}(\rho)$ at least at the points in $\ol{U}$. By openness of the transversality condition, we may and will decrease $\rho$ in such a way that $\eta$ is transverse to $\ol{D}(\rho)$ at every point of $\ol{D}(\rho)$.

    We now thicken $\ol{D}(\rho)$ in the direction of $\eta$. Choose $\delta>0$. By compactness of $\ol{D}(\rho)$ we may and will assume that
    any trajectory of $\eta$ connecting two distinct points of $\ol{U}$
    takes at least $3\delta$ to travel.
    Define $\ol{D}(\rho,\delta)$ as the set of points on trajectories of $\eta$ that can be reached from a point $u\in \ol{D}(\rho)$ in time in $[-\delta,\delta]$. With that choice of $\delta$,
    the flow of $\eta$ induces a diffeomorphism $\ol{D}(\rho,\delta)\cong\ol{D}(\rho)\times[-\delta,\delta]$.

    We declare $V$ to be the interior of $\ol{D}(\rho,\delta)$.
    The entry set $\piin V$ is the set of points that reach $\ol{D}(\rho)$ in time $\delta$. The exit set $\pout V$ is the set of points that are reached from $\ol{D}(\rho)$ in time $\delta$.

    Note that if a trajectory of $\eta$ exits $\pout V$ and then hits $\piin V$, it has to travel for time at least $\delta$ (otherwise there would be a trajectory connecting two points of $\ol{D}(\rho)$ in time less than $3\delta$).
    As $\zeta$ increases along the trajectories,
    if a trajectory leaves $\ol{V}$ at $v_+$ and then hits $\ol{V}$ at $v_-$,
    we have $\zeta(v_-)>\zeta(v_+)$.
  \end{proof}

  \begin{remark*}
    By compactness of $N'$, the condition $\partial_\eta\zeta>0$ can quickly lead to a universal constant $C_{\eta\zeta}$ such that $\zeta(v_-)>\zeta(v_+)+C_{\eta\zeta}$.
  \end{remark*}

  The next lemma is the key step in the proof of Proposition~\ref{prop:mu_is_good}.

  \begin{lemma}\label{lem:improving}
    Suppose $X$ is a closed submanifold of $\Sigma$, and let $U$ be an open subset of $X$
    such that
    neither $\upsilon$ nor $\eta$ is tangent to $X$ at any point of $\ol{U}$.
    Let $V$ be as in Lemma~\ref{lem:leaving}.
    Let $\varepsilon>0$ and $U'$ be an open subset of $U$ such that $\ol{U}'\subseteq U$. Set $V'\subseteq V$
    to be the set obtained by thickening $\ol{U}'$ as in the proof of Lemma~\ref{lem:leaving} with the same parameters
    $\rho$ and $\delta$.

    Then $\zeta$ can be altered to a function $\wt{\zeta}$ such that
    \begin{itemize}
      \item $\partial_{\upsilon}\wt{\zeta}(u)>0$ for all $u\in \ol{U'}$;
      \item $|\wt{\zeta}(u)-\zeta(u)|<\varepsilon$;
      \item $\wt{\zeta}(u)=\zeta(u)$ away from $V$.
    \end{itemize}
  \end{lemma}

    \begin{figure}
      \input{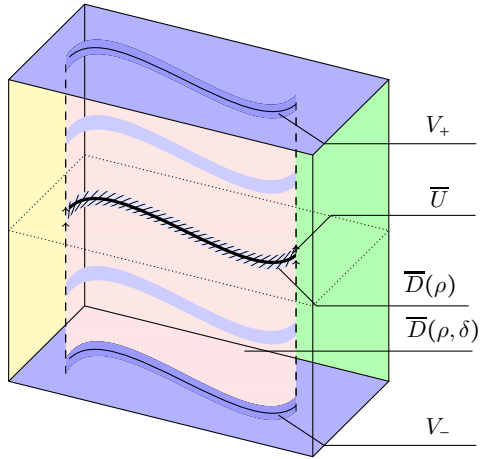}
      \caption{Proof of Lemma~\ref{lem:improving}. Construction of $\ol{D}(\rho)$.}\label{fig:olUp}
    \end{figure}

  \begin{proof}
    In the proof of Lemma~\ref{lem:leaving}, we chose a Riemannian metric
    on $N$.
    We will use the same metric in this proof.
    We begin with yet another thickening of $\ol{U}$, this time
    in the direction of $\upsilon_u$. To this end, we choose $\theta_0>0$ with $\theta_0<\rho$, and we allow ourselves to decrease $\theta_0$ further, if need be.
    Consider the line bundle $E$ over $\ol{U}$ spanned by the vector field $\nu_u$. Note that $E\subseteq L^{\perp}$.
    The disc bundle $DE(\theta_0)$ is associated with the line bundle $E$, and the fibres are assumed to be closed and of radius $\theta_0$. In particular, $DE(\theta_0)$ can be regarded as a subbundle of $\wt{D}(\rho)$.
    The $\exp$ map takes $DE(\theta_0)$ diffeomorphically onto its image. We denote this image by $E_\upsilon(\theta_0)$.

    We need to introduce some notation. A point $z$ in $E_\upsilon(\theta_0)$ is the image of a point $(u,\theta(z))\in DE(\theta_0)$, where $u$ is the base and $\theta(z)\in[-\theta_0,\theta_0]$ is the coordinate in the fibre.
    We also let $E_u\subseteq D_\upsilon(\theta_0)$, for $u\in \ol{U}$,
    be the image of the fibre of $DE(\theta_0)$ over $u$ under the $\exp$ map.

  For a point $v\in \ol{D}(\rho)$, we let $v_-,v_+$ denote the entry point and the exit point, respectively, of the trajectory of $\eta$.
  We choose $\delta$ to be small enough so that
  $\zeta(v_-)>\zeta(v_+)+\varepsilon$.
  In the rest of the proof, we will be assuming that $\eta$ inside $V$ is rescaled in such a way that
  a trajectory of $\eta$
  hitting $v_-$ at $t=0$, hits $\ol{D}(\rho)$ at time $t=1/2$ and $v_+$ at time $t=1$. This procedure is needed only to make the formulae more concise.

  Our aim is to construct a family of orientation preserving self-diffeomorphisms $\lambda_{v_-}$ of $[0,1]$, smoothly depending
  on a parameter $v_-\in \piin V$. We require that $\lambda_{v_-}$ be the identity:
  \begin{itemize}
    \item if $v_-$ belongs to the boundary of $\piin V$, that is, if the trajectory of $\eta$ through $v_-$ hits
      $\partial D_\upsilon(\theta_0)$ before leaving $V$;
    \item if the trajectory from $v_-$ hits $\ol{U}$ before leaving $V$.
  \end{itemize}
  Next, suppose $v_-$ passes through a point $v$ in $E_u$, with $u\in \ol{U}'$. Suppose $\theta(v)<\theta_0/2$.
  We set $\lambda_{v_-}$ to be the quadratic (or linear in the degenerate case) map such that $\lambda_{v_-}(0)=0$,
  $\lambda_{v-}(1)=1$ and the condition on $\lambda_{v_-}(1/2)$ that is going to be specified momentarily.
  For $t\in[0,1]$, we set $\kappa(v_-,t)\in N$ to be point on the trajectory of $\eta$ through $v_-$ at time $t$,
  so that $\kappa(v_-,0)=v_-$ and $\kappa(v_-,1)=v_+$. As the trajectory of $\eta$ hits
  $\ol{D}(\rho)$ at time value $1/2$, we have $\kappa(v_-,1/2)\in\ol{D}(\rho)$. Note that the path $t\mapsto\kappa(v_-,\lambda_{v_-}(t))$
  specifies the same trajectory of $\eta$ as $t\mapsto\kappa(v_-,t)$, but after reparametrisation.

  The last condition specifying $\lambda_{v_-}$ is then
  \begin{equation}\label{eq:zeta_mu}
    \zeta(\kappa(v_-,\lambda_{v_-}(1/2)))=\zeta(\kappa(u_-,1/2))+\theta(v)=\frac12+\theta(v),
  \end{equation}
  where the last equality follows from the second item of the bullet list above.
  \begin{figure}
    \input{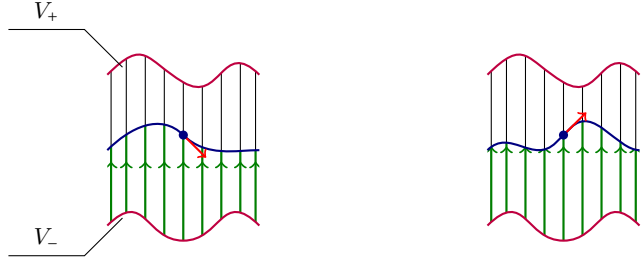}
    \caption{Proof of Lemma~\ref{lem:improving}. The height function is $\zeta$ on the left and $\wt{\zeta}$ on the right. The diffeomorphisms $\lambda$ preserve the trajectories of $\eta$,
    which are represented by vertical lines.}\label{fig:lambda}
  \end{figure}
  The values at $0$, $1/2$, and $1$ specify $\lambda_{v_-}$ uniquely. Furthermore, it is easy to see that a quadratic function $p$ attaining
  values $0$ at $0$ and $1$ at $1$ restricts to a  diffeomorphism of $[0,1]$ if $p(1/2)\in(1/4,1/2)\cup(1/2,3/4)$, where $p(1/2) = 1/2$ is the case corresponding
  to the linear function. To see this, we write the general form of $p$ as $p(x)=\frac{x(x-a)}{1-a}$, with $a\in\R\setminus\{1\}$. Such a function
  is a diffeomorphism of the interval $[0,1]$ if and only if $a\notin[0,2]$.
With $p(1/2)=q$, we have $a=\frac{4q-1}{4q-2}$. We have $a>2$ if $q\in(1/2,3/4)$, and $a<0$ if $q\in(1/4,1/2)$.

  Moreover, $\lambda_{v_-}$ depends smoothly on points $v_-\in\piin V$ such that
  a trajectory through $v_-$ hits $E_\upsilon(\theta_0/2)$, the image under $\exp$ of the subbundle of the disc bundle of $E$
  whose fibres are discs of radius $\theta_0/2$. Note that the condition that $\ol{U}'\subseteq U$ guarantees that
  the definition of $\lambda_{v_-}$ does not contradict the points in the itemized list.

  If $\theta$ is sufficiently close to $0$ (that is, if $\theta_0$ is sufficiently small), then
  \[\smfrac{1}{2}+\theta(v)=\zeta(\kappa(u_-,\lambda_{u_-}(1/2)))+\theta(v)\in(\zeta(\kappa(v_-,0)),\zeta(\kappa(v_-,1))),\]
  If $|\theta(v)|<\frac14$, which can be guaranteed by choosing $\theta_0<\frac14$, then $\lambda_{v_-}$ is indeed a diffeomorphism.

  We have defined $\lambda_{v_-}$ on the boundary of $V_-$ and on all the set of points $v_-$ such that a trajectory of $\eta$ through $v_-$
  hits $E_{\upsilon}(\theta_0)$.
  We extend $\lambda_{v_-}$ smoothly through all points on $V_-$, on which it has not been defined yet. A possible extension is to specify
  a smooth map
  $v_-\mapsto \lambda_{v_-}(1/2)$, taking values in $[1/4,3/4]$ since, as mentioned above, this gives rise to a smooth family
  of diffeomorphisms of $[0,1]$.

  Now for a point $v\in V$ we set
  \[\wt{\zeta}(v)=\zeta(\lambda_{v_-}(t_v)),\]
  where $t_v$ is the time to get from $v_-$ to $v$; see Figure~\ref{fig:lambda}. With this definition, by \eqref{eq:zeta_mu}, for $v\in L_u$
  we have $\wt{\zeta}(v)=\wt{\zeta}(u)+\theta_u$. In particular, $\partial_{\upsilon}\wt{\zeta}(u)=1$.
  Furthermore, as $\zeta(v_-)>\zeta(v_+)-\varepsilon$, and $\zeta(v_-)<\zeta(v),\wt{\zeta}(v)<\zeta(v_+)$,
  we obtain that $|\wt{\zeta}(u)-\zeta(u)|<\varepsilon$ as well.
  \end{proof}


  We continue proving Proposition~\ref{prop:mu_is_good}.
  First, consider the map $\Pi\circ G_0\colon T_1\to Y$ (here $\Pi$ is the projection along $\nabla F_0$). As $\dim\ker D\Pi=1$,
  there is a Thom-Boardman stratification of $\Pi\circ G_0$; see Subsection~\ref{sub:thom_boardman}.  We denote
  by $\Sigma_m$ the part of the stratum $\beta^{\mathbf{m}} \Pi$ of this map contained in $T_1$. This is a smooth manifold with boundary. 
  There is an index $m_0$ such that $\Sigma_{m_0}$ is nonempty and all subsequent $\Sigma_i$ are empty
  (there is a universal bound of $m_0$ in terms of the dimensions $n$ and $k$, but we do not need it).

  By definition,
  $\Pi\circ G_0$ restricted to any $\Sigma_{m}\setminus\Sigma_{m+1}$ is an immersion. That is, $\upsilon_u$
  is not tangent to $\Sigma_{m}\setminus\Sigma_{m+1}$. However, $\eta$ might be tangent to $\Sigma_{m}\setminus\Sigma_{m+1}$.
  Therefore, consider the second projection $\Pi_\eta\colon N'\to f_0^{-1}(a+\varepsilon)$ along the trajectories of $\eta$.
  Let $\Sigma_{m,s}$ stand for the $s$th Thom--Boardman stratum $\beta^\mathbf{s}\Pi_\eta|_{\Sigma_{m}\setminus\Sigma_{m+1}}$.
  By Lemma~\ref{lem:TBP},
  $\Sigma_{m,s}$ is a smooth manifold. 
   Again, for any $m$, there is
  $s_m \in \mathbb{N}$ such that $\Sigma_{m,s_m}$ is nonempty and $\Sigma_{m,i}$ is empty for all $i>s_m$.  We can now finally define
  \[\kappa=1+\sum_m s_m.\]
  We proceed to prove that $\partial_\upsilon\zeta(u)>0$ by induction on $i$. Consider $\Sigma_{m_0,s_{m_0}}$. This is a closed submanifold
  of $T_1$. Let $R\subseteq\Sigma_{m_0,s_{m_0}}$ be the set of points for which
  $\eta$ is parallel to $\upsilon_u$. By Lemma~\ref{lem:positive_multiple},
  on $R$, $\eta$ is a positive
  multiple of $\upsilon_u$. 
  In particular, there is a neighbourhood $U_R$ of $R$
  such that $\partial_{\upsilon}\zeta>0$ everywhere on $\ol{U}_R$. We consider
  $\ol{U}:=\Sigma_{m_0,s_{m_0}}\setminus\Int U_R$, letting $U$ be the interior of $\ol{U}$. We let $U'=U\cap T_2$
  and $\varepsilon<\frac{1}{4\kappa}$.
  \begin{figure}
    \input{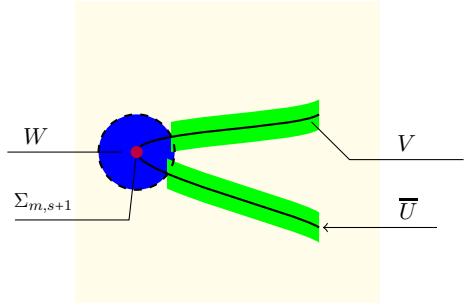}
    \caption{Proof of Proposition~\ref{prop:mu_is_good}. Constructing $\ol{U}$.}\label{fig:eta}
  \end{figure}
  Using Lemma~\ref{lem:improving} we improve $\zeta$ on $U$ in such a way that
  $\partial_\upsilon\zeta(u)>0$ for all $u\in \ol{U}'$. This means that $\partial_{\upsilon}\zeta(u)>0$
  everywhere on $\Sigma_{m_0,s_{m_0}}\cap \ol{T}_2$.

  We proceed with the induction step, inducting on $i$ in $T_i$. Suppose that $\partial_{\upsilon}\zeta(u)>0$
  for $u\in \Sigma_{m,s+1}\cap \ol{T_i}$. We aim to improve $\zeta$ so that this also holds on $\Sigma_{m,s}\cap \ol{T}_{i+1}$.
  The procedure is analogous to the first step.
  Let $R$ be the subset of $\Sigma_{m,s}$ such that $\eta$ is parallel to $\upsilon$. For $u$ in the compact set
  $(R\cup\Sigma_{m,s+1})\cap \ol{T_i}$, we already know that $\partial_{\upsilon}\zeta(u)>0$. In fact, on $R\cap \ol{T_i}$ this follows from Lemma~\ref{lem:positive_multiple},
  while on $\Sigma_{m,s+1}\cap\ol{T_i}$ this is the induction assumption. There exists an open subset $W\subseteq \Sigma_{m,s}$
  containing $R\cup\Sigma_{m,s+1}$ such that $\partial_{\upsilon}\zeta(u)>0$ on the whole of $\ol{W}$.
  We set $U=\Sigma_{m,s}\setminus W$ and $U'=U\cap T_{i+1}$; see Figure~\ref{fig:eta}.
  We apply Lemma~\ref{lem:improving} to obtain that $\partial_{\upsilon}\zeta(u)>0$ for $u \in \ol{U'}$. This means
  that $\partial_{\upsilon}\zeta(u)>0$ on $\ol{W}\cup\ol{U'}$ which contains $\Sigma_{m,s}\cap\ol{T_{i+1}}$.
  This completes the induction step.

  The inductive argument proves the statement until we reach $\Sigma_{m,0}$, that is, the statement
  holds for the whole of $\Sigma_m$. We declare $\Sigma_{m-1,s_m+1}$ to be $\Sigma_m$,
  and we proceed with induction.
  The procedure continues until we conclude that $\partial_\upsilon\zeta>0$ everywhere
  on $\Sigma\cap T_{\kappa}$.

  The final function $\zeta$ has been constructed. Now, as the final step in the proof, we construct the function $\mu$ using the function $\zeta$. We then show that $\mu$ satisfies the required conditions.
  Recall from Lemma~\ref{lemma:mu-exists} that constructing a function satisfying \ref{item:mu_0}, \ref{item:mu_1}, and \ref{item:mu_2} was relatively straightforward. The key here is that we also need $\partial_{\upsilon} \mu(u) \geq 0$.

  To this end, choose a smooth function $\chi\colon[0,\frac54]\to[0,1]$ with the following properties.
  \begin{enumerate}[label=(CH-\arabic*)]
    \item $\chi$ maps $[0,\frac12]$ to $0$;\label{item:CH1}
    \item $\chi$ maps $[\frac34,\frac54]$ to $1$;\label{item:CH2}
    \item $\chi'(x)>0$ for $x\in(\frac12,\frac34)$.\label{item:CH3}
  \end{enumerate}
  We set
  \begin{equation}\label{eqn:final-defn-of-mu}
    \mu(u)=\chi(\zeta(u)).
  \end{equation}
  To prove \ref{item:mu_0} we use the property of \ref{item:zeta_0} together with \ref{item:CH3}. The chain rule yields that
  $\partial_\eta\mu>0$ except at the locus where $\chi=\{0,1\}$. At these latter points,  $\partial_\eta\mu=0$.

  The same argument applies
  for the derivative along $\upsilon_u$, proving the last property for $\mu$ in the statement of Proposition~\ref{prop:mu_is_good}.

  Next, note that subsequent applications of Lemma~\ref{lem:improving} will not increase the value of $\zeta$
  by more than $\frac14$ (we apply this lemma $\kappa$ times, and we choose $\varepsilon<\frac{1}{4\kappa}$).
  It follows from \ref{item:zeta_1} that $\zeta$ is less than $\frac12$
  at all points that are connected with $S_1\setminus S_\kappa$
  by a trajectory of $\eta$. Therefore, $\mu=0$ on that set by \ref{item:CH1}. Hence we can extend $\mu$ to the whole of $f^{-1}(a+\varepsilon,b+\varepsilon)$
  by declaring it to be zero away from $S_1$. This proves \ref{item:mu_1} and the second part of~\ref{item:mu_2}.
  The fact~\ref{item:zeta_2} that initially $\zeta\ge 1$ on $\cK_-\cap f^{-1}[a+2\varepsilon,b+\varepsilon]$ implies that after all the alterations we have
  $\zeta\ge\frac34$ on that set. Hence by \ref{item:CH2}, we have that $\chi\circ\zeta=1$, proving \ref{item:mu_2}.  We have proven that \ref{item:mu_0}, \ref{item:mu_1}, and \ref{item:mu_2} hold. Also, we argued that  $\partial_{\upsilon}(\mu)\ge 0$. So $\mu$ satisfies all the required conditions.
\end{proof}

\subsection{Enforcing condition~\ref{item:LR2}: arranging~\ref{item:ir_disjoint_image}}\label{sub:LR2_gen}

We have shown that for any $\nabla F_0$, the function $\mu$ can be changed in such a way that $G_\tau$ is a regular homotopy.
Now, we will alter $\nabla F_0$ to make sure that item~\ref{item:ir_disjoint_image} of Addendum~\ref{lem:lift_rearr_immersion} is satisfied. Recall that this says that
if $N=N_1\sqcup\dots\sqcup N_\ell$ and the images $N_i$ and $N_j$ $($for all $i\neq j)$ under $G_0$ are disjoint, then $G_\tau(N_i)\cap G_\tau(N_j)=\emptyset$ for all $\tau$ whenever $i\neq j$.

While pushing a point on $G_0(N)$ via the lift map $P$ of Subsection~\ref{sub:LR2_move} we might create new double points. In fact,
we create a self-intersection of $G_\tau$ each time that $P(G_0(u),\tau\mu(u))$ hits another component of $G_0(N)$. In particular,
self-intersections are created only among points that are identified under the map $\Pi$. We will use this principle to prove the next result.

\begin{proposition}\label{prop:disjoint}
  Suppose $N=N_1\sqcup\dots\sqcup N_\ell$ and $G_0(N_1),\dots,G_0(N_\ell)$ are pairwise disjoint. Then there exists
  a vector field $\nabla F_0$ such that $G_\tau(N_1),\dots,G_\tau(N_\ell)$ are pairwise disjoint for every $\tau\in[0,1]$.
\end{proposition}

\begin{proof}
  Choose a grim vector field $\xi$ for $F_0$.
  It is usually not generic, in the sense that the map $\Pi$ induced by $\xi$
  is not Thom-Boardman (see Definition~\ref{def:TB}), in the sense that the Thom-Boardman strata are not smooth of the expected dimension. However, $\xi$ has the following important property.

  \begin{lemma}\label{lem:no_connection}
    No two points $z_i,z_j$ with $z_i \in G_0(N_i)\cap\Omega'$ and $z_j\in G_0(N_j)\cap\Omega'$, $i\neq j$, are connected by a trajectory of $\xi$.
  \end{lemma}

Recall that $\O' = F_0^{-1}(a-\varepsilon,b+\varepsilon)$ as in \eqref{eqn:defn-some-submanifolds}.

  \begin{proof}
    The result relies on property~\ref{item:LR1}. If $z_i$ and $z_j$ are in distinct connected components of $G_0(N)$, the only trajectory of $\xi$ that could possibly connect them has to lie in the zeroth stratum. As $z_i,z_j$ are both on the first stratum or deeper, the only possibility
    that they can be connected by a trajectory in a shallower stratum is that both $z_i$ and $z_j$ are critical points of $F_0$ (this follows from the definition of the grim vector field; see Definition~\ref{def:grim}).

   However,  the fourth item of Lemma~\ref{lem:moving} implies that all critical points of $F_0$ in $\O'$ belong to the same component $G_0(N_i)$.
    This means that even if there is a trajectory in the zeroth stratum, it connects only critical points on the same connected component.
  \end{proof}

  \begin{remark*}
    We could also argue that there are no trajectories on the zeroth stratum for dimensional reasons. However, the present argument
    will also be used in proving an analogous result when lifting paths of death, where a dimension counting argument does not work.
  \end{remark*}

Now suppose that $p_1,\dots,p_r$ are critical points of $F_0$ in $\Omega'$ ($p_-$ and $p_+$ being in this set). The critical point $p_i$  is assumed to belong to $G_0(N_{j_i})$. 
  Choose balls $B_1,\dots,B_r$ around these points.

  \begin{lemma}\label{lem:no_connection_2}
    For sufficiently small balls $B_1,\dots,B_r$, there is no trajectory of $\xi$ that passes through $B_i$ and a component $G_0(N_j)$ with $j\neq j_i$. Also,
    there are no trajectories that connect two balls $B_i$ and $B_{i'}$ whenever $j_i\neq j_{i'}$.
  \end{lemma}

  \begin{proof}[Sketch of proof]
    If for every choice of balls there were a trajectory violating one of these conditions, on taking smaller and smaller balls, and passing to a limit we would conclude that
    there is a broken trajectory between two different components of $G_0(N)$. But such a broken trajectory would be composed
    of actual trajectories connecting components of $G_0(N)$ (no such trajectory can terminate at the zeroth stratum, because there are no critical points at depth $0$). Therefore, there would be at least one unbroken trajectory connecting components of $G_0(N)$, contradicting Lemma~\ref{lem:no_connection}.
  \end{proof}

  Now modify the vector field $\xi$ inside $B_1,\dots,B_p$ so that $\xi$ has no critical points in $\Omega'$.
  That is,  replace the coordinate of $y_{11}$ in \eqref{eq:localgrim} from $\sum y_{1j}^2$ with $\phi+(1-\phi)\sum y_{1j}^2$,
  where $\phi$ is a bump function supported at $B_-$ and equal to $1$ at $p_-$ (respectively supported on $B_+$ and equal to $1$
  at $p_+$). Let $\xi'$ be the replaced vector field. It clearly satisfies the statement of Lemma~\ref{lem:no_connection_2},
  so it also satisfies the statement of Lemma~\ref{lem:no_connection}.

  Now note that the condition in the statement of Lemma~\ref{lem:no_connection} is open. Hence, we may perturb $\xi'$ to $\xi''$,
  in such a way that $\xi''$ satisfies the genericity condition of Lemma~\ref{lem:generic_Pi} and such that no trajectory of $\xi$
  connects two different components of $G_0(N)$. Now $\xi''$ is gradient-like for $F_0$ in $\Omega'$.
  We choose a metric in such a way that $\nabla F_0=\xi''$ (compare Theorem~\ref{thm:grim_exist}).
\end{proof}

By Proposition~\ref{prop:disjoint}, the previous proof with this choice of $\nabla F_0$ gives the desired statement, with the addition that \ref{item:ir_disjoint_image} holds: if $G_0(N_i) \cap G_0(N_j)$ for $i \neq j$, then no new intersections are created between the different components.

\subsection{Condition~\ref{item:LR2} for small dimensions: arranging \ref{item:ir3}}\label{sub:lowdim}
In this subsection,
we focus on the case $k=2$. The first lemma holds for $k > 2$ as well, however, so we state it in general.

\begin{lemma}\label{lem:automatic}
Let $k \geq 2$.
  Suppose $h_->n-k$ or $h_+<k$. Then, for generic $G_0$, Condition~\ref{item:LR2} is automatically satisfied.
\end{lemma}

\begin{proof}
  If $h_->n-k$, the manifold $\cK_-$ has dimension strictly less than $k$. The set of points in $N$ that are mapped to the second stratum
  by $G_0$ has codimension $k$, so if $h_->n-k$, this set is generically avoided by $\cK_-$.
  If $h_+<k$, then $\dim \cK_+<k$, and we repeat the argument.
\end{proof}

\begin{corollary} Suppose $k=2$ and $n \in \{1,2,3\}$. Then condition~\ref{item:LR2} is satisfied for generic $G_0$.
\end{corollary}
\begin{proof}
  Suppose $n=3$. The only situation when $\dim\cK_+\ge k$ is that $h_+=2,3$. In both cases, we know that $h_-\ge 2,3$, because $h_-\ge h_+$.
  Then $h_->n-k=1$, so we apply~Lemma~\ref{lem:automatic}. The argument for $n=1,2$ is analogous.
\end{proof}

\begin{remark*}
  If $n=4$, Condition~\ref{item:LR2} is satisfied for generic $G_0$ unless $h_-=h_+=2$. Indeed, for $n=4$, $\dim\cK_+\ge 2$, $\dim\cK_-\le 2$ and $\dim\cK_+=\dim\cK_-$ can hold only when $h_-=h_+$.
\end{remark*}

\subsection{Proof of Lifting Rearrangement Lemma~\ref{lem:lift_rearr}}\label{sub:summary_rearr}
We are now in position to prove Lifting of Rearrangement Lemma without assuming \ref{item:LR1} and~\ref{item:LR2} at the beginning.
The proof consists of improving the initial pair $(F_0,G_0)$ so as to assert \ref{item:LR2}. Then, we lift rearrangement. The key
difficulty we address in this subsection is to show that the path $(F_\tau,G_\tau)$ we construct is indeed a weak lift, that is $F_\tau\circ G_\tau$ is left-homotopic to the original path. This relies on patiently controlling each step improving $(F_0,G_0)$ to satisfy
conditions~\ref{item:LR1} and then~\ref{item:LR2}. A schematic of the proof is given in Figure~\ref{fig:concat_rearr}.
\begin{figure}
  \input{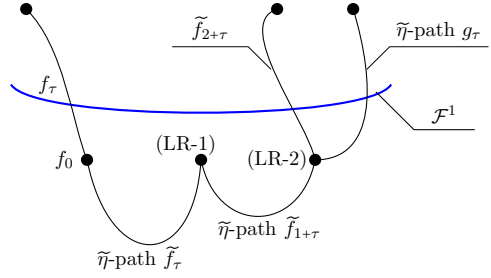}
  \caption{Paths created in the proof of Lifting Rearrangment Lemma~\ref{lem:lift_rearr}.}\label{fig:concat_rearr}
\end{figure}

Consider an immersed Morse function $F_0 \colon \O \to \R$ and a vector field $\eta$ on $N$ that is gradient-like for $f_0=F_0\circ G_0$.
We denote  the original $\eta$-path of rearrangement by $f_\tau$.

At first, we guarantee condition~\ref{item:LR1}.
Lemma~\ref{lem:moving} creates a double path $(F_\tau,G_\tau)$, $\tau\in[0,1]$,
such that $F_\tau$ is a $\xi$-path of rearrangement, $F_\tau$ is supported
away from $\cL_-\cup\cL_+$. Set $\wt{f}_\tau:=F_\tau\circ G_\tau$. Note that $F_\tau$ is not moving any critical points at depth $1$,
hence $\wt{f}_\tau$ is actually a path of excellent Morse functions.
As $\xi$ was constructed independently from $\eta$, we cannot expect that $\eta$ is gradient-like for $\wt{f}_1$.
However, it is so near $\cK_-\cup\cK_+$, which is enough for our purpose, as we now explain.
\begin{lemma}\label{lem:extend_eta}
  There is a vector field $\wt{\eta}$ on $N$ that agrees with $\eta$ near $\cK_-$ and $\cK_+$ and such that $\wt{f}_\tau$
  is a $\wt{\eta}$-path of excellent Morse functions.
\end{lemma}
\begin{proof}
  Choose $U_1,U_2\subseteq N$ to be two open subsets with the property that $U_1\supset\ol{U_2}\supset(\cK_-\cup\cK_+)$ and such that $\wt{f}_\tau$
is supported on $N\setminus U_1$. This is possible because Lemma~\ref{lem:moving} uses only safe rearrangements. Shrink $U_1$
so that $U_1$ contains only $q_-,q_+$ as critical points of $f_0$.

Let $\phi$ be a bump function equal to $1$ on $U_2$ and supported on $U_1$. Let $\eta_\xi$ be the pull-back of $\xi$ via $G_1$, as in Subsection~\ref{sub:pull_back}. Then  $\eta_\xi$ is a gradient-like vector field for $g_\tau$. Also, restricted to $U_1$,
$\eta$ is gradient-like vector field for $\wt{f}_\tau$, because ${\wt{f}_\tau}|_{U_1}={\wt{f}_0}|_{U_1}$. Set $\wt{\eta}=\phi\eta+(1-\phi)\eta_{\xi}$. Then  $\wt{\eta}$ is gradient-like for $\wt{f}_1$ and agrees with $\eta$ on $U_2$.
\end{proof}

Next, Proposition~\ref{prop:create_isotopy} applied to $(F_1,G_1,\wt{\eta})$ creates another double path, which we now denote $(F_{1+\tau},G_{1+\tau})$, $\tau\in[0,1]$,
such that $\wt{f}_{1+\tau}:=F_{1+\tau}\circ G_{1+\tau}$ is a path of excellent Morse functions, $\wt{\eta}$ is gradient-like
for all $\wt{f}_{1+\tau}$, and $\wt{f}_{1+\tau}$ does not depend on $\tau$ near $\cK_-\cup\cK_+$. The main outcome of Proposition~\ref{prop:create_isotopy}
is that $G_2$ satisfies \ref{item:LR2}.

Let $\wt{f}_{2+\tau}$ be the $\wt{\eta}$-path of rearrangement starting from $\wt{f}_2=F_2\circ G_2$.
The pair of functions $(F_2,G_2)$ satisfies~\ref{item:LR2}. Consider the $\wt{\eta}$-path $g_{\tau}$, $\tau\in[0,1]$
that starts at $g_0:=\wt{f}_2$ and lifts the critical point $q_-$ above the critical point $q_+$. By Lemma~\ref{lem:extra_cond},
the path can be weakly lifted.  That is, there exists a double path $(F_{2+\tau},G_{2+\tau})$, $\tau\in[0,1]$, such that $\wt{f}_{2+\tau}:=F_{2+\tau}\circ G_{2+\tau}$ is weakly homotopic to $g_{2+\tau}$.
\begin{lemma}\label{lem:gtau_homo}
  The path $\wt{f}_{2+\tau}$ is lax homotopic to $f_\tau$ over $\wt{f}_\tau$, $\tau\in[0,2]$.
\end{lemma}
\begin{proof}
  As $\wt{f}_{2+\tau}$ is left-homotopic to $g_{\tau}$, it is enough to show that $g_\tau$ is lax homotopic to $f_\tau$ over $\wt{f}_\tau$.
  This is precisely the statement of the lemma. We have set $h_{\sigma,0}=\wt{f}_{2\sigma,0}$ is a path of excellent Morse functions
  supported away from a neighbourhood of $\cK_-$ and $\cK_+$. The path $f_\tau$ is an $\eta$-path, while the path $g_\tau$ is a $\wt{\eta}$-path, and the two vector fields agree in a neighbourhood of $\cK_-\cup\cK_+$.
\end{proof}
\begin{corollary}\label{cor:gtau_homo}
  The path $(F_\tau,G_\tau)$, $\tau\in[0,3]$ can be promoted to a weak lift of $f_\tau$.
\end{corollary}
\begin{proof}
  Lemma~\ref{lem:gtau_homo} implies that the paths $(F_\tau,G_\tau)$, $\tau\in[0,3]$ satisfy the assumptions of Lemma~\ref{lem:lax_to_lift}.
\end{proof}

To conclude the proof of Lifting Rearrangment Lemma~\ref{lem:lift_rearr}, we need to show that $G_\tau$
has the properties stated in Addendum~\ref{lem:lift_rearr_immersion}.
This is done with a case-by-case analysis.
\begin{itemize}
  \item If $G_0$ is an embedding, then~\ref{item:LR2} is always satisfied. Therefore, we do not create self-intersections by
    changing $G_0$ in such a way that \ref{item:ir_embedding} holds; 
  \item If $N$ is a disjoint union $N_1\sqcup\dots\sqcup N_\ell$ with $G_0(N_i)$ and $G_0(N_j)$  disjoint for each $i \neq j$, then
    we use the methods from Subsection~\ref{sub:LR2_gen} to choose an appropriate vector field $\xi$ so that condition~\ref{item:ir_disjoint_image} is guaranteed.
  \item If $k=2$ and $\dim N \leq 3$, then we use Subsection~\ref{sub:lowdim} to show that~\ref{item:LR2} is automatically satisfied.
\end{itemize}

We have constructed a weak lift of the rearrangement by a regular double path, that does not introduce critical points on the zeroth or first stratum, and we have shown that the conditions in Addendum~\ref{lem:lift_rearr_immersion} can be arranged, when appropriate.  This completes the proof of Lemma~\ref{lem:lift_rearr}.

\section{Lifting paths of death}\label{sec:lifting-paths-of-death}

We now pass to the most important, and technically most difficult question, namely lifting paths of death.
In the proof of Lemma~\ref{lem:lift_death}, we are going to use the finger move, described in detail in Part~\ref{part:finger}.

\begin{lemma}[Lifting paths of death]\label{lem:lift_death}
  Let $G\colon N\looparrowright \O$ be a generic immersion with $\dim N=n$, $\dim\O=n+k$, and $k\ge 2$.
  Suppose $F\colon\O\to\R$ is an immersed Morse function and $f_\tau\colon N\to\R$, for $\tau\in[0,1]$,  is an
  elementary path of death such that $f_0=F\circ G$.

  Then there exists a regular double path $(F_\tau,G_\tau)$ that weakly lifts $f_\tau$.

  Suppose moreover that the level sets of the critical points that are cancelled are $a,b$.
  The path $F_\tau$ is supported on $F_0^{-1}[a-\varepsilon,b+\varepsilon]$, while $G_\tau$ is supported on $f_0^{-1}[a-\varepsilon,b+\varepsilon]$, where $\varepsilon>0$ can be chosen as small as we please. In particular, if $f_\tau$ is neat, $G_0$ is neat and $F_0$ is neat,
  then $(F_\tau,G_\tau)$ is neat.
\end{lemma}
As in the case of the Lifting Paths of Rearrangement Lemma~\ref{lem:lift_rearr}, we give the follow-up result, which specifies additional
properties of $G_\tau$ under certain conditions;

\begin{addendum}\label{lem:lift_death_iso}
  The map $G_\tau$ has the following properties.
  \begin{itemize}
    \item If $k>2$, and $G_0$ is an embedding, then $G_\tau$ is an embedding for all $\tau$.
    \item If $k=2$, and $G_0$ is an embedding, then any new self-intersections are only added  via a finite number of finger moves.
    \item If $k=2$ and $n \in \{1,2\}$, then any new self-intersections are only added via a finite number of finger moves.
        If $n=1$ self-intersections only occur at finitely many values of $\tau \in (0,1)$.
    \item If $N=N_1\sqcup\dots\sqcup N_\ell$ and $G_0(N_1),\dots,G_0(N_\ell)$ are pairwise disjoint, then for all $\tau \in [0,1]$, we also have that $G_\tau(N_1),\dots,G_\tau(N_\ell)$  are pairwise disjoint.
  \end{itemize}
\end{addendum}

The proof of Lemma~\ref{lem:lift_death} occupies the whole of Section~\ref{sec:lifting-paths-of-death}, except for the finger move, which
occupies the whole of  Part~\ref{part:finger}.
Schematically, the proof is similar to the proof of the Lifting Paths of Rearrangement Lemma~\ref{lem:lift_rearr}. First we set up notation,
then we prove Lemma~\ref{lem:lift_death} under the conditions \ref{item:LD1}--\ref{item:LD4} specified
in Subsection~\ref{sub:lift_death_extra}. The next subsections, \ref{sub:LD1}, \ref{sub:LD0}, \ref{sub:LD2}, \ref{sub:LD34}, and~\ref{sub:finger_mover}, show how to arrange the function $F_0$ and the embedding $G_0$ in such a way that the conditions~\ref{item:LD1}--\ref{item:LD4}
(from Subsection~\ref{sub:lift_death_extra}) are satisfied. Finally, in Subsection~\ref{sub:summary_death},
we give a precise summary of all steps in the proof of Lemma~\ref{lem:lift_death}.

\subsection{Notation of the proof of Lemma~\ref{lem:lift_death}}

We let $q_-,q_+$ denote the critical points of $f$ that are cancelled along $f_\tau$. The indices are $h_-$ and $h_+$ ($h_+=h_-+1$).
  By Assertion~\ref{ars:two},
  it is enough to consider $f_\tau$ that is an elementary path of death from Definition~\ref{def:el_path_death}.
  We choose a double neighbourhood for $f$ and we let $\eta$ be a gradient-like vector field for $f$ such
  that this neighbourhood is adapted to the pair of good caps $(D_\eta,A_\eta)$.  (It may help to refer back to Section~\ref{sec:path_of_death} for the terminology.)

 Let $\gamma$ be the unique trajectory of $\eta$ connecting $q_-$ and $q_+$.
 As $\dim\gamma=1$ and the union of strata $\bigcup_{d \geq 2} \O[d]$ is of codimension~2, we may perturb $G_0$ to a generic immersion
 such that $G_0(\gamma)$ is disjoint from the second stratum and all higher strata. Note that a perturbation can be realised as a path
 $G_\tau$ of $F_0$-regular generic immersions such that $F_0\circ G_\tau$ is a path of an excellent Morse functions.
 Hence, from now on, we will be assuming that $G_0(\gamma)$ is disjoint from $\bigcup_{d \geq 2} \O[d]$.

  Set $a=f(q_-)$, $b=f(q_+)$.
  Choose a neighbourhood $U_0$ of $\gamma$
  such that $G_0$ maps $U_0$ to the first stratum. Write  $\cK_- \subseteq N$, respectively $\cK_+ \subseteq N$ for the unstable manifold of $q_-$, respectively for the stable manifold of $q_+$. We choose $\varepsilon>0$ such
  that $\cK_-\cap f^{-1}[a,a+\varepsilon]$ and $\cK_+\cap f^{-1}[b-\varepsilon,b]$ are contained in $U_0$.
  Note that $\gamma \subseteq \cK_- \cap \cK_+ \cap f^{-1}[a,b]$.
  Define $\cL_-=G_0(\cK_-)$,
  $\cL_+=G_0(\cK_+)$. As usual, we set $p_+=G_0(q_+)$ and $p_-=G_0(q_-)$.

  \subsection{Lifting paths of death under extra conditions}\label{sub:lift_death_extra}

  We will structure the proof using the following conditions on $\eta$, $f$, and $G_0$.
  \begin{enumerate}[label=(LD-\arabic*)]
    \item\label{item:LD1} There are no critical points of $F$ at depth $0$ with critical values in $[a-\varepsilon,b+\varepsilon]$;
    \item\label{item:LD0} The unique trajectory $\gamma$ on $N$, between $q_-$ and $q_+$, is mapped to the first stratum;
    \item\label{item:LD2} The subsets $\cK_-$ and $\cK_+$ of $N$ are mapped to the first stratum of $G_0(N)$;
    \item\label{item:LD3} There exists a grim vector field $\xi$ for $F$ such that the pull-back $\wt{\eta}$ of $\xi$ via $G_0$
      gives rise to a path of death that is left-homotopic to a path of death constructed with $\eta$,
      and moreover there are no critical points of $F$ in $F^{-1}[a-\varepsilon,b+\varepsilon]$ other than $p_-$ and $p_+$;
    \item\label{item:LD4} There are no trajectories of $\xi$ from $p_-$ to $p_+$ other than the image of the trajectory $\gamma$.
  \end{enumerate}

  \begin{remark*}
    \ref{item:LD1} and \ref{item:LD2} are the analogues of \ref{item:LR1} and \ref{item:LR2} respectively; however \ref{item:LD2} requires
    that \emph{both} $\cK_-$ and $\cK_+$ are mapped to the first stratum, whereas \ref{item:LR2} required only one of them.

    A condition on the lack of other critical points appears twice, in \ref{item:LD1} and in \ref{item:LD3}.
    First, before we construct an appropriate
    vector field, we are allowed to perform only safe rearrangements (see the discussion in the proof of Lemma~\ref{lem:dim_asc}). The lack of critical points at depth $0$
    is important in ensuring~\ref{item:LD2}. Once we have a vector field extending $\eta$, we have more control when we  perform rearrangements, and this enables us to
    move all critical points away from between $p_-$ and $p_+$.
    Arranging for \ref{item:LD4} to hold is rather difficult: it will require the finger move, which takes the whole of Part~\ref{part:finger} to construct.
    Removing the other critical points and arranging that there are no trajectories other than $\gamma$ are both needed to apply Cancellation Theorem~\ref{thm:grimcanc}.
  \end{remark*}

  \begin{lemma}[Conditional lifting of paths of death]\label{lem:cond_lift_death}
    Suppose conditions \ref{item:LD3} and \ref{item:LD4} are satisfied. Then there exists a path $F_\tau$ such that $(F_\tau,G_0)$  weakly lifts $f_\tau$.
  \end{lemma}

  \begin{proof}
    The Cancellation Theorem~\ref{thm:grimcanc} uses a vector field $\xi$ to create a regular double path $(F_\tau,G_\tau)$, $\tau\in[0,2]$ that cancels the pair of critical points  $p_-$ and $p_+$. This is possible since \ref{item:LD4} is satisfied.
    Then the composition $F_\tau\circ G_\tau$ is an $\wt{\eta}$ path of death.
    By \ref{item:LD3}, it is left-homotopic to the original path of death constructed with $\eta$. By Corollary~\ref{cor:left_homotopy},
    the path $(F_\tau,G_\tau)$ can be promoted to a weak lift of the original $\eta$-path.
  \end{proof}

  \begin{remark*}
    In the proof of Lemma~\ref{lem:cond_lift_death} we did not use properties~\ref{item:LD1}, \ref{item:LD0}, and~\ref{item:LD2}. However these properties will
    be needed to construct a vector field $\xi$ satisfying~\ref{item:LD3} and~\ref{item:LD4}.
  \end{remark*}

  \subsection{Removing critical points from $[a-\varepsilon,b+\varepsilon]$}\label{sub:LD1}
  In this section we arrange that condition~\ref{item:LD1} holds. The procedure uses the rearrangement theorem and the approach is similar to  the approach of Subsection~\ref{sub:LR1}. In particular, we might need to replace the original vector field $\eta$ by another gradient-like vector field, $\wt{\eta}$, which agrees with the original one near $\cK_-\cup\cK_+$.

  \begin{lemma}\label{lem:rearrange_while_keeping}
    There exists a path of Morse functions $F_\tau\colon\O\to\R$,
    such that $F_\tau=F_0$ on $U_0$, $F_1$ has no critical
    points in $[a-\varepsilon,b+\varepsilon]$ at depth $0$ and
    $F_\tau\circ G_0$ is a path of Morse functions on $N$ with constant
    critical values, and such that the support of $F_\tau\circ G_0$ is disjoint from $\cK_-\cup\cK_+$.
  \end{lemma}
  \begin{proof}
    The proof follows the argument of Lemma~\ref{lem:moving} and discovers what are the depths and indices of the critical points that can and cannot be moved out of the region $F_0^{-1}[a-\varepsilon,b+\varepsilon]$. 
    The only difference with Lemma~\ref{lem:moving} is that $h_+=h_-+1$, where we recall that $h_\pm$ is the index of $q_\pm$.

    We begin with the case $k=2$, which is the most rigid. Choose a grim vector field $\xi$ for $F$. By Lemma~\ref{lem:dim_asc},
    a critical point $p$ can be safely (see Definition~\ref{def:safe_rearrangement}) moved up by a $\xi$-path if $h_p+d_p>h_++1=h_-+2$ (or, if $d_p=0$), and $p$ can be moved down by a $\xi$-path if $h_p+d_p<h_-+1$. Acting as in the proof of Lemma~\ref{lem:moving}
    we can move all critical points with $h_p+d_p\le h_-$ below~$p_-$ and all critical points with $h_p+d_p\ge h_-+3$ above $p_+$. There remain critical points with $h_p+d_p=h_-+1$ or $h_p+d_p=h_-+2$. These points cannot be safely rearranged in general.

    Suppose a point $p'$ is between $p_-$ and $p_+$ and $d_{p'}=0$. If $h_{p'}=h_-+1$, we can safely move it below $p_-$; here any rearrangement with other critical points is safe and possible, because the only critical points
    have $h_p+d_p\ge h_-+1$. Compare the algorithm for handling \ref{item:SR2} in the proof of Lemma~\ref{lem:moving_step_1} above. On the other hand, if $h_{p'}=h_-+2$, we move it up.

    Concatenating the paths of rearrangement, we construct a $\xi$-path $F_\tau$ supported away from $\cL_-\cup\cL_+$. The only critical points of $F_1$ in $F_1^{-1}(a,b)$ are at depth $2$ or more (recall that the critical points at depth $1$ correspond to critical points of $F_\tau\circ G_\tau$, and since we consider a path of death there are none; see Section~\ref{sec:path_of_death}) 
    and are such that $h_p+d_p=h_-+1$ or $h_p+d_p=h_-+2$. This concludes the case $k=2$.

    \smallskip
    Suppose $k>2$. Lemma~\ref{lem:dim_asc} allows us to safely move a critical point
$p$ below $p_-$ as long as $h_p+d_p<h_-+1$. If
$h_p+(k-1)(d_p-1)>h_+=h_-+1$, we can safely move a point $p$ above
$p_+$. The only
possibility that a critical point cann be safely moved neither above $p_+$
nor below $p_-$ is if $h_p+d_p\ge h_-+1$ and $h_p+(k-1)(d_p-1)\le
h_+=h_-+1$ simultaneously. These two conditions imply that $h_p+d_p\ge
h_p+(k-1)(d_p-1)$, that is, $d_p\le\frac{k-1}{k-2}$. Since $d_p\ge 2$
and $k\ge 3$ by assumptions, the only possibility is that $d_p=2$ and
$k=3$. In that case, we must have $h_p+d_p=h_-+1$.  
  \end{proof}

  We remark that the rearrangements performed during the proof of Lemma~\ref{lem:rearrange_while_keeping}, might result in $\eta$
  not being a gradient-like vector field for $f_1:=F_1 \circ G_0$. However, $f_1=f_0$ near $\cK_-\cup\cK_+$. Therefore, $\eta$ is a gradient-like vector field for $f_1$ near $\cK_-\cup\cK_+$. By Proposition~\ref{prop:grim_extend}, there exists a gradient-like vector field $\wt{\eta}$ for $f_1$ agreeing with $\eta$ near $\cK_-\cup\cK_+$.

  We can refine the result if $N$ is a union of disjoint components, and $G_0$ maps all these components to pairwise disjoint stratified manifolds.

  \begin{lemma}\label{lem:rearrange_to_keep_non_intersecting}
    Suppose $N=N_1\sqcup\dots\sqcup N_\ell$, the images $G_0(N_1),\dots,G_0(N_\ell)$ are pairwise disjoint, and $p_+,p_-$ belong to the component $G_0(N_1)$. Then the path of rearrangement from Lemma~\ref{lem:rearrange_while_keeping} can be chosen to additionally push all critical points away from $F_0^{-1}(a,b)$,
    except for those that belong to $N_1$.
  \end{lemma}

  \begin{remark*}
    Unlike in Lemma~\ref{lem:moving}, we need not consider the case where $p_-$ and $p_+$ are in separate components; the existence of a trajectory in $G_0(N)$ connecting $q_-$ to $q_+$ implies that $p_-$ and $p_+$ belong to the same connected component of $G_0(N)$.
  \end{remark*}

  \begin{proof}[Proof of Lemma~\ref{lem:rearrange_to_keep_non_intersecting}]
    Lemma~\ref{lem:rearrange_while_keeping} allows us to move critical points away from $F_0^{-1}(a,b)$ under certain conditions
    on $h_p+d_p$ depending on the codimension $k$. If $k>3$, we are able to move all critical points, so the statement of Lemma~\ref{lem:rearrange_to_keep_non_intersecting} follows from Lemma~\ref{lem:rearrange_while_keeping}.

   Assume $k=2$. By Lemma~\ref{lem:rearrange_while_keeping}, only critical points that cannot be safely moved away from $F_0^{-1}(a,b)$
    are those for which $h_p+d_p=h_-+1$ or $h_p+d_p=h_-+2$, and $d_p\ge 2$.

    The proof in that case   
     follows the same lines as the proof of the fourth item of Lemma~\ref{lem:moving}, so here we only give a quick sketch. If a critical point $p$ has $h_p+d_p=h_-+1$,
    its descending membrane is disjoint from the ascending membrane of $p_-$ for dimensional reasons. Next, if $p$ and $p_-$
    belong to different components of $G_0(N)$, the membrane of $p$ belongs to a different component of $G_0(N)$ than $\cL_-=G_0(\cK_-)$.
    Moreover, for dimensional reasons, the descending membrane of $p$ is disjoint from all the ascending membranes of critical points
    between $p_-$ and $p$. Hence,~$p$ can be safely moved below the level set of $p_-$.

    In this way, we can push all critical points with $h_p+d_p=h_-+1$ below the level set of $p_-$, except for those that belong
    to the same component of $G_0(N)$ as $p_-$. Reversing this construction, we push all critical points with $h_p+d_p=h_-+2$ above
    the level set of $p_+$, with the exception of critical points belonging to the same component as $p_+$.

    \smallskip
    For $k=3$ we essentially repeat the argument, except that we only need to move critical points with $h_p+d_p=h_-+1$
     and $d_p=2$.
This completes the proof that \ref{item:LD1} can be arranged, and moreover if the $G_0(N_i)$ are pairwise disjoint, the only critical points remaining in $F^{-1}(a-\varepsilon,b+\varepsilon)$ are on the connected component $G_0(N_1)$.
  \end{proof}

  \subsection{Proving \ref{item:LD0}}\label{sub:LD0}
  In order to prove~\ref{item:LD2} (the analogue of \ref{item:LR2}), we need an intermediate step. This is the place where the present situation  differs from the proof of Proposition~\ref{prop:create_isotopy} used for lifting rearrangements in the previous section.  As in Subsection~\ref{sub:LR2_move},
  we will be working with a gradient-like vector field for $F_0$ as an ordinary Morse function. Again, to distinguish these gradient-like
  vector fields from grim vector fields, we use the notation $\nabla F_0$. Note that any gradient-like vector field is a gradient for some choice of Riemannian metric, so this is not a significant abuse of notation.

  \begin{lemma}\label{lem:vector_for_LD2}
    There exists a vector field $\nabla F_0$, gradient-like for $F_0$,  such that with $\Omega'=F_0^{-1}[a-\varepsilon,b+\varepsilon]$ we have:
    \begin{itemize}
      \item the projection $\Pi$ from $\Omega'$ to the level set $F^{-1}(a-\varepsilon)$ is a Thom-Boardman map $($see Definition~\ref{def:TB}$)$;
      \item suppose $N=N_1\sqcup\dots\sqcup N_\ell$ and that $G_0(N_i)$ are pairwise disjoint for $i=1\dots,\ell$. Then there are open balls near critical points of $F_0$, which we can choose to be as small as we please, such that
	there are no trajectories of $\nabla F_0$ that connect two distinct components $G_0(N_i)$ and $G_0(N_j)$.
    \end{itemize}
  \end{lemma}

  \begin{proof}
    The first item follows from the arguments that have already been used in the proof of Proposition~\ref{prop:create_isotopy}.
    Namely, the map $\Pi$ is defned as the projection onto $F^{-1}(a-\varepsilon)$ along the trajectories of $\nabla F_0$. By Lemma~\ref{lem:projection_vs_gradient}, a small perturbation of $\Pi$ can be realized by a perturbation of $\nabla F_0$ within the family of gradient-like
    vector fields for $F_0$. Furthermore, Proposition~\ref{prop:res} guarantees, that any $\Pi$ can be perturbed to a Thom--Boardman map.
    That is, any gradient-like vector field $\nabla F_0$ can be perturbed to a gradient-like vector field satisfying the first condition.
    The second part is proved in the same way as Lemmas~\ref{lem:no_connection} and~\ref{lem:no_connection_2}.
  \end{proof}

  We need to add an extra condition regarding the vector field $\nabla F_0$.

    \begin{definition}\label{def:good_position}
      We say that $\gamma$ is in \emph{good position} with respect to $G_0$ if for every $w\in G_0(\gamma)$ the trajectory
      of $\nabla F_0$ through $w$ does not hit $G_0(N) \cap \O'$, except at $w$.
    \end{definition}

    \begin{lemma}\label{lem:good_position}\
      \begin{itemize}
	\item If $k>2$, then for a generic gradient-like vector field $\nabla F_0$, $\gamma$ is in a good position with respect to $G_0$.
	\item If $k=2$, and $F_0$ satisfies \ref{item:LD1}, then
	  there exists a regular $F_0$-path of immersions $G_\tau$
 such that $G_\tau$ is fixed on $\gamma$, $\eta$ is a gradient-like vector field
	  for $F_0\circ G_\tau$, for all $\tau \in [0,1]$, and $\gamma$ is in good position with respect to $G_1$.
	\item If $N=N_1\sqcup\dots\sqcup N_\ell$ and $G_0(N_1),\dots,G_0(N_\ell)$ are pairwise disjoint, then $G_\tau(N_1),\dots,G_\tau(N_\ell)$ are also pairwise disjoint for all $\tau \in [0,1]$.
      \end{itemize}
    \end{lemma}

    \begin{proof}
    For each point $z$ in $\gamma$, let $\nu_z$ denote the set of points in $\O'$ that lie on the trajectory
    of $\nabla F_0$ through $G_0(z)$.
    The set $C=\bigcup_{z\in\gamma}\nu_z$ is two-dimensional and contains $G_0(\gamma)$ by construction.
    This set can be reinterpreted as follows: the flow of $\nabla F_0$ induces a projection $\Pi$ of $\Omega'$
    onto the level set $F_0^{-1}(a-\varepsilon)$; see Lemma~\ref{lem:projection_vs_gradient} above and the discussion
    in Subsection~\ref{sub:LR2_move}. Suppose $\Pi$ is generic with respect to $\gamma$, that is, it satisfies the following two conditions. 
\begin{enumerate}
  \item First $\beta^{\mathbf{\ell}} \Pi|_\gamma$ is smooth
    of expected dimension (where $\beta^{\mathbf{i}}\gamma$ denotes the Thom--Boardman stratification, see Subsection~\ref{sub:thom_boardman}).
    As $\dim\gamma=1$, the expected dimension of $\beta^1\gamma$ is negative. This means that $\beta^{\mathbf{1}}\gamma$ is empty, so that $\Pi|_\gamma$ is a generic immersion.
  \item  Secondly, we assume that the set of double points of the image is of expected dimension. This set is of codimension $(n+k-3)$ in $\Pi(\gamma)$. As we assumed that $k\ge 2$, the set of double points is empty unless $n=1$ and $k=2$. This means that $\Pi^{-1}\Pi(\gamma)$ is either a smooth surface (if $n>1$ or $k>3$) or a surface with $1$-dimensional set of singularities: each such singularity is a product of an ordinary double point of a curve and a segment.
\end{enumerate}
    These two conditions imply that in both cases, $\Pi^{-1}\Pi(\gamma)$ is Whitney stratified (Definition~\ref{def:Whitney}).  The set $C$ is an open subset of $\Pi^{-1}\Pi(\gamma)$, so it inherits Whitney stratification. In particular, stratified general position arguments can be applied to $C$ as follows.

    If $k>2$, the set $C$ can be made disjoint from $G_0(N)\setminus G_0(\gamma)$ (which is of codimension $k>\dim C$)
    by perturbing $\Pi$. This proves
    the first item.

    For the remaining part of the proof, we will assume that $k=2$.
    As $C$ is stratified of dimension $2$, a general position argument guarantees that $C$ intersects $G_0(N)\setminus G_0(\gamma)$ at finitely many points,
    and each such point is a smooth point of $C$ (because the singular set of $C$ has dimension $1<\codim G_0(N)\setminus G_0(\gamma)$).
    Moreover, $C$ avoids the second stratum of $G_0(N)$. 
    Let
    $w_1,\dots,w_t\in N$ be such that $C\cap (G_0(N)\setminus G_0(\gamma))=\{G_0(w_1),\dots,G_0(w_t)\}$;
    see Figure~\ref{fig:gamma_lift}.
    \begin{figure}
      \input{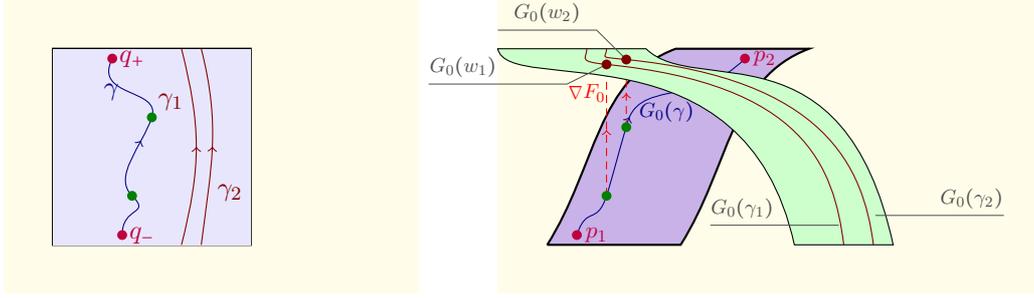}
      \caption{Notation of proof of Lemma~\ref{lem:good_position}.  }\label{fig:gamma_lift}
    \end{figure}

   Similarly to Subsection~\ref{sub:f_ing_immersion}, define $\Sigma \subseteq N$ to be the subset of points $u\in N$ such
    that $\nabla F_0$ is tangent at $G_0(u)$ to the branch of $G_0(N)$.
    Denote  the collection of trajectories of $\eta$ through points on $w_1,\dots,w_t$ by $\gamma_1,\dots,\gamma_t$.
   The curves $\gamma_1,\dots,\gamma_t$ are one-dimensional, while $\Sigma\subseteq N$ is of codimension $k$.
    By general position arguments (choosing appropriate $\Pi$) we may and will assume that all the curves $\gamma_i$ are disjoint from $\Sigma$.

    The original trajectory $\gamma$ is not any of the $\gamma_1,\dots,\gamma_t$, therefore, each of the $\gamma_i$
    either does not terminate at $q_+$,
    or does not start at $q_-$ (or both).
    Suppose $\gamma_1$ does not terminate at $q_+$. As $\gamma_1$ is a trajectory of $\eta$, the fact that $\gamma_1$ does not terminate at $q_+$ implies that $\gamma_1$ is disjoint from $\cK_+$.

    Our aim is to lift $\gamma_1$ so that the point $w_1$ lands above the level $f^{-1}(b+\varepsilon)$, and then to apply induction. To accomplish the first goal, we use the procedure of the proof of Proposition~\ref{prop:create_isotopy}. Set $c_1=f(w_1)$ and $\varepsilon'>0$ such that $c_1-\varepsilon'>a$.

    Choose a neighbourhood $U_1$ of $\gamma_1\cap f^{-1}(c_1-\varepsilon',b+\varepsilon)$ disjoint from $\cK_+$ and all the other curves $\gamma_2,\dots,\gamma_t$. We assume that all trajectories
    of $\eta$ passing through $U_1$ avoid $\Sigma$ in $f^{-1}([c_1-\varepsilon',b+\varepsilon])$. Such a choice of $U_1$
    is possible, since $\gamma_1$ avoids $\Sigma$ itself. Pick a function
  \[\mu \colon f^{-1}[a+\varepsilon,b+\varepsilon]\to [0,1]\]
  satisfying analogues of
  items~\ref{item:mu_0}, \ref{item:mu_1}, and~\ref{item:mu_2}. Namely:
  \begin{itemize}
    \item $\partial_\eta\mu\ge 0$;
    \item $\mu$ is supported on $U_1$;
    \item $\mu=1$ on $\gamma_1$.
  \end{itemize}
  Existence of such $\mu$ follows from the same arguments as in Lemma~\ref{lemma:mu-exists}.
  Note that in this specific case, we need not care about the condition~$\partial_{\upsilon}\mu\ge 0$ as in Lemma~\ref{lem:mu_partial}.
  In fact, $\mu$ is supported on $U_1$ and we have chosen $U_1$ so as to avoid $\Sigma$. The vector field $\upsilon$ was defined
  only at points on $\Sigma$.

  This function, together with the choice of the rescaling function $R$ and the lift function $P$ as in the proof of Proposition~\ref{prop:create_isotopy}, allows us to create a regular homotopy lifting
    the trajectory $\gamma_1$ and its neighbourhood
    along the trajectories of $\nabla F_0$, to
    arrange  that no point of $\gamma_1$ belongs to the same trajectory of $\nabla F_0$ as a point on $\gamma$.
    We have the following properties.
    \begin{itemize}
      \item As we already mentioned, the procedure is needed only for $k=2$.
      \item If $N$ is presented as a disjoint sum of $N_i$ with $G_0(N_i)$ pairwise disjoint, then $\gamma_1$ belongs to the same component as $\gamma$. While we might create some new self-intersections, the only new self-intersections we create are within $G(N_1)$.
    \end{itemize}
    If $\gamma_1$ terminates at $q_+$, then it cannot start at $q_-$. We push it down instead of up, by the same procedure. An inductive argument constructs the required regular homotopy.
\end{proof}

\subsection{From~\ref{item:LD1} and~\ref{item:LD0} to~\ref{item:LD2}}\label{sub:LD2}

\begin{lemma}\label{lem:good_means_ld2}
  Suppose $(F_0,G_0,\eta)$ satisfies~\ref{item:LD1} and~\ref{item:LD0}.
  There exists a regular homotopy $G_\tau$ such that $\eta$
  is a gradient-like vector field for $F_0\circ G_\tau$ for all $\tau$ and $G_1(\cK_-)$ and such that $G_1(\cK_+)$ belong to the first stratum
  of $G_1(N)$, i.e.\ such that \ref{item:LD2} is satisfied.  
  The path $(F_\tau,G_\tau)$ with $F_\tau=F_0$ is a regular double path.

  Moreover:
  \begin{itemize}
    \item if $G_0(N)$ is an embedding, then $G_\tau=G_0$.
    \item if $G_0(N)$ is a disjoint union of $G_0(N_1),\dots,G_0(N_\ell)$ $($with $N=N_1\sqcup\dots\sqcup N_\ell)$,
  then $G_\tau(N)$ is a disjoint union of $G_\tau(N_1),\dots,G_\tau(N_\ell)$ for all $\tau\in[0,1]$.
  \end{itemize}
\end{lemma}
\begin{remark*}
  The condition~\ref{item:LD2} is stronger than \ref{item:LD0}, that is $G_1$ satisfies \ref{item:LD0} as well. However, the procedure of arranging for \ref{item:LD2} might create extra self-intersections, hence we do not control critical points at deeper strata. That is, $(F_1,G_1)$ need not satisfy \ref{item:LD1}. These critical points will be dealt later, with \ref{item:LD3}, which in turn implies \ref{item:LD1}.
\end{remark*}
\begin{proof}
  If $G_0(N)$ is an embedding then it automatically satisfies~\ref{item:LD2}.

  The proof of Lemma~\ref{lem:good_means_ld2} uses the whole machinery of the proof of Proposition~\ref{prop:create_isotopy}. The presence of the trajectory $\gamma$ prevents us from applying that proposition directly, because in the proof of Proposition~\ref{prop:create_isotopy},
  we were assuming that $\cK_+$ and $\cK_-$ are disjoint. However, thanks to \ref{item:LD0},
   we can apply these arguments with
  only minor modifications. 

  Choose $\varepsilon>0$. As in the proof of Proposition~\ref{prop:create_isotopy}, define $Z$ by Condition~\ref{cond:on_Z}.
  We note that $\cK_-\cap f_0^{-1}(a+\varepsilon)$ belongs to $Z$. Recall that $Z$ is an open subset of the level set $f_0^{-1}(a+\varepsilon)$.

  Let $U_0\subseteq f_0^{-1}(a+\varepsilon)$ be
  a neighbourhood of $\gamma\cap f_0^{-1}(a+\varepsilon)$ such that if $z\in U_0$, if $\lambda_z$ is a trajectory
  of $\eta$ through $z$ (its part in $N' := G_0^{-1}(\Omega')$; see~\eqref{eqn:defn-some-submanifolds}), and if $w\in G_0(\lambda_z)$, then the trajectory of $\nabla F_0$ through $w$
  does not hit $G_0(N)$ at any point in $\O'$ except at $w$ itself. This is a condition analogous to Definition~\ref{def:good_position}.
  Note that $\gamma$ is in good position, so $\gamma\cap f_0^{-1}(a+\varepsilon)$ indeed belongs to $U_0$. In particular $U_0$ is not empty. The condition defining $U_0$ is open, so we may indeed choose $U_0$ to be open.

  Let $V$ be an open set of $f_0^{-1}(a+\varepsilon)$ such that $\ol{V}\subseteq Z$, $V$ contains $(\cK_-\cap f_0^{-1}(a+\varepsilon))\setminus U_0$,
  and $V$ is disjoint from some open subset $U_1$ containing $\cK_+$. Notice that the only point on $f_0^{-1}(a+\varepsilon)$
  that belongs to $\cK_-\cap\cK_+$ is the intersection point of $\gamma$ with the level set (and that belongs to $U_0$). Therefore,
  such a $V$ exists.  Let $Z_\eta$ and $V_\eta$ be
  the sets of points
  in $f^{-1}[a+\varepsilon,b+\varepsilon]$ that are reached from $Z$, respectively $V$, by a trajectory of $\eta$; compare Figure~\ref{fig:V_and_U}.
  \begin{figure}
     \input{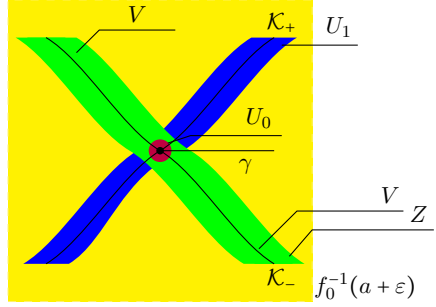}
     \caption{Notation of the proof of Lemma~\ref{lem:good_means_ld2}. The figure represents the level set $f_0^{-1}(a+\varepsilon)$.}\label{fig:V_and_U}
  \end{figure}

  To ensure property~\ref{item:LD2} holds, we construct yet another cut-off function $\mu$, with similar properties to the function with the same name from Section~\ref{sec:lifting-paths-of-rearangment}, but different from it.
  Let \[\mu\colon f_0^{-1}[a+\varepsilon,b+\varepsilon]\to[0,1]\] be a smooth function such that  
  \begin{itemize}
    \item $\mu$ is supported on $Z_\eta$;
    \item $\mu\equiv 1$ on $V_\eta$;
    \item $\partial_\eta\mu\ge 0$ with equality only at points where $\mu$ is zero or one;
    \item $\partial_{\upsilon}\mu(u)\ge 0$ for all $u\in \Sigma\cap Z_\eta$ with equality only for when $\mu$ is zero or one, here
      $\upsilon$ is as in Subsection~\ref{sub:f_ing_immersion}.
  \end{itemize}
  The construction of $\mu$ is analogous to the construction in Lemma~\ref{lemma:mu-exists}. 
  Using $\mu$, and constructing an appropriate scaling function $R$ and  a lift function $P$ as in Subsection~\ref{sub:LR2_move}, we define the isotopy $G_{\tau}$, $\tau\in[1,2]$,
   that lifts all the points $w$ with $f_1(w)\in[a+2\varepsilon,b]$ whose trajectory passes through $V$ above the level set $F_0^{-1}(b)$.
   The construction of $G_\tau$ is via the same formula as in the proof of Proposition~\ref{prop:create_isotopy}; see especially \eqref{eq:Gtau_def}.

   The definition of $G_\tau$ implies that the arguments of Lemma~\ref{lem:wtf_is_eta} apply to show that $F_0\circ G_\tau$ is an $\eta$-path of functions.
   To show that $G_1$ maps $\cK_-$ to the first stratum, we use arguments analogous to the proof of Lemma~\ref{lem:reduce_to_1}.
   Take $w\in\cK_-$, and let $w'\in f_0^{-1}(a+\varepsilon)$ be the point on the same trajectory of $\eta$ as $\cK_-$. Consider the following
   two cases.
   \begin{itemize}
     \item If $w'\in V$, then
       \begin{itemize}
	 \item either $f_0(w)>a+\varepsilon$ and so $F_0\circ G_1(w)>b+\varepsilon/2$, and we are done;
	 \item or $f_0(w)<a+\varepsilon$ and then $G_\tau(w)$ belongs to the first stratum by construction. 
       \end{itemize}
     \item If $w'\notin V$, then, as $w'\in\cK_-$, we infer that $w'\in U_0$ (by construction, $V\cup U_0$ contains $\cK_-\cap f_0^{-1}(a+\varepsilon)$). Then, by definition of $U_0$, the trajectory of $\nabla F_0$ through $G_0(w)$
       does not hit $G_0(N')$, so $G_\tau(w')$ belongs to the first stratum.
   \end{itemize}
   After this isotopy $\cK_-$ is mapped to the first stratum.

   An analogous operation improves $G_0$ in such a way that $\cK_+$ is
   eventually mapped to the first stratum. The concatenation of the two paths (one making $\cK_-$ map to the first stratum, the other
   making $\cK_+$ map to the first stratum) is the desired path of immersions~$G_\tau$. We perturb it rel $G_0$ to be a regular $F_0$-path in the sense of Definition~\ref{def:another_F_word}, so that $(F_\tau,G_\tau)$, with $F_\tau=F_0$ is a regular double path.

   To prove the second item of the lemma,
   we use the same method as in the proof of Lemma~\ref{lem:rearrange_to_keep_non_intersecting}. The conditions
   on $\nabla F_0$, guaranteed by Lemma~\ref{lem:vector_for_LD2}, ensure that we do not create intersections between different components of $N$.
  \end{proof}

\subsection{Condition~\ref{item:LD3} and the vector field $\xi$}\label{sub:LD34}
Throughout Subsection~\ref{sub:LD34} we assume that $(F_0,G_0)$ satisfies \ref{item:LD2}. Recall that the condition~\ref{item:LD3}
tells about the existence of a suitable vector field $\xi$ and the lack of critical points in $F^{-1}[a-\varepsilon,b+\varepsilon]$
other than $p_-$ and $p_+$. The next lemma constructs the vector field. That vector field will be used to move all the unnecessary critical
points away from the interval $F^{-1}[a-\varepsilon,b+\varepsilon]$.

  \begin{lemma}\label{lem:LD3}
    Suppose $F$ is immersed Morse, $G\colon N\hookrightarrow \O$ is a generic immersion, $f=F\circ G$ and $\eta$ is gradient-like for $f$.
    Assume $G$ maps $\cK_-$ and $\cK_+$ to the first stratum, i.e.\ \ref{item:LD2} holds.
    There exists a grim vector field $\xi$ for $F$
    such that the pull-back $\wt{\eta}$ of $\xi$ to $N$ agrees with $\eta$ in an open subset of $N$ containing $\cK_-\cup\cK_+$.
    If $G$ is an embedding, we can guarantee that $\eta'=\eta$. 
  \end{lemma}
  As the pull-back $\wt{\eta}$ of $\xi$ agrees with $\eta$ in an open subset of $\cK_-\cup\cK_+$, Lemma~\ref{lem:uni_common} gives us
  a left-homotopy of the $\eta$-path and the $\wt{\eta}$-path of death.

  \begin{proof}[Proof of Lemma~\ref{lem:LD3}]
    As $(F,G)$  satisfies~\ref{item:LD2}, the map $G$ takes a neighbourhood $U_1\subseteq N$ of $\cK_-\cup\cK_+$ to the first stratum.
    Take $U_2\subseteq N$ open such that $\cK_-\cup\cK_+\subseteq U_2\subseteq \ol{U}_2\subseteq U_1$.
    By Proposition~\ref{prop:grim_extend}, applied to $U=G(U_1)$ there exists a grim vector field $\xi$ for $F$, which is equal to $DG(\eta)$
    on the whole of $G(U_2)$. We perturb $\xi$ so that it is Morse--Smale.

    By definition, the pull-back $\wt{\eta}$ of $\xi$ agrees with $\eta$ on $U$.
  \end{proof}

  To guarantee \ref{item:LD3}, we need to rearrange critical points to move them away from between $F(p_-)=a$ and $F(p_+)=b$.
  This includes the critical points that have not been rearranged before and the critical points that might have been created while enforcing \ref{item:LD2}.
  \begin{lemma}\label{lem:LD3a}
    Suppose $(F,G)$ satisfies \ref{item:LD2} and $\xi$ is as constructed in Lemma~\ref{lem:LD3}. Then, there exists
    a path of functions $(F_\tau,G_\tau)$ such that $F_\tau$ is a $\xi$-path, $F_\tau\circ G_\tau$ is a path
    of excellent Morse functions supported away from $\cK_-\cup\cK_+$, $G_\tau=G$ $($in particular, $(F_\tau,G_\tau)$ is a regular double path$)$
    and moreover, $F_1$ has no critical
    points in $F_1^{-1}[a-\varepsilon,b+\varepsilon]$ other than $p-$ and $p_+$.
  \end{lemma}
  \begin{proof}
  We proceed in a standard way. That is, we take the first critical point above $p_-$ with $h_p+d_p\le h_-+1$.
  We move it below $p$ using Rearrangement Theorem~\ref{thm:grim_rearrangement} (compare the proof of Lemma~\ref{lem:extra_cond}).
  Inductively, we construct a special $\xi$-path that moves all critical points with $h_p+d_p\le h_-+1$ below $p_-$. We can do it
  in such a way that the path is supported away from $\cL_-$. Next,
  the same argument allows us to move all critical points $p$ with $h_p+d_p\ge h_++1$ above the level set $p_+$. This done,
  since $h_+=h_-+1$, there are no more critical points between $p_-$ and $p_+$. We let $(F_\tau,G_\tau)$, for $\tau\in[0,1]$,
 denote  the resulting path. This is a $\xi$-path, a concatenation of $\xi$-paths of rearrangments. Therefore $F_\tau\circ G_\tau$
 is an $\wt{\eta}$-path. Notice that we do not rearrange pairs of critical points at depth $1$, that is, $F_\tau\circ G_\tau$
 is a path of excellent functions.
 \end{proof}

 \begin{remark*}
 Unlike in item \ref{item:LD1}, our vector field $\xi$ extends $\wt{\eta}$. That is, the condition that the membranes of points being rearranged miss $\cL_-$ and $\cL_+$ is included in the Morse--Smale condition, and need not be additionally guaranteed by the condition on a safe rearrangement.
 \end{remark*}


  For the reader's convenience we summarise the properties of the functions we have constructed.
  \begin{corollary}
    The pair $(F_1,G_1)$ that is the outcome of Lemma~\ref{lem:LD3a} satisfies conditions \ref{item:LD1}, \ref{item:LD0}, \ref{item:LD2},
    and~\ref{item:LD3}.
  \end{corollary}
  \begin{proof}
    The vector field $\xi$ from Lemma~\ref{lem:LD3} is grim for $F_1$, because Lemma~\ref{lem:LD3a} constructed $F_1$ as the end of a $\xi$-path. Next, Lemma~\ref{lem:LD3a} does not change the immersion $G$, in particular \ref{item:LD2} is preserved and ~\ref{item:LD3} is arranged to hold. Next, condition~\ref{item:LD2} generalises (and implies) the condition~\ref{item:LD0}: the former one tells about the behaviour of $G$ on $\cK_-\cup\cK_+$, the latter
    controls $G$ on $\gamma=\cK_-\cap\cK_+$. Finally, condition~\ref{item:LD3} controls all critical points between $p_-$ and $p_+$, while
    condition~\ref{item:LD1} controls only some of them. That is, \ref{item:LD3} implies \ref{item:LD1}.
  \end{proof}

  \subsection{The Finger Move Theorem}\label{sub:finger_mover}
  Before we complete the proof of Lifting Paths of Death Lemma~\ref{lem:lift_death}, we need the most important ingredient, namely the Finger Move Theorem, which we now state.  The proof is deferred to Part~\ref{part:finger}.

\begin{theorem}[Finger Move]\label{thm:new_finger_move}
  Let $G\colon N\looparrowright \O$ be a generic immersion
  with $\dim N=n$ and $\dim\O=n+2$. Let $F\colon\O\to\R$ be an immersed Morse function with respect to $M=G(N)$ and let $\xi$
  be a grim vector field for $F$ satisfying the Morse--Smale condition.
  Set $f=F\circ G \colon N \to \R$ and let $\eta$ be the pull-back of $\xi$ as in Section~\ref{sub:pull_back}, a gradient-like vector field on $N$.  Assume that $\xi$ and $\eta$ are Morse--Smale.

  Suppose that there exist critical points $q_-,q_+\in N$ of indices $h$ and $h+1$ respectively, such that there is a single trajectory $\gamma$ of $\eta$
  connecting $q_-$ and $q_+$, and $G$ maps the following subsets to the first stratum of $M=G(N)$, where $a:=F(p_-)$, $b:=F(p_+)$: (i) $\gamma$; (ii) the part of the stable manifold of $q_+$ in $F^{-1}(a,b)$; and (iii) the part of the unstable manifold of $q_-$ in $F^{-1}(a,b)$.
   Suppose also there exists $\varepsilon>0$ such that $\xi$
   has no critical points in the region $F^{-1}(a-\varepsilon,b+\varepsilon)$ other than $p_-$ and $p_+$. Let $r>0$ be the number of trajectories of $\xi$ connecting $p_-=G(q_-)$ to $p_+=G(q_+)$ that do not lie on $M = G(N)$.

  Then there exists a path $(F_\tau,G_\tau)$, for $\tau\in[0,2]$, and a grim vector field $\xit$ for $F_2$,
  such that $F_\tau$ for $\tau\in[0,1]$ is an arbitrarily small perturbation of $F_0$, $F_{1+\tau}=F_1$ for $\tau\in[0,1]$, and $G_\tau=G_0$ for $\tau\in[0,1]$. The path $(F_\tau,G_\tau)$ and $\xit$ satisfy the following properties.
  \begin{enumerate}[label=(FM-\arabic*)]
    \item The Morse function $F_2$ has at most four more critical points than $F_0$ and these critical points belong to the second stratum of $G_2(N)$. Also they lie in the image of a single connected component of $N$ under $G_2$. \label{item:FM_two_more}
    \item $F_2=F_0$ near all critical points of $F_0$. \label{item:FM_support_of_F2}
    \item The trajectories of $\xit$ connecting $p_-$ to $p_+$ are precisely the trajectories of $\xi$ connecting $p_-$ to $p_+$, except that one trajectory outside of $G_0(N)$ $($and outside of $G_2(N))$ is removed. \label{item:FM_one_less}
    \item Set $f_\tau=F_\tau\circ G_\tau$. The path $f_{1+\tau}$ is a constant path for $\tau\in[0,1]$. Moreover, $\eta$ is a gradient-like vector field for all $f_\tau$. \label{item:gradient}
    \item There is a pull-back $\etat$ of $\xit$ via $G_2$ such that the paths of death starting from $F_2\circ G_2$ and
      constructed with $\eta$ and $\etat$ are left-homotopic. \label{item:pull_back}
    \item The map $G_\tau(N)$ fails to be a generic immersion only for one time value of $\tau$ $($but still it is an immersion$)$.
      The double points that are created are within the same connected component of $G_0(N)$. \label{item:connected}
  \end{enumerate}
\end{theorem}

  \subsection{Condition~\ref{item:LD4}}\label{sub:finger}
  The next result uses the finger move for its proof, which will be the topic of Part~\ref{part:finger}.

  \begin{lemma}\label{lem:we_use_finger}
    Suppose $(F,G)$ satisfy \ref{item:LD1}--\ref{item:LD3}. Then.
    \begin{itemize}
      \item If $k>2$, then $\xi$ automatically satisfies~\ref{item:LD4};
      \item If $k=2$, then there exists a regular homotopy $G_\tau$ and a family of functions $F_\tau$, with $(F_\tau,G_\tau)$ a regular double path, as well as a grim vector field $\wt{\xi}$
	for the Morse function $F_2$ such that
	\begin{itemize}
	  \item $\xi$ satisfies \ref{item:LD2}, \ref{item:LD3}, and~\ref{item:LD4};
	  \item with $\wt{\eta}$ the pullback of $\wt{\xi}$, the paths of death starting from $F\circ G$ with $\eta$ and the path
	    of death starting from $F_2\circ G_2$ with $\wt{\eta}$ are lax homotopic over $F_{2\tau}\circ G_{2\tau}$.
	\end{itemize}
    \end{itemize}
  \end{lemma}
  \begin{proof}
    Suppose first $k>2$. Then, by Lemma~\ref{lem:intersectiondimension} applied to $p_-=G_0(q_-)$ and $p_+=G_0(q_+)$, it follows
    that $\dim\Ha(p_-)\cap\Hd(p_+)\cap \O[0]\cap F^{-1}(c)<0$. In particular, the only trajectory of $\xi$ connecting
    $p_-$ to $p_+$ is the image of the trajectory $\gamma$ of $\eta$ connecting $G_0$. This means that~\ref{item:LD4}
    is automatically satisfied.

    From now on, suppose that $k=2$. Lemma~\ref{lem:intersectiondimension} implies that the number of trajectories of $\xi$
    from $p_-$ to $p_+$ that lie in $\Omega\setminus G_0(N)$ is finite. Let $\ell$ be this number. If $\ell=0$, nothing needs to be done.

    Assume $\ell>0$.
    We first check that the properties \ref{item:LD1}---\ref{item:LD3} imply that the assumption of the Finger Move Theorem~\ref{thm:new_finger_move} are satisfied. The vector field $\xi$ is Morse--Smale as well as its pull-back $\wt{\eta}$.
    The curve $\gamma$ is mapped to the first stratum by \ref{item:LD0}, and the parts of the stable/unstable manifolds are of $q_-$ and $q_+$,
    that is, $\cK_-$ and $\cK_+$, are mapped to the first stratum by \ref{item:LD2}. The assumption of the Finger Move Theorem~\ref{thm:new_finger_move} on the lack of critical points is guaranteed by~\ref{item:LD3}.

    Apply the finger move. That is, by Theorem~\ref{thm:new_finger_move}, we can create a path $(F_\tau,G_\tau)$, $\tau\in[0,2]$,
    and a new grim vector field $\xit$ for $F_2$. The path has
    all the properties \ref{item:FM_two_more} -- \ref{item:connected}.
    By Lemma~\ref{lem:proof_of_gradient}, $\eta$ is gradient-like for $F_2\circ G_2$. By Lemma~\ref{lem:uni_common}
    with $h_{\sigma,0}=F_{2\tau}\circ G_{2\tau}$, the $\eta$-paths
    of death constructed from $F_2\circ G_2$ and from $F\circ G$ are lax homotopic over $F_{2\tau}\circ G_{2\tau}$. By \ref{item:pull_back},
    the  pull-back $\etat$ of $\xit$ via $G_2$ has the property that the $\etat$-path of death starting from $F_2\circ G_2$
    and the $\eta$-path of death starting from $F_2\circ G_2$ are left-homotopic. That is, to say,
    the $\etat$ path of death starting from $F_2\circ G_2$ and the $\eta$-path of death starting from $F\circ G$ are lax homotopic
    over $F_{2\tau}\circ G_{2\tau}$.
    Moreover, by \ref{item:FM_one_less},
    $\xit$ has $\ell-1$ trajectories outside of $G_2(N)$ connecting $p_-$ with $p_+$. All this means that $(F,G)$ satisfies condition~\ref{item:LD2}.

    Now, by \ref{item:FM_two_more}, $F_2$ acquires four critical points on the second stratum. We perform rearrangements, using $\xit$ as the guiding vector field, to move the four new critical points away
    from the region $F_2^{-1}[a,b]$. The rearrangement is done as in Lemma~\ref{lem:LD3a}. Call $(F_3,G_3)$ the resulting function. As the rearrangement is done using the vector field $\xit$,
    and $\etat$ is a pull-back, the path $F_{2+\tau}\circ G_{2+\tau}$ is an $\etat$-path of excellent Morse functions. In particular, by Lemma~\ref{lem:uni_common}, the $\etat$-paths of death starting with $F_3\circ G_3$ and starting with $F_2\circ G_2$ are lax homotopic over $F_{2+\tau}\circ G_{2+\tau}$. That is to say, the $\etat$-path of death starting from $F_3\circ G_3$ is lax homotopic over $F_{3\tau}\circ G_{3\tau}$
    to the original
    path of death. Note that the pair $(F_3,G_3)$ satisfies:
    \begin{itemize}
      \item the condition \ref{item:LD2}, with $\xit$ being the corresponding grim vector field. This is because $(F_2,G_2)$ already satisfied that condition;
      \item the condition \ref{item:LD3}, because we just move all critical points that were created by the finger move away from $F_2^{-1}[a,b]$;
      \item the conditions \ref{item:LD1} and \ref{item:LD0} as consequences of conditions~\ref{item:LD2} and~\ref{item:LD3}.
    \end{itemize}

   This places us in a position to use induction. Starting with $(F_3,G_3)$, we create further paths by a subsequent use of the finger move. More precisely, the $j$-th path
   denoted by $(F_{3j-3+\tau},G_{3j-3+\tau})$, $\tau \in [0,3]$, is constructed using the finger move applied to $(F_{3j-3},G_{3j-3})$
   and the vector field $\xitt{j}$, grim for $F_{3j-3}$, with $\xitt{0}=\xi$, $\xitt{1}=\xit$.
   The vector field $\xitt{j}$ has $\ell-j$ trajectories connecting $p_-$ with $p_+$. Moreover, the induction argument shows that the grim path
   of death 
   starting from $F_{3j-3}\circ G_{3j-3}$, and using the pull-back of $\xitt{j}$, is lax homotopic to the original one over $F_{(3j-3)\tau}\circ G_{(3j-3)\tau}$.

   Each of the vector fields $\xitt{j}$ satisfies \ref{item:LD3}. Eventually,
   $(F_{3\ell},G_{3\ell})$ and $\xitt{\ell}$ satisfy~\ref{item:LD2}, \ref{item:LD3}, and~\ref{item:LD4}. We conclude the proof by reparametrizing the path to the interval $[0,1]$.
  \end{proof}

  \subsection{The proof of the Death Lifting Lemma~\ref{lem:lift_death}}\label{sub:summary_death}
  Let $F_0=F$ and $G_0=G$. We set $f_0=F_0\circ G_0$, $\eta$ a gradient-like vector field for $f_0$, and $f_\tau$ an $\eta$-path of death cancelling $q_-$ and $q_+$. The proof follows the pattern of the proof of the Rearrangement Lifting Lemma~\ref{lem:lift_rearr}, but is technically more involved. The reader should keep in mind that throughout the proof a few new vector fields are constructed, like $\wt{\eta}$, $\eta'$, and $\etat$. The key property is that except for $\etat$, all these vector fields agree with the original vector field $\eta$ near $\cK_-\cup\cK_+$, so that all paths of death created with these vector fields are lax-homotopic to the original one. The case of $\etat$ is a bit special, as it arises from the finger move, but its behaviour is controlled by \ref{item:pull_back} of Finger Move Theorem~\ref{thm:new_finger_move}. A schematic of the proof is presented in Figure~\ref{fig:concat_death}.
  \begin{figure}
    \input{pictures/concat_death.tex}
    \caption{A schematic of the proof of Death Lifting Lemma~\ref{lem:lift_death}.}\label{fig:concat_death}
  \end{figure}
Lemma~\ref{lem:rearrange_while_keeping} takes care of \ref{item:LD1}. It creates a path $(F_\tau,G_\tau)$
  that replaces $(F_0,G_0)$ by a $(F_1,G_1)$ (with $G_\tau=G_0$) and $(F_1,G_1)$ satisfying \ref{item:LD1}.
  Set $\wt{f}_\tau=F_\tau\circ G_\tau$. Clearly $\eta$ need not be gradient-like for $\wt{f}_1$.
  However, the arguments of Lemma~\ref{lem:extend_eta} from Subsection~\ref{sub:summary_rearr} allow us to construct a vector field $\wt{\eta}$,
  gradient like for $\wt{f}_\tau$ and such that $\wt{\eta}=\eta$ near $\cK_-\cup\cK_+$.

  If $G_1$ is an embedding (that is if $G_0$ is), \ref{item:LD2} is already satisfied.
  Otherwise, we create a path whose destination satisfies that condition. We choose a generic gradient-like vector field $\nabla F_0$ and, if necessary, replace $G_1$ by a path of immersions (denoted $G_\tau$ in Subsection~\ref{sub:LD0}, but here we call it $G_{1+\tau}$, for $\tau\in[0,1]$, to avoid a notation clash), with $F_{1+\tau}\equiv F_1$ such that the trajectory $\gamma$ is in good position with respect to $G_2$ and $F_{1+\tau}\circ G_{1+\tau}$ is an $\wt{\eta}$-path of Morse functions. This is done in~Lemma~\ref{lem:good_position}.

  With this choice, $(F_2,G_2)$ satisfies \ref{item:LD0}. However, $(F_2,G_2)$ might acquire some critical points on deeper strata,
  as the path $(F_{1+\tau},G_{1+\tau})$ might potentially create extra self-intersections.
  The key point is that no critical points at depth $0$ are created, because $F_{1+\tau}$ is constant. That is, $(F_2,G_2)$ again satisfies
  \ref{item:LD1}.

  Once \ref{item:LD1} and~\ref{item:LD0} are satisfied, Lemma~\ref{lem:good_means_ld2} constructs a path of functions, which we here denote $(F_{2+\tau},G_{2+\tau})$, such that $F_{2+\tau}\circ G_{2+\tau}$ is a $\wt{\eta}$-path of Morse functions. Moreover $(F_3,G_3)$ satisfies \ref{item:LD2}.

  Lemma~\ref{lem:LD3} applied to $(F_3,G_3,\wt{\eta})$ constructs a vector field $\xi$, grim for $F_3$, such that the pull-back, $\eta'$
  agrees with $\wt{\eta}$ near $\cK_-\cup\cK_+$. Note that $\eta'$ agrees also with the original vector field $\eta$ in an open subset
  containing $\cK_-\cup\cK_+$. In Lemma~\ref{lem:LD3a}, we construct a path, which we now
  denote $(F_{3+\tau},G_{3+\tau})$, that moves all critical points away from between $p_-$ and $p_+$. This is a path
  of rearrangement,  such that $F_{3+\tau}\circ G_{3+\tau}$ is an $\eta'$ path.

  The resulting functions $(F_4,G_4)$ satisfy conditions \ref{item:LD2} and \ref{item:LD3}, so they also satisfy \ref{item:LD1} (which is weaker than \ref{item:LD3}) and \ref{item:LD0} (which is weaker than~\ref{item:LD2}).

 We continue with the proof of Death Lifting Lemma~\ref{lem:lift_death} by studying $\xi$ in detail.
 The vector field $\xi$ might have more trajectories
 connecting $p_-$ and $p_+$. We apply finger moves through Lemma~\ref{lem:we_use_finger} applied to $(F,G)=(F_4,G_4)$, the vector field $\xi$
 and its pull-back $\eta'$. We obtain a path, which we denote $(F_{4+\tau},G_{4+\tau})$, $\tau\in[0,1]$,
 and a vector field $\xit$, grim for $F_5$.

 Lemma~\ref{lem:we_use_finger} implies that $(F_5,G_5,\xit)$ satisfies
 \ref{item:LD3} and~\ref{item:LD4}, as well as all the previous conditions.
 Lemma~\ref{lem:cond_lift_death} applied to that system of functions constructs a weak lift of
 the $\etat$ path of death starting from $F_5\circ G_5$.
 Denote it by  $(F_{5+\tau},G_{5+\tau})$, $\tau\in[0,1]$. Set $\wt{f}_{5+\tau}:=F_{5+\tau}\circ G_{5+\tau}$.
 \begin{lemma}\label{lem:lax_f5}
   The path $\wt{f}_{5+\tau}$ is lax-homotopic to the path $f_\tau$ over $\wt{f}_{\tau}$, $\tau\in[0,5]$.
 \end{lemma}
 \begin{proof}
   We make this proof in several steps. 
   \begin{itemize}
     \item An $\wt{\eta}$-path of death starting from $\wt{f}_1$ is lax homotopic to an $\eta$-path of death starting from $f_0=\wt{f}_0$ over $\wt{f}_\tau$ (which is $f_\tau$) by Lemma~\ref{lem:uniqueness_after_rear}.
     \item An $\wt{\eta}$-path of death starting from $\wt{f}_2$ is lax homotopic to an $\wt{\eta}$-path of death starting from $\wt{f}_1$ over $\wt{f}_{1+\tau}$ by Lemma~\ref{lem:uni_common}.
     \item An $\wt{\eta}$-path of death starting from $\wt{f}_3$ is lax homotopic to an $\wt{\eta}$-path of death starting from $\wt{f}_2$ over $\wt{f}_{2+\tau}$ by Lemma~\ref{lem:uni_common}.
     \item An $\eta'$-path of death starting from $\wt{f}_4$ is lax homotopic to an $\wt{\eta}$-path of death starting from $\wt{f}_3$ over $\wt{f}_{3+\tau}$. Here we use Lemma~\ref{lem:uniqueness_after_rear}, because we change the vector field, however, we do not change it near $\cK_-\cup\cK_+$.
     \item An $\etat$-path of death starting from $\wt{f}_5$ is lax homotopic to an $\eta'$-path of death starting from $\wt{f}_4$ over $\wt{f}_{4+\tau}$. This is a consequence of Lemma~\ref{lem:we_use_finger}, which in turn relies on the property~\ref{item:pull_back} of the Finger Move Theorem~\ref{thm:new_finger_move}.
     \item By construction $\wt{f}_{5+\tau}$ is a weak lift of an $\etat$-path of death starting from $\wt{f}_5$. As such,  $\wt{f}_{5+\tau}$ is left-homotopic to an $\etat$-path of death starting from~$\wt{f}_5$.
   \end{itemize}
   We now concatenate the homotopies. Eventually we get that $\wt{f}_{5+\tau}$ is lax homotopic to $f_\tau$ over $\wt{f}_{5\tau}$.
 \end{proof}
 Lemma~\ref{lem:lax_f5} will put us in a position to use Lemma~\ref{lem:lax_to_lift}. The following result is needed to check all the assumptions.
 \begin{lemma}\label{lem:f5a}
   The path $(F_{6\tau},G_{6\tau})$ is a regular double path.
 \end{lemma}
 \begin{proof}
   The proof requires a case-by-case analysis of the components of the path.
   \begin{itemize}
     \item The path $(F_{\tau},G_{\tau})$ changes only $F_\tau$, fixing $G_\tau=G_0$ being a regular immersion. Moreover $F_\tau$ is an $\cF^1$-path, compare Lemma~\ref{lem:rearrange_while_keeping},
   so $(F_\tau,G_\tau)$ is a regular double path.
 \item The path $(F_{1+\tau},G_{1+\tau})$ created in Lemma~\ref{lem:good_position} is such that $F_{1+\tau}$ is fixed, and $G_{1+\tau}$ is a path of immersions (an $F_1$-regular path in the sense of Definition~\ref{def:another_F_word}), in particular $(F_{1+\tau},G_{1+\tau})$ is
   a regular double path.
 \item Likewise, Lemma~\ref{lem:good_means_ld2} creates a regular double path $(F_{2+\tau},G_{2+\tau})$.
 \item The path $(F_{3+\tau},G_{3+\tau})$ is a path of rearrangments with $G_{3+\tau}=G_3$, see Lemma~\ref{lem:LD3a}. That is, it is a regular double path.
 \item Lemma~\ref{lem:we_use_finger} creating the path $(F_{4+\tau},G_{4+\tau})$ creates a regular double path.
 \item The weak lift $(F_{5+\tau},G_{5+\tau})$ is created using Lemma~\ref{lem:cond_lift_death} creating a regular double path, actually, that lemma does not change $G_{5+\tau}$, only $F_{5+\tau}$.
   \end{itemize}
   Concatenation of regular double paths is a regular double path.
 \end{proof}
 Give Lemmata~\ref{lem:lax_f5} and~\ref{lem:f5a}, we promote $(F_{6\tau},G_{6\tau})$ to a weak lift of $f_\tau$ using Lemma~\ref{lem:lax_to_lift}. This completes the proof of the Death Lifting Lemma~\ref{lem:lift_death}.

\section{The Path Lifting Theorem}\label{sec:path_lifting_proof}

We can now combine the results from Section~\ref{sec:elementary_lift} on lifting $\cF^0$-paths and paths of birth, rearrangement and death, to obtain the following result.

\begin{theorem}[Path Lifting]\label{thm:path_lifting}
  Let $N$ be a closed manifold of dimension~$n$ and let $f_\tau$, for $\tau\in[0,1]$, be an $\cF^1$-path of functions such that there are no rearrangements for which
  a critical point of higher index goes below a critical point of a lower index.
  Let $G_0\colon N\to\O$ be a generic immersion,  
  where $\O$ is a compact manifold of dimension $n+k$. Suppose $F_0\colon\O\to\R$ is an immersed Morse function such that $F_0\circ G_0=f_0$.

  If $k\ge 2$, then there exists a weak lift $($Definition~\ref{def:lifting}$)$ of $f_\tau$.  That is, there exist:
  \begin{itemize}
    \item a regular path $($an $\cF^1$-path$)$
     of functions $\wt{f}_\tau\colon N\to\R$ such that $\wt{f}_0=f_0$ and $\wt{f}_1=f_1$ and the paths $\wt{f}_\tau$ and $f_\tau$
      are $\cF^1$-homotopic;
    \item a regular double path $(F_\tau,G_\tau)$ such that $F_\tau\circ G_\tau=\wt{f}_\tau$ with the following properties for $G_\tau$.
      \begin{enumerate}
	\item[(i)] If connected components of $N$ have disjoint image under $G$, then this also holds for $G_\tau$ for each $\tau \in [0,1]$.
	\item[(ii)] if $G_0$ is an embedding and either $k\ge 3$ or $f_\tau$ has no deaths, then $G_\tau$ is an ambient isotopy.
      \end{enumerate}
    \item The family $F_\tau$ has no births or deaths on the zeroth stratum, that is to say, for each $\tau \in [0,1]$, $F_\tau$ is a classical Morse function when
      regarded as a function from $\Omega$ to $\R$, forgetting the stratification.
  \end{itemize}

  \smallskip
  If $N$ and $\Omega$ are compact with nonempty boundary,
  $G_0$ is a neat immersion and $f_\tau$ is neat as a path of functions on $N$, while $F_0$
  is a neat function, then there is a neat regular double path $(F_\tau,G_\tau)$ satisfying all the required properties.
\end{theorem}

\begin{proof}
  Let $T$ be the number of `events' on $f_{\tau}$, where by an event we mean a birth, a rearrangement,
  or a death.
  As $f_{\tau}$ is an $\cF^1$-path we make sure that at each of these events, the path $f_{\tau}$ is an elementary path, respectively of birth, of rearrangement, or of death.
  We may and shall assume that these events occur at distinct times.

  We perform induction over $T$. If $T=0$, then $f_{\tau}$ is a path of excellent Morse functions. We use Lemma~\ref{lem:lift_morse}.

  Otherwise suppose that at $\tau_0$ there appears one of the events: birth, death or rearrangement, and that $\tau_0>0$ is minimal with this property.
   Let $\delta>0$ be such that the path $f_\tau$, $\tau\in[\tau_0-\delta,\tau_0+\delta]$ is an elementary path. We split our path $f_{\tau}$ into
  three parts: the first part, for $\tau\in[0,\tau_0-\delta]$, is a $\cF^1$-path of Morse functions, which can be lifted using Lemma~\ref{lem:lift_morse}. For
  $\tau\in[\tau_0-\delta,\tau_0+\delta]$ we lift the path using Lemma~\ref{lem:lift_birth} (if there is a birth at $\tau_0$), Lemma~\ref{lem:lift_rearr} (if there
  is a rearrangement at $\tau_0$), or Lemma~\ref{lem:lift_death} (if there is a death at $\tau_0$). Note that if there is a rearrangement at $\tau_0$, a critical point with a smaller index can never go above a critical point of higher index by the assumptions of the theorem. Therefore,
  the assumptions of Lemma~\ref{lem:lift_rearr} are satisfied.

  The path for $\tau\in[\tau_0+\delta,1]$ has $T-1$ events and we use the inductive hypothesis.

  \smallskip
  Now we provide an argument that the result works for $N$ and $\O$ with nonempty boundary. First, we perturb $f_\tau$ to a $\cF^1$-path that is neat.
  This is done using methods from Subsection~\ref{sub:neat}. Therefore, we only need to prove that we can lift a neat path of Morse functions
  to a neat path, and that we can lift an elementary path of neat functions to a neat path on~$\O$.

  The first step, that is, lifting a neat path of Morse functions to a neat path, is done by the last part of Lemma~\ref{lem:lift_morse}. Next,
  we note that each kind of elementary path (births, rearrangements and deaths) is lifted in such a way that the initial Morse function $F_0$
  and the initial embedding $G_0$ are unchanged away from a set of the form $F_0^{-1}(c,d)$: for example, for rearrangements, if $a$ and $b$
  are critical values of points that are going to be rearranged, we set $c=a-\varepsilon$, $d=b+\varepsilon$ for $\varepsilon\ll 1$. This means
  that each time we lift an elementary path, we do not alter the function $F_0$ near the boundary of $\O$, nor do we alter the function $G_0$ near the boundary of $N$. Each time we lift an elementary path, we might need to shrink the neighbourhoods of $\bd N$ and $\bd\O$, on which $G_\tau$, respectively $F_\tau$ are independent of $\tau$, but we perform only finitely many such operations. Hence, the resulting regular double path
  $(F_\tau,G_\tau)$ is neat.
\end{proof}

\section{Path lifting for $N$ having components of varying dimensions}\label{sec:multidim}

Now we discuss the case that $N=N_1\sqcup \cdots\sqcup N_r$ is a union of components of dimensions $n_1,\dots,n_r$, with $n_i\le\dim\Omega-2$ for all $i$. We suppose that $G_0(N_i)\cap G_0(N_j)=\emptyset$ unless $i=j$. Set $k_i=\dim\Omega-\dim N_i$ to be the codimension of~$N_i$.

First, the notions of immersed Morse functions and of grim vector fields can be generalised in an obvious way. With the stratification $\O[d]$ given by points at depth $d$ (even if the connected components of $\O[d]$ might have different codimensions), the local form of a grim vector field is still given by \eqref{eq:localgrim},
with the necessary adjustments of the range of indices of the $x$ and $y$ variables. The ascending and descending membranes of a critical point $p\in G_0(N_i)$ have dimensions governed by the $n_i$, the depth of $p$, and the codimension $k_i$. Existence of Morse functions and of grim vector fields is proved in this situation without changes.

In the Cancellation Theorem~\ref{thm:grimcanc} we work with the image of a single connected component of $N$. Indeed, the two critical points that are to be cancelled are connected by a trajectory of the grim vector field that lies in the first stratum. That is, everything happens within a single connected component of $N$. The dimension restriction is not used.

The situation is different with the rearrangement theorem, where we might rearrange critical points belonging to different components. The key property we use is dimension counting. It is routine to check that if $k_i\ge 2$ for all $i$, then  critical points $p_-$ and $p_+$ with $\ind p_-\ge \ind p_+$ can be rearranged regardless of their depth and of the dimension of the manifolds they belong to. One way to see this (and to avoid lengthy calculations) is that rearrangement is possible in codimension two, and the dimension of membranes at the same depth does not increase if the codimension $k_i$ increases. Another way is to check that if $p_-\in N_i$ and $p_+\in N_j$, $i\neq j$, then the only trajectories connecting $p_-$ to $p_+$ are necessarily in the zeroth stratum, since $G_0(N_i) \cap G_0(N_j) = \emptyset$. The calculation of dimensions of the membranes is straightforward.

We pass to a discussion of path lifting. Lifting of births is purely local. In general, we lift a path governed by a vector field $\eta$ on $N$
(an $\eta$-path of rearrangment or an $\eta$-path of death), this terminology is valid even if the components of $N$ have different dimensions.
Going into technical details, the partial rearrangement, i.e.\ ensuring \ref{item:LR1} and \ref{item:LD1}, relies on dimension counting arguments. These arguments go through if the components have various dimensions (the same argument as in the previous paragraph applies). The most difficult case is still if $p_-$ and $p_+$ both belong to a single codimension two component.

Arranging for properties \ref{item:LR2} and \ref{item:LD2} to hold is one of the most sophisticated technical part of the proof of the Path Lifting Theorem. Recall that we use an extra gradient-like vector field, which we call $\nabla F_0$, and a special cut-off function $\mu$ supported in a neighbourhood of the ascending manifold $\cK_-$ of $q_-$. The function $\mu$ is nonzero only on the component of $N$ containing $q_-$, that is, the whole discussion constructing $\mu$ is independent of the dimensions of the components of $N$ not containing $q_-$. The vector field $\nabla F_0$ is constructed so as to avoid self-intersections of different components of $G_0(N)$.
This holds even if the $N_i$ have different dimensions.

Finally, we need to discuss the finger move. It is used if the critical points $q_-$ and $q_+$ belonging to the \emph{same} component of $N$
are connected by a trajectory of a vector field $\xi$ that lies in the zero stratum. As $q_-$ and $q_+$ belong to the same component, and the component has codimension two, the finger move concerns only that single connected component of $N$. Other components do not interfere, regardless of their dimension.

\part{The finger move}\label{part:finger}

Throughout Part~\ref{part:finger} we assume that the codimension $k=2$.
Our work is focused on the image of $N$. Therefore, unlike in Part~\ref{part:pathlifting}, we will mostly work on $M:=G_0(N)$.

\begin{proof}[Plan of proof of Theorem~\ref{thm:new_finger_move}]
  The proof of Theorem~\ref{thm:new_finger_move} requires Sections \ref{sec:coorsystem} through \ref{sec:proof_of_pull_backs}.
As a guide for the reader through the proof, we present the plan.

The first challenge is to find a suitable coordinate system.
The construction of this coordinate system is done in Sections~\ref{sec:coorsystem} and~\ref{sec:middle_and_inner} and consists of several steps. In fact,
the coordinate system we define
is not merely a local coordinate system near a critical point, but it is suitably extended along a ``guiding curve.'' In a sense,
this a semi-local coordinate system.

The actual finger move is described in Section~\ref{sec:fingermove}. This section begins with an explicit formula
for the finger move in  Subsection~\ref{sub:highfinger}. Formally, this induces an isotopy $G_{1+\tau}$ of Theorem~\ref{thm:new_finger_move}.
One of the key properties is that $G_{1+\tau}$ is level-set-preserving, that is $F\circ G_{1+\tau}=F\circ G_1=F\circ G_0$ for all $\tau \in [0,1]$. Condition~\ref{item:connected} follows from the construction of $G_\tau$, and it is addressed at the end
of Subsection~\ref{sub:highfinger}.

In Subsection~\ref{sub:newvector} we construct a suitable vector field $\xih$, which is tangent to $G_2(M)$. In Subsection~\ref{sub:critical} we prove
that $\partial_{\xih} F \ge 0$.
Already the vector field $\xih$ satisfies the most
important property of Theorem~\ref{thm:new_finger_move}, namely \ref{item:FM_one_less}. However, $\xih$ is not a grim vector field on $F$;
moreover, $F$ is not an immersed Morse function for $G_2(N)$. To remedy this, in Subsection~\ref{sub:explicit_perturbation} we construct
an explicit perturbation $F_\tau$, $\tau\in[0,1]$, of $F$ and an explicit perturbation of the vector field $\xi_\tau$ and of
the function $F_\tau$.

The key property \ref{item:FM_one_less} is proved in Section~\ref{sec:membranesofxi} as Theorem~\ref{thm:oneless} with
two key lemmas (Lemma~\ref{lem:safeentry} and~\ref{lem:safeexit}) proved in Section~\ref{sec:middle_and_inner}. Since $F_1$ is a perturbation
of $F$, the proof of Theorem~\ref{thm:oneless} holds both for $F$ and $\xih$ and for $F_1$ and $\xit$.

Finally, we study the pull-back of $\xit$, denoted by $\etat$, which is a vector field on $N$, gradient-like for $f_2=F_2\circ G_2$.
Properties~\ref{item:gradient} and~\ref{item:pull_back} are proved in Section~\ref{sec:proof_of_pull_backs}.
By a careful analysis of the ascending and descending manifolds of $\etat$, in the same section we show that the grim paths of death
constructed via $\etat$ and via $\eta$ are homotopic.
\end{proof}

\section{The outer shell}\label{sec:coorsystem}
As noted above, the proof of Finger Move Theorem~\ref{thm:new_finger_move} begins with a construction of a suitable open subset with concrete coordinates, in which the finger move
will be performed. In fact,
we will construct three nested neighbourhoods of one of the critical points. 

Subsections~\ref{sub:fingernbhd1} through~\ref{sub:last_technical_finger} construct the outer shell, $V_{\oout}$. The construction
is rather technical. For the reader who wants to skip the details, we summarise the properties of $V_{\oout}$ in Subsection~\ref{sub:fingernbhd7}.


\subsection{The set $V_{\oout}$.}\label{sub:fingernbhd1}

In this section we construct the biggest (outer) neighbourhood of one of the critical points, which we will denote $V_{\oout}$. 

We set
\[c_-=F(p_-),\ c_+=F(p_+)\]
and we denote by $\Xi_s$ the flow of the vector field $\xi$.

Suppose that there are no other critical points in $F^{-1}([c_-,c_+])$.  Note that this can always be arranged by the Global Rearrangement Theorem~\ref{thm:grim_global_rearrangement}.
Choose local coordinates around $p_+$ given by $(x_1,\ldots,x_n,y_1,y_2)$, so that $p_+$ is given by $(0,\ldots,0)$,  $M$ is locally given by $\{y_1=y_2=0\}$,
and the grim vector field around $p_+$ is given by $(-x_1,\ldots,-x_{h+1},x_{h+2},\ldots,x_n,y_1^2+y_2^2,0)$.
Let $U_{\ccoor}$ be an open set containing $p_+$ on which these coordinates are defined.
In these local coordinates $\Hd(p_+)$ is given by
\[\Hd(p_+)=\{x_{h+2}=\dots=x_n=y_2=0\}\cap\{y_1\le 0\}.\]

Choose $c_{\bbot}\in (c_-,c_+)$ such that for any $c\in [c_{\bbot},c_+]$, the intersection of the descending membrane $\Hd(p_+)$ with the level set $F^{-1}(c)$
is contained in the coordinate neighbourhood $U_{\ccoor}$. Likewise, choose a regular value $c_{\ttop}>c_+$ in such a way that
$\Ha(p_+)\cap F^{-1}(c)\subseteq U_{\ccoor}$ for any $c\in[c_+,c_{\ttop}]$;
see Figure~\ref{fig:uout}.
It is clear that such $c_{\bbot}$ and $c_{\ttop}$ exist.
\begin{figure}
\input{pictures/uout.tex}
\caption{The set $U_{\ccoor}$ and the level sets $c_{\bbot}$, $c_{\ttop}$.}\label{fig:uout}
\end{figure}
The set $U_{\oout}$ will be contained inside $F^{-1}[c_{\bbot},c_{\ttop}]$.
To define it choose $\varepsilon_{\oout}>0$ and let $Y_{\oout}$ be an open tubular neighbourhood of $\Hd(p_+)\cap F^{-1}(c_{\bbot})$ in the level set $F^{-1}(c_{\bbot})$
consisting of points at distance less than $\varepsilon_{\oout}$ from $\Hd(p_+)\cap F^{-1}(c_{\bbot})$.
We will assume that $\varepsilon_{\oout}>0$ is small, and the precise meaning of `small' in this context will be clarified soon. For now define $U_{\oout}$ as
\begin{equation}\label{eq:uout}
  \ol{U}_{\oout} := \ol{\bigcup_{s\in\R}\Xi_s(Y_{\oout})}\cap F^{-1}[c_{\bbot},c_{\ttop}].\ \ U_{\oout}=\Int \ol{U}_{\oout}.
\end{equation}
That is, $U_{\oout}$ is the grim neighbourhood of $p_+$ in the sense of Definition~\ref{def:grimneigh}. Clearly, $\ol{U}_{\oout}$ is the closure of $U_{\oout}$.

\begin{lemma}
If $Y_{\oout}$ is sufficiently thin $($that is, if $\varepsilon_{\oout}$ is sufficiently small$)$,
then $U_{\oout}$ is contained in the coordinate neighbourhood $U_{\ccoor}$.
\end{lemma}

\begin{proof}
  Let $Y_n$ be the set $Y_{\oout}$ defined with $\varepsilon_{\oout}=\frac1n$ and let $U_{n}$ be the corresponding set $U_{\oout}$ (that is,
  $U_n$ is constructed via \eqref{eq:uout} by taking $Y_{\oout}=Y_n$). By construction, $\ol{U}_{n+1}\cap F^{-1}(c_{\bbot},c_{\ttop})\subseteq U_{n}$.
  It follows from the proof of
Lemma~\ref{lem:grimneigh} that $\bigcap U_n=(\Ha(p_+)\cup\Hd(p_+))\cap F^{-1}(c_{\bbot},c_{\ttop})$, so that $\bigcap U_n\subseteq U_{\ccoor}$. Since $U_{\ccoor}$ is open, it follows
that for large $n$ we must have $U_n\subseteq U_{\ccoor}$ as well.
\end{proof}

We have constructed the outer neighbourhood $U_{\oout}$ starting from the neighbourhood $Y_{\oout}$.
We leave ourselves the possibility of shrinking $Y_{\oout}$ further by decreasing $\varepsilon_{\oout}$,
but henceforth the levels $c_{\bbot}$ and $c_{\ttop}$ will remain fixed.

\subsection{The position of $\Ha(p_-)$.}\label{sub:fingernbhd2}

By the Morse--Smale condition, the membranes $\Ha(p_-)$ and $\Hd(p_+)$ intersect transversely along a finite number of trajectories. Intersecting
$\Ha(p_-)\cap\Hd(p_+)$ with the level set $F^{-1}(c_{\bbot})$  yields a finite number of points $z_0,z_1,\ldots,z_r$.
Here $z_0$ is assumed to lie on $M$, that is $z_0$ corresponds to the trajectory of $\xi$ on $M$ that connects $p_-$ and $p_+$. 

If $Y_{\oout}$ is small enough, the intersection $\Ha(p_-)\cap Y_{\oout}$ consists of $r+1$ discs of dimension $n-h$.
In fact, each connected component of $\Ha(p_-)\cap Y_{\oout}$ that does not contain $\Hd(p_+)\cap Y_{\oout}$
is separated from it by some distance, and thus shrinking $Y_{\oout}$ (decreasing $\varepsilon_{\oout}$) we can make this component disjoint from $U_{\oout}$;
see Figure~\ref{fig:removeredundantcomponents}.
\begin{figure}
\input{pictures/removeredundant.tex}
\caption{The membranes $\Ha(p_-)$ and $\Hd(p_+)$ at the level set $c_{\bbot}$. The set $Y_{\oout}$ is shrunk to avoid the component on
the left.}\label{fig:removeredundantcomponents}
\end{figure}
The $r+1$ discs $D_0,D_1,\ldots,D_r$ forming $\Ha(p_-)\cap F^{-1}(c_{\bbot})\cap U_{\oout}$ are transverse to $\Hd(p_+)$, because $\Ha(p_-)$ intersects $\Hd(p_+)$
transversely (this is the Morse--Smale condition)
and they intersect $\Hd(p_+)$ at the points $z_0,z_1,\ldots,z_r$. These discs can be isotoped in such a way that for $k>0$ the disc $D_k$ around $z_k$ has the form
\begin{equation}\label{eq:maft}
x_{1}=s_{1,k},\ldots,x_{h+1}=s_{h+1,k}\textrm{ for }k=1,\ldots,r,
\end{equation}
for some constants $s_{i,\ell}\in\R$. For one of the points $z_m$, after applying the aforementioned isotopy, we can and will assume that all of the constants $s_{i,m}=0$, for $i=1,\dots,h+1$ and that
$s_{1,j}\neq 0$ for all $j\neq m$.
The choice of $m$ is arbitrary if $\dim\Ha(p_-)\cap F^{-1}(c_{\bbot})>1$ (that is, if $h<n-1$); if $\dim\Ha(p_-)\cap F^{-1}(c_{\bbot})$ is 1-dimensional, then we take $m$ to be the index of the point
\emph{nearest} to $M$ on the guiding curve, see Subsection~\ref{sub:fingernbhd2} below.

Given \eqref{eq:maft}, the coordinates of the point $z_{\ell}$ are
\begin{equation}\label{eq:maft_2}
(s_{1,\ell},\ldots,s_{h+1,\ell},0,\ldots,0,c_{\bbot}-s_{1,\ell}^2-\ldots-s_{h+1,\ell}^2,0).
\end{equation}
We remark that the $y_1$ coordinate of $z_{\ell}$ is determined by the fact that $F(z_{\ell})=c_{\bbot}$. We also isotope the disc $D_0$ within $M$ in such a way that it is given by \eqref{eq:maft},
but with the extra conditions that the $y_1$ coordinate of $z_0$ is equal to $0$, and the $y_2$ coordinate on $D_0$
is either non-positive or non-negative.

According to the Isotopy Insertion Lemma~\ref{lem:isoinject}, the isotopy of the discs required to arrange the local description above of the intersection of $\Ha(p_-)$ with $\Hd(p_+)$, can be obtained by locally altering the vector field $\xi$ in the set
$F^{-1}(c',c_{\bbot})$, for some $c'<c_{\bbot}$ close to $c_{\bbot}$.
From now on we will assume that such an alteration has
been made, and that the intersection $\Ha(p_-)\cap Y_{\oout}$ is given by \eqref{eq:maft}.

\begin{figure}
\input{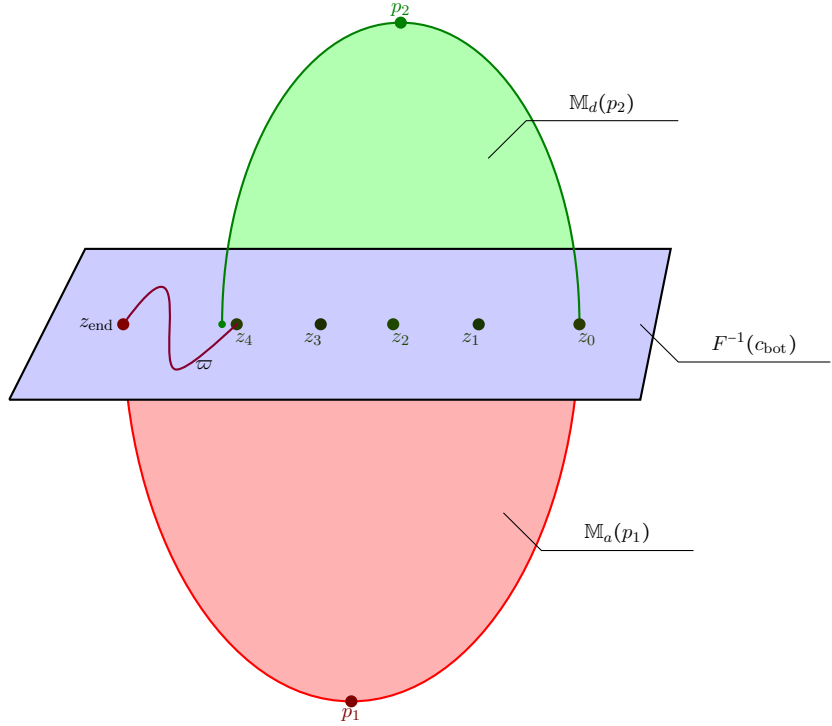}
\caption{The notation of Section \ref{sub:fingernbhd2}.  For clarity only the part of the membrane $\Hd(p_+)$ in $\{F\ge c_{\bbot}\}$ is drawn,
as well as the part of the membrane $\Ha(p_-)$ below the level set $F^{-1}(c_{\bbot})$.}\label{fig:membranes}
\end{figure}

\subsection{The guiding curve.}\label{sub:fingernbhd3}

Consider the point $z_m$ on the intersection $\Ha(p_-)\cap\Hd(p_+)\cap F^{-1}(c_{\bbot})$, for which we have set $s_{1,m}=\ldots=s_{h+1,m}=0$.
Take an embedded curve $\guiding\colon[0,\varepsilon_{\oout}]\to\Ha(p_-)\cap F^{-1}(c_{\bbot})$ given by
\begin{equation}\label{eq:gammashort}
\guiding(s)=(0,\ldots,0,0,\ldots,0,0,s);
\end{equation}
see Figure~\ref{fig:membranes}, in which $m=4$. Since $x_1,\ldots,x_n,y_1,y_2$ is an orthonormal coordinate system and $Y_{\oout}$ is the set
of points at distance less than or equal to $\varepsilon_{\oout}$
 from $\Hd(p_+)\cap F^{-1}(c_{\bbot})$, we have that $\guiding(\varepsilon_{\oout})\in\partial Y_{\oout}$.
Next, we want to extend $\guiding$ to a map $\guiding\colon[0,1]\to \Ha(p_-)\cap F^{-1}(c_{\bbot})$ such that
\begin{align*}
\guiding((\varepsilon_{\oout},1])\cap Y_{\oout}&=\emptyset,\\
\guiding(1)&\in M\cap F^{-1}(c_{\bbot}),  \text{ and}\\
\guiding(1)&\notin \Hd(p_+).
\end{align*}
That is, $\guiding$ is a path in $\Ha(p_-)$ and in the level set $F^{-1}(c_{\bbot})$, from an intersection point in $\Ha(p_-) \cap \Hd(p_+)$, to a point on $M$ in the boundary of $\Ha(p_-)\cap F^{-1}(c_{\bbot})$;
see Figure~\ref{fig:membranesgamma}.
The curve $\guiding$ will correspond to the guiding curve in Section~\ref{sec:61}.

The construction of $\guiding$ is as follows. The set $\Ha(p_-)\cap F^{-1}(c_{\bbot})$ is path connected. Its boundary, the unstable manifold of $p_-$, consists of at least two points. Precisely one point of the boundary
belongs to $\Hd(p_+)$: this point corresponds to the unique trajectory on $M$ connecting $p_-$ with $p_+$. There exists a point $z_{\eend}$ on $\bd(\Ha(p_-)\cap F^{-1}(c_\bbot))$ disjoint from $\Hd(p_+)$.
By path connectedness, we extend the curve $\guiding$ to a curve in $\Ha(p_-)\cap F^{-1}(c_{\bbot})$ in such a way that $\guiding(1)=z_{\eend}$ and $\guiding$ is transverse to $M$ at $z_{\eend}$.

It remains to show that $\guiding$ can be chosen in such a way that
$\guiding((\varepsilon_{\oout},1])\cap Y_{\oout}=\emptyset$. Note that $\Hd(p_+)$ intersects $\Ha(p_-)\cap F^{-1}(c_{\bbot})$ along points $z_0,\dots,z_r$ and so $Y_{\oout}$ intersects $\Ha(p_-)\cap F^{-1}(c_{\bbot})$
along open balls near these points. If $\dim\Ha(p_-)\cap F^{-1}(c_{\bbot})>1$, the complement $\Ha(p_-)\cap F^{-1}(c_{\bbot})\setminus Y_{\oout}$ is still path connected so the choice of $\guiding$ does not pose
any problems.

If $\dim\Ha(p_-)\cap F^{-1}(c_\bbot)=1$, though, $\Ha(p_-)\cap F^{-1}(c_{\bbot})$ is an arc with end points $z_0$ and $z_{\eend}$ (there is only one possibility of chosing $z_{\eend}$). The set $Y_{\oout}$ separates this arc.
In order that $\guiding$ exists, we need to choose the point $z_m$, from which $\guiding$ emerges, to be the \emph{nearest} point (among $z_1,\dots,z_r$) to $z_{\eend}$ on $\Ha(p_-)\cap F^{-1}(c_{\bbot})$, compare
Figure~\ref{fig:membranes}, where $m=4$ is the only possible choice. It might also happen that \eqref{eq:gammashort} defining $\guiding$ points in the other direction, i.e.\ in Figure~\ref{fig:membranes}, the curve from $z_4$
goes towards $z_{3}$ instead of $z_{\eend}$ as the parameter $y_2$ increases. If this happens, we globally replace $y_2$ by $-y_2$.  Note that $F$ does not depend on $y_2$ and $\xi$ depends on $y_2$ only via $y_2^2$, so this change takes one coordinate system to another one,
in which both $F$ and $\xi$ have the same shape.



\begin{figure}
\input{pictures/membranesgamma.tex}
\caption{The guiding curve $\guiding$.}\label{fig:membranesgamma}
\end{figure}

\smallskip
We have constructed a guiding curve $\guiding$.
For technical reasons we need to extend $\guiding$ slightly across the point $\guiding(1)$, so we will assume, for some $\eg > 0$, that
$\guiding\colon [0,1+\eg] \to  F^{-1}(c_{\bbot})$
is defined as a small extension of $\guiding \colon [0,1] \to F^{-1}(c_{\bbot})$, and that $\guiding$ is transverse to $M$ at $\guiding(1)$. This is why $\guiding$ extends slightly beyond $M$ in Figures~\ref{fig:membranesgamma} and~\ref{fig:xout}.

\subsection{Coordinates on a neighbourhood of the guiding curve.}\label{sub:fingernbhd4}

Now we are going to extend the coordinate system from $U_{\oout}$ to a neighbourhood of~$\guiding$.
First the coordinate system will be constructed in the level set $F^{-1}(c_{\bbot})$, then in later subsections it will be extended to $F^{-1}[c_{\bbot},c_{\ttop}]$.

\begin{figure}
\input{pictures/xout.tex}
\caption{The set $X_{\oout}$.}\label{fig:xout}
\end{figure}
Choose a small neighbourhood $X_{\oout}$ of $\guiding(\varepsilon_{\oout}/2,1+\eg)$ in $F^{-1}(c_{\bbot})$, shown in Figure~\ref{fig:xout}.
We will assume that $X_{\oout}$ has a product structure, that is $X_{\oout}\cong \guiding(\varepsilon_{\oout}/2,1+\eg)\times D$
for some $n$-dimensional disc $D$ ($\O$ is $(n+2)$-dimensional so level sets are $(n+1)$-dimensional) and that $X_{\oout}$ intersects the manifold $M$ away from the descending manifold of $p_+$. In the local coordinate system in $Y_{\oout}\cap X_{\oout}$, we will assume that $X_{\oout}$ is given by
$\{x_1^2+\cdots+x_n^2\le\wt{\varepsilon}_{\oout}^2,y_2\in(\varepsilon_{\oout}/2,\varepsilon_{\oout})\}\cap Y_{\oout}$,
where $\wt{\varepsilon}_{\oout}\in(0,\varepsilon_{\oout}/2)$ is a real number.
Recall that $y_1$ is fixed by being on the level set $F^{-1}(c_{\bbot})$, and $y_2$ changes along $\guiding$.

With respect to the product structure $X_{\oout}\cong\guiding\times D$,
we write $\partial X_{\oout}=D_-\cup R_{\ttang}\cup D_+$, where $D_- = \guiding(\varepsilon_{\oout}/2)\times D$, $D_+=\guiding(1+\eg)\times D$ and
$R_{\ttang}=\guiding\times\bd D$. 
Here $D_-$ is given by $X_{\oout}\cap\{y_2=\varepsilon_{\oout}/2\}\cap\{x_1^2+\cdots+x_n^2\le\wt{\varepsilon}_{\oout}^2\}$.
As $\wt{\varepsilon}_{\oout}<\varepsilon_{\oout}/2$ we have that $D_-\subseteq Y_{\oout}$.

Choose an auxiliary nonvanishing vector field $\auxfield$ on $X_{\oout}$ such that the following hold:
\begin{enumerate}[label=(L-\arabic*)]
  \item\label{item:L1} The vector field $\auxfield$ is tangent to $\guiding$ and to $\Ha(p_-)\cap F^{-1}(c_{\bbot})$.
\item\label{item:L2} The vector field $\auxfield$ is transverse to $D_-$ and to $D_+$. Moreover $\auxfield$ points into $X_{\oout}$ on $D_-$ and out of $X_{\oout}$ on $D_+$.
\item\label{item:L3} The vector field $\auxfield$ is tangent to $R_{\ttang}$.
\item\label{item:L4} For any $w\in X_{\oout}\sm (D_+ \cup D_-)$, the trajectory of $\auxfield$ through $w$ hits $D_+$ in the future and hits $D_-$ in the past.
\item\label{item:L5} The time it takes the trajectory of $\auxfield$ to go from $D_-$ to $D_+$ is equal to $1+\eg$.
\item\label{item:L6} In the local coordinates on $U_{\oout}\cap X_{\oout}$, the vector field $\auxfield$ is given by $\dy{2}$.
\end{enumerate}
The existence of such a $\auxfield$ is clear because $X_{\oout}$ is homeomorphic to a disc. Notice that \ref{item:L6} is compatible with
\ref{item:L1}---\ref{item:L5}.
Condition \ref{item:L5} can be obtained after all the other conditions by rescaling.
The flow of $\auxfield$ allows us to extend the coordinate system from $U_{\oout}\cap X_{\oout}$
to the whole of $X_{\oout}$.

Take a point $u \in X_{\oout}$ and let $\theta$ be the trajectory of $\auxfield$ through $u$.  Parameterise $\theta$ as $\theta(s)$ so
that $\theta(\varepsilon_{\oout}/2)=w \in D_-$ and $\theta'(s) = \auxfield_{\theta(s)}$ for all $s \in [\varepsilon_{\oout}/2,1+\eg)$.
Then, identifying $w$ with its expression in coordinates in $U_{\oout}$, define the coordinates of $u$ to be $w+(0,\ldots, 0,0,s_u)$, where~$s_u$
is the unique value  of $s \in [\varepsilon_{\oout},1+\eg)$ such that $\theta(s_u)=u$.

\begin{figure}
\input{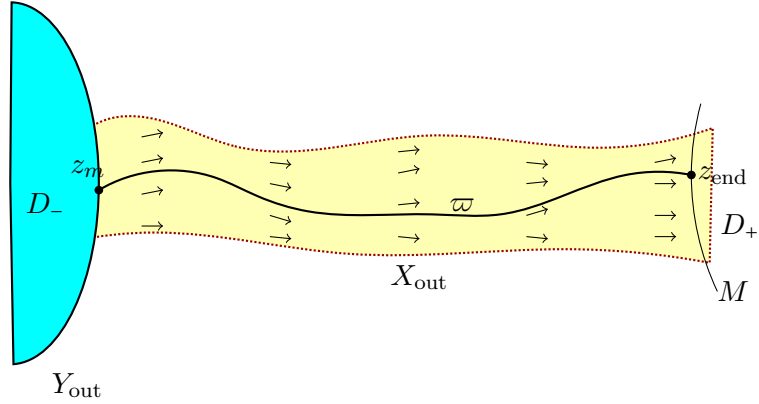}
\caption{The auxiliary vector field $\auxfield$ and its flow.}\label{fig:auxillaryvector}
\end{figure}

We conclude this subsection by making an assumption,
 whose r\^{o}le is to simplify the formulae that will appear in the sequel.

\begin{lemma}\label{lem:simplifyinglemma}
By possibly altering the vector field $\xi$ below the level set $F^{-1}(c_{\bbot})$, we may assume that
the connected component of $M\cap F^{-1}(c_{\bbot})\cap X_{\oout}$ containing $\guiding(1)$ is given by the set of equations $x_1=0$, $y_2=1$ and
$y_1=x_1^2+\dots+x_{h+1}^2-x_{h+2}^2-\ldots-x_n^2$.
It follows in particular that $\Ha(p_-)\cap X_{\oout}$ is contained in $\{y_2\le 1\}$.
\end{lemma}

\begin{proof}[Sketch of proof]
  We know that $M\cap X_{\oout}\cap F^{-1}(c_{\bbot})$
  is an $(n-1)$--dimensional
 disc containing $(0,\ldots,0,0,1)$ and also the equations
$x_1=0,y_2=1$, $y_1=x_1^2+\cdots+x_{h+1}^2-x_{h+2}^2-\dots-x_n^2$ define an $(n-1)$--dimensional disc in $X_1$ containing $(0,\ldots,0,0,1)$. These
two discs are isotopic,
 so by an isotopy of $M$ we can achieve that they coincide.
Such an isotopy forces a change in $\xi$ below the level set of $F^{-1}(c_{\bbot})$, by the Isotopy Insertion Lemma~\ref{lem:isoinject}; compare Section~\ref{sub:fingernbhd2}.
\end{proof}

\subsection{Flowing the guiding curve to different level sets.}\label{sub:last_technical_finger}

Recall that we defined $\Xi_s$ to be the flow of $\xi$ and define
\[\ol{Z}_{\oout} := \bigcup_{s \in \R} \Xi_s(\ol{X}_{\oout})\cap F^{-1}[c_{\bbot},c_{\ttop}],\ \ Z_{\oout}=\Int\ol{Z}_{\oout}.\]
Then set
\[V_{\oout} := U_{\oout}\cup Z_{\oout}.\]
\begin{figure}
\input{pictures/setsX2andZ2.tex}
\caption{The sets $Z_{\oout}$ and $U_{\oout}$.}\label{fig:setsX2andZ2}
\end{figure}
Using the flow of $\xi$, we can extend the coordinate system from $U_{\oout}\cup X_{\oout}$ to $V_{\oout}$. Note that the fact that $\guiding(1)$ does not belong
to the descending membrane of $p_+$ implies that $Z_{\oout}$ is a product $X_{\oout}\times(c_{\bbot},c_{\ttop})$
and the vector field $\xi$ does not vanish on $Z_{\oout}$.
\begin{remark*}
  This is a place where we assume that there are no critical points of $F$ other than $p_-$ and $p_+$. Otherwise, we should strive
  to show that there are no trajectories connecting $\ol{X}_{\oout}$ with a critical point.
\end{remark*}

Recall that $U_{\oout}$ is a neighbourhood of the critical point $p_+$, whereas $X_{\oout}$, a neighbourhood of the part of $\guiding$ that is outside $U_{\oout}$, is a subset of the level set $F^{-1}(c_{\bbot})$; see Figure~\ref{fig:setsX2andZ2}.

We want to extend our coordinate system to $Z_{\oout}$.
The extension on $Z_{\oout}$ is given by the requirement that
the vector field $\xi$ is given by
\begin{equation}\label{eq:formofxi}
(-x_1,\ldots,-x_{h+1},x_{h+2},\ldots,x_n,y_1^2+y_2^2,0).
\end{equation}
The procedure for extending the coordinate system is analogous to extending the coordinate system to $X_{\oout}$ using  the
auxiliary vector field $\auxfield$,
which was described above.  

Our aim is to show that in the new coordinate system $F$ is given by
$-x_1^2-\dots-x_{h+1}^2+x_{h+2}^2+\dots+x_n^2+y_1$ (up to a constant).
In order to obtain this condition and \eqref{eq:formofxi} at the same time, we need to rescale $\xi$
by a positive factor. The procedure is as follows.


Suppose $\phi\colon Z_{\oout}\to (0,\infty)$ is a smooth function
and consider the vector field $\phi\xi$.
Take a point $u\in Z_{\oout}$.
Suppose there is a trajectory $\theta_\phi$ of $\phi\cdot \xi$ such that $\theta_\phi(0)=u'\in X_{\oout}$. Suppose
$u'$ has coordinates $(x'_1,\ldots,x'_n,y'_1,y'_2)$.
For $s\ge 0$, we set the coordinates of $\theta_\phi(s)$ to be
$(x_1^\phi,\dots,y_2^\phi)$, where 
\begin{equation}\label{eq:xi_phi}
  \begin{split}
    x_i^\phi& =e^{-s}x'_i\textrm{ for }i\le h+1,\ \ x_i^\phi=e^sx'_i\textrm{ for }i>h+1,\\ y_1^\phi&=y'_2\tan(sy'_2+\arctan(y'_1/y'_2)),\ \  y_2^\phi=y_2'.
  \end{split}
\end{equation}
For simplicity, we do not write explicitly the dependence of
$x_1^\phi,\dots,y_2^\phi$ on the variable $s$.
In these coordinates, 
the vector field $\phi\xi$ has the form
\begin{equation}\label{eq:formofxiphi}
  (-x_1^\phi,\ldots,-x_{h+1}^\phi,x_{h+2}^\phi,\ldots,x_n^\phi,(y_1^\phi)^2+(y_2^\phi)^2,0).
\end{equation}
To justify this, one must think of $(x'_1,\ldots,x'_n,y'_1,y'_2)$ as constants, and differentiate $x_i^\phi$ and $y_i^\phi$ with respect to the variable $s$. This yields:
\begin{align*}
\frac{{d}x_i^\phi}{{d}s} &= \frac{{d}}{{d}s}  (-e^{-s}x_i') = - x_i^\phi, \text{ for } i \leq h+1, \\
\frac{{d}x_i^\phi}{{d}s}& = \frac{{d}}{{d}s}  (e^{s}x_i') = x_i^\phi \text{ for } i > h+1, \\
\frac{{d} y_1^\phi}{{d}s}& = (y_2')^2 \sec^2\big(sy_2' + \arctan(y_1'/y_2')\big) = \\=&(y_2')^2\big(1 + \tan^2\big(sy_2' + \arctan(y_1'/y_2')\big)\big) = (y_2')^2 +(y_1')^2.
\end{align*}
Also note that at $s=0$ we recover $x_i^\phi= x_i'$ and $y_i^\phi = y_i'$. Therefore the path $s\mapsto (x_1^\phi,\dots,x_n^\phi,y_1^\phi,y_2^\phi)$
is the trajectory of the vector field \eqref{eq:formofxi}, but it is also, by the definition, the trajectory $\theta_\phi(s)$ of $\phi\xi$. Therefore, indeed, $\phi\xi$
is given by \eqref{eq:formofxi}. 

The next result adjusts the function $\phi$ in such a way that $F$ has the desired form in the new coordinates.

\begin{lemma}\label{lem:functionisgood}
  There exists a unique choice of smooth function $\phi \colon Z_{\oout} \to (0,\infty)$ such that, in the coordinate system $(x_1^\phi,\dots,y_2^\phi)$, $F$ has the form
\begin{equation}\label{eq:formofF}
  F=-(x_1^\phi)^2-\ldots-(x_{h+1}^\phi)^2+(x_{h+2}^\phi)^2+\dots+(x_n^\phi)^2+(y_1^\phi)+c_+,
\end{equation}
where we recall that $c_+=F(p_+)$.
\end{lemma}

\begin{proof}

Throughout the proof, whenever we talk of a trajectory of a vector field (usually $\xi$ or $\phi\xi$), we mean
the part of the trajectory that stays in $Z_{\oout}$.

Choose a point $u'\in X_{\oout}$, whose coordinates are
$x_1',\ldots,x_n',y_1',y_2'$. Notice that $y_2'\ge\varepsilon_{\oout}/2>0$. Define
an abstract function $\nu\colon\R_{\ge 0}\to\R$ by
\[
\nu(s)=-e^{-2s}({x'_1}^2+\ldots+{x'_{h+1}}^2)+e^{2s}({x'_{h+2}}^2+\ldots+{x'_n}^2)+y_2'\tan(s{y_2'}+\arctan(y_1'/y_2')).
\]
Observe that this is always an increasing function.

Let $\theta$ be the trajectory of $\xi$ such that $\theta(0)=u'$.
The functions $s\mapsto F(\theta(s))$ and $s\mapsto \nu(s)$ are
both smooth, strictly increasing functions of a real positive argument that agree at $s=0$ and $\nu$ is unbounded. By one-dimensional
inverse function theorem, there exists an increasing function $\psi\colon\R_{\ge 0}\to\R_{\ge 0}$ with $\psi(0)=0$ such
that
\begin{equation}\label{eq:def_of_psi}
  F(\theta(\psi(s)))=\nu(s).
\end{equation}
Note that the function $\psi$ depends implicitly on the choice of $u'$. The formula
\begin{equation}\label{eq:def_of_phi}
  \phi(\theta(\psi(s)))=\frac{d}{ds}\psi(s)
\end{equation}
defines the value of $\phi$ on the trajectory of $\xi$ through $u'$.
Different choices of initial points $u'$ allow us
to define $\phi$ on the whole of $Z_{\oout}$. With this definition, a straightforward argument involving
smooth dependence of solutions on intial conditions, shows that $\phi$ is a smooth function on the whole of $Z_{\oout}$.

We claim that with the choice of $\phi$ from \eqref{eq:def_of_phi}
\begin{equation}\label{eq:theta_is_psi}
  \theta_\phi(s)=\theta(\psi(s)).
\end{equation}
To see this, notice that the two functions are equal at $s=0$. We calculate the differentials of both sides. By the definition of $\theta_\phi$:
\begin{equation}\label{eq:theta_is_psi2}
  \frac{d}{ds}\theta_\phi(s)=\phi(\theta_\phi(s))\xi(\theta_\phi(s))
\end{equation}
On the other hand:
\begin{equation}\label{eq:theta_is_psi3}
  \begin{split}
    \frac{d}{ds}\theta(\psi(s))=&\frac{d}{ds}\psi(s)\theta'_s(\psi(s))\stackrel{(*)}{=}
    \phi(\theta(\psi(s)))\theta'(\psi(s))\stackrel{(**)}{=}\\
    &=\phi(\theta(\psi(s)))\xi(\theta(\psi(s))).
\end{split}
\end{equation}
Here (*) is precisely \eqref{eq:def_of_phi}, whereas (**) holds because $s\mapsto \theta$ is the trajectory of $\xi$.
Therefore the two functions, $\theta_\phi(s)$ and $\theta(\psi(s))$ satisfy
the same differental equation
\begin{equation}\label{eq:flow_def}\frac{d}{ds}y(s)=\phi(y(s))\xi(y(s))\end{equation} with the same initial condition.
Hence, they are equal. Note that the differential equation \eqref{eq:flow_def} is exactly
the flow of the vector field $\phi\xi$.

Combining \eqref{eq:def_of_psi} with \eqref{eq:theta_is_psi} we see that
\[F(\theta_\phi(s))=\nu(s).\]
With the choice of coordinate system as in \eqref{eq:xi_phi} we note that $F$ has the desired form.
\end{proof}

By construction, the function $\phi$ is uniquely determined on $Z_{\oout}$, in particular $\phi\equiv 1$ on $U_{\oout}\cap X_{\oout}$.
We extend $\phi$ by $1$ to the whole of $X_{\oout}$ and to a smooth positive function on the whole of $\Omega$ (which we take equal to $1$ away from
a small neighbourhood of $Z_{\oout}\cap X_{\oout}$). We rescale now $\xi$ to $\phi\xi$; this preserves all the membranes.
We also drop superscript $\phi$ from the coordinates $x_1,\dots,y_2$.
By Lemma~\ref{lem:functionisgood}
the function $F$ has the form \eqref{eq:formofF} on the whole of $Z_{\oout}$.

\smallskip
We have now completed the definition of the set $V_{\oout}$ and the coordinate
system on it. In what follows, we will refer to $V_{\oout}$ as the \emph{outer shell}.

\subsection{Properties of the outer shell}\label{sub:fingernbhd7}

Before we pass to the construction of $V_{\mmid}$ and $V_{\iinn}$, we sum up the construction of $V_{\oout}$.
It is an open subset of $\Omega$, endowed with a coordinate system $(x_1,\ldots,x_n,y_1,y_2)$, such that the following holds.
\begin{enumerate}[label=(V-\arabic*)]
  \item\label{item:V1} The vector field $\xi$ is given by $(-x_1,\ldots,-x_{h+1},x_{h+2},\ldots,x_n,y_1^2+y_2^2,0)$. In particular
$\Hd(p_+)$ is given by $\{x_{h+2}=\ldots=x_n=0,y_2=0,y_1\le 0\}$.
\item\label{item:V2} The manifold $M$ is given by $\{x_1=0,y_2=1\}\cup\{y_1=0,y_2=0\}$. The second component contains the critical
  point $p_+$ with coordinates $(0,\ldots,0)$.
\item\label{item:V3} The function $F$ is given by $-x_1^2-\ldots-x_{h+1}^2+x_{h+2}^2+\dots+x_n^2+y_1+c_+$.
\item\label{item:V4} The set $V_{\oout}$ is a grim neighbourhood of $p_+$; in particular the entrance set of $V_{\oout}$ is contained
  in $F^{-1}(c_{\bbot})$ and the exit set belongs to $F^{-1}(c_{\ttop})$.
\end{enumerate}

\section{The middle and the inner shells}\label{sec:middle_and_inner}

\begin{figure}
\input{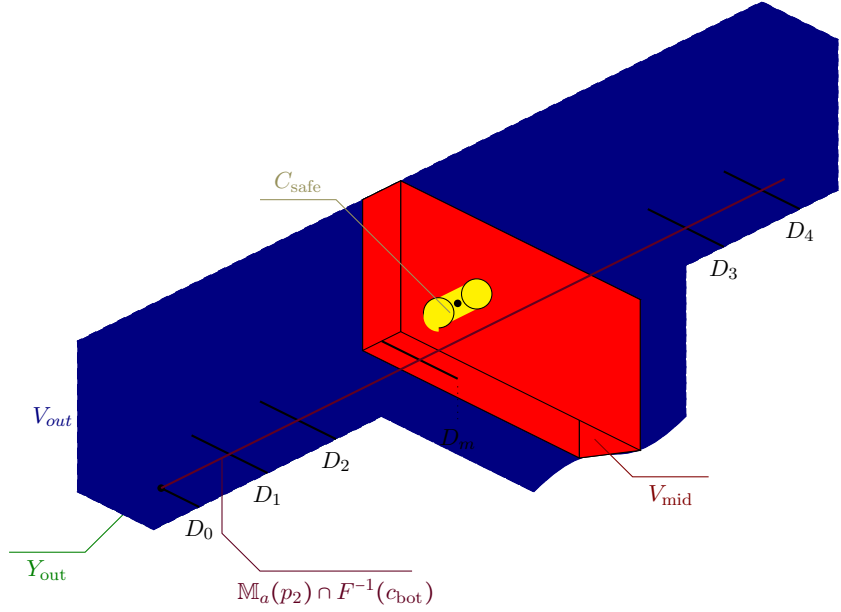}
\caption{The middle shell.}\label{fig:oneless}
\end{figure}

\subsection{Definining the middle shell}
The middle and inner shells $V_{\mmid}$ and $V_{\iinn}$ are constructed as open subsets satisfying $\ol{V}_{\iinn}\subseteq V_{\mmid}$, $V_{\mmid}\subseteq V_{\oout}$.
The objective of defining $V_{\iinn}$ and $V_{\mmid}$
is that the finger move will be done inside $V_{\iinn}$, and the vector field $\xi$ will be changed inside $V_{\mmid}$. The coordinate system
was defined on a larger set $V_{\oout}$ so that we can control  all the trajectories of $\xi$ reaching $p_+$.

Set
\[V_{\mmid}: =\{(x,y)\in V_{\oout} \colon |x_1|< \efng\};\]
see Figure~\ref{fig:oneless}.
The main objective will be to fix the quantity $\efng$. Before we do this, let us denote.
\[\ol{Y}_{\mmid}:= \ol{V}_{\mmid}\cap F^{-1}(c_{\bbot}),\ \ Y_{\mmid}=\Int Y_{\mmid},\]
where the interior is taken in the level set $F^{-1}(c_{\bbot})$; see Figure~\ref{fig:ymid}. 
\begin{figure}
\input{pictures/ymid.tex}
\caption{The set $Y_{\mmid} := V_{\mmid}\cap F^{-1}(c_{\bbot})$.}\label{fig:ymid}
\end{figure}

As a first step towards adjusting the value of $\efng$, we prove the following result.
\begin{lemma}\label{lem:basic_efng}
  There exists $\efng^0>0$ such that if $\efng<\efng^0$, the middle set
  the set $Y_{\mmid}$ is disjoint from the discs $D_i$ if $i\neq m$; see Figure~\ref{fig:ymid}.
\end{lemma}
\begin{proof}
  By \eqref{eq:maft},
  the disc $D_i$ is contained in $\{x_1=s_{i,1}\}$, and for $i\neq m$, $s_{i,1}\neq 0$. We take $\efng<|s_{i,1}|$ for all $i\neq m$.
\end{proof}

The set $V_{\mmid}$ is definitely not a grim neighbourhood of $V_{\mmid}\cap F^{-1}(c_{\bbot})$, because it does
not contain the whole of $\Hd(p_+)\cap F^{-1}(c_{\bbot})$. However,
if we restrict to the subspace $\{x_1=0\}$, then $V_{\mmid}$ turns out to be a grim neighbourhood of $p_+$.
We record this property for future use.

\begin{lemma}\label{lem:vmid_is_grim}
The set $V_{\mmid}\cap\{x_1=0\}$ is a grim neighbourhood of $p_+$ in $V_{\oout}\cap\{x_1=0\}$.
\end{lemma}

\begin{proof}
  By definition we have $V_{\mmid}\cap\{x_1=0\}=V_{\oout}\cap\{x_1=0\}$. The set $V_{\oout}$ is the grim neighbourhood of $p_+$, see \ref{item:V4}.
The hypersurface $\{x_1=0\}$ is preserved by $\xi$. The statement follows quickly.
\end{proof}

\subsection{Trajectories starting from $D_i$}\label{sub:traj}
The choice of $\efng$ is determined by the control of trajectories starting at $D_i$ and coming close to $p_+$. We will consider
two subsets $\Csafe$ and $\Cvs$ (`vs' standing for `very safe') and shrink $\efng$ in such a way that any trajectory from $D_i$ enters $V_{\mmid}$ through $\Cvs$
and, if it exits $\Csafe$ later on, it does it above the level set $c_+$. The finger move will not alter the vector field outside
of $V_{\mmid}\setminus\Csafe$. This means that the finger move will not create any new trajectories from $D_i$ to $p_+$; compare Theorem~\ref{thm:oneless}.

To begin with, let $\esafe,\evs$ be two positive numbers with $\esafe>\evs$. Define
\begin{align}\label{eq:csafe_def}
  \Csafe&=\{x_2^2+\cdots+x_n^2+y_1^2+y_2^2\le \esafe^2, x_1\in[-\efng,\efng]\}\\
  \Cvs&=\{x_2^2+\cdots+x_n^2+y_1^2+y_2^2\le \evs^2, x_1\in[-\efng,\efng]\}.
\label{eq:cvs_def}
\end{align}
The purpose of Subsection~\ref{sub:traj} is to prove the following two results.
The first one tells, that we can find a sufficiently thin neighbourhood of $\{x_1=0\}$ (controlled by the parameter $\efng$)
a neighbourhood $\Cvs$ of the critical point, such that if a trajectory starts from a point $D_i$ and hits $V_{\mmid}$,
then either it hits $\Cvs$ (which is unavoidable, since there are trajectories of $\xi$ from $D_i$ to $p_+$),
or it enters $V_{\mmid}$ at the level set, where it has no chance to hit $p_+$ afterwards.
\begin{lemma}\label{lem:safeentry}
  There exists $\efng^0>0$ such that for any $\efng<\efng^0$ there exists 
  $\evs^0>0$ with the property that if $\evs<\evs^0$, then the following holds.

If $w_1\in D_i$, $i\neq m$ and the trajectory of $\xi$ through $w_1$
enters $V_{\mmid}$ through $w_2\in\partial V_{\mmid}$ $($we require that $w_2$ is the point of the first entrance of the trajectory to $V_{\mmid}$ after it hits $w_1)$, then:
\begin{itemize}
\item[(a)] either $w_2\in \Cvs$;
\item[(b)] or $F(w_2)>c_+.$
\end{itemize}
\end{lemma}
\begin{proof}
Choose a point $w_1\in D_i$. Write its coordinates as
\[w_1=(s_{i,1},\ldots,s_{i,h+1},x_{h+2},\ldots,x_{n},y_{1,1},y_2),\]
where
\begin{equation}\label{eq:y1,1}
y_{1,1}=c_{\bbot}-c_++s_{i,1}^2+\cdots+s_{i,h+1}^2-x_{h+2}^2-\cdots x_n^2;
\end{equation}
see \eqref{eq:maft} and \eqref{eq:maft_2}.
Denote
\[\Delta_-=s_{i,1}^2+\cdots+s_{i,h+1}^2,\ \ \Delta_+=x_{h+2}^2+\cdots+x_n^2.\]
Note that $\Delta_-$ is fixed for all the points in $D_i$, while $\Delta_+$ can vary. There is
a uniform upper bound on $\Delta_-$ and $\Delta_+$ on the whole of $D_0\cup\dots\cup D_r\setminus D_m$.

Let $\gamma(s)$ be the trajectory of $\xi$ such that  $\gamma(0)=w_1$.
If $\gamma$ leads out of $V_{\oout}$ without
hitting $V_{\mmid}$, then, by \ref{item:V4}, it leads out at the level set $c_{\ttop}$, so it never reaches $p_+$; in that case there
is nothing to prove. So assume $\gamma$ reaches $V_{\mmid}$ and let
$\sh$ be the time of the first entrance of $\gamma$ into $V_{\mmid}$. Set $w_2=\gamma(\sh)\in\partial\ol{V_{\mmid}}$.

Given \ref{item:V1}, the coordinates of $w_2$ are calculated as follows,
\begin{equation}\label{eq:w2_entry}
  w_2=\gamma(\sh)=(e^{-\sh}s_{i,1},\ldots,e^{-\sh}s_{i,h+1},e^{\sh}x_{i,h+2},\ldots,e^{\sh}x_{i,n},y_1(\sh),y_2).
\end{equation}
where $y_1(\sh)$ can also be determined, compare \eqref{eq:xi_phi}, but we do not use the explicit formula here.

By the definition of $V_{\mmid}$, the $x_1$-coordinate of $w_2$ is
$\pm\efng$. Therefore
\[\sh=\ln\frac{|s_{i,1}|}{\efng}.\]
Note that by making $\efng$ sufficiently small, we can make $\sh$ as large as we please.

Using \eqref{eq:w2_entry} and \ref{item:V3}, we compute
\begin{equation}\label{eq:fw2}
  F(w_2)= - e^{-2\sh}\Delta_-+e^{2\sh}\Delta_+ +y_1(\sh)+c_+.
\end{equation}
In the following estimate we recall $\sh$ is a function of $w_1$, but $\sh$ can be made uniformly large for all $w_1\in D_i$ (and for all
indices $i\neq m$) by choosing sufficiently small
$\efng$

\begin{lemma}\label{lem:stbound}
For any $\delta>0$ there exists $t_0>0$ such that for every $w_1 \in D_i$, if $\sh = \sh(w_1)>t_0$, then $y_1(\sh)>-\delta-\Delta_+$.
\end{lemma}

\begin{proof}
  The proof relies on explicit calculations. While we could use the formula as in \eqref{eq:xi_phi} for $y_1$,
  we prefer to use more legible, but more rough, estimates.
  Let $\upsilon(s)$ be the $y_1$-coordinate of the point $\gamma(s)$. Note that $\upsilon(s)$ satisfies:
\begin{equation}\label{eq:dups}
  \begin{split}
\frac{d}{ds}\upsilon&=\upsilon^2+y_2^2\\
\upsilon(0)&=c_{\bbot}-c_++\Delta_--\Delta_+.
\end{split}
\end{equation}

The first equation holds because the $y_1$ coordinate of $\xi$ is $y_1^2 + y_2^2$. The second holds by \eqref{eq:y1,1}.
We have $\frac{d}{ds}\upsilon(s)\ge \upsilon^2$.  Let $\upsilon_0(s)$ be the solution to the ODE $\frac{d}{ds}\upsilon_0(s) = \upsilon_0^2$ with initial condition $\upsilon_0(0) = \upsilon(0).$
That is, \[\upsilon_0(s)= \frac{1}{\frac{1}{\upsilon(0)}- s}.\]
Gronwall's inequality from the classical theory of ODEs implies that $\upsilon(s) \geq \upsilon_0(s)$ for $s \geq 0$, so 
\[\upsilon(\sh)\ge \frac{1}{\frac{1}{\upsilon(0)}-\sh}.\]
Now $\frac{1}{\frac{1}{c_{\bbot}-c_++\Delta_--\Delta_+}-\sh}$ tends to zero as $\sh\to\infty$. 
Hence, for sufficiently large $\sh$ we have $\upsilon(\sh) \ge -\delta-\Delta_+$.
\end{proof}

Resuming the proof of Lemma~\ref{lem:safeentry},
choose $\delta<\evs^2/10$ and adjust $\efng$ according to Lemma~\ref{lem:stbound}. Recall that $\Delta_-$ is constant on $D_i$ and, making $\efng$ smaller if necessary,
choose $\efng$ in such a way that $e^{-2\sh}\Delta_{-}<\evs^2/10$ as well.
Then \eqref{eq:fw2} implies that
\begin{align}\label{eq:fvs2}
  F(w_2) = - e^{-2\sh}\Delta_-+e^{2\sh}\Delta_+ +y_1(\sh)+c_+ & \geq - e^{-2\sh}\Delta_-+e^{2\sh}\Delta_+ -\delta - \Delta_+ +c_+ \\&  \geq (e^{2\sh}-1)\Delta_+ +c_+-\evs^2/5. \notag
\end{align}

Suppose $w_2\notin\Cvs$.
Recall that $\Cvs$ was given by \eqref{eq:cvs_def}, that is
$|x_1|\le \efng$ and
$x_2^2+\dots+y_2^2\le\evs^2$.
If $w_2\notin\Cvs$, by
\eqref{eq:w2_entry}, we obtain
the following conditions:
\begin{equation}\label{eq:w2_not_in_cvs}
  \left|e^{-\sh}s_{i,1}\right|>\efng,\textrm{ or }
  e^{-2\sh}\Delta_-+e^{2\sh}\Delta_++y_1(\sh)^2+y_2^2>\evs^2.
\end{equation}
However, we know that $w_2$ is on the boundary of $V_{\mmid}$,
so $|e^{-\sh}s_{i,1}|=\efng$. Therefore, the second inequality
in \eqref{eq:w2_not_in_cvs} has to hold.
There are four terms on the left-hand side of the second inequality,
so at least one of them is greater than one fourth of the right-hand side.
We will list relevant cases and try to show that either
there is an immediate contradiction (using the fact that $\sh$ is large), or that $F(w_2)>c_+$.
These cases are:
\begin{itemize}
\item $e^{-2\sh}\Delta_->\evs^2/4$. This is impossible if $\sh>s_0$ for some fixed $s_0$, which can be made uniform
  for all $w_1$.
\item $e^{2\sh}\Delta_+>\evs^2/4$. Then $(e^{2\sh}-1)\Delta_+>\frac29\evs^2$ if $\sh$ is large,
  and so by \eqref{eq:fvs2} we conclude that $F(w_2)>c_+$.
\item $y_1(\sh)<-\sqrt{\evs^2/4} = -\evs/2$. But then by Lemma~\ref{lem:stbound} we infer that $-\evs/2 > -\delta - \Delta_+> -\evs^2/10 - \Delta_+$, which implies that $\Delta_+\ge \evs/2-\evs^2/10$. This
is greater than $\evs^2/4$ if $\evs$ is small. We reduce to the previous case.
\item $y_1(\sh)>\sqrt{\evs^2/4} = \evs/2$ and $e^{-2\sh}\Delta_-<\evs^2/4$.
  Then by \eqref{eq:fw2} we obtain that $F(w_2)\ge c_++\evs/2-\evs^2/10>c_+$, again provided $\evs$ is small.
\item $y_2^2(\sh)\ge \evs^2/4$. The $y_2$-coordinate is constant along the trajectory of $\xi$, hence the $y_1$ coordinate of $\gamma(s)$
  satisfies the inequality
  $\frac{d}{ds}\upsilon(s)\ge\evs^2/4$ (compare \eqref{eq:dups})
  hence $\upsilon(\sh)\ge \upsilon(0)+\sh\evs^2/4$. With $\sh$ sufficiently large we arrive
  at the previous case, that is $y_1(\sh)>\evs/2$.
\end{itemize}
We have assumed that $w_2$ is not contained in $\Cvs$, and have shown that $F(w_2) > c_+$, as desired.
\end{proof}

The second result after Lemma~\ref{lem:safeentry} also adjusts the value of $\efng$, but with respect to another parameter.
We fix a neighbourhood $\Csafe$ as above, whose size is controlled by a parameter $\esafe$. We prove we can choose $\efng$ small enough
that if a trajectory enters $\Cvs$ then on exiting the larger set $\Csafe$, it is at a level set above $p_+$, so that it has
no chances to reach $p_+$ in the future.
\begin{lemma}\label{lem:safeexit}
For any $\esafe>0$, there exists $\evs^0>0$ such that if $\evs<\evs^0$, then
if a trajectory of $\xi$  enters $V_{\mmid}$ through $w_2\in \Cvs$, and exits $\Csafe$ at a point $w_3$,
then $F(w_3)>c_+$.
\end{lemma}
\begin{proof}
  We let the coordinates of $w_2$ be
\[w_2=(x_1,\ldots,x_n,y_1,y_2).\]
We also set
\[\Delta_-=x_1^2+\cdots+x_{h+1}^2,\ \Delta_+=x_{h+2}^2+\cdots+x_n^2.\]
As $w_2\in\Cvs$ we have that
\begin{equation}\label{eq:asw2}\Delta_-,\,\Delta_+,\, y_1^2,\,y_2^2\le\evs^2,\ \
|x_1|<\efng.
\end{equation}
The trajectory of $\xi$ through $w_2$ leaves $\Csafe$.
We let $s_0$ be the time after which this trajectory leaves $\Csafe$ for
the first time and let $w_3$ be the point at which this trajectory leaves $\Csafe$.
The coordinates of $w_3$ are clearly
\begin{equation}\label{eq:w3_exit}
  w_3=(e^{-s_0}x_1,\ldots,e^{-s_0}x_{h+1},e^{s_0}x_{h+2},\ldots,e^{s_0}x_{n},\wt{y}_1,y_2).
\end{equation}
Here, $\wt{y}_1$ is the $y_1$-coordinate of $w_3$.
We have $\wt{y}_1\ge y_1$ and $y_1\ge -\evs$ by \eqref{eq:asw2}, so
\begin{equation}\label{eq:asw21}
  \wt{y_1}\ge -\evs.
\end{equation}
By property \ref{item:V3} combined with \eqref{eq:w3_exit}:
\begin{equation}\label{eq:fvs3}
  F(w_3)-c_+= e^{2s_0}\Delta_+-e^{-2s_0}\Delta_-+\wt{y}_1.
\end{equation}
In \eqref{eq:csafe_def} $\Csafe$ is given by two conditions: $|x_1|\le\efng$, $x_2^2+\dots+\dots+y_2^2\le\esafe^2$.
As $w_3\in\partial\Csafe$, one of the two inequalities must be equality.
We argue as in the proof of Lemma~\ref{lem:safeentry}: either the first inequality is an equality, or one of the four terms
$x_2^2+\dots+x_{h+1}^2$, $x_{h+2}^2+\dots+x_n^2$, $y_1^2$, $y_2^2$ at $w_3$ should be greater or equal than $\esafe^2/4$.
The coordinates of $w_3$ are given by \eqref{eq:w3_exit}, so $w_3\in\partial\Csafe$ implies that one of the following holds.
\begin{itemize}
  \item[(a)] $e^{-2s_0}\Delta_-\ge\esafe^2/4$;
  \item[(b)] $e^{2s_0}\Delta_+\ge\esafe^2/4$;
  \item[(c)] $\wt{y}_1\ge\esafe/2$;
  \item[(d)] $\wt{y}_1\ge-\esafe/2$;
  \item[(e)] $|y_2|\ge\esafe/2$;
  \item[(f)] $|e^{-2s_0}x_1|\ge\efng$.
\end{itemize}
We now show that among these possibilities some are self-contradictory (given \eqref{eq:asw2}),
the others lead to $F(w_3)>c_+$.
\begin{itemize}
  \item Case (a) is impossible, because $\Delta_-<\evs^2$; compare \eqref{eq:asw2}.
  \item If (b) holds, then in \eqref{eq:fvs3} we use that $\Delta_-\le\evs^2$, $e^{s_0}>1$ and \eqref{eq:asw21}
to get $F(w_3)-c_+\ge \esafe^2-\evs^2-\evs$. If $\evs$ is small enough compared with $\esafe$, we have $F(w_3)-c_+>0$.
  \item In case (c) we have $F(w_3)-c_+\ge \esafe-e^{-2s_0}\Delta_-\ge \esafe-\evs^2$. This difference is positive
if $\evs$ is sufficiently small.
  \item Case (d) contradicts \eqref{eq:asw21}.
  \item Case (e) cannot occur, because $y_2$ is constant on the trajectory, so we must have $|y_2|\le \evs$.
  \item Finally, we discuss case (f). Note that $|x_1|<\efng$ by \eqref{eq:asw2}. As $s_0>0$,
we have that $|e^{-2s_0}x_1|<\efng$.
\end{itemize}
We have shown that $F(w_3)>c_+$, so the lemma is proved.
\end{proof}

\begin{condition}\label{con:efng}
  The value of $\efng$ is chosen in such a way that the
statements of Lemmas~\ref{lem:basic_efng} and~\ref{lem:safeentry} hold.
\end{condition}
The values of $\esafe$ and $\evs$
will be adjusted in later subsections according to Lemmas~\ref{lem:safeentry} and~\ref{lem:safeexit}.

\subsection{The inner shell $V_{\iinn}$}\label{sub:Vinn}

We will also define the set $V_{\iinn}$. First introduce the following piece of notation.
For every $\alpha\in[0,1]$ we denote
\begin{equation}\label{eq:tc}
c_{-\alpha}=c_++(c_{\bbot}-c_+)\alpha \textrm{ and }c_{+\alpha}=c_++(c_{\ttop}-c_+)\alpha.
\end{equation}

For $\eeta>0$  consider a cylinder
\begin{equation}\label{eq:cfng_def}
  V_{\iinn}(\eeta)=\{x_1^2+\cdots+x_n^2\le\eeta^2, y_1\in [c_{-1/2}-c_+,c_{+1/2}-c_+], y_2\in[-\eeta,1+\eeta]\} \subseteq V_{\oout}.
\end{equation}
\begin{lemma}\label{lem:eta_small_enough}
  For $\eeta$ small enough, the interior of $V_{\iinn}(\eeta)$ is a subset of $V_{\mmid}$.
\end{lemma}
\begin{proof}
  The intersection of all of the $V_{\iinn}(\eeta)$ is the 2-dimensional rectangle $(0,\dots,0,t,s)$, $t\in[c_{-1/2}-c_+,c_{+1/2}-c_+]$, $s\in[0,1]$. This
  rectangle belongs to $V_{\mmid}$ and we can choose a neighbourhood $U_{\fng}$ of the rectangle contained in $V_{\mmid}$. For $\varepsilon_{R_0}$ sufficiently small,
  the interior of $V_{\iinn}(\varepsilon_{R_0})$ (interior in $F^{-1}[c_{-1/2},c_{+1/2}]$) is contained in $U_{\fng}$. Hence, for $\eeta=\varepsilon_{R_0}/2$,
  the whole of $V_{\iinn}(\eeta)$ belongs to $V_{\mmid}$.
\end{proof}
\begin{condition}\label{con:eeta}
From now on set $\eeta$ in such a way that
\[V_{\iinn}:=V_{\iinn}(\eeta)\]
is contained in $V_{\mmid}$; compare Figure~\ref{fig:vinn}.
\end{condition}


\begin{figure}
  \input{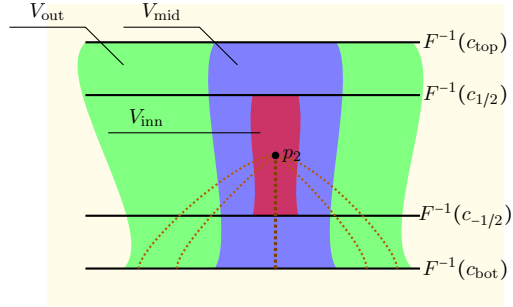}
  \caption{The position of the sets $V_{\iinn}$, $V_{\mmid}$ and $V_{\oout}$ presented in a two-dimensional schematic. The dotted lines represent the trajectories
  of $\xi$ connecting $p_-$ and $p_+$.}\label{fig:vinn}
\end{figure}

The purpose of the sets $V_{\iinn},V_{\mmid}$ and $V_{\oout}$ will be as follows.
\begin{itemize}
\item $V_{\iinn}$ is the set where the finger move is conducted.
\item $V_{\mmid}$ is the set where the vector field $\xi$ is changed after the finger move, so that it is still a grim vector field of some function.
\item $V_{\oout}$ is the set where we control the trajectories of the vector field $\xi$.
\end{itemize}

\section{The finger move}\label{sec:fingermove}

Having defined a suitable neighbourhood of the guiding curve in Subsection~\ref{sec:coorsystem}, we
pass to an explicit description of
the finger move in Subsection~\ref{sub:highfinger}, that is, of the isotopy $G_{1+\tau}$. Note that we assume that $G_\tau\equiv G$ for
$\tau\in[0,1]$.
In Subsection~\ref{sub:newvector} we show how to change the vector field $\xi$ to a new vector field $\xih$.
In Subsection~\ref{sub:critical} we study the critical points of $\xih$ and show that $\partial_{\xih}F\ge 0$,
which shows that $\xih$ is a good candidate for a grim vector field.
It turns out that $\xih$ vanishes on the newly created sphere of double points. In fact, the function $F$ is not an immersed Morse function on $G_2(M)$
(it has non-isolated critical points on the second stratum after the finger move).
Therefore, we define an explicit perturbation of both $F$ and
$\xih$ near that sphere of double points. This is done
in Subsection~\ref{sub:explicit_perturbation}, where we define the path $F_\tau$ for $\tau\in[0,1]$ and a perturbation of $\xih$, denoted by $\xit$.
Property \ref{item:FM_two_more} of Finger Move Theorem is proved in Subsection~\ref{sub:explicit_perturbation}, and \ref{item:FM_support_of_F2} follows immediately from the construction
of $F_\tau$.
The key property of $\xih$, namely \ref{item:FM_one_less}, is proved in Section~\ref{sec:membranesofxi}.

\subsection{Explicit description of the finger move}\label{sub:highfinger}

As described in Section \ref{sec:61} via an explicit example, we will start 
at the level $c_{+1/4}$.
We will push $M$ along the guiding curve $\guiding$.
The length of the push will vary as the height varies.


%
By \ref{item:V2}, the intersection of $M$ with $V_{\oout}$ consists of two connected components. The component containing $p_+$ is
\[\Mtwo:=M\cap U_{\oout}.\]
The second component belongs to $\{y_2=1\}$. We consider its part contained in $V_{\iinn}$.
\[\Mone=M\cap V_{\iinn}\cap\{y_2=1\}.\]
Now we define the finger move, that is, the path $G_{1+\tau}$
of immersions such that $G_1=G$.
Choose $u\in N$. If $G(u)\notin\Mone$, we set $G_{1+\tau}(u)=G(u)$ for all $\tau\in[0,1]$.
If $G(u)=(x_1,\dots,y_2)\in\Mone$, we set
\begin{equation}\label{eq:Gtau_def_finger}
  G_{1+\tau}(u) := (x_1,\dots,y_1,(1-\tau)y_2+\tau\ftheta(x_1,\dots,x_n,y_1))\in V_{\iinn},
\end{equation}
$\tau \in [0,1]$, for a function $\ftheta \colon \R^{n+1} \to \R$
which we will define shortly.  Note that as $F$ does not depend on the $y_2$ coordinate, see property \ref{item:V3}, we immediately obtain.

\begin{lemma}\label{lem:independent}
  The composition $F\circ G_{1+\tau}$ is equal to $F\circ G$.
\end{lemma}

By definition, the finger move replaces the original part of $M$, $\Mone$ by
\begin{equation}\label{eq:mprimeta}
\Mfng:=\{x_1=0, y_2=\ftheta(x_1,\ldots,x_n,y_1)\},
\end{equation}
that is to say $G_2(N) = (M \sm \Mone) \cup \Mfng$.
We define
$\Mnew=G_2(N)$.
To complete the construction, we need to define $\ftheta$. Set
\begin{equation}\label{eq:Theta}
  \ftheta(x_1,\dots,y_1)=\fa(y_1)\min\{x_1^2+x_2^2+\dots+x_n^2,\eeta^2\}+\fb(y_1),
\end{equation}
where $\fa \colon \R \to [0,\infty)$ and $\fb \colon \R \to \R$ 
are two functions such that
\begin{equation}\label{eq:fafb}\fa(y_1)\eeta^2+\fb(y_1)\equiv 1.
\end{equation}
The function $\fa$ is nonnegative.   
When all $x_i=0$, the function is $\fb(y_1)$.
Moreover we require that:
\begin{condition}\label{con:fa_fb}\
\begin{enumerate}[label=(A-\arabic*)]
  \item $\fb\equiv 1$ (and so $\fa\equiv 0$) for $y_1\notin[c_{-1/4}-c_+,c_{+1/4}-c_+]$; \label{item:A1}
  \item $\fb$ has a minimum at $y_1=0$, it is non-increasing for $y_1>0$ and non-decreasing for $y_1<0$;\label{item:A2}
  \item $\fb(0)<0$ and $|\fb(0)|<\eeta$, so that $\Mfng\subseteq V_{\iinn}$;\label{item:A3}
  \item $\fb'(y)\neq 0$ whenever $\fb(y)=0$.\label{item:A4}
\end{enumerate}
\end{condition}
A graph of an exemplary function for $\fb$ is presented in Figure~\ref{fig:beta}. A schematic of the finger move is depicted in Figure~\ref{fig:schematicfinger}.

\begin{figure}
  \input{pictures/beta.tex}
  \caption{A graph of $\fb \colon \R \to \R$.}\label{fig:beta}
\end{figure}
\begin{figure}
  \input{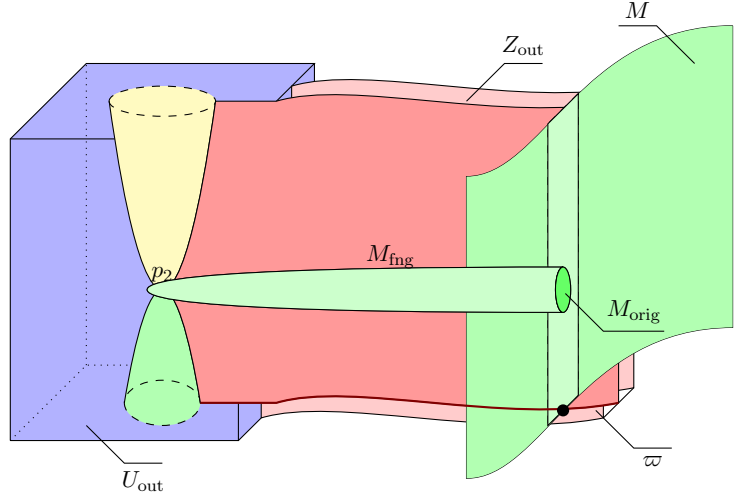}
  \caption{A schematic of the finger move.}\label{fig:schematicfinger}
\end{figure}

\begin{remark*}
As the reader may have noticed, $\Mnew$ has corners along the common boundary of $\Mfng$ and $\Mone$.
These corners can be avoided by taking a smooth approximation of $\min(x_1^2+x_2^2+\ldots+x_n^2,\eeta^2)$ in \eqref{eq:mprimeta}. We will not do that explicitly because
it would make the exposition even more complicated.
\end{remark*}

Note that $\Mnew$ has extra double points. In fact, we have the following result towards proving property~\ref{item:connected}
\begin{lemma}\label{lem:gtau_is_immersion}
  The map $G_{1+\tau}$ is an immersion and it is regular for all values of $\tau$ except for a single value of $\tau$.
\end{lemma}
\begin{proof}
  Since $G_1$ is an immersion, and $G_\tau$ changes only the coordinate $y_2$, which was constant on $\Mone$,
  it is clear that $G_{\tau}$ is an immersion.
  In fact, an easy calculation shows that the double point set of $G_\tau(\Mone)\cap \Mtwo$
  intersect along the sphere $x_1=y_1=y_2=0$ and $\{x_2^2+\dots+x_n^2=\varepsilon_R^2-\frac{1}{\tau \fa(0)}\}$.
  The only $\tau$ for which the immersion is not regular, is when the radius of the sphere is zero, that is  $\tau=\varepsilon_R^2/\fa(0)$.
\end{proof}
From this lemma we deduce that
$\Mtwo\cap\Mfng$ is given by
\begin{equation}\label{eq:sigmadef}
  \Sigma=\{x_2^2+\dots+x_n^2=\erho^2\}\cap\{x_1=y_1=y_2=0\},\end{equation}
  where
  \begin{equation}\label{eq:rhodef}
\erho^2=-\frac{\fb(0)}{\fa(0)}.
\end{equation}
The double point set is an $(n-2)$-dimensional sphere.
Note that the two branches of $M$ intersect transversely.

The last observation in this subsection prove~\ref{item:connected}.
\begin{lemma}\label{lem:item_connected}
  The components $\Mone$ and $\Mtwo$ of $M\cap V_{\oout}$ belong to the same connected component of $M$.
\end{lemma}
\begin{proof}
  The part $\Mtwo$ is a subset of the connected component of $M$ containing $p_+$. Note that $p_+$ and $p_-$
  are connected by a single trajectory of $\xi$, hence $\Mtwo$ belongs to the same connected component of $M$ as $p_-$.
  The intersection of the ascending membrane $\Ha(p_-)$ with $M$ belongs to the same connected component of $M$ as $p_-$.
  The endpoint of the guiding curve $\guiding(1)$ belongs to the same connected component of $M$ as $p_-$, because it belongs
  to the ascending membrane. But $\Mone$ contains $\guiding(1)$ by construction.
\end{proof}

\subsection{Construction of $\xih$}\label{sub:newvector}

The next goal is to modify the vector field $\xi$  to obtain a vector field $\xih$, which will be perturbed to $\xit$ later on.
The definition of $\xih$ is more important, because it will already have the property \ref{item:FM_one_less} from Theorem~\ref{thm:new_finger_move} -- one fewer trajectory between $p_-$ and $p_+$. Note that
\begin{equation}\label{eq:kappa0}
\partial_\xi\ftheta(x_1,\ldots,y_2)=0\textrm{ if } x_1^2+x_2^2+\ldots+x_n^2\ge\eeta^2,
\end{equation}
which follows immediately from \eqref{eq:Theta}, since then $\ftheta$ is constant.
It is easy to see that the vector field
\begin{equation}\label{eq:wtxi_def}
\wt{\xi} := (-x_1,\ldots,-x_{h+1},x_{h+2},\ldots,x_n,y_1^2+y_2^2,\partial_\xi\ftheta)=\xi+\partial_\xi\ftheta\dy{2}
\end{equation}
is tangent to $\Mfng$.
Moreover, by \ref{item:V3} we have $\dy{2}F=0$, hence
\begin{equation}\label{eq:wtxiF}
  \partial_{\wt{\xi}}F=\partial_\xi F
\end{equation}
everywhere on $V_{\oout}$.

The two vector fields $\xi$ and $\wt{\xi}$ do not match on $\Mfng\cap \Mtwo$. To rectify this, we will  interpolate between $\xi$ and $\wt{\xi}$, to obtain a third vector field, which will be $\xih$.
The most naive approach, that is, to define the new vector field $\xih$
as a linear combination of $\xi$ and $\wt{\xi}$ multiplied by two functions vanishing
on $\Mfng$ and $\Mtwo$ respectively, does not quite work, because a nontrivial linear combination of $\xi$ and $\wt{\xi}$ does not have
to be tangent to the stratum $\Mfng\cap \Mtwo$.

Let $e_1$ be the vector field
\begin{equation}\label{eqn:defn-of-e1}
e_1 := (0,x_2,\ldots,x_n,0,0).
\end{equation}
We claim that
\begin{equation}\label{eq:e1ftheta}
\partial_{e_1}\ftheta>0
\end{equation}
everywhere on $\Sigma$. To see this,
note that if $x_1^2+\dots+x_n^2\le \varepsilon_R^2$ (which holds
on $\Sigma$), then
\[\partial_{e_1}\ftheta=2(x_2^2+\dots+x_n^2)\fa(y_1)+\fb(y_1).\]
On $\Sigma$, $x_2^2+\dots+x_n^2=\erho^2$, so $\partial_{e_1}\ftheta=2\erho^2\fa(0)+\fb(0)$. By \eqref{eq:rhodef},
we obtain $\partial_{e_1}\ftheta=\erho^2\fa(0)>0$.


\begin{figure}
\input{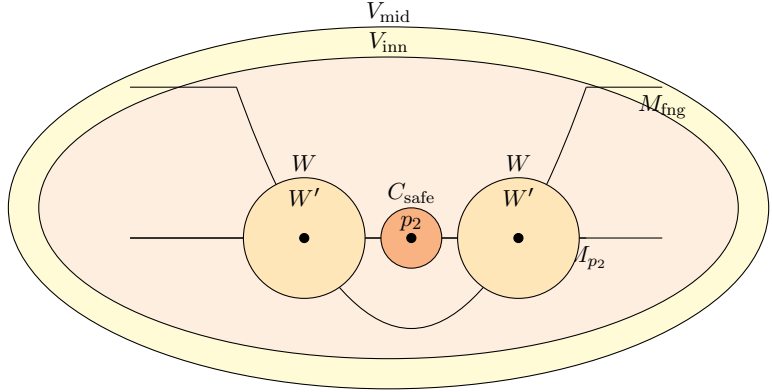}
\caption{The
subsets $V_{\iinn}$, $V_{\mmid}$, $W$ and $W'$ that we use to study the vector field $\xih$. We draw the intersection with the hyperplane $\{x_1=0\}$.}\label{fig:nbhds}
\end{figure}

Choose $W\subseteq V_{\iinn}$ to be a neighbourhood of
the double point set
$\Sigma$ satsfying the following two conditions; see Figure~\ref{fig:nbhds}.
\begin{condition}\label{con:onW}
  The set $W$ is disjoint from $p_+$, and $\partial_{e_1}\ftheta>0$ everywhere on $W$.
\end{condition}
The latter is possible because of \eqref{eq:e1ftheta}.
Let $W'$ be a neighbourhood of $\Sigma$ in $\O$
such that the closure of $W'$ is contained in $W$.

Now we adjust $\esafe$ to satisfy the assumptions of Lemma~\ref{lem:safeexit} and by the following condition:
\begin{condition}\label{con:csafe}
  $\Csafe$ is a subset of $V_{\iinn}$, the closures of $\Csafe$ and $W$ are disjoint, and $\Csafe$ is disjoint from $\Mfng$.
\end{condition}

Set $\rhofng \colon V_{\mmid} \to \R_{\geq 0}$ to be the square of the distance to $\Mfng$,  and  set $\rho_+\colon V_{\mmid} \to \R_{\geq 0}$
 to be the square of the distance to $\Mtwo$. Finally, choose two bump functions $\phi_1,\phi_2\colon\O\to[0,1]$, where $\phi_1$ is a function
supported on $V_{\mmid}$, equal to $1$ on $W$, and $\phi_2$ is a function supported in $W$ and equal to $1$ on $W'$

In $W$, define the decomposition of the tangent bundle
$T\Omega$ into $\Ttan$, $E_1=\sspan\{\dy{1},(\partial_{e_1}\ftheta)\dy{2}+e_1\}$ and $E_2=\sspan\{e_1,\dx{1}\}$.
By Condition~\ref{con:onW}, $E_1\cap E_2=0$.
The subbundle $\Ttan$ can be defined as the orthogonal complement of $E_1$ in $\sspan\{\dx{1},\ldots,\dx{n}\}$.
Recall that we are still using  the metric for which the coordinate vectors $\dx{1},\ldots,\dy{2}$ are
orthonormal.

The bundle $\Ttan$ is orthogonal to $E_1\oplus E_2$,
but $E_1$ is not necessarily orthogonal to $E_2$.
By definition
$(\Ttan\oplus E_2)|_{\Mtwo\cap W}$ is the tangent bundle to $\Mtwo\cap W$, and $(\Ttan\oplus E_1)|_{\Mfng\cap W}$ is the tangent bundle to $\Mfng\cap W$.
Moreover,
$\Ttan$ restricted to $\Mtwo\cap \Mfng$ is the tangent bundle to $\Mtwo\cap \Mfng$.

According to the decomposition $T\Omega=\Ttan\oplus E_1\oplus E_2$
we write $\xi=\xtan+\xi_1+\xi_2$ and $\wt{\xi}=\wxtan+\wt{\xi}_1+\wt{\xi}_2$ on $W$. Notice that $\wt{\xi}-\xi$ is parallel to $\dy{2}$,
which belongs to $E_1\oplus E_2$, hence $\wxtan=\xtan$ in $W$.
Set
\[\UUU=\bigg(\frac{\rhofng}{\rhofng+\rho_+}\bigg)^{\phi_3}.\]
Then $\UUU \equiv 1$ on $\Csafe$. 
Also $\UUU\equiv 0$ on the whole of $\Mfng$ and $\UUU\equiv 1$ on the whole of $\Mtwo$ ($\rho_+=0$ there); see
Figure~\ref{fig:upsilon}. The function $\UUU$ is not well-defined on $\Mfng\cap \Mtwo$, but, as we will see below, this does not affect the correctness
of the definition of $\xih$.
The interpolated vector field is given by:
\begin{equation}\label{eq:defxiprim}
\xih=\xi^a+\xi^b+\xi^c+\xi^d,
\end{equation}
\textrm{ where:}
\begin{align*}
  \xi^a&=(1-\phi_1)\xi&
\xi^b&=\phi_1\left(\UUU\xtan+(1-\UUU)\wxtan\right)\\
\xi^c&=\phi_1(1-\phi_2)\left(\UUU(\xi_1+\xi_2)+(1-\UUU)(\wt{\xi}_1+\wt{\xi}_2)\right) &
\xi^d&=\phi_1\phi_2\left(\rho_+\wt{\xi}+\rhofng\xi\right).
\end{align*}
\begin{figure}
\input{pictures/upsilon.tex}
\caption{Figure~\ref{fig:nbhds} revisited. We indicate the values of $\rhofng$, $\rho_+$ and $\UUU$.}\label{fig:upsilon}
\end{figure}
The form of $\xih$ in different regions is given in Lemma~\ref{lem:series_2} and Lemma~\ref{lem:series_4}.
\begin{remark*}
Notice that $\xtan$, $\wxtan$, and $\xi_i,\wt{\xi}_i$ for $i=1,2$, are defined only on $W$. However, outside $W$ the vector field $\xih$
depends only on the sum $\xtan+\xi_1+\xi_2$ and $\wxtan+\wt{\xi}_1+\wt{\xi}_2$. Therefore, to define $\xih$ everywhere on $V_{\mmid}$, extend $\xi_i$, $\wt{\xi}_i$ over $V_{\mmid}$ in such a way that $\xtan+\xi_1+\xi_2=\xi$ and $\wxtan+\wt{\xi}_1+\wt{\xi}_2=\wt{\xi}$, and then $\xih$
will not depend on the extension.
\end{remark*}

One can think of $\xih$ on $\Mtwo$ as taking the vector field $\xi$ and slowing it in the transverse direction to $\Mtwo$
as the trajectory draws near $\Mtwo$.

In the following series of lemmas we explain the formulae for $\xih$ and show that $\xih$ is
tangent to $\Mfng$ and to $\Mtwo$.
\begin{lemma}\label{lem:series_1}
  If $w\notin V_{\mmid}$, then $\xih(w)=\xi(w)$.
\end{lemma}
\begin{proof}
  We have $\phi_1(w)=0$, so $\xi^a=\xi$ and $\xi^b=\xi^c=\xi^d=0$.
\end{proof}
\begin{lemma}\label{lem:series_2}
  If $w\in V_{\mmid}\setminus V_{\iinn}$, then $\xih$ is a linear combination of $\xi$ and $\wt{\xi}$; more precisely
  \begin{equation}\label{eq:series_3}\xih=((1-\phi_1)+\phi_1\UUU)\xi +\phi_1(1-\UUU)\wt{\xi}.\end{equation}
\end{lemma}
\begin{proof}
  This follows from the fact that $\phi_2\equiv 0$ on $V_{\mmid}\setminus V_{\iinn}$.
\end{proof}
\begin{lemma}\label{lem:series_3}
  If $w\in V_{\mmid}\setminus V_{\iinn}$ and $w\in \Mfng$, then $\xih$ is tangent to $\Mfng$; more precisely
  $\xih=\xi$ on $\Mfng\cap(V_{\mmid}\setminus V_{\iinn})$.
\end{lemma}
\begin{proof}
  On $\Mfng\setminus V_{\iinn}$ we have $x_2^2+\dots+x_n^2\ge\eeta^2$. Therefore, by the definition
  of $\wt{\xi}$ we have $\wt{\xi}=\xi$. We conclude by \eqref{eq:series_3}.
\end{proof}
\begin{lemma}\label{lem:series_4}
  Suppose $w\in V_{\iinn}\setminus W$. Then $\xih(w)=\UUU\xi(w)+(1-\UUU)\wt{\xi}(w)$. In particular, $\xih(w)=\xi(w)$ if $w\in \Mtwo$ and $\xih(w)=\wt{\xi}(w)$
  if $w\in \Mfng$.
\end{lemma}
\begin{proof}
  We have $\phi_1\equiv 1$ and $\phi_2\equiv 0$, hence $\xi^a=\xi^d=0$ and so
  \[\xih=\xi^b+\xi^c=\UUU\xi+(1-\UUU)\wt{\xi}.\]
  The statement follows, since $\UUU(w)=0$ if $w\in \Mfng$ and $\UUU(w)=1$ if $w\in \Mtwo$.
\end{proof}
As $\Csafe\subseteq V_{\iinn}\setminus W$ and $\UUU\equiv 1$ on $\Csafe$ we obtain the following corollary.
\begin{corollary}\label{cor:csafe}
  On $\Csafe$ we have $\xih=\xi$.
\end{corollary}
\begin{lemma}\label{lem:series_5}
  Suppose $w\in W\setminus W'$. Then $\xih$ is a nonnegative linear combination of $\xi$, $\wt{\xi}$ and $\xtan$, more precisely
\begin{equation}\label{eq:xiprimonW}
\xih=(1-\phi_2)(\UUU\xi+(1-\UUU)\wt{\xi})+\phi_2(\rho_+\wt{\xi}+\rhofng\xi+\xtan).
\end{equation}
\end{lemma}
\begin{proof}
We use the fact that $\xtan=\wxtan$ in $W$.
\end{proof}
\begin{lemma}\label{lem:series_6}
  Suppose $w\in W\setminus W'$. If $w\in \Mtwo$, then $\xih(w)$ is tangent to $\Mtwo$, and if $w\in \Mfng$, then $\xih(w)$ is tangent  to $\Mfng$.
\end{lemma}
\begin{proof}
 This follows from \eqref{eq:xiprimonW}. If $w\in \Mtwo$, then $\UUU(w)=1$ and $\rho_+(w)=0$, while if $w\in \Mfng$, then $\UUU(w)=0$ and $\rhofng(w)=0$. The vector field $\xtan$
 is tangent both to $\Mtwo$ and to $\Mfng$.
\end{proof}
\begin{lemma}\label{lem:series_7}
  If $w\in W'$ then $\xi(w)$ is still given by \eqref{eq:xiprimonW}, but the formula simplifies further to
\begin{equation}\label{eq:this_means_that}
  \xih(w)=\xtan(w)+\rho_+\wt{\xi}_1(w)+\rhofng\xi_2(w).
\end{equation}
In particular $\xih$ is well-defined (even if $\UUU$ is not) and $\xih$ is tangent to $\Mfng$, $\Mtwo$ and $\Mfng\cap \Mtwo$.
\end{lemma}
\begin{proof}
  We have $\phi_1(w)=\phi_2(w)=1$, so $\xi^a=\xi^c=0$. Moreover $\xtan=\wxtan$, hence $\xi^b=\xtan$. The formula for $\xi^d$
  simplifies to $\xi^d=\rho_+\wt{\xi}_1+\rhofng\xi_2$. Hence we obtain \eqref{eq:this_means_that} and from this formula the tangency of $\xih$
  follows immediately.
\end{proof}
Combining the above lemmas we obtain the following corollary.
\begin{corollary}
  The vector field $\xih$ is tangent to $\Mtwo$, $\Mfng$ and $\Mtwo\cap \Mfng$.
\end{corollary}
\begin{figure}
\input{pictures/xineigh.tex}
\caption{Graphical presentation of the statements of Lemmas~\ref{lem:series_1} through~\ref{lem:series_7}.}\label{fig:xinbhds}
\end{figure}

\subsection{Positivity of $\xih$}\label{sub:critical}

We pass to the study of the critical points of $\xih$.

\begin{proposition}\ \label{prop:criticalF}
\begin{itemize}
\item[(a)]The vector field $\xih$ satisfies $\partial_{\xih}F\ge 0$.
\item[(b)] Away from $\Mfng\cap \Mtwo$, the vector field $\xih$ has the same critical points as $\xi$ and they have the same local behaviour.
\end{itemize}
Moreover on $\Mfng\cap \Mtwo$, $\xih$ vanishes at points where $x_2=\dots=x_{h+1}=0$ or $x_{h+2}=\dots=x_n=0$.
\end{proposition}


\begin{proof}
  By~\eqref{eq:wtxiF},  $\partial_{\wt{\xi}} F\ge 0$.
  Inside $W$, $\xih$ is a linear
combination of $\xi$, $\wt{\xi}$ and $\xtan$ (see Lemmas~\ref{lem:series_5} and~\ref{lem:series_7})
with nonnegative coefficients. Therefore, in order to
show that $\partial_{\xih}F\ge 0$, it is enough to show that $\partial_{\xtan} F \ge 0$.

Recall from \eqref{eqn:defn-of-e1} that $e_1$ is the vector field $(0,x_2,\ldots,x_n,0,0)$ in $V_{\iinn}$.
The projection of $\xi$ to the space orthogonal to $\dx{1},\dy{1}$ and $\dy{2}$ is equal to
the vector field $\xmo$ defined as
\begin{equation}\label{eq:xmo}
  \xmo := (0,-x_2,\ldots,-x_{h+1},x_{h+2},\ldots,x_n,0,0).
\end{equation}
  Then
\begin{equation}\label{eq:xoo}
  \xtan=\xmo-\frac{\langle\xmo,e_1\rangle}{\langle e_1,e_1\rangle}e_1.
\end{equation}
Set $\Delta=-x_2^2-\cdots-x_{h+1}^2+x_{h+2}^2+\cdots+x_n^2$.
Then the following holds.
\begin{align*}
  \langle\xmo,e_1\rangle&=\Delta &
  \langle e_1,e_1\rangle&=x_2^2+\cdots+x_n^2 &
  \smfrac{1}{2}\partial_{e_1}F&=\Delta &
\smfrac{1}{2}\partial_{\xmo}F&=x_2^2+\cdots+x_n^2.
\end{align*}
Therefore
\begin{equation}\label{eq:pxtanF}
\smfrac{1}{2}\partial_{\xtan}F=x_2^2+\cdots+x_n^2-\frac{\Delta^2}{x_2^2+\cdots+x_n^2}\ge 0.
\end{equation}
The last inequality holds because $|\Delta|\le x_2^2+\ldots+x_n^2$.
 This concludes the proof that $\partial_{\xih}F\ge 0$ in $W$,
and so proves point (a) of the proposition.

\smallskip
We pass to the study of the critical points of $\xih$. Firstly, outside $W$ the vector field $\xih$ is a linear combination
of $\xi$ and $\wt{\xi}$ with coefficients summing to $1$. Now $\wt{\xi}-\xi$ is parallel to $\dy{2}$, so it is linearly independent
of $\xi$. Thus, for  $w\notin W$, $\xih(w)=0$ only if $\xi(w)=0$,
that is, only if $w$ is a critical point of $\xi$.
By construction, $\xih=\xi$ near each critical point of $\xi$.
This shows point (b) outside $W$.

\smallskip
Now we analyse the critical points of $\xih$ inside $W$.
First, by \eqref{eq:wtxiF}, $\partial_{\xi}F(w)=\partial_{\wt{\xi}}F(w)$ on $V_{\oout} \supseteq W$.
If $w\not\in \Mtwo\cap \Mfng$, then in \eqref{eq:xiprimonW} (which holds also inside $W'$ by Lemma~\ref{lem:series_7}),
either the coefficient
at $\xi$ or the coefficient at $\wt{\xi}$ is nonzero. Moreover, $\partial_{\xi}F(w)=\partial_{\wt{\xi}}F(w)>0$, because within $F^{-1}([c_{\bbot},c_{\ttop}])$, we have
$\partial_{\xi}F=0$ only at the point $p_+$, which does not belong to $W$.  Since we have already shown that $\partial_{\xtan}F\ge 0$,
we conclude, using that  all coefficients in \eqref{eq:defxiprim} are positive, that $\partial_{\xih}F(w)>0$, and so $\xih(w)\neq 0$.
This concludes the proof of point (b) inside $W$.
\end{proof}

\subsection{The path $F_\tau$}\label{sub:explicit_perturbation}
In this subsection we construct the path $F_\tau$
and the vector field $\xit$. We also prove property \ref{item:FM_two_more}
of Finger Move Theorem~\ref{thm:new_finger_move}.

The function $F$ is -- in general -- not an immersed Morse
function for $\Mnew$. More precisely we have the following result.
\begin{lemma}\label{lem:crits_of_F}
  Recall that $\Sigma=\Mtwo\cap\Mfng$. The function $F$ restricted to $\Sigma$
  has critical points on the whole subset $\Sigma\cap\{x_2=\dots=x_{h+1}=0\}$ and $\Sigma\cap\{x_{h+2}=\dots=x_n=0\}$.
\end{lemma}

\begin{proof}
  By \eqref{eq:sigmadef} the set $\Sigma$ is given as $x_2^2+\dots+x_n^2=\erho^2$, $x_1=y_1=0$. On this set, by property~\ref{item:V3},
  the function $F$ is given by $-x_2^2-\dots-x_{h+1}^2+x_{h+2}^2+\dots+x_n^2+c_+$. Then $F$ is constant on the intersection of
  $\Sigma$ with $\{x_2=\dots=x_{h+1}=0\}$ and $\{x_{h+2}=\dots=x_n=0\}$.
\end{proof}

Our goal is to construct an explicit
perturbation of $F$ and of $\xih$ inside $W' \subseteq W$.
To this end choose a real vector $(\mu_2,\dots,\mu_n)$ with $|\mu_i|<\frac1n$ for each $i$. On our coordinate chart in $V_{\oout}$, define $L \colon V_{\oout} \to \R$ and a vector field $\xi^\mu$ by:
\[
  L:=2\sum_{j=2}^n\mu_jx_j,\ \
  \xi^\mu:=\sum_{j=2}^n\mu_j\frac{\partial}{\partial x_j}.
\]
Let $\xtan^{\mu}$ be the projection of the vector field $\xi^\mu$ to the sub-bundle
orthogonal to $e_1$. Decrease the coefficients
$\mu_j$ so that $|L(z)|<1$ for all $z\in W$.
Recall that $\rho_2 \colon V_{\mmid} \to \R_{\geq 0}$ is the square of the distance to $M_{p_+}$ and $\rhofng \colon V_{\mmid} \to \R_{\geq 0}$ is the square of the distance to $\Mfng$.
For $\varepsilon_{\lambda}>0$ sufficiently small define the set $U_\lambda$ by the inequalities $\{\rhofng\le\varepsilon_\lambda,\rho_2\le \varepsilon_\lambda\}$. If $\varepsilon_\lambda\ll 1$, we have $U_\lambda\subseteq W'$. From now on we assume that this is the case.
Let also $U_{\lambda}' \subseteq U_{\lambda}$ be given by $\{\rhofng\le\frac12\varepsilon_{\lambda},\rho_2\le \frac12\varepsilon_{\lambda}\}$.
The choice of the factor $\frac12$ in the definition of $U'_\lambda$ is arbitrary.

\begin{theorem}\label{thm:perturb}
  There exists $\varepsilon_{\lambda}>0$, a smooth function $\feta\colon\O\to[0,1]$ vanishing outside $U_{\lambda}$ and equal to $1$ in $U'_{\lambda}$, and a parameter $\sigma_0>0$ such that for any $\sigma\in(0,\sigma_0]$
  the function
  \[F-\sigma\feta L\]
  is an immersed Morse function for $G_2(N)$. It has at most four critical points when restricted to $\Sigma$.
  Moreover, the vector field
  \[\xih_\sigma:=\xih-\sigma\feta\xtan^{\mu}\]
  is a grim vector field for $F-\sigma\feta L$.
\end{theorem}

\begin{proof}
  Fix $\varepsilon_{\lambda}>0$ such that $U_{\lambda}\subseteq W'$. Choose $\feta \colon \Omega \to [0,1]$ in such
  a way that
  \begin{equation}\label{eq:greek_feta}
    \|\nabla\feta \| \le 4/\varepsilon_\lambda.
  \end{equation}
That this can be done is easy to believe if one notices that it is essentially equivalent to approximating a piecewise linear function $f \colon [0,2] \to [0,1]$ with $f(x)=1$ for $x \leq 1/2$ and $f(x)=0$ for $x \geq 1$ by a smooth function with derivative $|f'(x)|<4$ for all $x \in [0,2]$.
  Choose $c,K_\xi$, and $K_F$ to be real positive numbers satisfying
  \[\partial_\xi F(z)>c,\ \|\xih(z)\| <K_\xi, \text{ and } \|\nabla F(z)\| <K_F,\]
  for all $z\in \overline{W'}$. Such $c$ exists, because $\partial_\xi F$ is positive on the whole of $\overline{W'}$.

  Our discussion takes place entirely in $W'$.
By \eqref{eq:this_means_that} we have
$\xih=\xtan+\rho_2\xi+\rhofng\wt{\xi}$.
By \eqref{eq:pxtanF} and \eqref{eq:wtxiF},
$\partial_{\xih} F \geq (\rho_2+\rhofng\partial_{\xi}) F$.
Hence,
\[\partial_{\xih_\sigma}(F-\sigma\feta L)\ge (\rhofng+\rho_2)\partial_\xi F-\sigma\partial_{\xih}(\feta L)-\sigma\feta\partial_{\xtan^{\mu}}(F-\sigma\feta L).\]
  We first show that this quantity is nonzero on $U_\lambda\setminus U_{\lambda'}$.

  In $U_\lambda\setminus U_\lambda'$ we have $\rho_2 > \frac{1}{2} \varepsilon_{\lambda}$ or $\rhofng > \frac{1}{2} \varepsilon_{\lambda}$,  $\partial_\xi F = \partial_{\wt{\xi}}F>c$, and $\partial_{\xtan}F\ge 0$ by \eqref{eq:pxtanF}. Therefore
  \begin{equation}\label{eq:partial1}
  \partial_{\xih} F > \frac12 c\varepsilon_{\lambda}.
\end{equation}
  Now observe that $\partial_{\xih}\feta\le \|\xih \| \,\| \nabla\feta\| \le  4K_\xi\varepsilon_\lambda^{-1}$. Hence, using that  $|L|<1$, $\|\nabla L \| <1$, and that $\feta$ is valued in $[0,1]$, we have:
  \begin{equation}\label{eq:partial2}
    |\partial_{\xih}\feta L| = |\feta\partial_{\xih}L+L\partial_{\xih}\feta | \le K_\xi(1+4\varepsilon_\lambda^{-1}).
  \end{equation}
  The vector field $\xi^{\mu}$ has length at most $1$ since $|\mu_i| <\frac{1}{n}$, so $\xtan^\mu$ also has length at most $1$ as well, being an orthogonal projection. Therefore we have
  \begin{equation}\label{eq:partial3}
    |\partial_{\xtan^{\mu}}F | \le \| \nabla F \|\cdot \|\xtan^{\mu}\|\le K_F.
  \end{equation}
  In addition, also using that  $|L|<1$, $\|\nabla L \| <1$ on $U_{\lambda}$, and that $\feta$ is valued in $[0,1]$:
  \begin{equation}\label{eq:partial4}
    |\partial_{\xtan^{\mu}}\feta L| = |L\partial_{\xtan^{\mu}}\feta+\feta\partial_{\xtan^\mu}L| \le 4\varepsilon^{-1}_{\lambda}+1.
\end{equation}

Combining \eqref{eq:partial1}, \eqref{eq:partial2}, \eqref{eq:partial3} and \eqref{eq:partial4} we obtain
\[
  \partial_{\xih_\sigma} (F-\sigma\feta L)
  \ge \frac12c\varepsilon_{\lambda}-\sigma K_\xi(1+4\varepsilon_\lambda^{-1})-\sigma K_F-\sigma^2(1+4\varepsilon^{-1}+1))
\]

In the above inequality, only the first term does not depend on $\sigma$.
Therefore, there exists $\sigma_0>0$ such that if $|\sigma|<\sigma_0$,
then $\partial_{\xih_\sigma}(F-\sigma\feta L)$ is positive everywhere on
$U_\lambda\setminus U_{\lambda}'$.

\smallskip
We now estimate $\partial_{\xi_\sigma}(F-\sigma\feta L)$ inside $U_{\lambda'}$.
 Then $\feta\equiv 1$ so we have
 \[\xih_\sigma=\xih-\sigma\xtan^\mu.\]
 Let $\xi'_\sigma=\xtan-\sigma\xtan^\mu$ so that $\xih_\sigma=\xi'_\sigma+\rho_2\xi+\rhofng\wt{\xi}$.
As $\partial_\xi F>c$, for $\sigma$ sufficiently small, we have
$\partial_\xi (F-\sigma L),\partial_{\wt{\xi}}(F-\sigma L)>0$.
We need to show that $\partial_{\xi'_\sigma}(F-\sigma L)\ge 0$.

Set $H_\sigma=(F-\sigma L)+x_1^2-y_1$.
By definition, $H_\sigma$ does not depend on $y_1$ and $x_1$, because these terms in the definition of $H_{\sigma}$ cancel with the corresponding terms in $F$; compare~\ref{item:V3}. 
By \ref{item:V1}, $\partial_{\xi} (F-\sigma L-H_\sigma)=\partial_{\wt{\xi}} (F-\sigma L-H_\sigma)\ge 0$  and $\partial_{\xtan}(F-\sigma L-H_\sigma)=\partial_{\xtan^\mu}(F-\sigma L-H_\sigma)=0$.
 Therefore we strive to show that $\partial_{\xi'_\sigma}H_\sigma\ge 0$, and
 this will imply that $\partial_{\xi'_\sigma} (F-\sigma L)\ge 0$.

 Note that $\nabla H_\sigma=2\xmo-2\sigma\xi_\mu$.
The orthogonal projection of $\xmo-\sigma\xi_\mu$ along $e_1$ is equal precisely $\xtan-\sigma\xtan^\mu=\xi'_{\sigma}$. This shows that
\begin{equation}\label{eq:nablaGs}
\partial_{\xi'_{\sigma}}H_\sigma\ge 0.
\end{equation}
As alluded to above, this implies that $\partial_{\xih_\sigma}(F-\sigma L)\ge 0$ on $U'_\lambda$.

The remaining part of the proof is devoted to checking when $\partial_{\xih_\sigma}(F-\sigma L)=0$. First of all, if $\rhofng>0$ or $\rho_2>0$,
then $\partial_{\xih_\sigma}(F-\sigma L)>0$, because $\partial_{\xi'_\sigma}(F-\sigma L)\ge 0$. Therefore, if $\partial_{\xih_\sigma}(F-\sigma L)(z)=0$,
then $\rhofng(z)=\rho_2(z)=0$, that is, $z\in\Sigma$.

Next, if $\partial_{\xih_\sigma}(F-\sigma L)(z)=0$, we need to have
equality in \eqref{eq:nablaGs}. This is possible only if $2\xmo-2\sigma\xi_\mu$
is parallel to $e_1$. 
For $2\xmo-2\sigma\xi_\mu$ to be parallel to $e_1$ we have, for some $\kappa \in \R$, that
$(-1-\kappa)x_i = \mu_i$ for $2 \leq i \leq h$ and $(1-\kappa)x_i = \mu_i$ for $h+1 \leq i \leq n$.
Solving for $x_i$ and substituting into $x_2^2 + \cdots + x_n^2 = \varepsilon_{\Sigma}^2$ yields a quartic equation for $\kappa$. Thus there are at most 4 solutions.
\end{proof}

Given Theorem~\ref{thm:perturb} we choose $\sigma_1<\sigma_0$.
We define
\begin{equation}\label{eq:xibis}
  \xit=\xih_{\sigma_1}.
\end{equation}
We also define the path $F_\tau$ that will satisfy the requirements
of Finger Move Theorem~\ref{thm:new_finger_move}.
For each $\tau \in [0,1]$
we set
\[F_{\tau}:= F-\tau\sigma_1\feta L.\]

\begin{remark*}
  We leave ourselves the possibility of decreasing the value of $\sigma_1$ later on. The precise order of choosing different variables is explained in
  Remark~\ref{rem:order_of_choosing}.
\end{remark*}

\section{Property~\ref{item:FM_one_less}}\label{sec:membranesofxi}

Our next result proves property \ref{item:FM_one_less} from the Finger Move Theorem~\ref{thm:new_finger_move}.
Our result is valid both for $F_1$ and the grim vector field $\xit$,
as well as for the original function $F$ and the unperturbed
vector field $\xih$.

Recall that $\Ha(p_-) \cap F^{-1}(c_{\bbot}) \cap U_{\oout}$ consists of discs $D_0,D_1,\dots,D_r$ containing points $z_0,z_1,\dots,z_r \in \Ha(p_-) \cap \Hd(p_+) \cap F^{-1}(c_{\bbot})$, and that we seek to remove $z_m \in D_m$.
Recall that $Y_{\mmid}$ is the interior of $\ol{V}_{\mmid} \cap F^{-1}(c_{\bbot})$ and that $V_{\mmid} \subseteq F^{-1}(c_{\bbot},c_{\ttop})$.

\begin{theorem}\label{thm:oneless}
There are no trajectories of $\xit$ from $p_-$ to $p_+$ that go through $Y_{\mmid}$. If $\efng$ is very small, then
the only trajectories from $p_-$ to $p_+$ are those trajectories of $\xi$ that go through discs $D_0,D_1,\ldots,D_r$ $($see \eqref{eq:maft}$)$
for $r\neq m$.

In other words, the number of trajectories of $\xi$ from $p_-$ to $p_+$ is one less than the number of trajectories of $\xit$ from $p_-$ to $p_+$.
\end{theorem}

\begin{proof}
Suppose $\gamma$ is a trajectory of $\xit$ connecting $p_-$ with $p_+$. Assume $\gamma$
does not go through $Y_{\mmid}$. As below the level set $c_{\bbot}$, $\xi=\xit$, $\gamma$ is a trajectory of $\xi$ at least until
it reaches the level set $c_{\bbot}$. Let $w_1$ be the intersection of $\gamma$ with $F^{-1}(c_{\bbot})$. Clearly $w_1\in\Ha(p_-)$.

By \ref{item:V4}, and since $\xi=\xit$ on $\partial V_{\oout}$,
a trajectory of $\xit$ can enter $V_{\oout}$ only through $Y_{\oout}$. Hence, $w_1\in Y_{\oout}$. As $\Ha(p_-)\cap Y_{\oout}$ is the
union of discs $D_0,\dots,D_r$, we note that $w_1\in D_i$ for some $i$.

We consider now two cases. Either $i=m$, or $i\neq m$.

We begin with the first case, that is, $i=m$. Our aim is to show that this case is not possible.
The set $P:=\{x_1=0\}$ is easily seen to be invariant
under the flow of $\xit$, moreover $D_m\subseteq\{x_1=0\}$. As the trajectory hits the point $w_1\in P$, the trajectory stays on $P$
until reaching $p_+$, compare Lemma~\ref{lem:vmid_is_grim}. 

Consider the subsets
\[P_{\pm}=\{\pm \big(y_2-\fa(y_1)\min\{x_2^2+\dots+x_n^2,\eeta^2\}+\fb(y_1)\big)\ge 0\}.\]
Note that we can also write $P_{\pm}$ as  $\{\pm (y_2 -\ftheta) \geq 0\}$, where $\ftheta$ is the function from \eqref{eq:Theta}.
We have $P_+\cup P_-=P$ and $P_+\cap P_-\subseteq \Mfng$, see Figure~\ref{fig:ppm}. As $\xit$ is tangent to $\Mfng$, there can be no trajectory of $\xit$ that goes from $P_-$ to
$P_+$ and does not belong to $\Mfng$. By construction, $p_+\in P_+$.
\begin{figure}
  \input{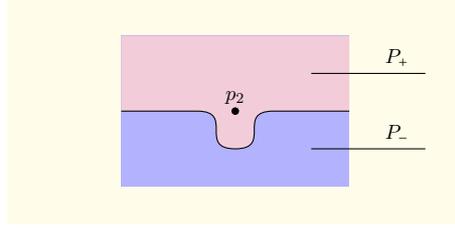}
  \caption{
  The sets $P_+$ and $P_-$ from the proof of Theorem~\ref{thm:oneless}. The trajectories of $\xit$ that enter through $P_-$ will never reach $p_+$.}\label{fig:ppm}
\end{figure}

Recall that $w_1\in D_m$ is the point of entrance of $\gamma$ to $V_{\mmid}$.
Denote by $(x_1,x_2,\ldots,x_n,y_1,y_2)$ its coordinates. We know that $y_2<1$ and $x_1=\dots=x_{h+1}=0$, compare \eqref{eq:maft}.
On the other hand, we have $F(w_1)=c_{\bbot}$. By \ref{item:V3},
$y_1+x_{h+2}^2+\cdots+x_n^2=c_{\bbot}-c_+$, so $y_1\le c_{\bbot}-c_+$. Here, recall that $c_+ = F(p_+)$.  It follows that
$y_1\notin[c_{-1/4}-c_+,c_{+1/4}-c_+]$.
Property \ref{item:A1} implies that $\fb(y_1)=1$, so it follows from \eqref{eq:Theta} that $\ftheta(w_1)=1$. As $y_2<1$, we infer that $w_1\in P_-$. Hence, $\gamma$ does not reach $p_+$.

\smallskip
We pass to the proof of the second part, that is, $w_1\in D_i$ for $i\neq m$.
As $V_{\mmid}$ is disjoint from $D_i$ by Lemma~\ref{lem:basic_efng}, the trajectory $\gamma$ must enter $V_{\mmid}$ (the point $p_+$
belongs to the interior of $V_{\mmid}$). Let $w_2\in\partial V_{\mmid}$ be the point of the entrance. The trajectory $\gamma$ is a trajectory
of $\xi$ until it reaches $w_2$. Hence, we can use
Lemmas~\ref{lem:safeentry} and~\ref{lem:safeexit}.
We have the following cases.
\begin{itemize}
\item The point $w_2$ is outside $\Cvs$. By Lemma~\ref{lem:safeentry}, $F(w_2)>c_+=F(p_+)$. As $\partial_{\xit}F\ge 0$,
such a trajectory will never reach $p_+$.
\item The trajectory hits $V_{\mmid}$ at $w_2\in \Cvs$ and exits $\Csafe$ at a point $w_3$. By Lemma~\ref{lem:safeexit} we have that $F(w_3)>c_+$,
  so the trajectory will never reach $p_+$ again.
\item The trajectory enters $\Cvs$ and does not lead out of $\Csafe$. Then for all the time it is a trajectory of $\xi$, because $\xi=\xit$ in $\Csafe$.
\end{itemize}
All this means that every trajectory
of $\xit$ that connects $p_-$ and $p_+$ and passes through $D_i$ with $i\neq m$ is actually a trajectory of $\xi$.

Thus we have indeed reduced the number of trajectories by $1$, namely, by eliminating the trajectory passing through $D_m$.
\end{proof}

We conclude the section with an important remark.
\begin{remark}\label{rem:order_of_choosing}
The order of choosing and adjusting various constants, after $V_{\mmid}$ has been constructed, is the following.
\begin{itemize}
  \item Adjust $\efng$ according to Lemma~\ref{lem:safeentry}; see Condition~\ref{con:efng}.
  \item Choose $\eeta$ in such a way that Lemma~\ref{lem:eta_small_enough} is satisfied; see Condition~\ref{con:eeta};
  \item The functions $\fa$, $\fb$ are adjusted to $\eeta$ as in Condition~\ref{con:fa_fb}. 
\item The subsets $W$ and $W'$ are defined as neighbourhoods of the double point set \eqref{eq:sigmadef}. They are
  chosen to be disjoint from $p_+$ and contained in $V_{\mmid}$. 
\item Take a $\esafe>0$ such that $\Csafe\subseteq V_{\mmid}$ is disjoint from $W$.
  This choice is
  made in Condition~\ref{con:csafe}.
\item Adjust $\evs$ according to Lemmas~\ref{lem:safeentry} and~\ref{lem:safeexit}.
\item Adjust the value $\sigma_1<\sigma_0$ in the definition of $\xit$ and $F_\tau$ in \eqref{eq:xibis}. The adjustment is used in
  Section~\ref{sec:proof_of_pull_backs} while proving property~\ref{item:gradient} of Finger Move Theorem~\ref{thm:new_finger_move}.
\end{itemize}
\end{remark}

\section{Proof of properties \ref{item:gradient} and \ref{item:pull_back}}\label{sec:proof_of_pull_backs}

We recall the statements of \ref{item:gradient} and \ref{item:pull_back}.

\begin{itemize}
  \item[(FM-4)] Set $f_\tau=F_\tau\circ G_\tau$. The path $f_{1+\tau}$ is a constant path for $\tau\in[0,1]$. Moreover, $\eta$ is a gradient-like vector field for all $f_\tau$.
  \item[(FM-5)] There is a pull-back $\etat$ of $\xit$ via $G_2$ such that the grim paths of death starting from $F_2\circ G_2$ and
      constructed with $\eta$ and $\etat$ are left-homotopic.
\end{itemize}

We begin with the following result.
  \begin{lemma}\label{lem:f_stays}
    For any $u\in N$ and for any $\tau \in [0,2]$  we have $F\circ G_\tau(u)=F\circ G_0(u)$.
  \end{lemma}
  \begin{proof}
    We use \eqref{eq:Gtau_def_finger}. Note that if $u\notin G^{-1}(\Mone)$, then $G_\tau(u)=G(u)$, so there is nothing to prove.
    If $G(u)=(x_1,\dots,y_2)\in\Mone$, the finger move affects only the $y_2$ coordinate of $G(u)$, but $F$ does not depend on
    the $y_2$ coordinate of a point in $V_{\oout}$.   \end{proof}

  \begin{corollary}\label{cor:eta_gradient}
    The vector field $\eta$ is a gradient-like vector field for $F\circ G_\tau$ for all $\tau$.
  \end{corollary}
  Next we pass to the function $F_2\circ G_2$. Recall that the path
  $F_\tau$ was constructed in Theorem~\ref{thm:perturb}. The size
  of perturbation was controlled by a parameter $\sigma_1<\sigma_0$,
  compare \eqref{eq:xibis}.
  \begin{lemma}\label{lem:proof_of_gradient}
    Suppose $\sigma_1$ is sufficiently small. Then $\eta$ is a gradient-like vector field for $F_\tau\circ G_2$, for all $\tau \in [0,2]$.
  \end{lemma}
  \begin{proof}
    We use notation from Subsection~\ref{sub:explicit_perturbation}.
    We have $F_\tau=F-\tau\sigma_1\feta L$. Let $Z=G_2^{-1}(U_\lambda)$.
    As $\feta$ has support on $U_\lambda$, it is enough to prove the statement on $Z$.
    Note that $Z$ is separated from the critical set of $F\circ G_2$.
    As $\eta$ is gradient-like for $F\circ G_2$, there exists $c>0$ such that
    $\partial_\eta(F\circ G_2)(u)>c$ for all $u\in Z$.
    Denote by $K_\phi$ the supremum of $|\partial_\eta(\feta L\circ G)|$
    on $Z$. Choose $\sigma_1<c/K_\phi$. For all $\sigma<\sigma_1$
    we have $\partial_\eta F_{\tau}\circ G_2(u)>0$, if $u\in Z$.
  \end{proof}
 Now  define $\etat$ to be a pull-back of $\xit$ via $G_2$ in the sense of Section~\ref{sub:pull_back}. As $\eta$ itself is a pull-back of $\xi$ via $G$, we may and will assume that $\etat(u)=\eta(u)$ if $G_2(u)=G(u)$
  and $\xi(G(u))=\xit(G(u))$. This means that $\etat(u)=\eta(u)$
  away from $G_2^{-1}(V_{\oout}\setminus\Csafe)$.

  By construction,
  $\etat$ is a gradient-like vector field for $f_2=F_2\circ G_2$.
  For $\tau\in[0,1]$, set:
  \[\eta_\tau=(1-\tau)\eta+\tau\etat.\]
  \begin{lemma}
    The vector field $\eta_\tau$ is a gradient-like vector field for $f_2 \colon N \to \R$.
  \end{lemma}
  \begin{proof}
    By construction, $\eta$ and $\etat$ agree away from the preimage of $G_2^{-1}(V_{\oout}\setminus\Csafe)$. In particular, $\eta$ and $\etat$ agree near all critical point of $f_2$. Away from these points,
    $\partial_{\eta}f_2>0$ by Corollary~\ref{cor:eta_gradient}, and
    $\partial_{\etat}f_2>0$ as $\etat$ is gradient-like for $f_2$.
    The statement promptly follows.
  \end{proof}

  \begin{theorem}\label{thm:paths_of_death_are_equal}
    The grim paths of death starting at $f_2$ constructed by $\eta$ and $\eta_\tau$ are left-homotopic.
  \end{theorem}

  \begin{proof}

    Our goal is to eventually use the Uniqueness of Death Lemma~\ref{lem:uniqueness_of_death}. To this end, we need to study the stable and unstable manifolds of $q_-$ and $q_+$ with respect to $\eta_\tau$, where  $q_- := G^{-1}(p_-)$ and $q_+ := G^{-1}(p_+)$, i.e.\ the preimages in $N$ of $p_-, p_+ \in \O$.

  Consider the level set $F^{-1}(c_{\bbot})$. Since this is the level set below the finger move is performed, we have that for any $w\in F^{-1}(c_{\bbot})$,
  $F(w)=F_\tau(w)$. Moreover, if $u\in f^{-1}(c_{\bbot})$, then $G_\tau(u)=G(u)$ for all $\tau$.
  Our first step toward the proof of Theorem~\ref{thm:paths_of_death_are_equal} is the following result, which relies on Theorem~\ref{thm:oneless}. 
  The subset $W \subseteq V_{\iinn}$ was chosen in Section~\ref{sub:newvector}, as a small neighbourhood of the double point set $\Mfng\cap \Mtwo$, with $p_+\not\in W$.


  \begin{lemma}\label{lem:W_means_nop1}
    Suppose a trajectory of $\xi$ hits $\ol{W}$ below the level set $c_+$. If it hits $p_-$ in the infinite past, then it crosses $F^{-1}(c_{\bbot})$ at a point of $D_m$.
  \end{lemma}
  \begin{proof}
    Any trajectory of $\xi$ entering $V_{\oout}$ and hitting $p_-$ in the infinite past must cross one of the discs $D_0,\dots,D_r$.
    Consider such a trajectory and let $w_1\in D_0\cup\dots D_r$ be the point of its entrance to $V_{\oout}$.

    Assume the trajectory reaches $W$ before the level set $c_+$. Suppose towards contradiction that $w_1\in D_j$, $j\neq m$.
    As $W\subseteq V_{\mmid}$, the trajectory must enter $V_{\mmid}$. Let $w_2$
    be the point it enters $V_{\mmid}$. By Lemma~\ref{lem:safeentry}, either $F(w_2)>c_+$ and we are done, or $w_2\in\Cvs$. In the latter case, the trajectory has to leave $\Csafe$ before entering $W$.
    By Lemma~\ref{lem:safeexit}, such a trajectory leaves $\Csafe$ above the level set $c_+$. The contradiction shows that $j=m$.
  \end{proof}

  The above result has implications for trajectories of $\eta_\tau$ on $N$. The following corollary explains the behaviour of the unstable manifold of $q_-$ with respect to $\eta_\tau$.

  \begin{corollary}\label{cor:W_meansnop1}
    Suppose a trajectory of $\eta_\tau$ for $\tau\in[0,1]$ hits $G^{-1}(W\cap \Mtwo)$. Then it does not reach $q_-$ in the infinite past.
  \end{corollary}

  \begin{proof}
    Assume for a contradiction that $\gamma_u$ is a trajectory of $\eta_\tau$ that starts at $q_-$ and reaches $G^{-1}(W\cap\Mtwo)$.
    Let $t_0>0$ be such that $\gamma(t_0)$ hits $G^{-1}(V_{\mmid})$.
    Then, for $t<t_0$, $\gamma_u$ is a trajectory of $\eta$, because $\eta_\tau$ agrees
    with $\eta$ away from $G^{-1}(V_{\mmid})$. Next, on $G^{-1}(V_{\mmid}\setminus V_{\iinn})$, $\eta=\etat$, so also $\eta=\eta_\tau$,
    by Lemma~\ref{lem:series_3}. Similarly, on $G^{-1}(V_{\mmid}\setminus W)$ we use Lemma~\ref{lem:series_4} to show that $\eta=\etat=\eta_\tau$.
    Note that Lemma~\ref{lem:series_4} cannot be used inside $G^{-1}(W)$, because we use a perturbation of $\xih$ to $\xit$. Anyway, we proved that until it reaches $G^{-1}(W)$, the trajectory $\gamma_u$ is a trajectory of $\eta$.
    The trajectory $\gamma_u$ between $q_-$ and $G^{-1}(W)$ does not hit any preimage of the second stratum. By Pull-back Lemma~\ref{lem:pull_back_lemma}, the image $G(\gamma_u)$ is a trajectory of $\xi$ starting at $p_-$ and reaching $W$.
    By Lemma~\ref{lem:W_means_nop1}, we conclude that $u_0\in G^{-1}(D_m)$.
    But $D_m$ is disjoint from $M$. The contradiction
    shows that $\gamma_u$ cannot reach $q_-$ in the infinite past.
  \end{proof}

    \begin{lemma}\label{lem:Mone_means_no_q2}
    Suppose a trajectory of $\eta_\tau$ passes through $u\in f^{-1}(c_{\bbot})$. If this trajectory hits $G^{-1}(\Mone)$, then it does not reach $q_+$ in the infinite future.
  \end{lemma}

  \begin{proof}
    First we prove this result for $\tau=0$, that is for $\eta_\tau=\eta$.
    Since $\Mone$ belongs to the subset $\{y_2\equiv 1\}$, a trajectory of $\xi$ passing through this
    subset will have $y_2=1$, at least until it leaves $V_{\oout}$. But any trajectory leaving $V_{\oout}$ leaves it above the level set $F(p_+)$, so the trajectory will not reach $p_+$.
    From this it follows that if a trajectory of $\eta$ leaves $G^{-1}(\Mone)$, it will not reach $q_+$.

    Also note that if a trajectory of $\xi$ leaves $\Mone$, then it will not reach $\Mtwo$. This is because $\Mone$ has $y_2$--coordinate
    equal to $1$ and the $y_2$ coordinate near $\Mtwo$ is close to zero.

    Suppose a trajectory $\gamma_u(t)$ with $\gamma_u(0)=u$ of $\eta_{\tau}$ passes through $G^{-1}(\Mone)$ and reaches $q_+$. In general, such a trajectory can
    hit $G^{-1}(\Mone)$ infinitely many times, but there has to be $t_{\max}$ such that $\gamma_u(t_{\max})\in\partial G^{-1}(\Mone)$ and $\gamma_u(t)\notin G^{-1}(\Mone)$
    for $t>t_{max}$.
    Now from the time moment $t_{\max}$ onwards, $\gamma_u$ is a trajectory of $\eta$: it does not return to $G^{-1}(\Mone)$, it does not hit $G^{-1}(\Mtwo)$, and
    away from $G^{-1}(\Mone\cup\Mtwo)$ we have $\eta=\eta_\tau$, because $\eta$ was not modified there.
    We have already shown that no trajectory of $\eta$ can leave $\Mone$ and hit~$q_+$.
  \end{proof}

  For a fixed $\tau\in[0,1]$, let $A_\tau$ be the intersection of the unstable (ascending) manifold of $q_-$ (with respect to $\eta_\tau$) with the level set $f^{-1}(c_{\bbot})$.
  Let $B_\tau$ be the intersection of the stable (descending) manifold of $q_+$ (with respect to $\eta_\tau$) with the level set $f^{-1}(c_{\bbot})$.

  \begin{lemma}\label{lem:AtauBtau}
    For any $\tau$ the submanifolds $A_\tau$ and $B_\tau$ intersect at a single point. Near this point $A_\tau=A_0$ and $B_\tau=B_0$.
  \end{lemma}

  \begin{proof}
    Below the level set $c_{\bbot}$ we have $\eta=\eta_\tau$, hence $A_\tau=A_0$. On $B_\tau$ we specify the subset
    $B_\tau^0$, as the set of points $u\in B_\tau$ such that the trajectory of $\eta_\tau$ through $u$ reaches $G^{-1}(\Mtwo\cap W)$ in the future. As $W$ is open, $B_\tau^0$ is an open subset of $B_\tau$.
    Let $B_\tau^1$ be the complement of $B_\tau^0$ in $B_\tau$. By Lemma~\ref{lem:Mone_means_no_q2}, if $u\in B_\tau^1$, then the trajectory of $u$ is in fact a trajectory
    of $\eta$. In particular $B_\tau^1=B_0^1$.

    By Corollary~\ref{cor:W_meansnop1} the whole intersection of $A_\tau$ and $B_\tau$ occurs in the interior of $B_\tau^1$. That is,
    $A_\tau\cap B_\tau=A_\tau\cap B_\tau^1=A_0\cap B_0^1=A_0\cap B_0$ as desired.
  \end{proof}

  Now we finish the proof of Theorem~\ref{thm:paths_of_death_are_equal}. Lemma~\ref{lem:AtauBtau} implies that the assumptions
  of the Uniqueness of Death Lemma~\ref{lem:uniqueness_of_death} are satisfied.
  This concludes the proof of Theorem~\ref{thm:paths_of_death_are_equal} and therefore of the last property of Theorem~\ref{thm:new_finger_move}.
\end{proof}

\part{Examples and applications}\label{part:examples}


This part gives examples and applications of our theory.  We already explained some applications to link homotopy in the introduction.

First we prove the Singular Concordance implies Regular Homotopy Theorem~\ref{thm:concordance} in Section~\ref{sec:concordance_implies}.
Next, we illustrate the Singular Concordance implies Regular Homotopy Theorem~\ref{thm:concordance} using some (previously known) low dimensional cases.
We also give an application to homotopies between two surfaces in a 4-manifold.

In Section~\ref{sec:braid} we prove Proposition~\ref{prop:braids}, which states that any classical link with $n$ components is link homotopic
to the closure of a braid with $n$ strands. While the result is well-known, our proof illustrates the methods in the proof of
the Singular Concordance implies Regular Homotopy Theorem~\ref{thm:concordance}.

Section~\ref{sec:61} deals with another example. The stevedore knot $6_1 \subseteq S^3$ is slice, that is it bounds a smoothly embedded disc in $D^4$. For one such disc, the function `distance to origin'
restricted to this disc has two minima and one saddle. It is not possible to cancel a minimum--saddle pair  in the embedded case, even though such
cancellation is possible restricting to the disc, forgetting about the embedding. To see this, observe that if this cancellation could be achieved, then the stevedore knot would be unknotted. However, after a finger move, the embedded cancellation becomes possible. This example serves as an alternative geometric explanation of the finger move. It is our interpretation of the method sketched by Habegger in~\cite{Habegger-1992-1}.

Section~\ref{section:regular-homotopies-surfaces} investigates regular homotopies between surfaces in 4-manifolds, and shows that every regular homotopy can be expressed as a combination of finger moves and Whitney moves, and that all the finger moves can be assumed to happen first.  This fact is often quoted in the literature.

\section{Immersed link concordance implies regular link homotopy}\label{sec:concordance_implies}

Having  the Path Lifting Theorem~\ref{thm:path_lifting} at our disposal, we can prove the main result of the article.

%
%

\begin{theorem}[Immersed link concordance implies regular  link homotopy]\label{thm:concordance}
  Suppose $A$ is a closed $(n-1)$-dimensional manifold and let $Y$ be a compact $(n+k-1)$-dimensional manifold. Assume that $g_0,g_1\colon A\to Y$ are  concordant generic immersions, via a generic immersion $G\colon A\times[0,1]\to Y\times[0,1]$ such that $G|_{A\times\{0\}}=g_0\times\{0\}$, $G|_{A\times\{1\}}=g_1\times\{1\}$.
\begin{enumerate}
  \item\label{item-conc-implies-link-hom-thm-1}
  If $k>2$, and $G$ is an embedding,
  then there are rel.\ boundary isotopies of diffeomorphisms $H_t$ of $Y \times [0,1]$ and $K_t$ of $A \times [0,1]$, such that
  \[G' := H_1 \circ G \circ K_1 \colon A \times [0,1] \to Y \times [0,1] \]
  is a level-preserving embedding.  In particular $g_0$ and $g_1$ are isotopic.
\item\label{item-conc-implies-link-hom-thm-2}
If $k \geq 2$,
 then there is a rel.\ boundary regular homotopy $H_t \colon A \times [0,1] \to Y \times [0,1]$ with $H_0=G$ such that $H_1$ is a level-preserving generic immersion.   In particular $g_0$ and $g_1$ are regularly homotopic.   Moreover, for every pair of connected components of $A \times [0,1]$ that has disjoint image under $G$, the same holds for $H_t$, for all $t \in [0,1]$.
    In particular, if $H_0$ is an immersed link concordance then $H_1$ is a regular link homotopy.
  \end{enumerate}
\end{theorem}

Before we prove Theorem~\ref{thm:concordance} we recall one more result of Cerf.

\begin{proposition}[see \expandafter{\cite[Theorem V.1.1]{Cerf}}]\label{prop:cerf2}
  Suppose $f_0,f_1\colon N\to\R$ are two Morse functions with the property that whenever $q_-,q_+$ are two critical points of $f_0$ $($respectively $f_1)$, with $\ind q_-<\ind q_+$, then $f_0(q_+)>f_0(q_-)$ respectively $f_1(q_+)>f_1(q_-)$.

  Then there exists a $\cF^1$-path of functions $f_\tau$ such that for every rearrangement occurring along~$f_\tau$, no critical point
  of higher index goes below a critical point of lower index.
\end{proposition}

\begin{proof}[Proof of Theorem~\ref{thm:concordance}]
  Set $\O=Y\times[0,1]$ and let $F\colon\O\to[0,1]$ be the projection onto the first factor. Let $N=A\times[0,1]$.
  Perturb $F$ if necessary to make it an immersed Morse function with respect to $M=G(N)$, without introducing critical points of $F$ on $\O \sm M$.
  Set $f\colon  N\to[0,1]$ to be $f=F\circ G$.
  Then $f$ is a Morse function on $N$.

  If $F$ has no critical points (and no critical points on all strata of $M$), we conclude that $G(A\times\{0\})$ is isotopic to $G(A\times\{1\})$. In fact, we can identify
  $F^{-1}(t)$ with $Y$ and under this identification, $M\cap F^{-1}(t)$ is a set $A_t$ isotopic to $A_0$. If $F$ has no critical points on the zeroth and first stratum of $M$,
  we conclude that $A\times\{0\}$ is regularly homotopic to $A\times\{1\}$ by
  Theorem~\ref{prop:crossingforhomotopy}.
The aim is to achieve these conditions on $F$ in cases \eqref{item-conc-implies-link-hom-thm-1} and \eqref{item-conc-implies-link-hom-thm-2} respectively.

  Now, $f$ is a Morse function on $A\times[0,1]$, equal to $0$ on $A\times\{0\}$ and equal to $1$ on $A\times\{1\}$.
  Let $f_1$ be the Morse function on $A\times[0,1]$ that is a projection onto the second factor.
  Note that $f_1$ and $f$ can be made equal on a neighbourhood of $A\times\{0,1\}$. Then  we connect $f$ and $f_1$
  by a neat path of functions.
  By Lemma~\ref{lem:regular_path}, we may perturb this path
  to a neat $\cF^1$-path of functions $f_{\tau}$ connecting $f$ to a function $f_1$.

  By Proposition~\ref{prop:cerf2},
  we may and shall assume that:
  \begin{itemize}
    \item $f_{\tau}$ is an elementary path near each event (birth, death and rearrangement);
    \item $f_{\tau}$ has no rearrangements such that a critical point of higher index goes below a critical point of lower index.
  \end{itemize}
  These are precisely the assumptions for the Path Lifting Theorem~\ref{thm:path_lifting}. This means that there exists a regular double path $(F_{\tau},G_\tau)$ that is a weak lift of $f_{\tau}$.
 Moreover $F_{\tau}$ is a constant path near $F^{-1}(0)$ and $F^{-1}(1)$. The path of immersions $G_\tau$ changes the map $G$ by a regular homotopy, but only introduces intersection points within each connected component. If $k \geq 3$ and $G$ is an embedding then $G_\tau$ is an ambient isotopy.

 The functions $F_{\tau}$, regarded as ordinary Morse functions on $\O$, have no critical points.
 We invoke Proposition~\ref{prop:crossingforhomotopy} to conclude that $G(A\times\{0\})$ is regularly homotopic to $G(A\times\{1\})$.
 The regular homotopy does not create any intersections between different connected components of $A$. Moreover, if $k>2$ and $G$ was initially an embedding, then so is $G_\tau$ for all~$\tau$. Thus $M$ is the trace of an ambient isotopy. 
\end{proof}

\section{A 3-dimensional example: the proof of Proposition~\ref{prop:braids}}\label{sec:braid}

We recall the statement of Proposition~\ref{prop:braids} for the convenience of the reader.

\begin{proposition}\label{prop:braids-restated}
Every classical link in $S^3$ with $n$ components is link
homotopic to the closure of a braid with $n$ strands.
\end{proposition}

\begin{proof}
Consider a link in $S^3$ with $n$ components.  Choose a disc $D^2$ intersecting each of the~$n$ components of the link transversely in a single point, and thicken it to obtain $D^2 \times I$ intersecting the link in $\{q_i\} \times I$ for some points $q_i \in D^2$.
Remove the interior of $D^2 \times I$ from $S^3$, and identify the remainder also with $D^2 \times I$.
We obtain a concordance
$G\colon  \sqcup^n I \hra D^2 \times I$ from $\{(q_i,0)\}_{i=1}^n$ to $\{(q_i,1)\}_{i=1}^n$, which is called a \emph{string link} in
\cite{Habegger-Lin:1990-1}. An example is shown in Figure~\ref{fig:string link}.  Write $G$ for this string link.

\begin{figure}[ht]
  \input{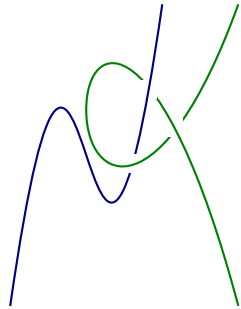}
\caption{A string link.}\label{fig:string link}
\end{figure}

After a perturbation, we may assume that the composition $\pi_2\circ G \colon n\cdot I\to I =[0,1]$ (where $\pi_2$ is a projection onto the second factor) is a Morse
function. If this Morse function has no critical points then the string link is a
braid, and we are done.
We will explain how to perform a link homotopy to reduce the number of local minima by one, and thus simultaneously  the number of local maxima by one (which must occur for Euler characteristic reasons). An induction on the number of critical points then completes the proof.

By Theorem~\ref{thm:grim_global_rearrangement}, by an isotopy we may arrange that all the local minima occur below the local
maxima, as is already the case in Figure~\ref{fig:string link}.  Assume that the local minima occur in $D^2 \times [0,1/2]$, and the local maxima are in $D^2 \times [1/2,1]$, with $1/2 \in I$ a regular value.
Then the disc $L:= D^2 \times \{1/2\} \subseteq D^2 \times I$ is an intermediate level set of the Morse function $\pi_2$.
The image $G(\sqcup^n I)$ intersects $L$ transversely in $k\geq n+2$ points $p_1,\dots,p_k$; compare Figure~\ref{fig:string1}.

Take two such points $p_i$ and $p_j$ and suppose they are connected by an arc in $G(\sqcup^n I)$ that does not intersect $L$. In Figure~\ref{fig:string1}
such pairs of points are $(p_1,p_2)$, $(p_2,p_4)$, $(p_3,p_5)$ and $(p_3,p_6)$. Each such pair corresponds either
to a local minimum or to a local maximum of the function $\pi_2\circ G$. For two such points, write $a_{ij} = a_{ji}$ for the projection of the corresponding
arc to the disc $L$. Such arcs are also drawn in Figure~\ref{fig:string1}. In the language of Section~\ref{sec:hypermembranes},
these arcs are the intersections of the ascending and descending membranes of the critical points with the level set $L$.
We assume that the arcs $a_{ij}$ intersect one another  transversely.
Moreover, the arcs corresponding to maxima (respectively minima) are mutually disjoint, while the projections of minima and maxima arcs can intersect each other.
The transverse intersections assumption is equivalent to the Morse--Smale condition (Definition~\ref{def:Morse_smale_immersed}).

If we consider the projection of $G(\sqcup^n I)$ to the disc $L$, in addition to the arcs just discussed we also see two arcs per component of $G(\sqcup^n I)$, running from one of the $p_k$ to $(q_\ell,1/2) \in L = D^2 \times \{1/2\}$, for some $\ell$. These correspond to the projections of monotone arcs with one endpoint $p_k$ and one endpoint either $q_\ell^+ := q_\ell \times \{1\}$ or $q_\ell^{-} = q_\ell \times \{0\}$. (Recall that $q_\ell^{\pm}$ are the endpoints of the $\ell$-th component of $G(\sqcup^n I)$.)  These arcs in $L$ can intersect anything, including themselves; we do not need to control these intersections. They represent a portion of the link components where the embedding is already monotone increasing with respect to the $I$ coordinate.  Denote the arc in $L$ by $b_\ell^{\pm}$, using $b_\ell^+$ if one endpoint of the preimage in $D^2 \times I$ is $q_\ell^+ \in D^2 \times \{1\}$, and $b_\ell^-$ if the endpoint is $q_\ell^- \in D^2 \times \{0\}$.
Figure~\ref{fig:level} shows the configuration in $L$ coming from Figure~\ref{fig:string link}.

\begin{figure}
  \input{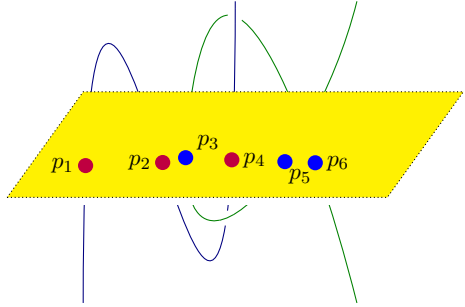}
  \caption{Intersecting the string link of Figure~\ref{fig:string link} with an intermediate level set $L$. }\label{fig:string1}
\end{figure}

\begin{figure}[ht]
  \input{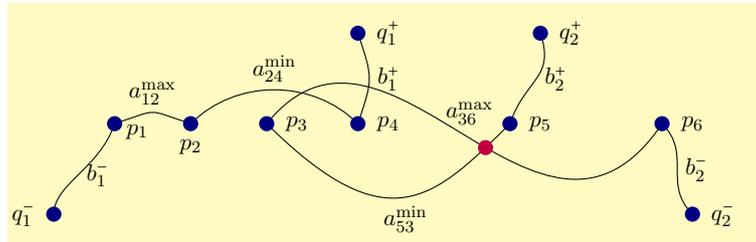}
\caption{The projection of the arcs containing the critical points, and the arcs $b_i^{\pm}$ to the level set $L$. In the language of Definition~\ref{def:hypermembrane}, the arcs between two $p_i$ are the intersections
of the ascending and descending membranes of the local minima and maxima with~$L$.  The points $q_i \times \{0\}$ and $q_i \times \{1\}$ have been displaced from one another so their projections can be shown as distinct points $q_i^{\pm}$. Arcs labelled with $\min$ represent the projections of minima, while arcs labelled with $\max$ represent the projections of maxima.}\label{fig:level}
\end{figure}

Let $a_{\min}$ and $a_{\max}$ be two arcs among the $\{a_{ij}\}$ that meet at one of their end points, with $a_{\min}$ (respectively $a_{\max}$) the projection of an arc containing a local minimum $w_{\min}$ of $\pi_2 \circ G$ (respectively, a local maximum $w_{\max}$).  Choose these to be the first such pair that arises while travelling along a link component starting at $q_{\ell}^-$ for some $\ell$, i.e.\ $a_{\max}$ must be adjacent to $b_\ell^-$.
If this is their only intersection point then isotope the arc $a_{\min}$ that is below the intermediate level to slightly above it, and isotope the arc $a_{\max}$ that is above the intermediate level to below it. Then further isotope both arcs downwards, without creating maxima nor minima, so that the arc $a_{\min}$, apart from the endpoint that is furthest along the $\ell$th component, is moved back slightly below the intermediate level; move this endpoint so as to lie on $L$.  The result is to cancel the critical points $w_{\min}$ and $w_{\max}$. In the model case presented in Figure~\ref{fig:level}, we can perform this operation for the arcs $a_{12}$ and $a_{24}$ connecting
$(p_1,p_2)$ and $(p_2,p_4)$. The effect on the projection to $\ell$ is that the arcs $a_{\max}$ and $a_{\min}$ are both added to $b_\ell^-$.

To fix notation, let $i, j$, and $k$ be defined by setting $a_{\min} = a_{ij}$ and $a_{\max} = a_{jk}$.
Even if the arcs $a_{\min}$ and $a_{\max}$ meet in their interior, that is if $(a_{ij} \cap a_{jk}) \sm \{p_j\}$ is nonempty (this is the case in Figure~\ref{fig:level} for arcs $a_{53}$ and $a_{36}$ connecting $(p_3,p_5)$ and $(p_3,p_6)$), we can still perform the same cancellation motion --- push $a_{ij}$ above $L$, push $a_{jk}$ below $L$, and then move all apart from the endpoint $p_i$ of $a_{ij}$ to be below $L$. Since the maxima (minima) arcs are mutually disjoint, this only creates intersections between the two pushed arcs. Since they share a common point $p_j$, the arcs $a_{ij}$ and $a_{jk}$ are projections of arcs of the same component of the string link $G(\sqcup^n I)$, and so we have performed a link homotopy.
An example is drawn in Figure~\ref{fig:string3}, and
the effect of the homotopy in Figure~\ref{fig:string3} on the configuration of arcs is shown in Figures~\ref{fig:string4} and~\ref{fig:string5}.

The effect of the cancellation on the configuration of arcs diagram, in $L$, is to remove the points of intersection $p_j$ and $p_k$ of $G(\sqcup^n I)$ with $L$. The arcs $a_{ij}$ and $a_{jk}$ are joined with $b_\ell^-$.
The point $p_i \in L$ at the far end of $a_{ij}$ remains. The number of maxima and the number of minima of $\pi_2 \circ G$ have both been reduced by one.  Since $\sqcup^n I$ is compact, a finite induction suffices to remove all maxima and minima.  Once all that remains are the monotone arcs $b_i^{\pm}$, we have a braid, and it has been obtained by a link homotopy from our original string link~$G$.  The closure of this braid is therefore link homotopic to the original link in $S^3$ we started with.
\begin{figure}
  \input{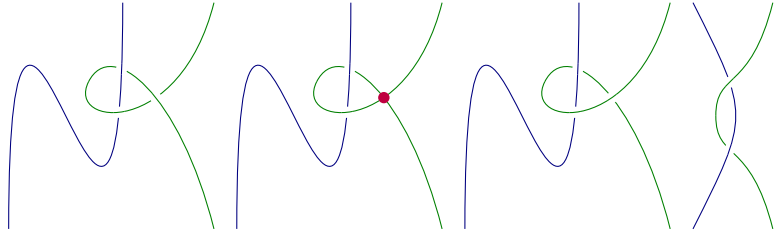}
  \caption{A homotopy of the string link that cancels critical points, even when there are intersections between membranes.}\label{fig:string3}
\end{figure}
\begin{figure}
  \input{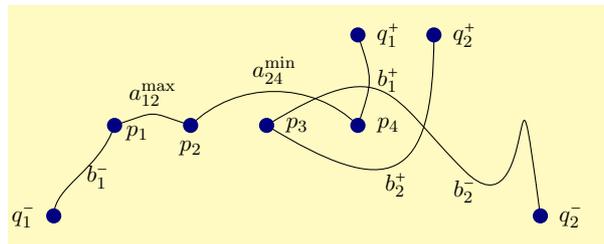}
  \caption{The effect of the homotopy in the first three frames of Figure~\ref{fig:string3} on the configuration of arcs in $L$.}\label{fig:string4}
\end{figure}
\begin{figure}
  \input{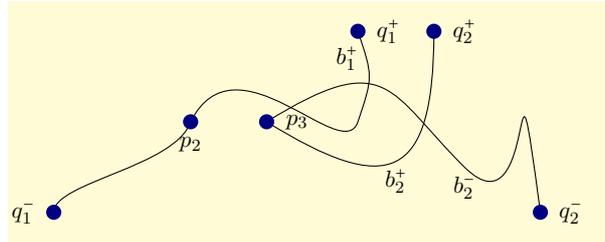}
  \caption{The effect of the homotopy in the final two frames of Figure~\ref{fig:string3} on the configuration of arcs in $L$.}\label{fig:string5}
\end{figure}
\end{proof}

\section{A 4-dimensional example}\label{sec:61}

We provide another example, this time involving surfaces in 4-dimensional space. We explain Figure~\ref{fig:stevedore} from the
introduction in detail.
This example enables us to illustrate the  finger move, and the idea of the proof of Theorem~\ref{thm:concordance}. We use the simplest nontrivial slice knot $6_1$, the stevedore knot, which is shown in  Figure~\ref{fig:one}.
This example motivates the more general construction of the finger move from
Part~\ref{part:finger}.

\begin{figure}[h]
	\input{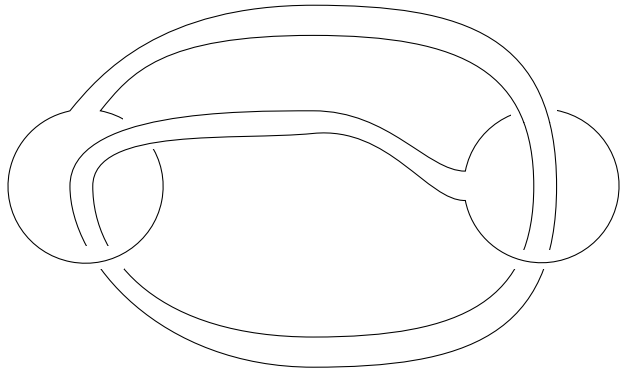}
	\caption{The stevedore knot $6_1$.}\label{fig:one}
\end{figure}

The stevedore knot $6_1$ bounds a disc $\Delta$ in $D^4$, but no  embedding of the disc can be built from a
single $0$-handle and no $1$-handles, in such a way that the handle decomposition comes from a Morse function on $D^4$ with a single index $0$ critical point; if it could, the knot would be trivial.
However, the knot bounds a disc admitting a handle decomposition with two $0$-handles and one $1$-handle.
This can be seen in Figure~\ref{fig:two}.
Our aim is to cancel a $0$--$1$ handle pair at the expense of a regular homotopy of $\Delta$. This will yield a link null-homotopy of $6_1$. Of course we know this is possible, because every knot in $S^3$ is null homotopic. Our aim is to give an explicit demonstration of the finger move.

To fix notation, consider $F \colon D^4 \to [0,1]$ given by the radius, and assume $F$ is an immersed Morse function with respect to $\Delta$. Denote the critical points corresponding to 0-handles by $p_-,p_-'$, and the critical point
corresponding to the $1$-handle by $p_+$.  Choose a grim vector field $\xi$ on $D^4$ for $F$.

\begin{figure}
  \input{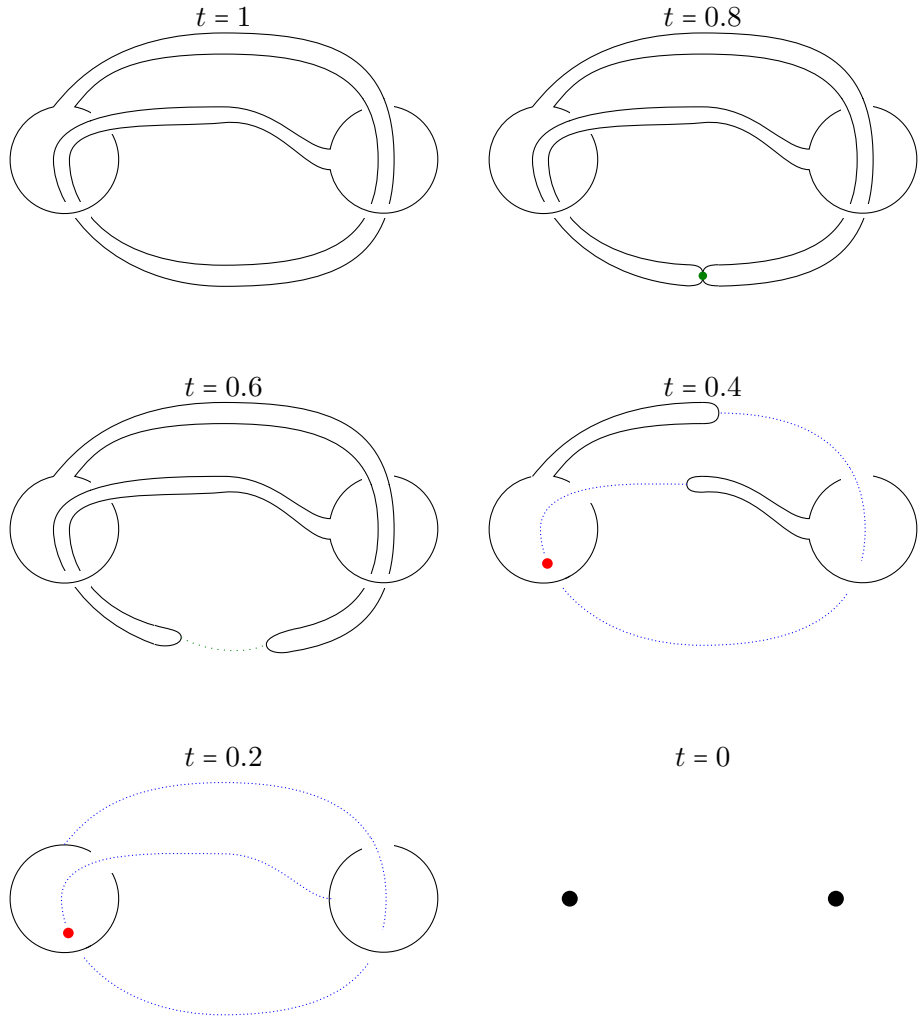}
  \caption{A movie indicating that the stevedore knot $6_1$ bounds a disc $\Delta$ in $D^4$ such that the distance-to-origin function $t$ on $D^4$ restricts
  to a Morse function on $\Delta$ with precisely two minima and one saddle.}\label{fig:two}
\end{figure}

In the absolute setting, i.e.\ if we were to ignore the fact that they lie on an embedded submanifold and just consider trajectories of $\xi$ on $\Delta$, either pair of 0- and 1-handles could be cancelled. This is not possible when we consider the ambient trajectories of $\xi$ as well. The obstruction to cancellation can be expressed in terms of the membranes $\Hd^0(p_+)$ and $\Ha^0(p_-)$ or $\Ha^0(p_-')$.  When the membranes of the critical points have nontrivial intersection in their interior, this prevents one from cancelling the critical points, even when there is a unique trajectory on $\Delta$ between the critical points.
As mentioned before, this was studied in \cite{Pe,Sha,BP}.
Each such intersection between membranes corresponds to a trajectory that starts at the index zero critical point, leaves the embedded disc immediately and then later arrives at the index~$1$ critical point. The
hypothesis of the Cancellation Theorem~\ref{thm:grimcanc} that there be only one trajectory of a gradient-like immersed vector field between the critical points, and that this must lie on $\Delta$, is not satisfied.  The ascending membrane of one of the
index zero critical points $p_-, p_-'$, and the descending membrane of the index one critical point $p_+$, are depicted in Figure~\ref{fig:membranes3}.  We can see that they intersect.
\begin{figure}
  \includegraphics[width=5cm]{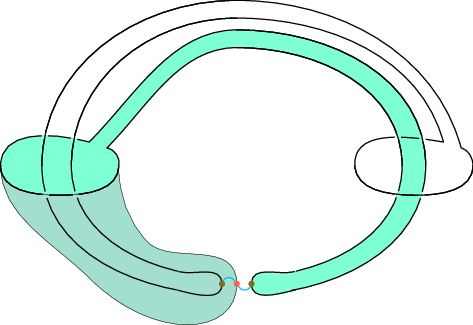}
	\caption{The ascending membrane (shaded) of one of the two index $0$ critical points $p_-$ of the disc $\Delta$, below the index 1 critical level set. The small arc represents
	the descending membrane of the index $1$ critical point $p_+$ of $\Delta$. The intersection of the two membranes is one point in the middle of the arc.}\label{fig:membranes3}
\end{figure}

However, we can cancel the intersection points in the interiors of membranes after a suitable regular homotopy of $\Delta$. This homotopy can be constructed as follows.
Choose a level set $c$ in between the two critical level sets $c_-$ and $c_+$ for $p_-$ and $p_+$ respectively.  Choose a curve $\gamma$ that lies in this level set, that lies on the ascending membrane of $p_-$, and which connects the intersection point of the ascending and descending membranes $\Ha^0(p_-) \cap \Hd^0(p_+) \cap F^{-1}(c)$ to a point on the ascending sphere $\Ha^1(p_-)$ of $p_-$.   See Figure~\ref{fig:guiding}.
\begin{figure}
  \includegraphics[width=5cm]{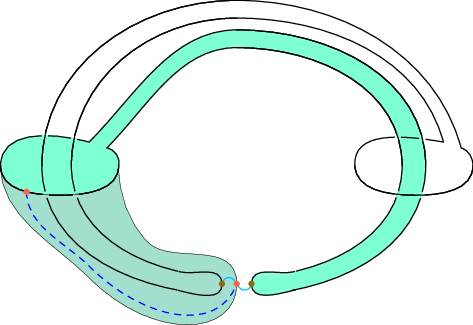}
  \caption{The guiding curve $\gamma$ represented as a dotted arc connecting the intersection point of the membranes with the link.}\label{fig:guiding}
\end{figure}
Now take the curve $\gamma$ and flow it upwards, to a level in between the level set~$c$ and a level set just above the index $1$ critical point. That is, to the level $c_+ +\delta$, where $\delta>0$ is a small real number.  Let $\gamma_t$ be the result of flowing $\gamma$ to the level set $t$. So we are considering $\gamma_{c_+ + \delta}$.

Above $c_+$, there is a problem with flowing at the endpoint $\gamma(0)$ of $\gamma$ that lies on $\Hd^0(p_+)$.  We therefore consider, in the level sets $F^{-1}(\wt{c})$, for $\wt{c}$ in $(c_+,c_+ +\delta]$, the closure of $F^{-1}(\wt{c}) \cap \bigcup_{t \in \R} \gamma_{t}$; see Figure~\ref{fig:guiding2}.
\begin{figure}
  \input{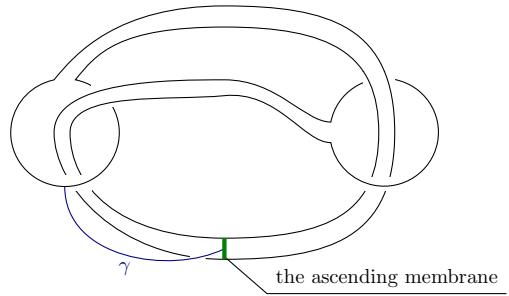}
  \caption{The guiding curve $\gamma$ above the level set $c_+$.}\label{fig:guiding2}
\end{figure}


Now we describe the homotopy for the finger move.  It is supported in the level sets $[c_+-\delta,c_+ + \delta]$. We describe the intersection of the finger-moved $\Delta$ with $F^{-1}([c_+-\delta,c_+ + \delta])$,  starting at the level set $c_++\delta$, \emph{above} the critical level $F(p_+)=c_+$ for the index 1 critical point $p_+$.  We will work downwards.
We push a neighbourhood of the endpoint of $\gamma$ that lies on the ascending sphere of $p_-$ along $\gamma$ by a `finger move' as in Figures~\ref{fig:dragging}.

\begin{figure}
	\input{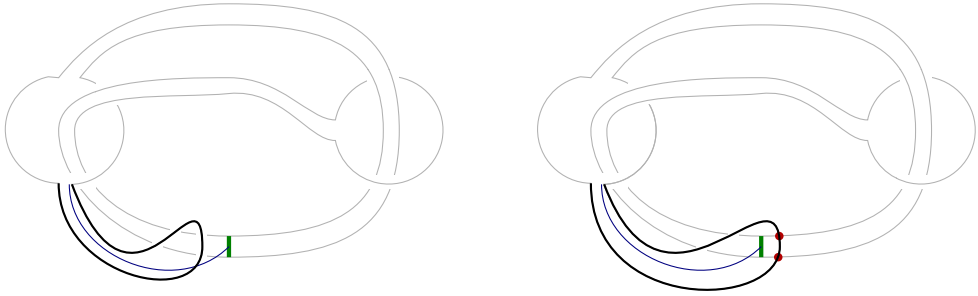}
\caption{The outcome of dragging part of the knot along the guiding curve, shown at the level sets $F^{-1}(c_+ + 3\delta/4)$ on the left, and at $F^{-1}(c_+ + \delta/2)$ on the right.}\label{fig:dragging}
\end{figure}
The length of the finger depends on the level $c_+ + t\delta$, $t \in [-1,1]$.
For $c_++\delta = c_{\ttop}$, the length is zero. As $t$ decreases from $1$, the finger gets longer. At some level set $c_{\mathrm{int}} \in (c_+,c_++\delta)$, the finger crosses the two lower strands of the knot, creating a pair of  self-intersections; see Figure~\ref{fig:dragging}(left). This happens at the two points of $\Ha^1(p_+) \cap F^{-1}(c_{\mathrm{int}})$; see Figure~\ref{fig:dragging}(right).
Moving the finger slightly further in the level sets below $c_{\mathrm{int}}$, we no longer have self-intersections.
The knot after this move, which is an unknot, is depicted on the left of Figure~\ref{fig:knoc_after}.

\begin{figure}
  \input{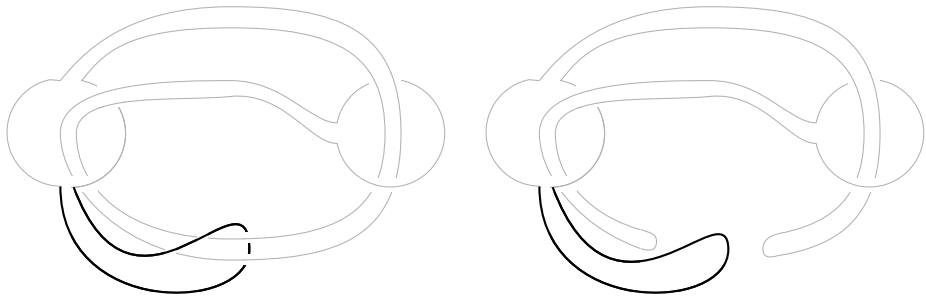}
  \caption{Left: the maximum extent of the finger move, in $F^{-1}(c+\delta/4)$. The finger is pushed past the band. The resulting knot is the unknot. Right: the finger is dragged back to where it started, below the level set $F^{-1}(c_+)$. The right figure shows $F^{-1}(c_+-\delta/4)$.}\label{fig:knoc_after}
\end{figure}

Next, we have to undo the finger, in the reverse of the above process.  We will do this within the level sets $[c_+-\delta,c_+)$.  We keep the finger constant between $c_{\mathrm{int}}$ and $c_+$, then below $c_+$ we begin to retract it.   We do this in such a way that the disc $\Delta$ is unaltered by the move at the level set $c_+-\delta$, and the finger reduces in length gradually as we move down from $c_+$ to $c_+-\delta$.  Below the index one critical point $p_+$ i.e.\ below the level set $c_+$, the band is cut by the saddle point, so the finger can shorten without introducing any more self intersections of $\Delta$.
Compare the right of Figure~\ref{fig:knoc_after}.

The finger move transforms the disc $\Delta$ into an immersed disc $\Delta'$. The two discs only differ in the region $F^{-1}([c_+-\delta,c_++\delta])$.  The Morse function on $D^4$ induces an immersed Morse function $F$ on $\Delta'$. Apart from the two index zero critical points $p_-$ and $p_-'$,
and one index $1$ critical point $p_+$, $F$ now has two critical points on the double point stratum (recall that any isolated double point is automatically a critical point). There is a single trajectory on $\Delta'$
connecting $p_-$ to $p_+$, and a single trajectory on $\Delta'$ connecting $p_-'$ to $p_+$. But now the trajectory connecting $p_-$
to $p_+$, that used to run outside $\Delta$, has no corresponding trajectory in the complement of $\Delta'$.   See Figure~\ref{fig:newstev1a}.
\begin{figure}
  \includegraphics[width=5cm]{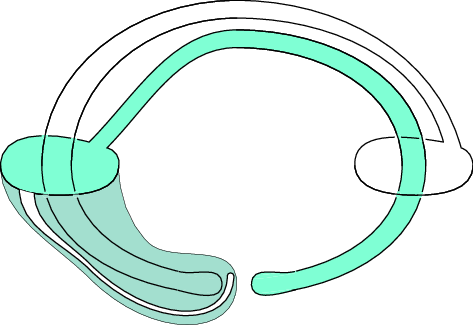}
  \caption{After drilling a neighbourhood of the guiding curve, the membranes do not intersect any more.}\label{fig:newstev1a}
\end{figure}

So we are ready to apply Cancellation Theorem~\ref{thm:grimcanc} to cancel the pair of critical points~$p_-$ and~$p_+$.  Note that the disc below the level set $c_{\mathrm{int}}$ is a slice disc for the unknot.
Some care must be taken to show that no new trajectories from $p_-$ to $p_+$ appear as a result of the finger move. It turns out that broken trajectories, that is trajectories
going from $p_-$, through a critical point on a deeper stratum, and ending up in $p_+$, can be excluded by a dimension counting argument. Ordinary
trajectories do not appear since we have just removed them with the finger move.
Thus we can cancel $p_-$ and $p_+$, leaving just a single index $0$ critical point $p_-'$.  The new disc $\Delta'$ therefore determines a link homotopy from the original knot $6_1$ to the unknot.  The above construction is described precisely, and in generality, in carefully chosen coordinates, in Section~\ref{sec:fingermove}.

In this dimension, the finger move introduced a 0-sphere $S^0$ of self intersections. In higher dimensions, the finger move introduces a higher dimensional sphere $S^k$ of new self-intersections.

\section{Codimension one links} \label{sec:cod1}

The Schoenflies theorem holds for $n\neq 3$, proven by elementary  methods for $n<3$ and using the $h$-cobordism theorem for $n>3$~\cite[p.~112]{Mil65}.
More precisely, given a smooth embedding $\varphi \colon S^n \hra \R^{n+1}$, the $h$-cobordism theorem implies that  for $n>3$ the (closure of the) bounded part of its complement is diffeomorphic to $D^{n+1}$. Since embeddings of codimension 0 discs (in any connected manifold) are all isotopic, up to a reflection, it follows that the image $K:= \varphi(S^n)$ is ambiently isotopic to
the unknot $U:= S^n\subseteq \R^{n+1}$, the round $n$-sphere.

We note that precomposing $U$ by a self-diffeomorphism of $S^n$ can change the isotopy class of the embedding $\varphi:S^n\hra \R^{n+1}$ and in fact, Cerf's pseudo-isotopy theorem implies that  this precomposition inducess bijections
\[
\pi_0\Diff(S^n) \cong \pi_0 \Emb(S^n,\R^{n+1})  \quad \text{for }  n>4.
\]
By Kervaire-Milnor~\cite{Kervaire-Milnor:1963-1} the (orientation preserving part of the) left hand mapping class group is isomorphic to the group $\Theta_{n+1}$ of exotic $(n+1)$-spheres, a highly nontrivial (finite) group.

That is why we do not work with {\em parametrized knots} $\varphi$ in this codimension one discussion but with their images, i.e.\ smooth codimension one submanifolds $K=\varphi(S^n)$ (that happen to be diffeomorphic to $S^n$).
In that spirit, we defined a codimension one link $L\subseteq\R^{n+1}$ to be an {\em ordered} sequence $(K_1, \dots, K_r)$ of disjoint codimension one knots.

\begin{definition}\label{def:unparametrized}
Given unparametrized knots $K_0, K_1 \subseteq \R^{n+1}$, we say that a concordance $g \colon S^n \times [0,1] \to \R^{n+1} \times [0,1]$ connects $K_0$ and $K_1$ if $g|_{S^n \times \{i\}}$ is a parametrisation of $K_i$ for $i=0,1$.  Similarly for homotopy between $K_0$ and $K_1$ and unparametrized link concordance respectively link homotopy. Alternatively, we could require maps of manifolds whose domain happens to be diffeomorphic to $S^n \times [0,1]$, but that notion is equivalent.
\end{definition}

We note that two embeddings $\varphi_0, \varphi_1 \colon  S^n \hra \R^{n+1}$ that differ by an {\em orientation preserving} diffeomorphism of $S^n$ are homotopic without changing their image, because the diffeomorphism is homotopic to the identity. So changing the parametrisations of the components of an embedding $\varphi \colon \sqcup^r S^n \hra \R^{n+1} $ in this way preserves its link homotopy and link concordance class.

However, if we precompose $\varphi$ by a reflection on one component then its link concordance class can change, depending on whether that component was innermost or not. In the former case, the component is null homotopic in the complement of the other components and hence homotopic to the precomposition with a reflection. In the latter case, there is at least one other component on the inside and hence the winding number around that component detects the orientation.

\begin{remark}\label{rem:oriented edges}
This discussion leads naturally to the notion of {\em oriented} unparametrized links $L=(K_1,\dots,K_r)$ for which the notion of link concordance is slightly more interesting. Recalling the \emph{dual tree} from Definition~\ref{def:dual}, it turns out that the orientations of the non-innermost submanifolds $K_i$ lead to {\em orientations} of those edges in $t(L)$ that are ``internal'', i.e.\ not adjacent to a leave, and so they correspond to innermost components $K_i$, see the discussion in the next paragraph. Then a link concordance preserves these orientations if and only if the trees are isomorphic as rooted, edge-ordered and internally oriented trees.

Furthermore, it follows that  isomorphism classes of such trees also classify the set of parametrised links $\Emb(\sqcup^r S^n,\R^{n+1})$, modulo link concordance.
\end{remark}

Now we give the proof of Proposition~\ref{prop:cod1}, whose statement we recall here.

\begin{proposition}\label{prop:cod1-body}
For $n\neq 3$, two smooth codimension one links $L, L' \subseteq\R^{n+1}$ are ambiently isotopic if and only if they are link concordant.
Moreover, a generically immersed link concordance induces an isomorphism on dual trees, and if there is an isomorphism $t(L)\cong t(L')$ of rooted, edge-ordered trees, then it is unique and is induced by an ambient isotopy from $L$ to $L'$.
\end{proposition}

\begin{proof}
The arguments will all use the following notion of an {\em innermost} component of $L$. Start with the first component $K_1$ and consider its ``inside'' ball $J_1$, the closure of the bounded part of $\R^{n+1} \smallsetminus K_1$. If there is another component $K_j$ inside $K_1$, i.e.\ in $J_1$, then consider its inside ball $J_j$ etc. By finiteness, there must be a component $K_i$ of $L$ that has no other component inside and hence it bounds a ball in $\R^{n+1} \smallsetminus L$ - we call $K_i$ innermost and note that there may be several innermost components in $L$. By induction on the number of components $r$ of $L$, where we only need to add an innermost component, one can easily show the following.
\begin{enumerate}
\item If $L$ has $r$ components then $\R^{n+1} \smallsetminus L$ has $r+1$ connected components, exactly one of which is unbounded.
\item For every component  $K$ of $L$, there are exactly two connected components of $\R^{n+1} \smallsetminus L$ whose closures meet $K$.
\item We have that $t(L)$ is a well-defined tree, with an ordering of the edges induced from the ordering of the components of $L$.
\item If $C$ is a connected component of $\R^{n+1} \smallsetminus L$ representing a vertex $v_C$ in $t(L)$ then the edges adjacent to $v_C$ correspond to the components of $L$ that lie in the closure (or equivalently, in the point set boundary) of $C$.
\item The component $C$ is innermost if and only if $v_C$ is a univalent vertex.
\item If there is an isomorphism $t(L)\cong t(L')$ of rooted edge-ordered trees, it is unique, and it is realised by an ambient isotopy from $L$ to $L'$.
\end{enumerate}
The subtle part of Proposition~\ref{prop:cod1} is to show that link concordance implies ambient isotopy. By Proposition~\ref{prop:immersed} we can approximate the link concordance by an immersed link concordance
\[
G \colon (\sqcup^r S^n) \times [0,1] \imra \R^{n+1} \times [0,1].
\]
from $L$ to $L'$, that we may assume is in general position as in Definition~\ref{def:generic-immersion}

Then we show that $G$ induces an isomorphism $t(L)\cong t(L')$ which is implied by the following result, Lemma~\ref{lem:tree isomorphism}, which proves that the obvious inclusion-induced maps on vertices and edges are an isomorphism of rooted, edge-ordered trees.
\end{proof}

\begin{figure}[h]
  \begin{tikzpicture}
    \draw (0,0) node {\includegraphics[width=6cm]{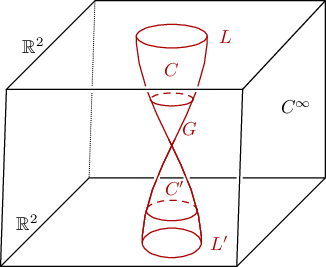}};
  \end{tikzpicture}
\caption{An `hourglass' knot concordance $G$ between two copies $L$ and $L'$ of a standard $S^1 \subseteq \R^2$. This $G$ is not generically immersed. The connected components of $\R^2 \times [0,1] \sm G$ are $C$, $C'$, and the unbounded region $C^{\infty}$. With $j \colon \pi_0(\R^2 \sm L) \to \pi_0(\R^2 \times [0,1] \sm G)$ and $j' \colon \pi_0(\R^2 \sm L') \to \pi_0(\R^2 \times [0,1] \sm G)$ the inclusion-induced maps, we have $\{C_\infty, C\}=\Image(j)\neq\Image(j') = \{C_\infty, C'\}$. Although $L$ and $L'$ are isotopic, and $t(L) \cong t(L')$, this isomorphism is not induced by the concordance $G$. Since $G$ is not generically immersed, this is consistent with Proposition~\ref{prop:cod1-body}. }
\label{fig:need immersion}
\end{figure}

\begin{lemma}\label{lem:tree isomorphism}
The two inclusions from the boundary induce injective maps $j, j'$
\[
 \pi_0(\R^{n+1} \smallsetminus L) \xrightarrow{j} \pi_0(\R^{n+1} \times [0,1] \smallsetminus G) \xleftarrow{j'}   \pi_0(\R^{n+1} \smallsetminus L')
\]
with equal images, $\Image(j) = \Image(j')$.
\end{lemma}

\begin{remark}\label{rem:need immersion}
The injectivity of the maps $j,j'$ also holds for an arbitrary (non-immersed) link concordance by Alexander duality, as will become apparent from the proof of Lemma~\ref{lem:tree isomorphism}. However, equality of their images does not hold in general, as illustrated by the example in Figure~\ref{fig:need immersion}.
\end{remark}

\begin{proof}[Proof of Lemma~\ref{lem:tree isomorphism}]
The first step can be proven using Alexander duality, but we use a transversality argument instead.

\emph{Step 1}. Generically, a smooth circle $c \colon S^1 \to  \R^{n+1} \times [0,1]$ intersects a smooth concordance $g \colon S^n \times [0,1]\to \R^{n+1} \times [0,1]$ in an even number of points. To see this, note that the circle bounds a smooth map $D^2 \to  \R^{n+1} \times [0,1]$  which generically intersects $g$ in a compact $1$-manifold with boundary $g \pitchfork c$, so this consists of an even number of points.

\emph{Step 2}. Injectivity of $j \colon \pi_0(\R^{n+1} \smallsetminus L) \hra \pi_0(\R^{n+1} \times [0,1] \smallsetminus G)$. Assume that there are two components $C_1, C_2$ of $\R^{n+1} \smallsetminus L$ that map to the same component, $j(C_1) = j(C_2)$. That means that there is a path $\gamma_{12}$ in $\R^{n+1} \times [0,1] \smallsetminus G$ from a point $p_1 \in C_1$ to $p_2\in C_2$. Consider the geodesic in $t(L)$ from the vertex $v_{C_1}$ to $v_{C_2}$ and assume that $K_i$ is a component of $L$ that corresponds to an edge on that geodesic. This means that there is a path $\gamma$ in $\R^{n+1}$ from $p_1$ to $p_2$ that intersects $K_i$ generically an odd number of times. So $\gamma_G \cup \gamma$ is a circle in $\R^{n+1} \times [0,1]$ that intersects the component $G_i$ of $G$ generically an odd number of times, a contradiction to the observation in Step~1.

\emph{Step 3}. Setting up an induction on $r$, the number of components of $L$ to show the equality $\Image(j)=\Image(j')$. Let $K_i$ be an innermost component of $L$ and let $\widehat L:= L \smallsetminus K_i$.
Then $\widehat G:= G \smallsetminus G_i$ is an immersed concordance from $\widehat L$ to $\widehat L' := L' \smallsetminus K_i'$ and so the statement holds by induction. The base case that $G$ is empty is easily seen to hold. To show it for $G$, it suffices to prove that there is a path $\gamma$ in $\R^{n+1} \times [0,1] \smallsetminus G$ that starts on the inside of  $K'_i$ and ends on the inside of $K_i$.
This is one of the places  where we use that $G$ is a generic immersion. Namely, we show that such a path  $\gamma$ exists arbitrarily close to $G_i$, and hence $\gamma$ automatically can be assumed to miss the other components of $G$. So we are left with showing the inductive step purely in terms of the innermost component we removed.

Step 4. To prove the inductive step, we need to show that for a generically immersed concordance $g$ between knots $K$ to $K'$, there is a path $\gamma$ in $\R^{n+1} \times [0,1] \smallsetminus g$ that starts on the inside of $K'$, ends on the inside of  $K$, and stays arbitrarily close to $g$.

To prove this statement, consider the normal bundle $\nu g$ of $g \colon S^n \times [0,1] \imra \R^{n+1} \times [0,1]$, a 1-dimensional vector bundle on $S^n \times [0,1]$. This is a trivial bundle since its restriction to $S^n \times \{0\}$ can be trivialized by the normal vector that points into the inside of $K$. As a consequence, we can extend that section to a non-vanishing section $\sigma$ of $\nu g$. Using the exponential map locally, where $g$ is an embedding, we get a map $s \colon S^n \times [0,1] \to \R^{n+1} \times [0,1]$ that we think of as a `parallel' of $g$.  Here we used again that $g$ is a generic immersion; this step is not possible with the `hourglass' concordance in Figure~\ref{fig:need immersion}.

If $g$ were an embedding then $s$ would also be an embedding (and actually be an honest parallel), lying in the complement of $g$. However, the double point manifold of $g$ has codimension one in $S^n \times [0,1]$, the triple point manifold has codimension~2, etc. Therefore, we can at least pick a smooth arc $\alpha\colon [0,1] \to S^n \times [0,1]$ from $(x,0)$ to $(x,1), x\in S^n$, that misses the triple, quadruple and higher singular points of $g$ and intersects the double point set generically  in finitely many points $\{t_1,\dots, t_m\}\subseteq [0,1]$.

By taking $s \circ \alpha \colon [0,1] \to \R^{n+1} \times [0,1]$, we obtain an embedded  arc $\beta$ in $\R^{n+1} \times [0,1]$ that starts on the inside of  $K'$, ends on the inside of  $K$, and meets $g$ transversely in $2m$ points. One pair of these intersections comes from each point $\alpha(t_i)$ lying in the double point set of $g$. So $\beta$ follows the normal vector field along $\alpha$. At each double point of $\alpha$, we have the local picture in Figure~\ref{fig:gamma}. The easiest way to describe it is to start with $\alpha$ given locally in $\R^2$ by the coordinate axes, intersecting in the origin $(0,0)$. Observe that $\R^2 \smallsetminus \alpha$ has four connected components, namely the four (open) quadrants.
By transversality, $g$ is given locally by the product of this $\R^2$ with $\R^n$, with $\{0\} \times \R^n$ becoming the local double point manifold of $g$.
Note that $g$ itself may twist around $\alpha$ from one double point of $\alpha$ to the next and that $\beta$ just twists along as its normal vector field. So we have no control over how $\beta$ moves between these double points, it just continues to stay normal to $g$ and glues together to a smoothly embedded arc (since we are assuming $n>0$).

\begin{figure}[h]
  \begin{tikzpicture}
    \draw (0,0) node {\includegraphics[width=8cm]{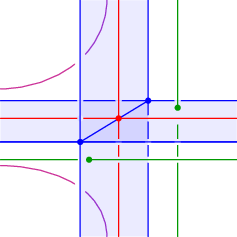}};
    \draw[red] (0.2,3.8) node {$\alpha$};
    \draw[red] (-3.8,0.2) node {$\alpha$};
    \draw[green!60!black] (2.2,3.8) node {$\beta$};
    \draw[green!60!black] (3.8,-1.6) node {$\beta$};
    \draw[blue] (-1.6,3.8) node {$g$};
    \draw[blue] (3.8,1.0) node {$g$};
    \draw[purple] (-3.8,1.2) node {$\gamma$};
    \draw[purple] (-3.8,-2.0) node {$\gamma$};
\end{tikzpicture}
\caption{Constructing the paths $\alpha$, its parallel $\beta$, and ultimately $\gamma$.}
\label{fig:gamma}
\end{figure}

From the local description we see, at each double point of $\alpha$, a unique turn that we can introduce just before $\beta$ hits $g$. We can keep the new local arc $\gamma$ normal to both sheets of $g$ and in addition disjoint from $g$ as in Figure~\ref{fig:gamma}. This comes from the fact that in a coordinate ball around the double point, the complement of $g$ has exactly four connected components, aforementioned the quadrants in $\R^2$ times $\R^n$. The required global path $\gamma$ in the complement of $g$ is then constructed by the following algorithm.
\begin{enumerate}
\item Start with $\gamma(0):=\beta(0)$ lying on the inside of $K'$ and let $\gamma(t):=\beta(t)$ for all $t<t_1$, the first double point of $\alpha$.
\item At that double point, let $\gamma$ make the unique smooth turn that keeps it disjoint from $g$ and then continue normally to $g$ along $\alpha$. This continuation may be either following $\beta$, or may be on the other side, i.e.\ following the negative $-\sigma$ of the section $\sigma$ used to define $s$ and $\beta$, applied to $\alpha$. We denote this curve by $-\beta$. The submanifold $\gamma$ may also have a different local orientation to $\pm \beta$.
\item This process of building an embedding $\gamma$ of a compact connected 1-manifold continues uniquely, up to isotopy, turning at all double points $\{t_1,\dots, t_m\}$ of $\alpha$ in the prescribed way, and staying normal to $g$ along $\alpha$ otherwise.
\item The resulting 1-manifold can be parametrised to become an embedding of $[0,1]$  with a second boundary point  $\gamma(1)$. By construction, $\gamma(1)$ is equal to either $\sigma$ or $-\sigma$ applied to a boundary point of $\alpha$. So $\gamma(1)$  equals $\sigma(\alpha(0)) = \beta(0)$, $-\sigma(\alpha(0)) = -\beta(0)$, $\sigma(\alpha(1)) = \beta(1)$, or $-\sigma(\alpha(1)) = -\beta(1)$.  However $\gamma(1)$ cannot be equal to $\gamma(0) = \beta(0)$ since the manifold $\gamma$ has two boundary points.
\end{enumerate}
The last step is to argue that $\gamma(1)= \pm \beta(1)$ and lies on the inside of  $K$, because the other two possibilities contradict Step 1 above. If $\gamma(1)= -\beta(0)$ still lies in $\R^{n+1} \smallsetminus K'$, then we can glue $\gamma$ with the short path $\delta \colon [0,1] \to \R^{n+1}$ sending $t\mapsto (\smfrac{1}{2}(t+1)\cdot \beta(0)$ through $K'$ to get a circle that intersects $g$ once, which is a contradiction. Similarly, if $\gamma(1)$ lies in $\R^{n+1} \smallsetminus K$, but outside of $K$, then we can connect $\gamma(1)$  by a path in the unbounded component, far away from $g$, to $-\beta(0)$. Using $\delta$ again we obtain a circle that intersects $g$ once, which is another contradiction. Hence $\gamma(1)$ is equal to whichever of $\pm \beta(1)$ lies inside $K$. This completes the proof of Step 3, and hence completes the proof of Lemma~\ref{lem:tree isomorphism}.
\end{proof}

\section{Regular homotopies of surfaces}\label{section:regular-homotopies-surfaces}

  A regular homotopy between immersed compact surfaces $G_i \colon \Sigma \looparrowright M$, $i=0,1$, in a closed 4-manifold $M$, is a smooth homotopy $G \colon \Sigma \times I \to M$ through immersions, with $G_t \colon \Sigma \looparrowright M$ an immersion, and with $G_t^{-1}(\bd M) = \bd \Sigma$ for all $t \in I$.  We consider the trace $G' \colon \Sigma \times I \to M \times I$ that sends $G'(s,t) = (G(s,t),t)$. This is a level-preserving generic immersion.  The following result is often used in the 4-manifolds literature.

\begin{theorem}\label{thm:reg-homotopies-surfaces}
  Let $G \colon \Sigma \times I \looparrowright M \times I$ be a generic immersion between generically immersed surfaces, e.g.\ the trace of a regular homotopy between $G_0(\Sigma)$ and $G_1(\Sigma)$.  Then $G$ is regularly homotopic to a concatenation of finger moves, Whitney moves, and ambient isotopies leading from $G_0$ to $G_1$.  Moreover, we may assume that all the finger moves occur before all the Whitney moves.
\end{theorem}

\begin{proof}
  By general position (Lemma~\ref{lem:generic-immersion}), we may and shall assume that $G$ induces a stratification on $M \times I$ with:
  \begin{itemize}
    \item  $\O[d] = \emptyset$ for $d \geq 3$,
    \item  $\O[2]$ the double point set, a collection of disjointly embedded 1-manifolds,
    \item   $\O[1]$ the complement in $G(\Sigma \times I)$ of the double point set, and
    \item $\O[0]$ the complement $M \times I \sm G(\Sigma \times I)$.
  \end{itemize}
  By Lemma~\ref{lem:cR_function}, we may assume after a perturbation of $G$ that the projection $F \colon M \times I \to I$ is an immersed Morse function with respect to $G(\Sigma \times I)$.
By Theorem~\ref{thm:concordance}, $G$ is smoothly regularly homotopic rel.\ boundary to the trace of a regular homotopy.

Since $G$ is level preserving, the only critical points occur on $\O[2]$.  Since $\O[2]$ is a 1-manifold, the critical points are maxima and minima, i.e.\ index 0 and index 1 critical points of the Morse function restricted to the stratum $\O[2]$.  Minima correspond to finger moves of $G_t(\Sigma)$ and maxima correspond to Whitney moves.  By the Rearrangement Theorem~\ref{thm:grim_global_rearrangement}, we may rearrange the critical points so that maxima are above minima.
  Consider a single pair of a Whitney move and a finger move, where the Whitney move occurs first.
  To see that the rearrangement theorem applies, note that the descending membrane of a finger move is 3-dimensional, and thus 2-dimensional in each level set, while the ascending membrane of a Whitney move is 2-dimensional, and thus 1-dimensional in each level set. The level sets are 4-dimensional, so by the Morse-Smale condition the membranes may be assumed disjoint.
  More precisely, Theorem~\ref{thm:grim_global_rearrangement} alters the Morse function by a 1-parameter family to achieve the rearrangement, with a corresponding 1-parameter family of grim vector fields.  It does so without introducing any new critical points. We perform an ambient isotopy that returns the Morse function to the original projection; see Lemma~\ref{lem:lift_morse}.
  The rearrangement switches the order of our given pair of a Whitney move and finger move.  By applying this as many times as necessary, it follows that by a regular homotopy $G$ can be made into the trace of a regular homotopy, and all the finger moves i.e.\ minima on $\O[2]$, can be placed before the Whitney moves i.e.\ the maxima on $\O[2]$.
\end{proof}

\bibliographystyle{alpha}
\def\MR#1{}
\bibliography{research}

\end{document}